\numberwithin{equation}{section}
\begin{document}

\fancyhead[OR]{\thepage}

\renewcommand{\headrulewidth}{0pt}
\renewcommand{\thefootnote}{\fnsymbol {footnote}}

\theoremstyle{plain} 
\newtheorem{thm}{\indent\sc Theorem}[section] 
\newtheorem{lem}[thm]{\indent\sc Lemma}
\newtheorem{cor}[thm]{\indent\sc Corollary}
\newtheorem{prop}[thm]{\indent\sc Proposition}
\newtheorem{claim}[thm]{\indent\sc Claim}
\theoremstyle{definition} 
\newtheorem{dfn}[thm]{\indent\sc Definition}
\newtheorem{rem}[thm]{\indent\sc Remark}
\newtheorem{ex}[thm]{\indent\sc Example}
\newtheorem{notation}[thm]{\indent\sc Notation}
\newtheorem{assertion}[thm]{\indent\sc Assertion}
%
%
\numberwithin{equation}{section}
\renewcommand{\proofname}{\indent\sc Proof.} 
\def\C{\mathbb{C}}
\def\R{\mathbb{R}}
\def\Rn{{\mathbb{R}^n}}
\def\M{\mathbb{M}}
\def\N{\mathbb{N}}
\def\Q{{\mathbb{Q}}}
\def\Z{\mathbb{Z}}
\def\F{\mathcal{F}}
\def\L{\mathcal{L}}
\def\S{\mathcal{S}}
\def\supp{\operatorname{supp}}
\def\essi{\operatornamewithlimits{ess\,inf}}
\def\esss{\operatornamewithlimits{ess\,sup}}
\def\dlim{\displaystyle\lim}

\fancyhf{}

\fancyhead[EC]{W. Li, H. Wang, Y. Zhai}

\fancyhead[EL]{\thepage}

\fancyhead[OC]{Maximal functions associated with curvatures}

\fancyhead[OR]{\thepage}

\renewcommand{\headrulewidth}{0pt}
\renewcommand{\thefootnote}{\fnsymbol {footnote}}

\title{\textbf{Sparse domination and $L^{p} \rightarrow L^{q}$ estimates for maximal functions associated with curvature}
\footnotetext {This work is supported by Natural Science Foundation of China (No.11601427);
 China Postdoctoral Science Foundation (No. 2021M693139); the Fundamental Research Funds for the Central Universities (No. E1E40109); ERC project FAnFArE no. 637510, the region Pays de la Loire and CNRS.}
\footnotetext {{}{2000 \emph{Mathematics Subject
 Classification}: 42B20, 42B25.}}
\footnotetext {{}\emph{Key words and phrases}: Maximal function, vanishing curvature, sparse domination, $L^{p} \rightarrow L^{q}$ estimate, weighted inequalities. } } \setcounter{footnote}{0}
\author{
Wenjuan Li, Huiju Wang, Yujia Zhai}

\date{}
\maketitle

\begin{abstract}
In this paper, we study maximal functions along some finite type curves and hypersurfaces. In particular, various impacts of non-isotropic dilations are considered. Firstly, we provide a generic scheme that allows us to deduce the sparse domination bounds for global maximal functions under the assumption that the corresponding localized maximal functions satisfy the $L^{p}$ improving properties. Secondly, for the localized  maximal functions with non-isotropic dilations of  curves and hypersurfaces whose curvatures vanish to finite order at some points, we establish the $L^{p}\rightarrow L^{q}$ bounds $(q >p)$. As a corollary, we obtain the weighted inequalities for the corresponding global maximal functions, which generalize the known unweighted estimates.
\end{abstract}

\tableofcontents

\section{Introduction}
The spherical maximal function
\begin{equation} \label{sphMax}
 \sup_{t >0}\left|\int_{\mathcal{S}^{n-1}}f(y-tx)d\sigma(x)\right|,
\end{equation}
where $d\sigma$ is normalized surface measure on the sphere $S^{n-1}$, was first studied by Stein  \cite{stein3} in 1976. The sharp $L^{p}$-estimate was established by Stein \cite{stein3} for $p>n/(n-1)$ when $n \ge 3$, and later by  Bourgain \cite{JB} for $p> 2$ when $n = 2$. Then many authors turned to the study of generalizations of the spherical maximal function, i.e. the sphere is replaced by a more general smooth hypersurface in $ {\mathbb{R}}^n$. In 2019, Lacey \cite{Lacey} proved the sparse bound for the spherical maximal functions \eqref{sphMax} and obtained the weighted inequalities involving weights of Muckenhoupt and reverse H\"older type. In \cite{Lacey}, the sparse domination for the global spherical maximal functions mainly follows from the stopping-time decomposition and the $L^p$ improving properties of the appropriate localized spherical maximal functions.
Both the lacunary and full spherical maximal functions were considered in \cite{Lacey}, whose localized versions correspond to
\begin{equation}\label{local operator}
 \mathcal{M}_Ef(y)=\sup_{t \in E}\left|\int_{\mathcal{S}^{n-1}}f(y-tx)d\sigma(x)\right|,
\end{equation}
with $E$ being a single point set or a interval respectively.

For generic subset $E\subset [1,2]$, Anderson-Hughes-Roos-Seeger \cite{AHRS} considered the spherical maximal operators with sets $E$ of dilates intermediate between the two above extreme cases. The bounds for such spherical maximal operators will depend on Minkowski dimension and the upper Assouad dimension of the set $E$. Using the same stopping-time decomposition as \cite{Lacey}, they derived the sparse domination bound and weighted estimates for this class of spherical maximal operators. Later, Roos-Seeger \cite{Roos} gave a deep characterization of the closed convex sets which can occur as closure of the sharp $L^p$ improving region of $\mathcal{M}_E$ for some $E$. This region relies on the Minkowski dimension of $E$, and the Assouad spectrum of $E$.

A natural generalization of the spherical maximal functions is to characterize the $L^p$-bounded
ness properties of the maximal operator associated to hypersurface where the Gaussian curvature at some points is allowed to vanish. Related works can be found in Iosevich \cite{I},  Sogge-Stein \cite{ss}, Sogge \cite{Sogge}, Cowling-Mauceri \cite{cw1,cw2}, Nagel-Seeger-Wainger \cite{nsw},  Iosevich-Sawyer \cite{ios1,ios2}, Iosevich-Sawyer-Seeger \cite{Iss}, Ikromov-Kempe-M\"{u}ller \cite{IKMU} and references therein. Nevertheless, the weighted estimates for maximal operators associated with hypersurfaces with vanishing curvature are far beyond satisfaction.


Meanwhile, maximal operators defined by averages  over curves or surfaces with non-isotropic dilations have also been extensively considered. In 1970, in the study of a problem related
to Poisson integrals for symmetric spaces, Stein raised the question as to when the operator $\mathcal{M}_{\gamma}$ defined by
\begin{equation*} \label{eq:stein}
\mathcal{M}_{\gamma}f(x)=\sup_{h>0}\frac{1}{h}\int_{0}^h|f(x-\gamma(t))|dt,
\end{equation*}
where $\gamma(t)=(A_1t^{a_1},A_2t^{a_2},\cdots,A_nt^{a_n})$ and $A_1,A_2,\cdots,A_n$ are real, $a_i>0$, is bounded on $L^p(\mathbb{R}^n)$.
Nagel-Riviere-Wainger \cite {NRW} showed that the $L^p$-boundedness of $\mathcal{M}_{\gamma}$ holds for  $p>1$ for the special case $\gamma(t)=(t,t^2)$ in $\mathbb{R}^2$ and Stein \cite{stein} for  homogeneous curves in $\mathbb{R}^n$.  For maximal functions $\mathcal{M}$ associated with non-isotropic dilations in higher dimensions, one can see the works by Greenleaf \cite{Greenleaf}, Sogge-Stein \cite{ss}, Iosevich-Sawyer \cite{ios2}, Ikromov-Kempe-M\"{u}ller \cite{IKMU}, Zimmermann \cite{Zimmermann}. More information can be found in \cite{WL} and references therein.

In \cite{WL}, the first author of this paper established $L^p$-estimates for the maximal function related to the hypersurface $\Gamma=\{(x_{1}, x_{2},\Phi(x_1,x_2))\in \mathbb{R}^3:(x_1,x_2)\in \Omega\subset \mathbb{R}^2, 0\in\Omega\}$ with associated dilations $\delta_t(x)=(t^{a_1}x_1,t^{a_2}x_2,t^{a_3}x_3)$, defined by
 \begin{equation}\label{Eq1.4}
\sup_{t >0}\left|\int_{\mathbb{R}^2}f(y-\delta_t(x_1,x_2,\Phi(x_1,x_2)))\eta(x)dx\right|,
\end{equation}
where $\eta$ is somooth fucntion supported in a sufficiently small neighborhood $U\subset\Omega$ of the origin, and $\Phi(x_1,x_2)\in C^{\infty}(\Omega)$. When $2a_{2}\neq a_{3}$ and the hypersurface $\Gamma$ has one non-vanishing principle curvature, one can see that the local smoothing estimate from \cite{mss2,mss}
will imply the boundedness for the corresponding maximal function. Employing the isometric transformation, this result can also be generalized to the maximal functions along smooth hypersurfaces of finite type with order $d$, $d  >2$, but $da_{2} \neq a_{3}$ is required in the  dilation. When $da_{2} = a_{3}$ and $\Phi(x_1, x_2) = x_2^d(1+(\mathcal{O}(x_2^m)))$ near the origin, where $d \geq 2, m \geq 1$,
by freezing operator, she observed the necessity to estimate a family of corresponding Fourier integral operators in $\mathbb{R}^2\times \mathbb{R}$ which fail to satisfy the ``cinematic curvature" condition  uniformly. This means that celebrated local smoothing estimates could not be directly applied there. In fact, the similar problem has also arised in the study of  maximal functions associated with the curve $\gamma(x)=(x,x^d\phi(x))$ and associated dilations $(t,t^d)$, i.e.,
\begin{equation}\label{Eq1.3}
\sup_{t >0}\left|\int_{\mathbb{R}}f(y_1-tx,y_2-t^dx^d\phi(x))\eta(x)dx\right|,
\end{equation}
 where $\eta(x)$ is supported in a sufficiently small neighborhood of the origin, and $\phi(x)$ satisfies (\ref{phi}) below.
In \cite{WL},  new ideas were established to obtain $L^{4}$-estimate for Fourier integral operators which fail to satisfy the ``cinematic curvature" condition  uniformly, and to develop the sharp $L^p$-estimates for the maximal function (\ref{Eq1.3}).

The goal of our paper is to establish the weighted $L^p$-inequalities for maximal operators associated to curves and hypersurfaces where the Gaussian curvatures at some points are allowed to vanish. For example, we are able to develop the weighted $L^p$- estimates for $(1)$ maximal operators along generic curves of finite type in dimension $2$; $(2)$ maximal operators related to homogeneous curves studied by Stein \cite{stein}; $(3)$ most maximal operators associated with non-isotropic dilations considered in \cite{WL}.

Since the sparse domination bounds imply the weighted estimates (see \cite{Lacey}) in a straightforward fashion, the main methodology of this paper consists of \textbf{two} parts:
\begin{enumerate}
\item \label{reduction}
\textit{a generic scheme that relates global maximal operators to the corresponding localized maximal operators and employs the stopping-time decompositions developed by Lacey \cite{Lacey}, which
reduces the sparse domination bounds to the $L^p$ improving property of the localized maximal operators;
\item
the study of $L^p \rightarrow L^q$ regularity property of the localized maximal operators, particularly the ones which correspond to Fourier integral operators that fail to satisfy the ``cinematic curvature" condition uniformly.}
\end{enumerate}

An immediate observation from our methodology is that the sparse domination bounds and thus the weighted $L^p$-estimates for a global maximal operator can be reduced to the validation of the $L^p \rightarrow L^q$ regularity property of the corresponding localized maximal operator. We noticed that the articles \cite{Digo, BJS} described a very general and precise connection between sparse bounds and $L^p$ improving properties, but in an isotropic setting. However, our paper is mainly about non-isotropic cases.

In the next few subsections, we will state our theorems on sparse domination (Subsection \ref{SD}) and $L^p \rightarrow L^q$ regularity properties (Subsection \ref{regularity}), and deduce from those two main ingredients, the weighted $L^p$-estimates for global maximal operators of interest (Subsection \ref{weighted}).  

\subsection{Sparse domination} \label{SD}
This subsection is devoted to the derivation of the sparse domination bound for a generic global maximal operator assuming the validity of a \textit{local property} that will be specified. Such \textit{local property} is a consequence of the $L^p \rightarrow L^q$ regularity property, as will be shown in Subsection \ref{sec:con:lemma}.

Let $\Omega \in \mathbb{R}^{k}$ be an open neighborhood of the origin. Suppose that $\Gamma$ is a $k$-dimensional surface in $\mathbb{R}^n$ for $1 \leq k \leq n$ which is parametrized by
$$\Gamma=\{(\Phi_1(x), \ldots, \Phi_{n-1}(x), \Phi_n(x)) \in \mathbb{R}^n: x \in \Omega\subset \mathbb{R}^{k}\},$$
where $\Phi_i$ are smooth maps for $1 \leq i \leq n$.
 Denote by $\delta_t$ the non-isotropic dilations in $\mathbb{R}^n$ given by
\begin{equation}\label{dilation:generic}
\delta_t(x)=(t^{a_1}x_1,, \ldots, t^{a_{n-1}}x_{n-1}, t^{a_n}x_n),
\end{equation}
for $a_1, \ldots, a_n >0$.

Let $\eta: \mathbb{R}^{k} \rightarrow \mathbb{R}$ be a smooth function supported on a sufficiently small neighborhood $U \subseteq \Omega$ of the origin. We define the average associated to the hypersurface parametrized by $\vec{\Phi}:=(\Phi_i)_{1 \leq i \leq n}$ as
\begin{equation*}
A^{\vec{\Phi}}_{t} f(y):=  \int_{\mathbb{R}^{k}}f\left(y-\delta_t\left(\Phi_1(x), \ldots, \Phi_n(x)\right)\right)\eta(x)dx.
\end{equation*}

The corresponding maximal function can then be defined as
\begin{equation} \label{max:Phi}
\mathcal{M}_{\vec{\Phi}}f(y):=\sup_{t>0} |A^{\vec{\Phi}}_{t}f(y)|.
\end{equation}

We would like to remark that the full spherical maximal operator in \cite{Lacey} corresponds to the case when $\Phi_i: \Omega \subseteq \mathbb{R}^{n-1}\rightarrow \mathbb{R}$ for $1 \leq i \leq n$ and $\Phi_i(x) = x_i$ for $1 \leq i \leq n-1$, $\Phi_n(x) = \left(1-(x_1^2+\ldots + x_{n-1}^2)\right)^{\frac{1}{2}}$. The dilation $\delta_t$ is defined in \eqref{dilation:generic} with $a_1=\ldots = a_n = 1$. Another closely related class of operators is the maximally truncated Hilbert transforms along well curved curves (\cite{stein2}), one of which can be expressed as \eqref{max:Phi}
with the dilation $\delta_t$ \eqref{dilation:generic} satisfying $0 < a_1 < \ldots < a_n < \infty$ and $\Phi_i: \Omega \subseteq \mathbb{R} \rightarrow \mathbb{R}$ for $1 \leq i \leq n$ such that $\Phi_i(x) = |x|^{a_i}$ with the same $a_i$s.
The singular integral variant of this example is shown by Cladek-Ou \cite{cou} to satisfy the sparse domination bound and the weighted estimates.

A simple but important observation is that the maximal function defined in terms of a generic non-isotropic dilation $\delta_t$ \eqref{dilation:generic}  can be reduced to a maximal function with the dilation
\begin{equation} \label{dilation:restrict}
\tilde{\delta}_t(x):= (t^{b_1}x_1, \ldots, t^{b_n}x_n)
\end{equation}
 satisfying the condition that
\begin{equation} \label{dilation:geq1}
b_j \geq 1, \text{ for all } 1\leq j \leq n.
\end{equation}
Suppose that $a_{j_0} = \min_{1 \leq j \leq n} a_j >0$. Then one can let $\tilde{t} := t^{a_{j_0}}$ and the supreme can be taken over $\tilde{t}$ so that
\begin{align*}
M_{\vec{\Phi}} f(y) = & \sup_{t >0} \left|\int_{\mathbb{R}^{k}}f\left(y-\left(t^{a_1}\Phi_1(x), \ldots, t^{a_n}\Phi_n(x)\right)\right)\eta(x)dx\right| \\
= & \sup_{\tilde{t} >0} \left|\int_{\mathbb{R}^{k}}f\left(y-\left(\tilde{t}^{\frac{a_1}{a_{j_0}}}\Phi_1(x), \ldots , \tilde{t}^{\frac{a_n}{a_{j_0}}}\Phi_n(x)\right)\right)\eta(x)dx\right|,
\end{align*}
where $b_j := \frac{a_j}{a_{j_0}} \geq 1$ for all $1 \leq j \leq n $ by the minimality of $a_{j_0}$. From now on, we will therefore assume without loss of generality that the dilation satisfies the condition \eqref{dilation:geq1}, which allows a separation of scales that will become crucial later on.

We will now describe the important \textit{local property}, namely local continuity property, which gives rise to the sparse bounds. The operator of consideration is defined as
\begin{equation} \label{max:local}
\tilde{\mathcal{M}}_{\vec{\Phi}} f := \sup_{1 \leq t \leq 2} A^{\vec{\Phi}}_{t} f,
\end{equation}
which reflects the "local" behavior since the local supreme is inspected instead of the global supreme. \vskip .1in
 \noindent
 \textbf{\textit{Local Continuity Property.}} We say that the operator $\tilde{\mathcal{M}}_{\vec{\Phi}}$ has the \textit{local continuity property}
 if there exists a convex polygon $\mathcal{L}_n$ satisfying the following two conditions:
 \begin{enumerate}
 \item
 $\mathcal{L}_n$ has non-empty interior;
 \item
  for all $(\frac{1}{p}, \frac{1}{q})$ in the interior of $\mathcal{L}_n$, there is some $\epsilon = \epsilon(n,p,q)$ such that for all $|z| \leq 1$,
 \begin{equation} \label{cont:scale1}
 \| \sup_{1 \leq t \leq 2} |A^{\vec{\Phi}}_{t} f - \tau_z A^{\vec{\Phi}}_{t} f| \|_q \lesssim |z|^\epsilon \|f\|_p,
 \end{equation}
 where $\tau_z F(y) := F(y-z)$ denotes the translation of $F$ by $y$.
\end{enumerate}

We will state the sparse bounds established upon the local continuity property. We set $\mathcal{L}_n'$ be the dual range of exponents to $\mathcal{L}_n$, which is defined as $\mathcal{L}_n' := \{(\frac{1}{p}, \frac{1}{q'}):(\frac{1}{p},1-\frac{1}{q'}) \in \mathcal{L}_n\}$.

Heuristically speaking, the sparse bounds capture the sparseness of the geometric objects where the  ``main mass" is distributed. In particular, the geometrical objects one encounters are generalized cubes defined as follows.
\begin{dfn}
Let $\delta$ denote the dilation specified in \eqref{dilation:restrict}. Let $\mathcal{R}$ denote the collection of all axes-parallel hyperrectangles. Then the collection of $\delta$-cubes with dyadic size is defined as
\begin{equation*}
\mathcal{Q}^\delta := \{Q \in \mathcal{R}: l_1(Q) = 2^{\lceil kb_1 \rceil}, \ldots, l_n(Q) = 2^{\lceil kb_n \rceil}, \text{ for some } k \in \mathbb{Z}\},
\end{equation*}
where $l_j(Q)$ denote the $j$-th side-length of $Q$ and $b_j \geq 1$ for $1 \leq j \leq n$.
\end{dfn}
\begin{rem} \label{sep:scale}
One crucial property implied by the condition $b_j \geq 1$ is the separation of scales: for any $k \neq k' \in \mathbb{Z}$,
\begin{equation*}
|k b_j - k' b_j| = |k-k'|b_j \geq b_j \geq 1,
\end{equation*}
or equivalently, for any $l_j(Q)$, there exists a unique $k \in \mathbb{Z}$ such that $2^{\lceil kb_j \rceil} = l_j(Q)$. As a consequence, given $l_j(Q)$ for some $1 \leq j \leq n$, one can determine $k$ and thus all other side-lengths of $Q$.
\end{rem}

\begin{dfn}
We say that a collection of $\delta$-cubes $\mathcal{S}$ is \textit{sparse} if there are sets $\{E_S \subseteq S: S \in \mathcal{S} \}$ such that they are pairwise disjoint and $|E_S| > \frac{1}{4}|S|$ for all $S \in \mathcal{S}$.
\end{dfn}

\begin{thm} \label{thm:sparse}
Let $\mathcal{M}_{\vec{\Phi}}$ denote the maximal function defined in \eqref{max:Phi}. Suppose that the corresponding local operator $\tilde{\mathcal{M}}_{\vec{\Phi}}$ \eqref{max:local} satisfies the local continuity property in the range $\mathcal{L}_n$. Then for all bounded compactly supported functions  $f$, $g$ and for any $(\frac{1}{p}, \frac{1}{q'}) \in \mathcal{L}'_n$, there exists a constant $C < \infty$ such that
\begin{equation*}
| \langle \mathcal{M}_{\vec{\Phi}} f, g \rangle | \leq C \sup_{\mathcal{S}} \Lambda_{\mathcal{S},p,q'}(f,g),
\end{equation*}
where the supreme is taken over all possible sparse collections of $\delta$-cubes.
\end{thm}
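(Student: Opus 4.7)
The argument is a non-isotropic adaptation of the stopping-time scheme of Lacey \cite{Lacey}, with the local continuity property \eqref{cont:scale1} playing the role that $L^p$-improving plays in the isotropic setting.

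My first step is to break the global supremum dyadically in scale. For each $k \in \mathbb{Z}$, let $M_k f(y) := \sup_{2^k \leq t < 2^{k+1}} |A_t^{\vec{\Phi}} f(y)|$. A change of variables using the dilation $\tilde{\delta}_{2^k}$ identifies $M_k f$ with $\tilde{\mathcal{M}}_{\vec{\Phi}}$ applied to a $\tilde{\delta}_{2^k}$-rescaled function. Because the integer $k$ and the $\delta$-cube side lengths $l_j(Q) = 2^{\lceil k b_j \rceil}$ are in a one-to-one correspondence (Remark \ref{sep:scale}, using $b_j \geq 1$), each scale $k$ is pinned to a unique size of $\delta$-cube, so we can keep bookkeeping of scales purely in terms of the grid $\mathcal{Q}^\delta$. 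Once rescaled, every bound proved for $\tilde{\mathcal{M}}_{\vec{\Phi}}$ translates into a bound for $M_k$ with the correct volume normalization, which is what permits the sparse average form.

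My second step is the stopping-time construction of the sparse family $\mathcal{S}$. Starting from an initial $\delta$-cube $Q_0$ containing the supports of $f$ and $g$ (handling unbounded support by a standard exhaustion), I select the maximal sub-cubes $Q \in \mathcal{Q}^\delta$, $Q \subsetneq Q_0$, for which either $\langle |f|^p\rangle_Q^{1/p} > C \langle |f|^p\rangle_{Q_0}^{1/p}$ or $\langle |g|^{q'}\rangle_Q^{1/q'} > C \langle |g|^{q'}\rangle_{Q_0}^{1/q'}$; the weak-$(p,p)$/weak-$(q',q')$ property of the $\delta$-maximal function, together with a large enough $C$, guarantees that the union of the selected children covers at most $\frac{1}{4}|Q_0|$, so that the complement $E_{Q_0}$ satisfies the sparseness condition. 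Iterating this selection inside each selected child produces a sparse collection $\mathcal{S}$.

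The main step, and the technical heart of the argument, is to show that the contribution from scales ``resolved" by $Q$ but not by its stopping descendants can be bounded by $|Q| \langle |f|^p\rangle_Q^{1/p} \langle |g|^{q'}\rangle_Q^{1/q'}$. Fix $Q \in \mathcal{S}$ and let $T_Q$ denote the portion of $\mathcal{M}_{\vec{\Phi}}$ restricted to those $y \in Q \setminus \bigcup_{\text{child } Q'} Q'$ and to those scales $t$ whose rescaled $\delta$-ball fits in $Q$ but not in any stopping child. I would decompose $f \mathbf{1}_Q$ into a low-frequency part at the scale of $Q$ plus telescoping differences $f_\ell - \tau_{z_\ell} f_\ell$ at scales $|z_\ell| \leq 2^{-\ell}$ for $\ell \geq 0$; on each piece the local continuity estimate \eqref{cont:scale1}, after the dyadic rescaling from step one, yields a bound of the form $|Q|^{1 - \frac{1}{p} - \frac{1}{q'}} \, 2^{-\epsilon \ell}\, \|f_\ell\|_p \|g\|_{q'}$, which sums geometrically in $\ell$. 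The stopping condition gives $\|f_\ell\|_p \lesssim |Q|^{1/p} \langle |f|^p\rangle_Q^{1/p}$ and likewise for $g$, so the telescoping sum is absorbed into a single sparse term at the cube $Q$. This is the delicate part: the continuity estimate is interpolation-type, so I need $(1/p, 1/q')$ to lie in the open set $\mathcal{L}'_n$ in order to gain a strictly positive $\epsilon$.

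Finally, summing over $Q \in \mathcal{S}$ and recalling that by construction every scale of the global supremum is captured by exactly one stopping layer yields
\[
|\langle \mathcal{M}_{\vec{\Phi}} f, g\rangle| \lesssim \sum_{Q \in \mathcal{S}} |Q|\, \langle |f|^p\rangle_Q^{1/p} \langle |g|^{q'}\rangle_Q^{1/q'} = \Lambda_{\mathcal{S}, p, q'}(f, g),
\]
which is the asserted sparse domination. The main obstacle, as indicated, is the telescoping/continuity estimate inside a single sparse cube: verifying that the $|z|^\epsilon$ gain in \eqref{cont:scale1} interacts correctly with the anisotropic rescaling and produces a geometric series summable exactly on the interior of $\mathcal{L}'_n$.
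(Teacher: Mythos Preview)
Your overall architecture---dyadic discretization using the $\delta$-cubes and Remark~\ref{sep:scale}, a stopping-time selection on $\langle |f|^p\rangle_Q$ and $\langle |g|^{q'}\rangle_Q$, and a single-cube estimate that is then summed---matches the paper exactly. The gap is in your ``main step,'' the single-cube estimate.

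You propose to write $f\mathbf{1}_Q$ as a low-frequency piece plus a telescoping sum of differences $f_\ell - \tau_{z_\ell} f_\ell$ and apply \eqref{cont:scale1} to each piece. But a general function is not expressible as a sum of terms of the form $h-\tau_z h$ in any way that gives the needed $\|f_\ell\|_p$ control from the stopping condition; you have not said what $f_\ell$ is or why the series telescopes back to $f$. What the paper actually does (Lemma~\ref{inductive} and its proof) is a Calder\'on--Zygmund decomposition $f = f_\infty + \sum_{P\in\mathcal{B}} b_P$ at level $C\langle f\rangle_{Q_0,p}$. The good part $f_\infty$ is handled trivially by its $L^\infty$ bound. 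For the bad part, the \emph{mean-zero} property of each $b_P$ is the mechanism that introduces a translation difference: pairing $b_P$ against the linearized adjoint $\widetilde{A}^{\vec{\Phi}}_{t_Q(x)}(g\mathbf{1}_{F_Q})$ and subtracting the average over $P$ produces $\widetilde{A}^{\vec{\Phi}}_{t}(g\mathbf{1}_{F_Q})(x) - \widetilde{A}^{\vec{\Phi}}_{t}(g\mathbf{1}_{F_Q})(x-y)$ with $|y|$ at the scale of $P$. The continuity property is then invoked in its scale-invariant \emph{dual} form \eqref{dual:cont}, yielding the factor $2^{-k\epsilon}$ where $k$ is the scale gap between $P$ and $Q$; this is what sums geometrically. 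In short, the translation difference to which \eqref{cont:scale1} applies is generated by mean-zero cancellation on the bad part of a CZ decomposition, not by a direct telescoping of $f$ itself.

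A second point you have omitted: the paper first proves the single-cube estimate assuming $f$ is a characteristic function (this is used when summing $\sum_l \sum_Q \langle B_{l-k}\mathbf{1}_Q\rangle_{Q,p}\langle g\mathbf{1}_{F_Q}\rangle_{Q,q'}|Q|$), and then passes to general $f$ via level-set decomposition and the Carleson embedding, following Lacey~\cite{Lacey}. Your sketch does not account for this reduction.
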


In the paper, we will restrict our focus on proving sparse domination bounds (and thus weighted inequalities) for for maximal operators over hypersurfaces, namely when $\Phi_i: \Omega \subseteq \mathbb{R}^{n-1} \rightarrow \mathbb{R}$ for $1 \leq i \leq n$, $\Phi_i(x)= x_i$ for $1 \leq i \leq n-1$ and $\Phi_n =: \Phi$ is a smooth function. We will explicitly define this class of maximal operators for further references.

Let
\begin{equation*}
A_{t} f(y):= \int_{\mathbb{R}^{n-1}}f\left(y-\delta_t\left(x', \Phi(x')\right)\right)\eta(x')dx'.
\end{equation*}

The corresponding maximal function can thus be defined as
\begin{equation} \label{max:Phi}
\mathcal{M}_{}f(y):=\sup_{t>0}|A^{}_{t}f(y)|.
\end{equation}

\subsection{$L^p \rightarrow L^q$ estimates and continuity lemmas for localized maximal functions associated with curvature} \label{regularity}
The $L^p$-improving estimate ($L^p\rightarrow L^q$ estimate) for the local  maximal operator $\mathcal{M}_E$  defined by (\ref{local operator}) have been extensively studied, where $E $ is a subset of $[1,2]$.
In one extreme case when $E=\{\textmd{single point}\}$, the corresponding maximal operator reduces to an averaging operator, for which Littman \cite{Littman} has proved the $L^p\rightarrow L^q$ boundedness if and only if $(\frac{1}{p},\frac{1}{q})$ belongs to the closed triangle with corners $(0,0), (1,1),(\frac{n}{n+1},\frac{1}{n+1})$.
For the other extreme case when $E=[1,2]$,  Schlag \cite{WS1} showed that in $\mathbb{R}^2$, $\mathcal{M}_E$ is actually bounded in the interior of the triangle with vertices $(0,0)$, $(1/2,1/2)$ and $(2/5,1/5)$. This result was obtained using "combinatorial method" in \cite{WS1}. Based on some local smoothing estimates, an alternative proof was given by Schlag-Sogge \cite{WS2} later. Schlag-Sogge \cite{WS2} also established $L^{p} \rightarrow L^{q}$ estimates for the local maximal functions of hypersurfaces satisfying ``cinematic curvature" condition  in $\mathbb{R}^{n}$ for $n\geq 3$. It is worthwhile to mention that, using bilinear cone restriction estimate, Lee \cite{SL1} improved the local smoothing estimate in \cite{WS2} and attained the endpoint estimate for the local circular maximal function in $\mathbb{R}^{n}$ for $n\geq 2$. Anderson-Hughes-Roos-Seeger \cite{AHRS} considered the spherical maximal operators with sets $E$ of dilates intermediate between the above two extreme cases. Let $\beta$ and $\gamma$ be the upper Minkowski dimension and the upper Assouad dimension of the set $E$ respectively, where $0\leq\beta\leq \gamma\leq 1$ if $n\geq 3$ and $0\leq\beta\leq \gamma\leq 1/2$ if $n=2$. Then $\mathcal{M}_E$ is bounded in the interior of the closed convex hull of the points $Q_1=(0,0),Q_2=(\frac{n-1}{n-1+\beta},(\frac{n-1}{n-1+\beta}),Q_3=(\frac{n-\beta}{n-\beta+1},\frac{1}{n-\beta+1}),Q_4=(\frac{n(n-1)}{n^2+2\gamma-1},\frac{n-1}{n^2+2\gamma-1})$ and the line segment connecting $Q_1$ and $Q_2$, with $Q_1$ included and $Q_2$ excluded. Furthermore, they obtained the sparse domination for the global maximal operators defined by $\sup_{k\in \mathbb{Z}}\sup_{t \in E}\left|\int_{\mathcal{S}^{n-1}}f(y-2^ktx)d\sigma(x)\right|$, which covers the corresponding result in \cite{Lacey}. However, it still remains open whether the $L^p\rightarrow L^q$ estimates hold for the maximal operators related to hypersurfaces where the Gaussian curvatures at some points are allowed to vanish.

In this subsection, we state some $L^p\rightarrow L^q$  estimates for some local maximal operators associated with anistropic dilations of curves in $\mathbb{R}^{2}$ or hypersurfaces in $\mathbb{R}^{3}$ where the Gaussian curvatures at some points are allowed to vanish to finite order at some points. Then the corresponding  continuity lemmas are given as inferences.

\subsubsection{Localized maximal functions associated with  curves  in $\mathbb{R}^{2}$}\label{subsection1.1}
According to the previous results from \cite{WS1,WS2}, the local circular maximal operator (when $n=2$ and $E= [1, 2]$ in equality (\ref{local operator})) is $L^p\rightarrow L^q$ bounded if  $(\frac{1}{p},\frac{1}{q})$ belongs to
\begin{equation}\label{delta0}
\Delta_{0} := \{(\frac{1}{p},\frac{1}{q}): \frac{1}{2p} < \frac{1}{q} \le \frac{1}{p} , \frac{1}{q}  > \frac{3}{p} -1\} \cup \{(0, 0)\}.
\end{equation}
It is clear that the circle has non-vanishing curvature. In this subsection, we consider the $L^p\rightarrow L^q$ estimates for local maximal operators along curves of finite type $d$ ($d \ge 2$) at the origin. For convenience, we define the following regions of boundedness exponents that will be referred to later on:
\begin{equation}\label{delta1}
\Delta_{1} := \{(\frac{1}{p},\frac{1}{q}): \frac{1}{2p} < \frac{1}{q} \le \frac{1}{p} , \frac{1}{q}  > \frac{3}{p} -1, \frac{1}{q} > \frac{1}{p} - \frac{1}{d+1}\} \cup \{(0, 0)\};
\end{equation}
\begin{equation}\label{delta2}
\Delta_{2} := \{(\frac{1}{p},\frac{1}{q}): \frac{1}{2p} < \frac{1}{q} \le \frac{1}{p} , \frac{1}{q}  > \frac{d+1}{p} -1\} \cup \{(0, 0)\};
\end{equation}
\begin{equation}\label{delta3}
\Delta_{3} := \{(\frac{1}{p},\frac{1}{q}): \frac{1}{2p} < \frac{1}{q} \le \frac{1}{p} , \frac{1}{q}  > \frac{2}{p} -1, \frac{1}{q} > \frac{1}{p} - \frac{1}{d+1}\} \cup \{(0, 0), (1, 1)\}.
\end{equation}

\begin{figure}
\centering

\tikzset{
pattern size/.store in=\mcSize, 
pattern size = 5pt,
pattern thickness/.store in=\mcThickness, 
pattern thickness = 0.3pt,
pattern radius/.store in=\mcRadius, 
pattern radius = 1pt}
\makeatletter
\pgfutil@ifundefined{pgf@pattern@name@_sa2perfxd}{
\pgfdeclarepatternformonly[\mcThickness,\mcSize]{_sa2perfxd}
{\pgfqpoint{0pt}{-\mcThickness}}
{\pgfpoint{\mcSize}{\mcSize}}
{\pgfpoint{\mcSize}{\mcSize}}
{
\pgfsetcolor{\tikz@pattern@color}
\pgfsetlinewidth{\mcThickness}
\pgfpathmoveto{\pgfqpoint{0pt}{\mcSize}}
\pgfpathlineto{\pgfpoint{\mcSize+\mcThickness}{-\mcThickness}}
\pgfusepath{stroke}
}}
\makeatother
\tikzset{every picture/.style={line width=0.75pt}} 

\begin{tikzpicture}[x=0.75pt,y=0.75pt,yscale=-1,xscale=1]

\draw  [color={rgb, 255:red, 255; green, 255; blue, 255 }  ,draw opacity=1 ][fill={rgb, 255:red, 74; green, 144; blue, 226 }  ,fill opacity=0.3 ] (136.85,1677.98) -- (44.28,1726.15) -- (162.76,1606.93) -- cycle ;
\draw   (167.68,1602.07) .. controls (167.68,1600.25) and (166.23,1598.78) .. (164.43,1598.78) .. controls (162.63,1598.78) and (161.17,1600.25) .. (161.17,1602.07) .. controls (161.17,1603.89) and (162.63,1605.37) .. (164.43,1605.37) .. controls (166.23,1605.37) and (167.68,1603.89) .. (167.68,1602.07) -- cycle ;
\draw   (141.04,1674.22) .. controls (141.04,1672.4) and (139.58,1670.92) .. (137.78,1670.92) .. controls (135.98,1670.92) and (134.53,1672.4) .. (134.53,1674.22) .. controls (134.53,1676.04) and (135.98,1677.51) .. (137.78,1677.51) .. controls (139.58,1677.51) and (141.04,1676.04) .. (141.04,1674.22) -- cycle ;
\draw  [fill={rgb, 255:red, 0; green, 0; blue, 0 }  ,fill opacity=1 ] (41.41,1724.93) .. controls (41.41,1726.76) and (42.76,1728.25) .. (44.43,1728.25) .. controls (46.1,1728.25) and (47.46,1726.76) .. (47.46,1724.93) .. controls (47.46,1723.09) and (46.1,1721.61) .. (44.43,1721.61) .. controls (42.76,1721.61) and (41.41,1723.09) .. (41.41,1724.93) -- cycle ;
\draw [line width=1.5]    (44.28,1726.15) -- (162.92,1604.26) ;
\draw  [dash pattern={on 4.5pt off 4.5pt}]  (47.67,1498.33) -- (278.86,1499.27) ;
\draw  [dash pattern={on 4.5pt off 4.5pt}]  (278.86,1499.27) -- (280.72,1723.52) ;
\draw    (44.44,1724.93) -- (298.25,1725.4) ;
\draw [shift={(300.25,1725.4)}, rotate = 180.11] [color={rgb, 255:red, 0; green, 0; blue, 0 }  ][line width=0.75]    (10.93,-3.29) .. controls (6.95,-1.4) and (3.31,-0.3) .. (0,0) .. controls (3.31,0.3) and (6.95,1.4) .. (10.93,3.29)   ;
\draw    (44.28,1726.15) -- (45.49,1481) ;
\draw [shift={(45.5,1479)}, rotate = 90.28] [color={rgb, 255:red, 0; green, 0; blue, 0 }  ][line width=0.75]    (10.93,-3.29) .. controls (6.95,-1.4) and (3.31,-0.3) .. (0,0) .. controls (3.31,0.3) and (6.95,1.4) .. (10.93,3.29)   ;
\draw  [draw opacity=0][fill={rgb, 255:red, 74; green, 144; blue, 226 }  ,fill opacity=0.3 ] (476.3,1681.19) -- (376.57,1726.94) -- (496.9,1614.34) -- cycle ;
\draw  [color={rgb, 255:red, 74; green, 144; blue, 226 }  ,draw opacity=0.13 ][pattern=_sa2perfxd,pattern size=6pt,pattern thickness=2.25pt,pattern radius=0pt, pattern color={rgb, 255:red, 245; green, 166; blue, 35}] (496.9,1614.34) -- (487.26,1646.09) -- (427.18,1701.28) -- (374.12,1727.12) -- cycle ;
\draw [line width=1.5]    (376.57,1725.16) -- (496.9,1611.68) ;
\draw    (376.57,1726.94) -- (376.1,1484) ;
\draw [shift={(376.09,1482)}, rotate = 89.89] [color={rgb, 255:red, 0; green, 0; blue, 0 }  ][line width=0.75]    (10.93,-3.29) .. controls (6.95,-1.4) and (3.31,-0.3) .. (0,0) .. controls (3.31,0.3) and (6.95,1.4) .. (10.93,3.29)   ;
\draw   (488.79,1650.73) .. controls (488.79,1649.04) and (487.34,1647.67) .. (485.54,1647.67) .. controls (483.74,1647.67) and (482.28,1649.04) .. (482.28,1650.73) .. controls (482.28,1652.42) and (483.74,1653.8) .. (485.54,1653.8) .. controls (487.34,1653.8) and (488.79,1652.42) .. (488.79,1650.73) -- cycle ;
\draw   (433.79,1702.06) .. controls (433.79,1700.35) and (432.31,1698.95) .. (430.48,1698.95) .. controls (428.66,1698.95) and (427.18,1700.35) .. (427.18,1702.06) .. controls (427.18,1703.78) and (428.66,1705.18) .. (430.48,1705.18) .. controls (432.31,1705.18) and (433.79,1703.78) .. (433.79,1702.06) -- cycle ;
\draw   (496.76,1609.64) .. controls (496.76,1608.1) and (498.09,1606.84) .. (499.74,1606.84) .. controls (501.38,1606.84) and (502.71,1608.1) .. (502.71,1609.64) .. controls (502.71,1611.19) and (501.38,1612.44) .. (499.74,1612.44) .. controls (498.09,1612.44) and (496.76,1611.19) .. (496.76,1609.64) -- cycle ;
\draw  [fill={rgb, 255:red, 0; green, 0; blue, 0 }  ,fill opacity=1 ] (379.87,1724.71) .. controls (379.87,1723) and (378.39,1721.6) .. (376.57,1721.6) .. controls (374.74,1721.6) and (373.26,1723) .. (373.26,1724.71) .. controls (373.26,1726.43) and (374.74,1727.83) .. (376.57,1727.83) .. controls (378.39,1727.83) and (379.87,1726.43) .. (379.87,1724.71) -- cycle ;
\draw    (376.57,1724.71) -- (641.5,1722.07) ;
\draw [shift={(643.5,1722.05)}, rotate = 179.43] [color={rgb, 255:red, 0; green, 0; blue, 0 }  ][line width=0.75]    (10.93,-3.29) .. controls (6.95,-1.4) and (3.31,-0.3) .. (0,0) .. controls (3.31,0.3) and (6.95,1.4) .. (10.93,3.29)   ;
\draw  [dash pattern={on 4.5pt off 4.5pt}]  (377.5,1503) -- (616.1,1502.45) ;
\draw  [dash pattern={on 4.5pt off 4.5pt}]  (616.5,1724) -- (616.1,1502.45) ;
\draw    (436,1708) -- (463.79,1731.82) -- (470.21,1739.47) ;
\draw [shift={(471.5,1741)}, rotate = 229.98] [color={rgb, 255:red, 0; green, 0; blue, 0 }  ][line width=0.75]    (10.93,-3.29) .. controls (6.95,-1.4) and (3.31,-0.3) .. (0,0) .. controls (3.31,0.3) and (6.95,1.4) .. (10.93,3.29)   ;
\draw    (490.79,1650.73) -- (533.5,1651.94) ;
\draw [shift={(535.5,1652)}, rotate = 181.62] [color={rgb, 255:red, 0; green, 0; blue, 0 }  ][line width=0.75]    (10.93,-3.29) .. controls (6.95,-1.4) and (3.31,-0.3) .. (0,0) .. controls (3.31,0.3) and (6.95,1.4) .. (10.93,3.29)   ;
\draw   (479.55,1684.49) .. controls (479.55,1682.67) and (478.1,1681.19) .. (476.3,1681.19) .. controls (474.5,1681.19) and (473.04,1682.67) .. (473.04,1684.49) .. controls (473.04,1686.31) and (474.5,1687.78) .. (476.3,1687.78) .. controls (478.1,1687.78) and (479.55,1686.31) .. (479.55,1684.49) -- cycle ;

\draw (147,1788.5) node [anchor=north west][inner sep=0.75pt]    {$\Delta _{0}$};
\draw (351,1472.5) node [anchor=north west][inner sep=0.75pt]    {$\frac{1}{q}$};
\draw (303.81,1703.75) node [anchor=north west][inner sep=0.75pt]    {$\frac{1}{p}$};
\draw (140.72,1664.57) node [anchor=north west][inner sep=0.75pt]    {$\left(\frac{2}{5} ,\frac{1}{5}\right)$};
\draw (166.77,1589.25) node [anchor=north west][inner sep=0.75pt]    {$\left(\frac{1}{2} ,\frac{1}{2}\right)$};
\draw (46.26,1724.28) node [anchor=north west][inner sep=0.75pt]    {$( 0,0)$};
\draw (275.79,1471.03) node [anchor=north west][inner sep=0.75pt]    {$( 1,1)$};
\draw (17.84,1465.15) node [anchor=north west][inner sep=0.75pt]    {$\frac{1}{q}$};
\draw (375.41,1726.61) node [anchor=north west][inner sep=0.75pt]    {$( 0,0)$};
\draw (599.79,1476.03) node [anchor=north west][inner sep=0.75pt]    {$( 1,1)$};
\draw (500.77,1583.25) node [anchor=north west][inner sep=0.75pt]    {$\left(\frac{1}{2} ,\frac{1}{2}\right)$};
\draw (469.87,1722.82) node [anchor=north west][inner sep=0.75pt]    {$\left(\frac{2}{d+1} ,\frac{1}{d+1}\right)$};
\draw (534,1630) node [anchor=north west][inner sep=0.75pt]    {$\left(\frac{d}{2d+2} ,\frac{d-2}{2d+2}\right)$};
\draw (640.81,1699.75) node [anchor=north west][inner sep=0.75pt]    {$\frac{1}{p}$};
\draw (436,1791) node [anchor=north west][inner sep=0.75pt]    {$\Delta _{1} \ for\ \textcolor[rgb]{0.29,0.56,0.89}{\mathbf{2\leq d\leq 4}}\textcolor[rgb]{0.29,0.56,0.89}{\ } and\ \textcolor[rgb]{0.95,0.63,0.09}{\mathbf{d\geq 5}} \ $};
\draw (481.72,1668.57) node [anchor=north west][inner sep=0.75pt]    {$\left(\frac{2}{5} ,\frac{1}{5}\right)$};

\end{tikzpicture}

\caption{${\color{black}{\Delta_1}} = {\color{black}\Delta_0}$ for $2 \leq d\leq 4$ and ${\color{black}{\Delta_1}} \subsetneq {\color{black}\Delta_0}$ for $d\geq5$}
\end{figure}

\begin{figure}
\centering

\tikzset{
pattern size/.store in=\mcSize, 
pattern size = 5pt,
pattern thickness/.store in=\mcThickness, 
pattern thickness = 0.3pt,
pattern radius/.store in=\mcRadius, 
pattern radius = 1pt}
\makeatletter
\pgfutil@ifundefined{pgf@pattern@name@_7c8epya2v}{
\pgfdeclarepatternformonly[\mcThickness,\mcSize]{_7c8epya2v}
{\pgfqpoint{0pt}{-\mcThickness}}
{\pgfpoint{\mcSize}{\mcSize}}
{\pgfpoint{\mcSize}{\mcSize}}
{
\pgfsetcolor{\tikz@pattern@color}
\pgfsetlinewidth{\mcThickness}
\pgfpathmoveto{\pgfqpoint{0pt}{\mcSize}}
\pgfpathlineto{\pgfpoint{\mcSize+\mcThickness}{-\mcThickness}}
\pgfusepath{stroke}
}}
\makeatother

 
\tikzset{
pattern size/.store in=\mcSize, 
pattern size = 5pt,
pattern thickness/.store in=\mcThickness, 
pattern thickness = 0.3pt,
pattern radius/.store in=\mcRadius, 
pattern radius = 1pt}
\makeatletter
\pgfutil@ifundefined{pgf@pattern@name@_t8htwyv0t}{
\pgfdeclarepatternformonly[\mcThickness,\mcSize]{_t8htwyv0t}
{\pgfqpoint{0pt}{-\mcThickness}}
{\pgfpoint{\mcSize}{\mcSize}}
{\pgfpoint{\mcSize}{\mcSize}}
{
\pgfsetcolor{\tikz@pattern@color}
\pgfsetlinewidth{\mcThickness}
\pgfpathmoveto{\pgfqpoint{0pt}{\mcSize}}
\pgfpathlineto{\pgfpoint{\mcSize+\mcThickness}{-\mcThickness}}
\pgfusepath{stroke}
}}
\makeatother
\tikzset{every picture/.style={line width=0.75pt}} 

\begin{tikzpicture}[x=0.75pt,y=0.75pt,yscale=-1,xscale=1]

\draw  [color={rgb, 255:red, 255; green, 255; blue, 255 }  ,draw opacity=1 ][fill={rgb, 255:red, 74; green, 144; blue, 226 }  ,fill opacity=0.3 ] (117.91,2272.09) -- (27.83,2316.57) -- (141.48,2203.27) -- cycle ;
\draw    (22.6,2322.02) -- (267.57,2321.82) ;
\draw [shift={(269.57,2321.82)}, rotate = 179.95] [color={rgb, 255:red, 0; green, 0; blue, 0 }  ][line width=0.75]    (10.93,-3.29) .. controls (6.95,-1.4) and (3.31,-0.3) .. (0,0) .. controls (3.31,0.3) and (6.95,1.4) .. (10.93,3.29)   ;
\draw    (22.6,2323.24) -- (23.56,2081) ;
\draw [shift={(23.57,2079)}, rotate = 90.23] [color={rgb, 255:red, 0; green, 0; blue, 0 }  ][line width=0.75]    (10.93,-3.29) .. controls (6.95,-1.4) and (3.31,-0.3) .. (0,0) .. controls (3.31,0.3) and (6.95,1.4) .. (10.93,3.29)   ;
\draw    (340.59,2324.06) -- (339.09,2083) ;
\draw [shift={(339.08,2081)}, rotate = 89.64] [color={rgb, 255:red, 0; green, 0; blue, 0 }  ][line width=0.75]    (10.93,-3.29) .. controls (6.95,-1.4) and (3.31,-0.3) .. (0,0) .. controls (3.31,0.3) and (6.95,1.4) .. (10.93,3.29)   ;
\draw    (342.49,2325.06) -- (593.77,2324.95) ;
\draw [shift={(595.77,2324.95)}, rotate = 179.98] [color={rgb, 255:red, 0; green, 0; blue, 0 }  ][line width=0.75]    (10.93,-3.29) .. controls (6.95,-1.4) and (3.31,-0.3) .. (0,0) .. controls (3.31,0.3) and (6.95,1.4) .. (10.93,3.29)   ;
\draw  [color={rgb, 255:red, 233; green, 134; blue, 141 }  ,draw opacity=0.2 ][fill={rgb, 255:red, 208; green, 2; blue, 27 }  ,fill opacity=0.24 ] (498.06,2245.37) -- (337.85,2326.66) -- (573.57,2093.16) -- cycle ;
\draw  [color={rgb, 255:red, 254; green, 19; blue, 23 }  ,draw opacity=0.15 ][pattern=_7c8epya2v,pattern size=6pt,pattern thickness=2.25pt,pattern radius=0pt, pattern color={rgb, 255:red, 144; green, 19; blue, 254}] (578.4,2084) -- (531.7,2180.43) -- (433.92,2279.28) -- (341.78,2323.2) -- cycle ;
\draw [line width=2.25]    (342.49,2324.06) -- (577.88,2086.29) ;
\draw  [fill={rgb, 255:red, 0; green, 0; blue, 0 }  ,fill opacity=1 ] (576.12,2086.53) .. controls (576.12,2085.02) and (577.35,2083.79) .. (578.87,2083.79) .. controls (580.39,2083.79) and (581.62,2085.02) .. (581.62,2086.53) .. controls (581.62,2088.05) and (580.39,2089.27) .. (578.87,2089.27) .. controls (577.35,2089.27) and (576.12,2088.05) .. (576.12,2086.53) -- cycle ;
\draw  [fill={rgb, 255:red, 0; green, 0; blue, 0 }  ,fill opacity=1 ] (343.42,2326.28) .. controls (343.42,2324.34) and (341.71,2322.77) .. (339.6,2322.77) .. controls (337.49,2322.77) and (335.78,2324.34) .. (335.78,2326.28) .. controls (335.78,2328.21) and (337.49,2329.78) .. (339.6,2329.78) .. controls (341.71,2329.78) and (343.42,2328.21) .. (343.42,2326.28) -- cycle ;
\draw   (437.68,2276.73) .. controls (437.68,2274.32) and (439.47,2272.36) .. (441.69,2272.36) .. controls (443.9,2272.36) and (445.7,2274.32) .. (445.7,2276.73) .. controls (445.7,2279.15) and (443.9,2281.11) .. (441.69,2281.11) .. controls (439.47,2281.11) and (437.68,2279.15) .. (437.68,2276.73) -- cycle ;
\draw   (529.41,2181.17) .. controls (529.41,2178.88) and (531.41,2177.02) .. (533.87,2177.02) .. controls (536.34,2177.02) and (538.34,2178.88) .. (538.34,2181.17) .. controls (538.34,2183.47) and (536.34,2185.33) .. (533.87,2185.33) .. controls (531.41,2185.33) and (529.41,2183.47) .. (529.41,2181.17) -- cycle ;
\draw  [dash pattern={on 4.5pt off 4.5pt}]  (257.14,2323.58) -- (257.14,2087.8) ;
\draw  [dash pattern={on 4.5pt off 4.5pt}]  (257.14,2087.8) -- (22.6,2086.92) ;
\draw  [color={rgb, 255:red, 74; green, 144; blue, 226 }  ,draw opacity=0.29 ][pattern=_t8htwyv0t,pattern size=6pt,pattern thickness=2.25pt,pattern radius=0pt, pattern color={rgb, 255:red, 208; green, 2; blue, 27}] (87.62,2286.01) -- (25.88,2320.43) -- (105.07,2242.42) -- cycle ;
\draw  [fill={rgb, 255:red, 0; green, 0; blue, 0 }  ,fill opacity=1 ] (26.5,2323.24) .. controls (26.5,2321.11) and (24.75,2319.38) .. (22.6,2319.38) .. controls (20.45,2319.38) and (18.71,2321.11) .. (18.71,2323.24) .. controls (18.71,2325.37) and (20.45,2327.1) .. (22.6,2327.1) .. controls (24.75,2327.1) and (26.5,2325.37) .. (26.5,2323.24) -- cycle ;
\draw   (108.18,2239.34) .. controls (108.18,2237.64) and (106.79,2236.26) .. (105.07,2236.26) .. controls (103.35,2236.26) and (101.96,2237.64) .. (101.96,2239.34) .. controls (101.96,2241.04) and (103.35,2242.42) .. (105.07,2242.42) .. controls (106.79,2242.42) and (108.18,2241.04) .. (108.18,2239.34) -- cycle ;
\draw   (92.2,2286.72) .. controls (92.2,2285.02) and (90.8,2283.64) .. (89.09,2283.64) .. controls (87.37,2283.64) and (85.98,2285.02) .. (85.98,2286.72) .. controls (85.98,2288.42) and (87.37,2289.8) .. (89.09,2289.8) .. controls (90.8,2289.8) and (92.2,2288.42) .. (92.2,2286.72) -- cycle ;
\draw   (121.02,2274.17) .. controls (121.02,2272.47) and (119.63,2271.09) .. (117.91,2271.09) .. controls (116.19,2271.09) and (114.8,2272.47) .. (114.8,2274.17) .. controls (114.8,2275.87) and (116.19,2277.25) .. (117.91,2277.25) .. controls (119.63,2277.25) and (121.02,2275.87) .. (121.02,2274.17) -- cycle ;
\draw   (144.59,2203.27) .. controls (144.59,2201.57) and (143.2,2200.19) .. (141.48,2200.19) .. controls (139.77,2200.19) and (138.37,2201.57) .. (138.37,2203.27) .. controls (138.37,2204.97) and (139.77,2206.35) .. (141.48,2206.35) .. controls (143.2,2206.35) and (144.59,2204.97) .. (144.59,2203.27) -- cycle ;
\draw [color={rgb, 255:red, 155; green, 155; blue, 155 }  ,draw opacity=1 ][line width=2.25]    (23.06,2321.3) -- (141.48,2202.83) ;
\draw    (87.09,2291.8) -- (68.89,2366.39) ;
\draw [shift={(68.42,2368.33)}, rotate = 283.71] [color={rgb, 255:red, 0; green, 0; blue, 0 }  ][line width=0.75]    (10.93,-3.29) .. controls (6.95,-1.4) and (3.31,-0.3) .. (0,0) .. controls (3.31,0.3) and (6.95,1.4) .. (10.93,3.29)   ;
\draw    (121.26,2279.59) -- (154.9,2338.56) ;
\draw [shift={(155.89,2340.3)}, rotate = 240.29] [color={rgb, 255:red, 0; green, 0; blue, 0 }  ][line width=0.75]    (10.93,-3.29) .. controls (6.95,-1.4) and (3.31,-0.3) .. (0,0) .. controls (3.31,0.3) and (6.95,1.4) .. (10.93,3.29)   ;
\draw  [dash pattern={on 4.5pt off 4.5pt}]  (574.44,2322.19) -- (577.88,2086.53) ;
\draw  [dash pattern={on 4.5pt off 4.5pt}]  (577.88,2088.29) -- (341.27,2088.29) ;
\draw   (498.06,2245.37) .. controls (498.06,2242.93) and (500.09,2240.96) .. (502.6,2240.96) .. controls (505.1,2240.96) and (507.13,2242.93) .. (507.13,2245.37) .. controls (507.13,2247.8) and (505.1,2249.77) .. (502.6,2249.77) .. controls (500.09,2249.77) and (498.06,2247.8) .. (498.06,2245.37) -- cycle ;
\draw    (444.59,2283) -- (464.13,2344.1) ;
\draw [shift={(464.74,2346)}, rotate = 252.26] [color={rgb, 255:red, 0; green, 0; blue, 0 }  ][line width=0.75]    (10.93,-3.29) .. controls (6.95,-1.4) and (3.31,-0.3) .. (0,0) .. controls (3.31,0.3) and (6.95,1.4) .. (10.93,3.29)   ;
\draw    (541.5,2186) -- (588.79,2214.96) ;
\draw [shift={(590.5,2216)}, rotate = 211.48] [color={rgb, 255:red, 0; green, 0; blue, 0 }  ][line width=0.75]    (10.93,-3.29) .. controls (6.95,-1.4) and (3.31,-0.3) .. (0,0) .. controls (3.31,0.3) and (6.95,1.4) .. (10.93,3.29)   ;

\draw (148.49,2334.09) node [anchor=north west][inner sep=0.75pt]    {$\left(\frac{2}{5} ,\frac{1}{5}\right)$};
\draw (55.58,2220.33) node [anchor=north west][inner sep=0.75pt]    {$\left(\frac{1}{d} ,\frac{1}{d}\right)$};
\draw (7.75,2364.01) node [anchor=north west][inner sep=0.75pt]    {$\left(\frac{2}{2d+1} ,\frac{1}{2d+1}\right)$};
\draw (-0.17,2065.47) node [anchor=north west][inner sep=0.75pt]    {$\frac{1}{q}$};
\draw (274.78,2298.26) node [anchor=north west][inner sep=0.75pt]    {$\frac{1}{p}$};
\draw (492.94,2254) node [anchor=north west][inner sep=0.75pt]    {$\left(\frac{2}{3} ,\frac{1}{3}\right)$};
\draw (581.83,2211.84) node [anchor=north west][inner sep=0.75pt]    {$\left(\frac{d}{d+1} ,\frac{d-1}{d+1}\right)$};
\draw (416.84,2341) node [anchor=north west][inner sep=0.75pt]    {$\left(\frac{2}{d+1} ,\frac{1}{d+1}\right)$};
\draw (313.22,2070.36) node [anchor=north west][inner sep=0.75pt]    {$\frac{1}{q}$};
\draw (594.55,2300.79) node [anchor=north west][inner sep=0.75pt]    {$\frac{1}{p}$};
\draw (142.27,2184.09) node [anchor=north west][inner sep=0.75pt]    {$\left(\frac{1}{2} ,\frac{1}{2}\right)$};
\draw (257.75,2076.8) node [anchor=north west][inner sep=0.75pt]    {$( 1,1)$};
\draw (24.15,2321.92) node [anchor=north west][inner sep=0.75pt]    {$( 0,0)$};
\draw (577.8,2083.79) node [anchor=north west][inner sep=0.75pt]    {$( 1,1)$};
\draw (338.53,2329.78) node [anchor=north west][inner sep=0.75pt]    {$( 0,0)$};
\draw (55,2435) node [anchor=north west][inner sep=0.75pt]    {$\textcolor[rgb]{0,0,0}{\Delta }\textcolor[rgb]{0,0,0}{_{2}} \ for\mathbf{\ \textcolor[rgb]{0.29,0.56,0.89}{d\ =2}} \ and\ \textcolor[rgb]{0.82,0.01,0.11}{\mathbf{d\geq 3}}$};
\draw (403,2437) node [anchor=north west][inner sep=0.75pt]    {$\Delta _{3} \ for\ \textcolor[rgb]{0.8,0.36,0.49}{\mathbf{d=2}} \ and\ \textcolor[rgb]{0.56,0.07,1}{\mathbf{d\geq 3}}$};

\end{tikzpicture}

\caption{${\color{black}\Delta_2}\subsetneq {\color{black}\Delta_0}$ for $d\geq3$ and ${\color{black}\Delta_2} = {\color{black}\Delta_0}$ for $d=2$}
\end{figure}

Now we list our main results  as follows.

Let $\phi \in C^{\infty}(I,\mathbb{R})$, where $I$ is a bounded interval containing the origin, and
\begin{equation}\label{phi}
\phi(0)\neq 0; \hspace{0.1cm}\phi^{(j)}(0)= 0, \hspace{0.1cm}j=1,2,\cdots,m-1; \hspace{0.1cm}\phi^{(m)}(0)\neq 0 \hspace{0.1cm}(m\geq 1).
 \end{equation}
We first show the $L^p \rightarrow L^{q}$ estimates for  maximal functions with isotropic dilations along the curves $(x,x^{d}\phi(x) +c)$ of finite type for $d \ge 2$, $d \in \mathbb{N}^{+}$ and $c \in \mathbb{R}$.

\begin{thm}\label{maintheorem1}
Define the averaging operator
\begin{equation}
A_{t}f(y):= \int_{\mathbb{R}} f(y_1-tx,y_2-t(x^d\phi(x) +c))\eta(x)dx,
\end{equation}
where $\eta$ is supported in a sufficiently small neighborhood of the origin. Then we have the following results:

 (1) when $c =0$, for  $(\frac{1}{p},\frac{1}{q}) \in \Delta_{1} = \{(\frac{1}{p},\frac{1}{q}): \frac{1}{2p} < \frac{1}{q} \le \frac{1}{p} , \frac{1}{q}  > \frac{3}{p} -1, \frac{1}{q} > \frac{1}{p} - \frac{1}{d+1}\} \cup \{(0, 0)\}$, there exists a constant
$C_{p,q} >0$ such that $\|\mathop{sup}_{t\in [1,2]}|A_{t}|\|_{L^{p} \rightarrow L^{q} }\le C_{p,q} $;

 (2) when $c \neq 0$, for  $(\frac{1}{p},\frac{1}{q}) \in \Delta_{2} = \{(\frac{1}{p},\frac{1}{q}): \frac{1}{2p} < \frac{1}{q} \le \frac{1}{p} , \frac{1}{q}  > \frac{d+1}{p} -1\} \cup \{(0, 0)\}$, there exists a constant
$C_{p,q} >0$ such that $\|\mathop{sup}_{t\in [1,2]}|A_{t}|\|_{L^{p} \rightarrow L^{q} }\le C_{p,q} $.
\end{thm}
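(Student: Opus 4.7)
The plan is to combine a Littlewood--Paley decomposition in frequency with single-scale Fourier decay estimates and local smoothing bounds, then interpolate. I would write $A_t = A_t^{\sharp} + \sum_{j\geq 1} A_t^{j}$, where $A_t^{j}$ localizes the output spectrum to $|\xi|\sim 2^{j}$, and the low-frequency piece $A_t^{\sharp}$ is harmless (dominated by a Hardy--Littlewood-type operator). The Fourier multiplier of $A_t^{j}$ is the oscillatory integral $\chi_j(\xi)\int e^{-it(\xi_1 x+\xi_2(x^d\phi(x)+c))}\eta(x)\,dx$, and van der Corput in $x$ combined with Plancherel yields $\|A_t^{j}\|_{2\to 2}\lesssim 2^{-j/d}$, where the exponent reflects the finite-type order $d$ at the origin.

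To pass from fixed time to the supremum over $t\in[1,2]$, I would use the Sobolev-type embedding
\[
\sup_{t\in[1,2]}|A_t^{j}f(y)| \le \Big(\int_1^2 |A_t^{j}f(y)|^q\,dt\Big)^{1/q} + C\,2^{-j/q}\Big(\int_1^2 |\partial_t A_t^{j}f(y)|^q\,dt\Big)^{1/q},
\]
reducing the problem to $L^p\to L^q(L^q_t)$ local smoothing bounds for the Fourier integral operator family $\{A_t^{j}\}_{t\in[1,2]}$ and its time derivative. The endpoint vertices of $\Delta_1$ and $\Delta_2$ would then come from: the trivial corner $(0,0)$; the $L^2\to L^2$ bound; the Schlag--Sogge-type vertex obtained from local smoothing in the presence of cinematic curvature; and single-scale $L^p\to L^q$ estimates for the fixed-time averaging operator at the corresponding Littman-type vertices, namely $(2/(d+1),1/(d+1))$ in case~(1) and $(1/d,1/d)$ together with $(2/(2d+1),1/(2d+1))$ in case~(2). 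The constant $c$ changes the geometry: for $c\ne 0$ the extra nondegenerate direction supplied by $(0,c)$ strictly improves the single-scale estimate, which is what accounts for the different shape of $\Delta_2$ compared to $\Delta_1$.

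The principal obstacle is that, because $(x,x^d\phi(x)+c)$ has vanishing curvature of order $d-2$ at $x=0$, the Fourier integral operator $A_t^{j}$ fails the cinematic-curvature condition uniformly in $x$, so classical Schlag--Sogge / Mockenhaupt--Seeger--Sogge local smoothing is not directly applicable. Following the strategy developed in~\cite{WL}, I would decompose $\eta(x)=\sum_{k\geq 0}\eta_k(x)$ with $\supp\eta_k\subset\{|x|\sim 2^{-k}\}$ and rescale each piece by an anisotropic dilation that restores uniform cinematic curvature at unit scale; classical local smoothing applies on the rescaled piece, and undoing the rescaling produces a factor $2^{-k\alpha(p,q)}$. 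The geometric sum in $k$ converges precisely when $(1/p,1/q)$ lies on the correct side of the line $\frac{1}{q}>\frac{1}{p}-\frac{1}{d+1}$ in case~(1) or of $\frac{1}{q}>\frac{d+1}{p}-1$ in case~(2); summing over $j$ then yields a factor $2^{-\varepsilon j}$ inside the region, and real interpolation between the listed vertices completes the identification of $\Delta_1$ and $\Delta_2$.
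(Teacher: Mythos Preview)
Your overall architecture---dyadic decomposition of $\supp\eta$ into $|x|\sim 2^{-k}$, anisotropic rescaling to restore uniform curvature, then applying classical local smoothing (Mockenhaupt--Seeger--Sogge for $L^p\to L^p$ and Lee for $L^p\to L^q$) to each rescaled piece, and finally summing in $k$ and $j$---is exactly the paper's approach, and your identification of the line $\frac{1}{q}>\frac{1}{p}-\frac{1}{d+1}$ as the convergence threshold for the $k$-sum when $c=0$ is correct.

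However, your explanation of the role of $c$ is backwards, and this is not cosmetic: it is the heart of why $\Delta_2\subsetneq\Delta_1$ for $d>2$, i.e.\ why $c\neq 0$ gives a \emph{worse} region, not a better one. The point is that the anisotropic rescaling $T_k f(x_1,x_2)=2^{(d+1)k/p}f(2^k x_1,2^{dk}x_2)$ sends the curve $(x,x^d\phi(x)+c)$ with $|x|\sim 2^{-k}$ to the curve $(x,x^d\phi(2^{-k}x)+2^{dk}c)$ with $|x|\sim 1$. The constant term is now $2^{dk}c$, growing exponentially in $k$, and the rescaled averaging operator involves a $t$-dependent translation in $y_2$ by $t\cdot 2^{dk}c$. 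When you pass from fixed $t$ to the supremum over $t\in[1,2]$ via the Sobolev embedding in $t$, this translation ranges over an interval of length $\sim 2^{dk}$, which costs an extra factor $2^{dk/q}$ in the $L^q$ norm (see the estimate (E1)--(E3) in the paper, where every bound carries the prefactor $(c\,2^{dk/q}+1)$). Consequently the $k$-sum becomes $\sum_k 2^{(d+1)k(\frac{1}{p}-\frac{1}{q})-k}\cdot 2^{dk/q}=\sum_k 2^{\frac{(d+1)k}{p}-\frac{k}{q}-k}$, and convergence now requires $\frac{1}{q}>\frac{d+1}{p}-1$, which is the boundary of $\Delta_2$. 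There is no ``extra nondegenerate direction'' helping you; the off-origin translation is precisely what degrades the estimate.

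A secondary point: the paper does not use single-scale Littman-type $L^p\to L^q$ estimates for the fixed-time operator as a separate input. After the rescaling, the curve has uniformly nonvanishing curvature on $|x|\sim 1$, and \emph{all} the off-diagonal information comes from Lee's $L^p\to L^q$ local smoothing for the resulting Fourier integral operator (together with MSS on the diagonal). The vertices you list are simply the corners of the convex region cut out by the local-smoothing constraint $\frac{1}{q}>\frac{3}{p}-1$, the trivial constraint $\frac{1}{2p}<\frac{1}{q}\le\frac{1}{p}$, and the $k$-summability constraint above.
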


The regions for $p, q$ in Theorem \ref{maintheorem1} are almost sharp. In fact, according to the counterexamples given in Theorem \ref{hnnece}  below, the corresponding  $L^{p} \rightarrow L^{q}$ bound for the local maximal operator $\sup_{t\in [1,2]}|A_{t}|$  can not be finite either $c=0$ and $(\frac{1}{p},\frac{1}{q}) \notin $ closure of the set $\Delta_1$, or $c \neq 0$ and $(\frac{1}{p},\frac{1}{q}) \notin$ closure of the set $\Delta_2$.

Comparing Theorem \ref{maintheorem1} with the corresponding result for the local circular maximal operator specified by $\Delta_0$, we notice that when $c \neq 0$ and $d =2$, $\Delta_{2} = \Delta_{0}$,  but for  $d > 2$, $\Delta_{2} \subsetneqq \Delta_{0}$. When $c =0$, the curves pass through the origin, and the corresponding results (described by $\Delta_1$) behave more regularly than the case $c \neq 0$ (described by $\Delta_2$), since $\Delta_{2} \subsetneqq \Delta_{1}$ whenever $d >2$. However, we still have $\Delta_{1} \subsetneqq \Delta_{0}$ for $d \ge 5$. Therefore, vanishing curvatures lead to worse $L^p\rightarrow L^q$ regularities for the local maximal operators with  isotropic dilations.

Now we briefly explain how to prove Theorem \ref{maintheorem1}. Since the curvature of the curve $(x,x^{d}\phi(x) +c)$ might vanish at the origin, we first decompose the support of $\eta$ dyadically. For each dyadic operator, we make good use of isometric transform on $L^{p}(\mathbb{R}^{2})$ so that the curve will be away from the flat point $x =0$. Then we decompose the Fourier side of $f$ into dyadic annuli, and concentrate on each annulus $\{\xi: |\xi|\approx 2^{j}\}$, $j \gg 1$. By the stationary phase method and Sobolev embedding, we are reduced to obtain the $L^{p} \rightarrow L^{q}$ estimate for a Fourier integral operator whose phase function satisfies the so-called ``cinematic curvature" condition. For $q = p > 2$, we can apply the local smoothing estimate by Mockenhaupt-Seeger-Sogge \cite{mss}; for $(\frac{1}{p} ,\frac{1}{q}) \in \{(\frac{1}{p}, \frac{1}{q}):   \frac{1}{q} \le  \frac{3}{5p}, \frac{3}{q} \le 1-\frac{1}{p} , 0 \le \frac{1}{q} \le \frac{3}{14} \}$, the  $L^{p} \rightarrow L^{q}$  local smoothing estimate by Lee \cite{SL2} can be adopted. Finally, we obtain all the desired $L^{p} \rightarrow L^{q}$ estimate for the corresponding Fourier integral operator by interpolation.

Furthermore, the result in Theorem \ref{maintheorem1} can be generalized to the local maximal functions associated to finite type curves $(x,x^{d}\phi(x) +c)$ with non-isotropic dilation $\delta_{t}(y) = (t^{a_{1}}y_1, t^{a_{2}}y_2)$, $da_{1} \neq a_{2}$.

\begin{thm}\label{hisomaintheorem}
Define the averaging operator
\begin{equation}\label{averagefinite1}
A_{t}f(y):= \int_{\mathbb{R}} f(y_1-t^{a_{1}}x,y_2-t^{a_{2}}(x^d\phi(x) +c))\eta(x)dx, \quad da_{1} \neq a_{2},
\end{equation}
where $\eta$ is supported in a sufficiently small neighborhood of the origin. Then we have the following results:

 (1) when $c =0$, for  $(\frac{1}{p},\frac{1}{q}) \in \Delta_{1} = \{(\frac{1}{p},\frac{1}{q}): \frac{1}{2p} < \frac{1}{q} \le \frac{1}{p} , \frac{1}{q}  > \frac{3}{p} -1, \frac{1}{q} > \frac{1}{p} - \frac{1}{d+1}\} \cup \{(0, 0)\}$, there exists a constant
$C_{p,q} >0$ such that $\|\mathop{sup}_{t\in [1,2]}|A_{t}|\|_{L^{p} \rightarrow L^{q} }\le C_{p,q} $;

 (2) when $c \neq 0$, for  $(\frac{1}{p},\frac{1}{q}) \in \Delta_{2} = \{(\frac{1}{p},\frac{1}{q}): \frac{1}{2p} < \frac{1}{q} \le \frac{1}{p} , \frac{1}{q}  > \frac{d+1}{p} -1\} \cup \{(0, 0)\}$, there exists a constant
$C_{p,q} >0$ such that $\|\mathop{sup}_{t\in [1,2]}|A_{t}|\|_{L^{p} \rightarrow L^{q} }\le C_{p,q} $.
\end{thm}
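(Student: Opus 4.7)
The plan is to mirror the proof strategy outlined for Theorem \ref{maintheorem1}, adapting the isometric transform and the cinematic curvature verification to accommodate the non-isotropic dilation $\delta_t(y) = (t^{a_1}y_1, t^{a_2}y_2)$. Note that Theorem \ref{maintheorem1} is the special case $a_1 = a_2 = 1$, for which $da_1 - a_2 = d-1 \ge 1$, so the present statement is a genuine generalization whose proof follows the same outline once the corresponding cinematic curvature condition is established in the non-isotropic setting.

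I would begin by decomposing $\eta(x) = \sum_{k \ge 0}\eta_k(x)$ dyadically with $\eta_k$ supported on $|x| \sim 2^{-k}$, and splitting $A_t$ into pieces $A_t^k$. For each $k$, the change of variables $x = 2^{-k}u$ brings the $x$-integration onto a fixed compact annulus, and a non-isotropic rescaling of $f$ on $L^p(\mathbb{R}^2)$ normalizes the operator. When $c = 0$, taking $\tilde f(z_1,z_2) := f(2^{-k}z_1, 2^{-kd}z_2)$ converts $A_t^k$ into a bounded multiple of an averaging operator $\widetilde{A}_t$ associated with the curve $\Gamma_k(u) := (t^{a_1}u, t^{a_2}u^d\phi_k(u))$, where $\phi_k(u) := \phi(2^{-k}u) \to \phi(0) \ne 0$. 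When $c \ne 0$, only the $y_1$-coordinate is rescaled by $2^k$, and the rescaled curve reads $(t^{a_1}u, t^{a_2}(c + 2^{-kd}u^d\phi_k(u)))$, a small perturbation of the translated line $(t^{a_1}u, t^{a_2}c)$.

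The decisive step is the verification of the cinematic curvature condition for $\Gamma_k$ uniformly in $k$. The transversality determinant computes as
\begin{equation*}
\det(\partial_u\Gamma_k,\partial_t\Gamma_k) = t^{a_1+a_2-1}u^d\left[(a_2 - da_1)\phi_k(u) - a_1 u\phi_k'(u)\right],
\end{equation*}
which, for $u$ on a compact annulus bounded away from zero and for $k$ large enough, is bounded below by a constant multiple of $|(da_1 - a_2)\phi(0)|$ -- this is non-zero precisely by the hypothesis $da_1 \ne a_2$ combined with $\phi(0) \ne 0$. The analogous computation for $c \ne 0$ is dominated by the constant term $a_2 c \ne 0$ and so transversality is automatic; however in that case the intrinsic curvature of the curve at scale $|x| \sim 2^{-k}$ has size only $\sim 2^{-k(d-2)}$, a quantitative smallness which must be tracked through the subsequent analysis. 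With cinematic curvature in hand, I would decompose $f$ in frequency into dyadic annuli $\{|\xi| \sim 2^j\}$, apply stationary phase and Sobolev embedding to reduce matters to $L^p \to L^q$ estimates for a family of Fourier integral operators, and invoke the $L^p \to L^p$ local smoothing of Mockenhaupt-Seeger-Sogge for $p > 2$ together with Lee's off-diagonal local smoothing, exactly as in Theorem \ref{maintheorem1}. Interpolation then produces the full range of bounds in the interior of $\Delta_1$ and $\Delta_2$.

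The main obstacle is the summation over the dyadic scales $k$. In the $c = 0$ case, tracking the $L^p/L^q$ Jacobians of the isometric transform produces the factor $2^{-k(1-(d+1)(1/p - 1/q))}$ per piece, whose summability is exactly the slope restriction $1/q > 1/p - 1/(d+1)$ cutting out $\Delta_1$. In the $c \ne 0$ case, the curvature of size $2^{-k(d-2)}$ enters the Fourier integral operator bounds through a factor that degrades with $k$, and the balance of this quantitative vanishing against the remaining $L^p/L^q$ factors forces the stronger condition $1/q > (d+1)/p - 1$ defining $\Delta_2$. Once the per-scale operator norms and these summability conditions are aligned, the full $L^p \to L^q$ bounds for $\sup_{t \in [1,2]}|A_t|$ follow.
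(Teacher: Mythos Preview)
Your outline for the case $c=0$ is essentially the paper's argument: dyadic decomposition in $x$, non-isotropic rescaling $(y_1,y_2)\mapsto(2^k y_1,2^{dk}y_2)$, stationary phase, and then the local smoothing estimates of Mockenhaupt--Seeger--Sogge and Lee. Your computation of the determinant $\det(\partial_u\Gamma_k,\partial_t\Gamma_k)=t^{a_1+a_2-1}u^d[(a_2-da_1)\phi_k(u)-a_1u\phi_k'(u)]$ correctly isolates the role of the hypothesis $da_1\neq a_2$; in the paper this appears equivalently as the nonvanishing exponent $(da_1-a_2)/(d-1)$ in the leading term $(d-1)t^{(da_1-a_2)/(d-1)}\xi_2(-\xi_1/(d\xi_2))^{d/(d-1)}$ of the phase after stationary phase, which is what makes the cinematic curvature condition hold uniformly in $k$.

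For $c\neq 0$ your route diverges from the paper's and has a gap. The paper does \emph{not} rescale only $y_1$; it rescales both coordinates exactly as in the $c=0$ case, obtaining the curve $(t^{a_1}u,\,t^{a_2}(u^d\phi_k(u)+2^{dk}c))$. This keeps the curvature uniformly $\sim 1$ and the cinematic curvature condition uniform in $k$, so the same local smoothing theorems apply with constants independent of $k$. The price is the large translation $(0,2^{dk}c)$, whose cost the paper tracks explicitly: the kernel and Sobolev-embedding estimates pick up a factor $c\,2^{dk/q}+1$ (from integrating the $t$-variable over an interval of length $\sim 1$ while the $y_2$-center moves by $\sim 2^{dk}$). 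Combining $2^{(d+1)k(1/p-1/q)-k}$ from the rescaling with $2^{dk/q}$ gives summability exactly when $(d+1)/p-1/q-1<0$, i.e.\ the $\Delta_2$ condition.

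Your alternative---rescale only $y_1$ and track the degenerate curvature of the nearly flat curve $(t^{a_1}u,\,t^{a_2}(c+2^{-kd}u^d\phi_k(u)))$---is not fleshed out, and the numbers do not obviously close. After your rescaling the second derivative of the $y_2$-component is $\sim 2^{-kd}$, not $2^{-k(d-2)}$ (the latter is the curvature of the \emph{unrescaled} curve at $x\sim 2^{-k}$), so there is an inconsistency in which quantity you are tracking. More importantly, once the curvature degenerates like $2^{-kd}$ the cinematic curvature condition fails uniformly, and the local smoothing theorems of \cite{mss,SL2} no longer apply with $k$-independent constants; you would need a quantitative version with explicit dependence on the curvature lower bound, and you have not specified what power of $2^{-kd}$ enters or why it balances against $2^{k(1/p-1/q-1)}$ to produce precisely $\Delta_2$. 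The paper's approach avoids this entirely by keeping the curvature fixed and paying instead for the translation, which is a much more transparent bookkeeping.
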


We would like to emphasize that the case when $da_{1} =a_{2}$ is distinctive from the case when $da_1 \neq a_2$ as described in Theorem \ref{hisomaintheorem}. Since $t \in [1, 2]$, by change of variables, we only need to consider the dilation $\delta_{t}(y)= (ty_1, t^{d}y_2)$. The following theorem concerns the local maximal operator along the homogeneous curve $(x, x^{d})$.

\begin{thm}\label{maitheorem3}
Define the  averaging operator
\begin{equation}\label{averagefinite2}
A_{t}f(y):= \int_{0}^{1}f(y_1-tx,y_2-t^dx^d)dx,
\end{equation}
then for  $(\frac{1}{p},\frac{1}{q}) \in \Delta_{3}= \{(\frac{1}{p},\frac{1}{q}): \frac{1}{2p} < \frac{1}{q} \le \frac{1}{p} , \frac{1}{q}  > \frac{2}{p} -1, \frac{1}{q} > \frac{1}{p} - \frac{1}{d+1}\} \cup \{(0, 0), (1, 1)\}$, there exists a constant
$C_{p,q} >0$ such that $\|\mathop{sup}_{t\in [1,2]}|A_{t}||_{L^{p} \rightarrow L^{q} }\le C_{p,q} $.
\end{thm}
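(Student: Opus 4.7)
The argument hinges on the homogeneity of the curve $(x, x^d)$ under the dilation $(t, t^d)$. The substitution $u = tx$ yields
\[
A_t f(y) = \frac{1}{t}\int_0^t f(y_1 - u, y_2 - u^d)\,du, \qquad t\in[1,2],
\]
which gives the pointwise domination
\[
\sup_{t\in[1,2]} |A_t f(y)| \leq \int_0^2 |f(y_1 - u, y_2 - u^d)|\,du =: T_2|f|(y).
\]
Since $T_2$ is convolution with a finite measure supported on the curve segment $\{(u,u^d):u\in[0,2]\}$, the endpoint bounds at $(0,0)$ and $(1,1)$ follow at once. Moreover, classical $L^p\to L^q$ estimates for the averaging operator along the finite type curve $(u,u^d)$ (Oberlin--Iosevich--Sawyer-type theorems) yield the vertex $(d/(d+1),(d-1)/(d+1))$, which lies on the scaling line $1/q = 1/p - 1/(d+1)$.

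To reach the remaining vertex $(2/(d+1),1/(d+1))$ and fill out $\Delta_3$ along the lower boundary $1/q = 1/(2p)$, the plan is to dyadically decompose the cutoff $\mathbf{1}_{[0,1]}(x) = \sum_{k\geq 0}\eta_k(x)$ with $\eta_k$ supported in $x \approx 2^{-k}$, and write $A_t = \sum_k A_t^{(k)}$. Using the homogeneity again, the rescaling $x = 2^{-k}z$ and $(y_1,y_2) = (2^{-k}w_1, 2^{-kd}w_2)$ reduces each $A_t^{(k)}$ to a well-curved maximal operator
\[
\tilde{A}_t g(w) = \int g(w_1 - tz, w_2 - t^d z^d)\,\eta_0(z)\,dz,
\]
where the curve $(z,z^d)$ has non-vanishing curvature on $z\approx 1$. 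For $\sup_{t\in[1,2]}\tilde{A}_t$, the $L^p\to L^q$ maximal bounds throughout the Schlag region $\Delta_0$ follow, exactly as sketched for Theorem \ref{maintheorem1}, from a Fourier dyadic decomposition combined with the local smoothing estimates of Mockenhaupt--Seeger--Sogge \cite{mss} and Lee \cite{SL2} applied to the resulting FIO. Tracking the Jacobians of the two rescalings then produces
\[
\bigl\|\sup_t |A_t^{(k)} f|\bigr\|_{L^q} \lesssim 2^{k[(d+1)(1/p - 1/q) - 1]}\,\|f\|_{L^p},
\]
which is summable in $k\geq 0$ precisely under the scaling constraint $1/q > 1/p - 1/(d+1)$, matching the $d$-dependent boundary of $\Delta_3$.

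Finally, Riesz--Thorin interpolation among the four vertex estimates $(0,0)$, $(1,1)$, $(d/(d+1),(d-1)/(d+1))$, $(2/(d+1),1/(d+1))$ recovers the entire region $\Delta_3$. The main obstacle is the borderline nature of the vertex $(2/(d+1),1/(d+1))$, which lies exactly on the scaling line and hence at the critical exponent for $k$-summation; this will be handled by establishing the inner well-curved estimate with an arbitrarily small $\varepsilon$-loss in the exponents, which is then absorbed by interpolating with strictly sub-critical bounds and with the Oberlin endpoint on the same scaling line.
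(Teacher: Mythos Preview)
Your first step—the pointwise domination $\sup_{t\in[1,2]}|A_t f|\le T_2|f|$ via the substitution $u=tx$—is exactly the paper's transference argument (Theorem~\ref{transference}). The divergence, and the gap, occurs in your second step.

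Your plan to bound $\sup_{t\in[1,2]}|\tilde A_t|$ in the Schlag region $\Delta_0$ via the local smoothing estimates of \cite{mss,SL2} does not work: the cinematic curvature condition fails completely for $\tilde A_t$. Indeed, the very homogeneity you exploit in step one persists after rescaling. Writing the phase of $\tilde A_t$ as $t\xi_1 z+t^d\xi_2 z^d$ and applying stationary phase in $z$, the critical point satisfies $(tz_c)^{d-1}=-\xi_1/(d\xi_2)$, so $u_c:=tz_c$ is independent of $t$, and the critical value $\xi_1 u_c+\xi_2 u_c^{\,d}$ is likewise $t$-independent. The resulting FIO phase has $\partial_t\varphi\equiv 0$, so the cone condition \eqref{cone:condition} is vacuously violated and neither Theorem~\ref{lplocalsmoothing} nor Theorem~\ref{lpqlocalsmoothing} applies. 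This is precisely the $da_1=a_2$ degeneracy that forces the much more delicate analysis in Theorem~\ref{hhisomaintheorem} for non-homogeneous perturbations; here, for the exactly homogeneous curve, there is no $t$-variation at all. A secondary issue is that even if the $\Delta_0$ bound for $\tilde A_t$ held, interpolating it with $(0,0)$, $(1,1)$ and the Oberlin vertex would not recover the corner $(2/(d+1),1/(d+1))$ for $d=3$, since that point lies strictly below the segment joining $(2/5,1/5)$ to $(3/4,1/2)$.

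The paper's route is both correct and shorter: having reduced to the single averaging operator $T$ by transference, it invokes Iosevich--Sawyer \cite{ios3} directly for the full $L^p\to L^q$ mapping region of $T$, which is exactly $\Delta_3$ (up to boundary). No dyadic decomposition or local smoothing is needed. If you wish to salvage your architecture, note that the same homogeneity trick applied to $\tilde A_t$ reduces it to a fixed averaging operator on a non-degenerate arc, for which Littman's theorem gives the triangle with vertex $(2/3,1/3)$; summing in $k$ then yields $\Delta_3$. But this is just a roundabout reproof of the Iosevich--Sawyer bound.
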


When $d=2$, we have $\Delta_{0} \subsetneqq \Delta_{3}$, indicating that the regularity result in Theorem \ref{maitheorem3} is better than the corresponding result for the local circular maximal operator. It is also clear that the regularity results in Theorem \ref{maitheorem3} are better than the results in Theorem \ref{maintheorem1} for all $d \ge 2$. Theorem \ref{maitheorem3}  is almost sharp.

It is also natural to consider a generalization of the homogeneous curve $(x,x^{d})$, namely  the curves $(x,x^{d}\phi(x))$ near the origin,  which  can be considered as a small perturbation of the homogeneous curve. 
We study the corresponding local maximal functions with the dilation $(t, t^{d})$ and obtain the following result.

\begin{thm}\label{hhisomaintheorem}
Define the averaging operator
\begin{equation}\label{averagefinite3}
A_{t}f(y):= \int_{\mathbb{R}} f(y_1-tx,y_2-t^{d} x^d\phi(x) )\eta(x)dx,
\end{equation}
where $\eta$ is supported in a sufficiently small neighborhood of the origin. Then for  $(\frac{1}{p},\frac{1}{q}) \in \Delta_{1} = \{(\frac{1}{p},\frac{1}{q}): \frac{1}{2p} < \frac{1}{q} \le \frac{1}{p} , \frac{1}{q}  > \frac{3}{p} -1, \frac{1}{q} > \frac{1}{p} - \frac{1}{d+1}\} \cup \{(0, 0)\}$, there exists a constant
$C_{p,q} >0$ such that $\|\mathop{sup}_{t\in [1,2]}|A_{t}|\|_{L^{p} \rightarrow L^{q} }\le C_{p,q} $.
\end{thm}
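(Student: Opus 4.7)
The plan is to mirror the scheme used for Theorem \ref{maintheorem1}(1), but to cope with the structural difficulty that the dilation $(t,t^{d})$ is homogeneously compatible with the curve $(x,x^{d}\phi(x))$, which causes the associated family of Fourier integral operators to fail the cinematic curvature condition uniformly after rescaling. After the harmless normalization $\phi(0)=1$ (by an affine change in $y_2$), I start with a dyadic decomposition $\eta=\sum_{k\ge k_0}\eta_k$, where $\supp\eta_k\subseteq\{|x|\sim 2^{-k}\}$, producing $A_t=\sum_k A_t^{(k)}$.

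The first key step exploits the homogeneity of the dilation. The substitution $x=2^{-k}u$ together with the anisotropic rescaling $y_1=2^{-k}z_1$, $y_2=2^{-kd}z_2$ transforms $A_t^{(k)}$ into a rescaled operator $\widetilde{A}_t^{(k)}$ averaging over the curve $(u,u^{d}\phi(2^{-k}u))$ with $u\sim 1$ under the same dilation $(t,t^{d})$, $t\in[1,2]$. Tracking the Jacobians, a \emph{uniform} (in $k$) $L^p\to L^q$ bound $\|\sup_{t\in[1,2]}|\widetilde{A}_t^{(k)}|\|_{L^p\to L^q}\lesssim 1$ yields
\begin{equation*}
\|\sup_{t\in[1,2]}|A_t^{(k)}|\|_{L^p\to L^q}\lesssim 2^{k[(d+1)(\frac{1}{p}-\frac{1}{q})-1]},
\end{equation*}
so summability in $k$ is precisely the third defining inequality $\frac{1}{q}>\frac{1}{p}-\frac{1}{d+1}$ of $\Delta_1$; the remaining two inequalities of $\Delta_1$ coincide with those of $\Delta_0$ and must come from the uniform bound for $\widetilde{A}_t^{(k)}$.

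The second step, which is also the principal difficulty, is to establish these uniform bounds. I decompose $\widetilde{A}_t^{(k)}=\sum_j \widetilde{A}_t^{(k)}P_j$ into Littlewood--Paley pieces and, for $j\gg 1$, treat $\widetilde{A}_t^{(k)}P_j$ as a Fourier integral operator with phase $\Phi_k(u,t;\xi)=\xi_1 tu+\xi_2 t^{d}u^{d}\phi(2^{-k}u)$. As $k\to\infty$ this phase reduces to $\xi_1 tu+\xi_2 (tu)^{d}\phi(0)$, which depends on $(u,t)$ only through $s=tu$; the image of $(u,t)\mapsto(tu,t^{d}u^{d})$ collapses to a one--dimensional curve and the cinematic curvature condition fails uniformly in $k$. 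Consequently the local smoothing estimates of Mockenhaupt--Seeger--Sogge \cite{mss} and Lee \cite{SL2} cannot be applied in a black-box fashion, which is exactly what distinguishes the present theorem from Theorem \ref{hisomaintheorem} and constitutes the main obstacle of the argument.

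The remedy is to invoke the $L^p\to L^q$ estimates for exactly this class of degenerate Fourier integral operators developed in \cite{WL}, which extract the small but genuine transversality produced by $\phi(2^{-k}u)-\phi(0)$ and yield the required bounds with constants independent of $k$. Combining these with the $L^2\to L^2$ decay obtained by stationary phase in $u$, Sobolev embedding, and interpolation against the trivial $L^\infty\to L^\infty$ bound, one obtains uniform $L^p\to L^q$ control of $\widetilde{A}_t^{(k)}$ throughout the interior of $\Delta_0$. Reassembling the Littlewood--Paley pieces in $j$ and summing the geometric series in $k$ via the gain of the first step then produces the full region $\Delta_1$ claimed in the theorem.
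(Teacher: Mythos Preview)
Your outline is correct through the dyadic decomposition in $k$, the anisotropic rescaling, the reduction to a uniform-in-$k$ bound on $\widetilde{A}_t^{(k)}$, and the identification of the degenerating cinematic curvature as the main obstacle. The gap is in your ``remedy''.

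You write that you will ``invoke the $L^p\to L^q$ estimates for exactly this class of degenerate Fourier integral operators developed in \cite{WL}''. No such estimates exist in \cite{WL}: that paper proves only diagonal $L^p\to L^p$ bounds for these operators (this is precisely Lemma 2.12 and the surrounding material there, quoted here as \eqref{p-p}). The off-diagonal $L^p\to L^q$ local smoothing estimate for the degenerate FIOs is the main new technical ingredient of the present paper, stated and proved as Theorem \ref{L^(p,q)therom}. Its proof is not a routine interpolation: one rescales to unit frequency, performs a Whitney-type decomposition of the cone $\{\xi_1/\xi_2\approx 1\}$ into angular sectors of width $2^{-l}$ with $\log(1/\epsilon_0)\le l\le (j-mk)/2$, establishes an almost-orthogonality inequality \eqref{Eq3.2}, and then proves bilinear bounds (Lemmas \ref{main} and \ref{basic}) for separated pairs of sectors by a further parabolic rescaling that restores nondegeneracy and allows one to invoke Lee's bilinear oscillatory integral estimate \cite[Theorem 1.2]{SL2}. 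None of this mechanism appears in your proposal.

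A second, related omission is the splitting of the Littlewood--Paley index into $j\le mk$ and $j>mk$. For $j\le mk$ the curvature proxy $2^{-mk}$ is smaller than $2^{-j}$ and no local smoothing is available at all; the paper instead proves a basic estimate (E2) by interpolating the $L^p$ bound \eqref{p-p} from \cite{WL} against a new $L^1\to L^\infty$ kernel bound (Lemma \ref{lemmaL^2}), itself obtained via an angular decomposition at scale $2^{-j/2}$. Only in the complementary range $j>mk$ is the degeneration mild enough for Theorem \ref{L^(p,q)therom} to apply. Your proposal treats all $j$ uniformly and does not isolate this threshold, without which the sums in $j$ and $k$ cannot be closed in the claimed region.
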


Theorem \ref{hhisomaintheorem} requests a more involving argument. In particular,
we would like to adopt the similar method used in the proof of Theorem \ref{maintheorem1}. But after employing the stationary phase method and Sobolev embedding, we need to consider a family of  Fourier integral operators which fail to satisfy the ``cinematic curvature" condition uniformly, which means that celebrated local smoothing estimates in the articles \cite{mss,SL2} cannot be directly applied to our problem. This echos with the problems which have arised in the study of the $L^{p}$-estimate for the global maximal functions defined by
 \begin{equation*}
\sup_{t >0}\left|\int_{\mathbb{R}}f(y_1-tx,y_2-t^dx^d\phi(x))\eta(x)dx\right|.
\end{equation*}
In order to overcome the above difficulty, we adopt the following strategy.

(i) We break the support of $\eta$ into dyadic intervals with length $2^{-k}$, $k \gg 1$.  Meanwhile, we decompose the frequency space of $f$ into $\{\xi \in \mathbb{R}^{2}: |\xi| \leq 1 \}$ and $\{\xi \in \mathbb{R}^{2}: |\xi| \approx 2^{j} \}$, $j \ge 1$. The difficulty lies in the case when $j$ is sufficiently large. It can be observed that for each $j$, $j \gg 1$, the principal curvature for the surfaces related to the  corresponding Fourier integral operators vanishes as $k$ tends to infinity. Next, we consider  $j \leq mk$ and $j > mk$ respectively.

(ii) For $j \leq mk$, no  local smoothing estimates can be established in this case. We only get the basic $L^{p} \rightarrow L^{q}$ estimates  for the local maximal operator. Fortunately, we realize that the basic $L^{p} \rightarrow L^{q}$ estimate  is sufficient for us to finish the proof of Theorem \ref{hhisomaintheorem} since $j$ is ``small" here.

(iii) For $j > mk$, by stationary phase method and Sobolev  embedding, we are left to consider a class of Fourier integral operators defined by inequality (\ref{estimate:L4}), in which the principal curvature of the related surfaces is not so``small", since the upper bound of $k$ is dominated by $j/m$. This phenomenon allows us to obtain $L^{p} \rightarrow L^{q}$ estimates for these Fourier integral operators in Theorem \ref{L^(p,q)therom}. In the proof of Theorem \ref{L^(p,q)therom}, we  use  Whitney type decomposition and a bilinear estimate established  by \cite{SL2}.


\subsubsection{Localized maximal functions associated with surfaces in $\mathbb{R}^{3}$}\label{subsection1.2}
Denote by $\delta_t$ the non-isotropic dilations in $\mathbb{R}^3$ given by
\begin{equation}\label{dilation}
\delta_t(y)=(t^{a_1}y_1,t^{a_2}y_2,t^{a_3}y_3).
\end{equation}
We also obtain $L^{p} \rightarrow L^{q}$ estimates for local maximal operators associated with the dilation $\delta_{t}$ of some hypersurfaces $\{(x_1,x_2,\Phi(x_1,x_2)), (x_1,x_2)\in \Omega\subset \mathbb{R}^2\}\subset \mathbb{R}^{3}$. Here,  $\Omega$ is an open neighborhood of the origin, $\Phi: \Omega \rightarrow \mathbb{R}$ is a smooth function.

 We first show $L^{p} \rightarrow L^{q}$ estimates for  maximal functions related to hypersurfaces with at least one non-vanishing principal curvature when $2a_{2} \neq a_{3}$.


\begin{thm}\label{3nonvanish}
Assume that $\Phi(x_1,x_2)\in C^{\infty}(\Omega)$  satisfies
\begin{equation}\label{conditionnonvani}
\partial_2\Phi(0,0)=0,\hspace{0.5cm}\partial_2^2\Phi(0,0)\neq 0,
\end{equation}
and  $2a_2\neq a_3$. Define the  averaging operator  by
\begin{equation}\label{maximal3dimen}
A_{t}f(y):=\int_{\mathbb{R}^2}f(y-\delta_t(x_1,x_2,\Phi(x_1,x_2)))\eta(x)dx,
\end{equation}
where $\eta$ is supported in a sufficiently small neighborhood $U\subset \Omega$ of the origin. For  $(\frac{1}{p},\frac{1}{q}) \in \Delta_{0}= \{(\frac{1}{p},\frac{1}{q}): \frac{1}{2p} < \frac{1}{q} \le \frac{1}{p} , \frac{1}{q}  > \frac{3}{p} -1\} \cup \{(0, 0)\}$, there exists a constant
$C_{p,q} >0$ such that $\|\mathop{sup}_{t\in [1,2]}|A_{t}|\|_{L^{p} \rightarrow L^{q} }\le C_{p,q} $.
\end{thm}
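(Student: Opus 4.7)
The strategy parallels the approach outlined by the authors for Theorem~\ref{maintheorem1}: a Littlewood--Paley decomposition in frequency plus a Sobolev embedding in the dilation parameter will reduce the maximal estimate to a local smoothing bound for a Fourier integral operator whose phase satisfies the cinematic curvature condition. First, write $f = \sum_{j \ge 0} f_j$ with $\widehat{f_j}$ supported in $\{|\xi| \sim 2^j\}$ for $j \ge 1$. The low-frequency piece $\sup_{t} |A_t f_0|$ is handled by trivial bounds, so the task is to produce geometric decay in $j$ for each dyadic piece $\sup_t |A_t f_j|$ and then sum.

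For each $j \ge 1$, I express $A_t f_j$ via its Fourier multiplier
\[
m(\xi, t) = \int_{\mathbb{R}^2} e^{-i(t^{a_1}\xi_1 x_1 + t^{a_2}\xi_2 x_2 + t^{a_3}\xi_3 \Phi(x_1,x_2))} \eta(x_1, x_2)\, dx_1\, dx_2.
\]
Since $\partial_2\Phi(0,0) = 0$ and $\partial_2^2\Phi(0,0)\neq 0$, I would perform a partial stationary phase in $x_2$: on the region where the critical-point equation $\partial_2 \Phi(x_1,x_2) = -t^{a_2-a_3}\xi_2/\xi_3$ admits a smooth solution $x_2^{*}(x_1; \xi, t)$, one extracts the factor $(t^{a_3}|\xi_3|)^{-1/2}$ and is left with a reduced one-dimensional oscillatory integral in $x_1$ with phase
\[
\widetilde{\Phi}(x_1; \xi, t) = t^{a_1}\xi_1 x_1 + t^{a_2}\xi_2 x_2^{*} + t^{a_3}\xi_3 \Phi(x_1, x_2^{*}),
\]
while on the complementary region integration by parts in $x_2$ yields arbitrarily rapid decay.

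Next, a Sobolev embedding in $t \in [1,2]$ allows the maximal function to be controlled by mixed $L^q_y L^q_t$ norms of $A_t f_j$ together with $\partial_t A_t f_j$, thereby reducing the problem to local smoothing bounds for the Fourier integral operator with phase $\langle y, \xi\rangle - \widetilde{\Phi}(x_1;\xi, t)$. I expect the main obstacle to be the verification of the cinematic curvature condition for this operator: one needs the relevant Monge--Amp\`ere determinant built from the derivatives of the phase in $(t, x_1)$ to be bounded away from zero near the origin. Both hypotheses of the theorem enter at this exact point, as $\partial_2^2\Phi(0,0) \neq 0$ propagates through $x_2^{*}$ into a non-trivial contribution in the reduced phase, while $2a_2 \neq a_3$ is precisely the non-resonance condition that rules out the cancellations between the terms produced by differentiating the $t$-factors and the critical-point perturbation $x_2^{*}$, which would otherwise annihilate this determinant.

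Once cinematic curvature is established, I would apply the $L^p \to L^p$ local smoothing estimate of Mockenhaupt--Seeger--Sogge on the diagonal and Lee's sharp $L^p \to L^q$ local smoothing for $q > p$, obtaining operator bounds with a geometric gain $2^{-\varepsilon_{p,q} j}\|f_j\|_p$ valid for $(\tfrac{1}{p}, \tfrac{1}{q})$ in the interior of $\Delta_0$. Summing in $j$ and interpolating with the trivial $L^\infty \to L^\infty$ estimate and the $L^2 \to L^2$ bound then recovers the full target range $\Delta_0$ claimed in the theorem.
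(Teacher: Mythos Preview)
Your high-level plan (stationary phase in $x_2$, Sobolev embedding in $t$, then local smoothing) matches the paper, but your treatment of the remaining $x_1$-integration is where you diverge and where the proposal has a genuine gap. After stationary phase in $x_2$ you still carry an oscillatory integral in $x_1$ with phase $t^{a_1}\xi_1 x_1 + t^{a_2}\xi_2 x_2^{*}+t^{a_3}\xi_3\Phi(x_1,x_2^{*})$; you then speak of a ``Monge--Amp\`ere determinant built from the derivatives of the phase in $(t,x_1)$'', which does not correspond to any standard cinematic-curvature hypothesis. More importantly, the theorem's assumptions impose no control whatsoever on $\partial_1\Phi$ or $\partial_1^2\Phi$, so neither a second stationary phase in $x_1$ nor a genuinely three-dimensional cinematic-curvature argument in $(y,t)\times\mathbb{R}^3_\xi$ (\`a la Schlag--Sogge) can be carried out under the stated hypotheses. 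This is precisely the point of the remark following Corollary~\ref{co1.3}: only a \emph{two}-dimensional cinematic-curvature condition is available here.

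The paper's key device is to \emph{freeze} $x_1$. After the change of variables $x_1\mapsto x_1/t^{a_1}$, Minkowski's inequality pulls the $x_1$-integral outside the $L^q_{y',t}$ norm, and for each fixed $x_1$ one is left with a two-dimensional FIO in $(y_2,y_3,t)$ with phase $\xi'\cdot y'-t^{a_3}\xi_3\tilde{\Psi}(x_1/t,s)$, $s=-t^{a_2-a_3}\xi_2/\xi_3$. It is for this $2$D operator that cinematic curvature holds (and $2a_2\neq a_3$ enters here, guaranteeing nontrivial $t$-dependence of the phase); Lee's $L^p\to L^q$ local smoothing in $\mathbb{R}^2$ is then applied, the result is integrated in $x_1$ via Young's inequality, and the $y_1$-variable is handled trivially. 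Correspondingly, the dyadic frequency decomposition should be in $|\delta_t'\xi'|$ with $\xi'=(\xi_2,\xi_3)$ only (the $\xi_1$-decay is already furnished by $\widehat{\eta_1}$), not in the full $|\xi|$ as you write. The final interpolation is with the diagonal $L^p$ bound from \cite{WL}, not an $L^2\to L^2$ estimate.
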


Notice that when $a_{1}=a_{2} = a_{3}=1$, we get the following corollary.

\begin{cor}\label{co1.3}
Assume that $\Phi(x_1,x_2)\in C^{\infty}(\Omega)$  satisfies inequality (\ref{conditionnonvani}). Define the averaging operator
\begin{equation}
A_{t}f(y):= \int_{\mathbb{R}^2}f(y-t(x_1,x_2,\Phi(x_1,x_2)))\eta(x)dx,
\end{equation}
where $\eta$ is supported in a sufficiently small neighborhood $U\subset \Omega$ of the origin. For  $(\frac{1}{p},\frac{1}{q}) \in \Delta_{0}= \{(\frac{1}{p},\frac{1}{q}): \frac{1}{2p} < \frac{1}{q} \le \frac{1}{p} , \frac{1}{q}  > \frac{3}{p} -1\} \cup \{(0, 0)\}$, there exists a constant
$C_{p,q} >0$ such that $\|\mathop{sup}_{t\in [1,2]}|A_{t}|\|_{L^{p} \rightarrow L^{q} }\le C_{p,q} $.
\end{cor}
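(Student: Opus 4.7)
The plan is to observe that Corollary \ref{co1.3} is an immediate specialization of Theorem \ref{3nonvanish} to the fully isotropic dilation. Setting $a_1 = a_2 = a_3 = 1$ in the non-isotropic dilation \eqref{dilation}, the averaging operator in \eqref{maximal3dimen} reduces precisely to the operator appearing in the statement of Corollary \ref{co1.3}, namely
\begin{equation*}
A_{t}f(y) = \int_{\mathbb{R}^2} f(y - t(x_1, x_2, \Phi(x_1, x_2)))\eta(x)\,dx.
\end{equation*}

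Next, I would verify that the two hypotheses required by Theorem \ref{3nonvanish} are satisfied. First, the condition \eqref{conditionnonvani} on $\Phi$, namely $\partial_2 \Phi(0,0) = 0$ and $\partial_2^2 \Phi(0,0) \neq 0$, is assumed identically in the corollary. Second, the dilation condition $2a_2 \neq a_3$ becomes $2 \cdot 1 \neq 1$, i.e.\ $2 \neq 1$, which is trivially satisfied. The support condition on $\eta$ is also the same.

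With both hypotheses verified, Theorem \ref{3nonvanish} may be invoked directly: for every $(\tfrac{1}{p}, \tfrac{1}{q}) \in \Delta_0$, there exists a constant $C_{p,q} > 0$ such that $\|\sup_{t \in [1,2]} |A_t f|\|_q \leq C_{p,q} \|f\|_p$. This is exactly the conclusion of Corollary \ref{co1.3}, so the proof is complete with no additional argument required. There is no genuine obstacle here; the only thing to check is the compatibility of the two settings, which is transparent.
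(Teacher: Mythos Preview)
Your proposal is correct and takes essentially the same approach as the paper, which simply remarks that the corollary follows from Theorem \ref{3nonvanish} by setting $a_1 = a_2 = a_3 = 1$ (so that $2a_2 \neq a_3$ is automatic). There is nothing to add.
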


\begin{rem}
We note that in \cite{WS2}, the ``cinematic curvature" condition in $\mathbb{R}^{3}$ is required  to  establish $L^{p} \rightarrow L^{q}$ estimates for the local maximal functions. While in Corollary \ref{co1.3}, we just need the ``cinematic curvature" condition in $\mathbb{R}^{2}$.
\end{rem}

For hypersurfaces of finite type given by $(x_{1}, x_{2}, x_{2}^{d}\Phi(x_{1}, x_{2}))$ with $\Phi(0,0)\not=0$, we obtain the $L^{p} \rightarrow L^{q}$ estimate for dilations satisfying $da_{2} \neq a_{3}$.

\begin{thm}\label{maintheorem4}
Assume that $\Phi(x_1,x_2)\in C^{\infty}(\Omega)$ satisfies
$\Phi(0,0)\neq 0$ and $da_2\neq a_3$ in (\ref{dilation}), $d\geq 2$. The associated averaging operator is defined by
\begin{equation}\label{averagefinite4}
A_{t}f(y):= \int_{\mathbb{R}^2}f(y-\delta_t(x_1,x_2,c+x_2^d\Phi(x_1,x_2)))\eta(x)dx,
\end{equation}
where $\eta$ is supported in a sufficiently small neighborhood $U$ of the origin.
Then we have the following results:

 (1) when $c =0$, for  $(\frac{1}{p},\frac{1}{q}) \in \Delta_{1} = \{(\frac{1}{p},\frac{1}{q}): \frac{1}{2p} < \frac{1}{q} \le \frac{1}{p} , \frac{1}{q}  > \frac{3}{p} -1, \frac{1}{q} > \frac{1}{p} - \frac{1}{d+1}\} \cup \{(0, 0)\}$, there exists a constant
$C_{p,q} >0$ such that $\|\mathop{sup}_{t\in [1,2]}|A_{t}|\|_{L^{p} \rightarrow L^{q} }\le C_{p,q} $;

 (2) when $c \neq 0$, for  $(\frac{1}{p},\frac{1}{q}) \in \Delta_{2} = \{(\frac{1}{p},\frac{1}{q}): \frac{1}{2p} < \frac{1}{q} \le \frac{1}{p} , \frac{1}{q}  > \frac{d+1}{p} -1\} \cup \{(0, 0)\}$, there exists a constant
$C_{p,q} >0$ such that $\|\mathop{sup}_{t\in [1,2]}|A_{t}|\|_{L^{p} \rightarrow L^{q} }\le C_{p,q} $.
\end{thm}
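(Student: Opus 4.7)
The plan is to extend the strategy used in the proof of Theorem \ref{hisomaintheorem} for two-dimensional curves to the present three-dimensional setting. The key geometric observation is that in the hypersurface $(x_1, x_2, c + x_2^d\Phi(x_1, x_2))$, the variable $x_1$ enters linearly in the first coordinate and only as a smooth parameter through $\Phi$ (with $\Phi(0,0)\neq 0$) in the third coordinate. The curvature vanishes to finite order only in the $x_2$-direction at the flat point $x_2 = 0$, and the hypothesis $da_2 \neq a_3$ plays exactly the same role as $da_1 \neq a_2$ in Theorem \ref{hisomaintheorem}.

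First I would perform a dyadic decomposition of the $\eta$-support near the flat point, writing $\eta = \eta_0 + \sum_{k\geq k_0}\eta_k$, where $\eta_k$ is supported in $\{|x_2|\sim 2^{-k}\}$. After a change of variables $x_2\mapsto 2^{-k}x_2$ together with the isometric transformation on $L^p(\mathbb{R}^3)$ used in the proof of Theorem \ref{maintheorem1}, each dyadic piece is reduced to an operator supported where $x_2$ is bounded away from $0$, with a controlled scaling factor. The sum over $k$ will ultimately produce the constraint $\frac{1}{q} > \frac{1}{p} - \frac{1}{d+1}$ when $c = 0$ and the stricter constraint $\frac{1}{q} > \frac{d+1}{p} - 1$ when $c\neq 0$.

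Next I would decompose $f$ on the Fourier side into dyadic frequency pieces $P_j f$ supported in $|\xi|\sim 2^j$; the low-frequency part is handled by Young's inequality. For each high-frequency piece, a stationary phase argument in $x_2$ combined with a Sobolev embedding in $t$ reduces the estimation of $\sup_{t\in[1,2]}|A_t P_j f|$ to $L^p\to L^q$ bounds for a Fourier integral operator with phase
\begin{equation*}
\psi(t, x_1, x_2, \xi) = t^{a_1}x_1\xi_1 + t^{a_2}x_2\xi_2 + t^{a_3}(c + x_2^d\Phi(x_1, x_2))\xi_3.
\end{equation*}
The tangential variable $x_1$ is handled by integration by parts when $|\xi_1|$ exceeds the natural threshold $\sim 2^{-kd}|\xi_3|$, and is effectively frozen otherwise; in the frozen regime the relevant mixed $(x_2, t)$-Hessian is non-degenerate precisely because $da_2 \neq a_3$, yielding the cinematic curvature condition mirroring the curve case. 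One then invokes the local smoothing estimate of Mockenhaupt--Seeger--Sogge \cite{mss} for the diagonal $p = q$ bounds and Lee's $L^p\to L^q$ local smoothing \cite{SL2} for the off-diagonal bounds.

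Finally, summing the $2^{-\epsilon j}$ decay over $j$ and tracking the $k$-dependent scaling yields the $L^p\to L^q$ bound on $\Delta_1$ in the case $c = 0$, and interpolation with the trivial $L^\infty\to L^\infty$ bound fills out the region. The case $c\neq 0$ is distinctive because the constant term $t^{a_3}c\xi_3$ in the phase does not cancel under the dyadic scaling in $k$, forcing the stricter summation constraint that defines $\Delta_2$. The hard part will be verifying that the three-dimensional Fourier integral operator satisfies the cinematic curvature condition uniformly across all dyadic pieces in $k$, and carefully controlling the coupling between the tangential $x_1$-variable and the curvature-carrying $x_2$-variable; this coupling has no analogue in the 2D proof of Theorem \ref{hisomaintheorem} and is the principal new difficulty in dimension three.
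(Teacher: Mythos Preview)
Your overall architecture is correct and matches the paper: dyadic decomposition in $x_2$, rescaling to put $x_2\approx 1$, Littlewood--Paley in $\xi'=(\xi_2,\xi_3)$, stationary phase in $x_2$, Sobolev embedding in $t$, and then the local smoothing estimates of \cite{mss} and \cite{SL2}. The $k$-summation constraints you identify for $c=0$ and $c\neq 0$ are exactly the ones the paper obtains.

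Where you diverge from the paper is in the treatment of $x_1$. You propose taking the full Fourier transform in $\mathbb{R}^3$ and then running a threshold analysis on $|\xi_1|$ versus $|\xi_3|$, calling the resulting $x_1$--$x_2$ coupling ``the principal new difficulty in dimension three.'' The paper avoids this difficulty entirely by the \emph{freezing} device already used in the proof of Theorem~\ref{3nonvanish}: one takes the partial Fourier transform only in $(y_2,y_3)$, keeps the $x_1$-integral as a physical-space convolution in $y_1$, and applies stationary phase in $x_2$ with $x_1$ as a harmless smooth parameter (this is legitimate because after rescaling $x_2\approx 1$ and $\Phi(0,0)\neq 0$, so the $x_2$-curvature is uniformly nonvanishing). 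One then pulls the $L^q_{y}$-norm inside the $x_1$-integral via Minkowski, applies the two-dimensional local smoothing bound in $(y',t)$ exactly as in Theorem~\ref{maintheorem1}, and closes with Young's inequality in $y_1$ against $\|\eta_0\|_{L^1}$. No $\xi_1$-threshold analysis and no coupling estimate is needed; the three-dimensional problem reduces verbatim to the two-dimensional one already proved. Your route could likely be pushed through, but it manufactures a difficulty that the freezing trick dissolves.
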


For dilations with $da_{2} = a_{3}$, the problem is much more difficult and we get the following result.

\begin{thm}\label{theocurvanishi}
Let $\phi\in C^{\infty}(I)$, where $I$ is a bounded interval containing the origin.
Define the averaging operator by
\begin{equation}\label{averagefinite5}
A_{t}f(y):=\int_{\mathbb{R}^2}f(y-\delta_t(x_1,x_2,x_2^{d}\phi(x_d)))\eta(x)dx,
\end{equation}
where $\eta$ is supported in a sufficiently small neighborhood $U$ of the origin.
Assume that $\phi$ satisfies (\ref{phi}),
and $da_2=a_3$ in \eqref{dilation}.  Then for  $(\frac{1}{p},\frac{1}{q}) \in \Delta_{1} = \{(\frac{1}{p},\frac{1}{q}): \frac{1}{2p} < \frac{1}{q} \le \frac{1}{p} , \frac{1}{q}  > \frac{3}{p} -1, \frac{1}{q} > \frac{1}{p} - \frac{1}{d+1}\} \cup \{(0, 0)\}$, there exists a constant
$C_{p,q} >0$ such that $\|\mathop{sup}_{t\in [1,2]}|A_{t}|\|_{L^{p} \rightarrow L^{q} }\le C_{p,q} $.
\end{thm}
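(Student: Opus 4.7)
The plan is to follow the blueprint of the proof of Theorem \ref{hhisomaintheorem}, exploiting the cylindrical structure of the hypersurface $\{(x_1,x_2,x_2^d\phi(x_2))\}$ in the $x_1$-direction. On the Fourier side, the multiplier of $A_t$ factors as
\[
m(t,\xi) \;=\; \hat{\eta}_1(t^{a_1}\xi_1)\,M_t(\xi_2,\xi_3), \qquad M_t(\xi_2,\xi_3)\;:=\;\int\eta_2(x_2)\,e^{-i(t^{a_2}\xi_2 x_2+t^{a_3}\xi_3 x_2^d\phi(x_2))}\,dx_2,
\]
where $M_t$ is exactly the multiplier for the 2-D curve problem of Theorem \ref{hhisomaintheorem} (after the rescaling $\tilde{t}=t^{a_2}$ which, using $a_3=da_2$, converts the dilation $(t^{a_2},t^{a_3})$ into the canonical $(\tilde{t},\tilde{t}^d)$). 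For $t\in[1,2]$, the factor $\hat{\eta}_1(t^{a_1}\xi_1)$ is essentially a smooth cutoff to $|\xi_1|\lesssim 1$, so after this cutoff $A_t f$ has bounded spectrum in $\xi_1$; Bernstein's inequality in $y_1$ can then be used to trade $L^p_{y_1}$ for $L^q_{y_1}$ at a bounded cost. This is how I plan to transfer the 2-D improving estimate to a 3-D $L^p\to L^q$ bound with $q\ge p$.

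I will then perform the dyadic decompositions at the heart of the strategy. Write $\eta_2=\sum_{k\ge k_0}\eta_{2,k}$ with $\supp\eta_{2,k}\subseteq\{|x_2|\sim 2^{-k}\}$, and split $f=f_{\mathrm{low}}+\sum_{j\ge 1}f_j$ with $\hat{f}_j$ supported on $\{|(\xi_2,\xi_3)|\sim 2^j\}$. The low-frequency piece $f_{\mathrm{low}}$ is controlled by Young's inequality. For the dyadic pieces $A^{j,k}_t f_j$ with $j,k$ large, the central dichotomy from the proof of Theorem \ref{hhisomaintheorem} divides the analysis into the regimes $j\le mk$ and $j>mk$, according to whether the perturbation $\phi(x_2)-\phi(0)=O(x_2^m)$ can be exploited as a genuine curvature at the scale under consideration.

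In the regime $j\le mk$, the rescaled phase has no uniform lower bound on its relevant curvature, so no local smoothing is available. Here I will only use trivial bounds: interpolating the $L^1\to L^\infty$ estimate of order $2^{-k}$ (coming from the support size of $\eta_{2,k}$) with the $L^2\to L^2$ estimate of order $2^{-k/2}$ (coming from stationary phase applied to the leading phase $\phi(0)x_2^d$) gives an $L^p\to L^q$ bound summable over the region $\{(j,k):j\le mk\}$. In the regime $j>mk$, the rescaled phase satisfies a uniform lower bound on its second $x_2$-derivative, and after stationary phase in $x_2$ followed by Sobolev embedding in $t$ the estimate reduces to an $L^p\to L^q$ bound for the family of Fourier integral operators treated in Theorem \ref{L^(p,q)therom}. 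The constraint $j>mk$ is precisely what guarantees enough residual cinematic curvature to run the Whitney/bilinear argument underlying that theorem.

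The main obstacle I anticipate is in the regime $j>mk$: one must verify that, after Fourier cutoff to $|\xi_1|\lesssim 1$ and the dyadic rescaling $|x_2|\sim 1$, the resulting 3-D FIO genuinely fits the hypotheses of Theorem \ref{L^(p,q)therom} (or a direct 3-D analogue), and that the extra $y_1$-variable contributes only a harmless loss absorbed by Bernstein's inequality. Keeping careful track of the scaling factors is delicate, since the dyadic gains from the curve analysis must survive the transfer from the 2-D curve problem to the 3-D hypersurface problem. Once this is in place, summing the two regimes together with the low-frequency contribution will yield the claimed $L^p\to L^q$ bound for all $(1/p,1/q)\in\Delta_1$.
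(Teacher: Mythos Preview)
Your plan follows essentially the same route as the paper. Both exploit the cylindrical structure to separate off the $x_1$-variable, dyadically decompose in $|x_2|\sim 2^{-k}$ and in $|\xi'|\sim 2^j$, split into the regimes $j\le mk$ and $j>mk$, and invoke Theorem~\ref{L^(p,q)therom} for the latter. Two points need sharpening.

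First, $\hat\eta_1(t^{a_1}\xi_1)$ is Schwartz but not compactly supported, so a literal Bernstein step is not available. The paper instead freezes $x_1$: after the change $x_1\mapsto t^{a_1}x_1$ the operator becomes a convolution in $y_1$ with a fixed $L^1$ function $\eta_0$, and Minkowski plus Young's inequality give the passage $L^p_{y_1}\to L^q_{y_1}$ at bounded cost. This is the clean implementation of your idea.

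Second, your stated bounds in the regime $j\le mk$ are off: the relevant estimates for the frequency-localized maximal pieces decay in $j$, not in $k$. In the rescaled coordinates the paper obtains $\|\widetilde{\mathcal M_j^{k,1}}\|_{L^2\to L^2}\lesssim 2^{-j/2}$ (Plancherel together with $\partial_t(\text{phase})=O(2^{j-mk})\le O(1)$ when $j\le mk$) and $\|\widetilde{\mathcal M_j^{k,1}}\|_{L^1\to L^\infty}\lesssim 2^{j(1+\epsilon)}$ (Lemma~\ref{lemmaL^2}, via an angular decomposition of the kernel); these interpolate with the $L^\infty\to L^\infty$ bound to give estimate (E2). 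It is this $j$-decay, combined with the scaling prefactor $2^{(d+1)k(1/p-1/q)-k}$ and the constraint $\tfrac{1}{q}>\tfrac{1}{2p}$, that makes the double sum over $\{(j,k):j\le mk\}$ converge throughout $\Delta_1$. Your claimed bounds $2^{-k}$ and $2^{-k/2}$ do not arise from any natural estimate on these pieces (an averaging operator along a curve has no $L^1\to L^\infty$ bound without frequency localization, and with it the kernel norm grows like $2^{2j}$ times the Fourier decay), and would not close the argument as written.
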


We briefly sketch the idea of the proof here. We follow the strategy adopted in \cite{WL}. First we "freeze" the first variable $x_1$ and apply the method of stationary phase to curves in $(x_2, x_3)-$ plane, then by Sobolev embedding, we are reduced to prove the $L^p \rightarrow L^{q}$ estimates for certain Fourier integral operators. Finally the method we used in $\mathbb{R}^{2}$ can be applied here to complete the proof.

\subsubsection{Continuity Lemmas} \label{sec:con:lemma}
Using the above $L^{p} \rightarrow L^{q}$ estimates, we can prove the following continuity lemmas.
\begin{thm}\label{continuty lemma}
There exists a real number $\epsilon >0$ such that for any $z \in \mathbb{R}^{n}$, there holds
\begin{equation}
\biggl\|\sup_{t\in [1,2]} \biggl| A_{t}f(y+z) - A_{t}f(y)  \biggl| \biggl\|_{L^{q}(\mathbb{R}^{n})} \lesssim |z|^{\epsilon} \|f\|_{L^{p}(\mathbb{R}^{n})},
\end{equation} 
provided that

(1) $n=2$, $A_{t}$ is defined by (\ref{averagefinite1}) with $c =0$ or by (\ref{averagefinite3}), and  $(\frac{1}{p},\frac{1}{q}) \in \Delta_{1} \backslash \{(0,0)\} = \{(\frac{1}{p},\frac{1}{q}): \frac{1}{2p} < \frac{1}{q} \le \frac{1}{p} , \frac{1}{q}  > \frac{3}{p} -1, \frac{1}{q} > \frac{1}{p} - \frac{1}{d+1}\}$;

(2) $n=3$, $A_{t}$ is defined by: (\ref{averagefinite4}) with $c=0$ or by (\ref{averagefinite5}), and  $(\frac{1}{p},\frac{1}{q}) \in \Delta_{1} \backslash \{(0,0)\} = \{(\frac{1}{p},\frac{1}{q}): \frac{1}{2p} < \frac{1}{q} \le \frac{1}{p} , \frac{1}{q}  > \frac{3}{p} -1, \frac{1}{q} > \frac{1}{p} - \frac{1}{d+1}\}$;

(3) $n=2$, $A_{t}$ is defined by  (\ref{averagefinite1}) with $c \neq 0$; $n=3$, $A_{t}$ is defined by    (\ref{averagefinite4}) with $c \neq 0$, and   $(\frac{1}{p},\frac{1}{q}) \in \Delta_{2} \backslash \{(0,0)\} =  \{(\frac{1}{p},\frac{1}{q}): \frac{1}{2p} < \frac{1}{q} \le \frac{1}{p} , \frac{1}{q}  > \frac{d+1}{p} -1\} $;

(4) $n=2$, $A_{t}$ is defined by (\ref{averagefinite2}), $(\frac{1}{p},\frac{1}{q}) \in \Delta_{3}\backslash \{(0,0), (1, 1)\} = \{(\frac{1}{p},\frac{1}{q}): \frac{1}{2p} < \frac{1}{q} \le \frac{1}{p} , \frac{1}{q}  > \frac{2}{p} -1, \frac{1}{q} > \frac{1}{p} - \frac{1}{d+1}\} $;

(5) $n=3$,  $A_{t}$ is defined by (\ref{maximal3dimen}), $(\frac{1}{p},\frac{1}{q}) \in \Delta_{0} \backslash \{(0,0)\} =  \{(\frac{1}{p},\frac{1}{q}): \frac{1}{2p} < \frac{1}{q} \le \frac{1}{p} , \frac{1}{q}  > \frac{3}{p} -1\}$.
\end{thm}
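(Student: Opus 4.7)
The plan is to reduce the continuity estimate to a frequency-localized version of the $L^p\!\to\! L^q$ bounds supplied by Theorems \ref{maintheorem1}--\ref{theocurvanishi}, combined with the elementary multiplier inequality $|e^{iz\cdot\xi}-1|\lesssim \min(|z||\xi|,1)$. The key observation is that every one of those $L^p\!\to\! L^q$ theorems is proved by a Littlewood--Paley decomposition of $f=P_0f+\sum_{j\ge1}P_j f$ (onto frequencies $|\xi|\sim 2^j$), after which one establishes a frequency-localized bound
\begin{equation*}
\bigl\| \sup_{t\in[1,2]} |A_t P_j f|\bigr\|_{L^q(\mathbb{R}^n)} \;\lesssim\; 2^{-\epsilon_0 j}\,\|f\|_{L^p(\mathbb{R}^n)}
\end{equation*}
for $(1/p,1/q)$ strictly inside the relevant region $\Delta_i$, with some $\epsilon_0=\epsilon_0(n,p,q)>0$. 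This decay is produced by stationary phase for the corresponding Fourier integral operators (together with Sobolev embedding) in the non-degenerate settings, and is preserved by the additional dyadic spatial decomposition used in the degenerate cases of Theorems \ref{hhisomaintheorem} and \ref{theocurvanishi}. The hypothesis that $(1/p,1/q)$ lies in $\Delta_i\setminus\{\text{corners}\}$ leaves exactly the slack needed to guarantee $\epsilon_0>0$ by interpolation with the trivial $L^\infty$ endpoint.

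Given those localized estimates, I would write
\begin{equation*}
A_t P_j f(y+z)-A_t P_j f(y) \;=\; \int_{\mathbb{R}^n} (e^{iz\cdot\xi}-1)\,\widehat{A_t P_j f}(\xi)\,e^{2\pi i y\cdot\xi}\,d\xi,
\end{equation*}
so that the multiplier bound $|e^{iz\cdot\xi}-1|\lesssim\min(|z|2^j,1)$ on $|\xi|\sim 2^j$ combines with the frequency-localized maximal estimate to yield
\begin{equation*}
\bigl\| \sup_{t\in[1,2]}|A_t P_j f(\cdot+z)-A_t P_j f(\cdot)|\bigr\|_{L^q} \;\lesssim\; \min(|z|2^j,1)\cdot 2^{-\epsilon_0 j}\,\|f\|_{L^p}.
\end{equation*}
Summing in $j$ with the split at $2^j\sim |z|^{-1}$ produces
\begin{equation*}
\sum_{2^j\le |z|^{-1}} |z|\,2^j\,2^{-\epsilon_0 j} \;+\; \sum_{2^j> |z|^{-1}} 2^{-\epsilon_0 j} \;\lesssim\; |z|^{\epsilon},
\end{equation*}
for $\epsilon:=\min(\epsilon_0/2,1/2)$ (after slightly shrinking $\epsilon_0$ if necessary to make the geometric sum in the low-frequency range converge). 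For the low-frequency piece $P_0 f$, the kernel of $A_t P_0$ is a Schwartz function uniformly in $t\in[1,2]$, so $A_t P_0 f(y+z)-A_t P_0 f(y)$ is Lipschitz in $z$ and contributes $\lesssim |z|\,\|f\|_{L^p}$, which is better than $|z|^\epsilon$ for $|z|\le 1$, while for $|z|\ge 1$ one controls both terms of the difference by the unweighted $L^p\!\to\!L^q$ estimate already proved.

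The main obstacle will be verifying the frequency-localized decay $2^{-\epsilon_0 j}$ uniformly in the degenerate regimes treated by Theorems \ref{hhisomaintheorem} and \ref{theocurvanishi}, where the cinematic curvature condition fails uniformly and one must simultaneously track the dyadic spatial decomposition (indexed by $k$) alongside the frequency decomposition (indexed by $j$). In the regime $j\le mk$, one only has the crude $L^p\!\to\!L^q$ bound, so one must check that the resulting loss in $k$ is still summable against the $|z|^\epsilon$ gain; in the regime $j>mk$, the bilinear analysis of Theorem \ref{L^(p,q)therom} supplies the required decay in $j$. Once both regimes are assembled, the same three-step argument above produces the stated continuity estimate, and this verification is where the bulk of the technical work lies. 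Finally, each of the five cases (1)--(5) of the theorem follows from its corresponding $L^p\!\to\!L^q$ theorem by exactly this scheme, so a single unified argument handles all of them.
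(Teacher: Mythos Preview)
Your approach is valid but differs substantially from the paper's. The paper does \emph{not} argue via a single frequency-localized maximal bound $\|\sup_t|A_tP_jf|\|_{L^q}\lesssim 2^{-\epsilon_0 j}\|f\|_{L^p}$ on the undecomposed operator. Instead, it splits the problem into two lemmas: a \emph{discrete} continuity lemma (Lemma~\ref{discrete continuty lemma}), obtained by combining the pointwise-in-$t$ estimate $\|A_tf(\cdot+z)-A_tf(\cdot)\|_{L^2}\lesssim |z|^{1/d}\|f\|_{L^2}$ (which follows from the finite-type Fourier decay $|\widehat{d\mu}(\xi)|\lesssim|\xi|^{-1/d}$) with an $\ell^2\!\hookrightarrow\!\ell^\infty$ bound over a mesh $\{t_i\}$ of spacing $|z|^{1/d}$ and then interpolating with the $L^p\!\to\!L^q$ theorems; and a \emph{continuous} continuity lemma (Lemma~\ref{continuous continuty lemma}) controlling $\sup_{|t-s|\le |z|^{1/d}}|A_tf-A_sf|$, proved case-by-case using the frequency/spatial pieces already developed for the $L^p\!\to\!L^q$ theorems. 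The triangle inequality then combines the two.

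Your route is cleaner and avoids the $t$-discretization entirely, but you should be aware that the ``key observation'' you quote is not literally what the paper proves: in the degenerate cases the paper's Littlewood--Paley index $j$ lives on the \emph{rescaled} function $g=(P_Jf)\circ D_k^{-1}$ after the spatial dyadic decomposition and the isometry $T_k$, so the paper's $j$ and your $J$ are related by $j\approx J-dk$ on the cone where the main term is supported. Extracting the bound $\|\sup_t|A_tP_Jf|\|_{L^q}\lesssim 2^{-\epsilon_0 J}\|f\|_{L^p}$ therefore requires summing the paper's estimates over $k$ with the extra constraint $j\sim J-dk$, checking that the exponent $(d+1)(1/p-1/q)-1+\epsilon d$ remains negative for small $\epsilon$ (this is exactly where the condition $1/q>1/p-1/(d+1)$ enters), and handling $k>J/d$ via the low-frequency kernel bounds. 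This is doable, and once done your multiplier argument goes through as written; but it is genuinely additional bookkeeping beyond what the $L^p\!\to\!L^q$ proofs already contain, not a direct quotation of them.
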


We concisely explain how to prove Theorem \ref{continuty lemma}. Notice that the result is trivial when $z =0$, and will follow from the $L^{p} \rightarrow L^{q}$ estimate when $|z| \ge 1$ immediately. So we always assume that $0< |z| < 1$ in what follows.  We need the following result from  [Theorem 2, page 351, \cite{steinbook}]:

Let $S$ be an open subset of a smooth $m$-dimensional submamanifold of $\mathbb{R}^{n}$, and  denote by $d\sigma$ the measure on $S$ induced by the Lebesgue measure on $\mathbb{R}^{n}$.  Taking a function $\psi \in C_{0}^{\infty}(\mathbb{R}^{n})$ whose support intersect $S$ in a compact subset of $S$, we write the finite Borel measure $d\mu = \psi(x) d\sigma$,  if $S$ is of finite type of order $d$ inside the support of $\psi$, then
\begin{equation}\label{fourierdecayoffinite}
|\widehat{d\mu}(\xi)| \lesssim |\xi|^{-\frac{1}{d}},\hspace{0.5cm}\xi\in \mathbb{R}^n\setminus \{0\}.
\end{equation}

According to the result above, for each $z \in \mathbb{R}^{n}$ and $f \in C_{0}^{\infty}(\mathbb{R}^{n})$, we have
\begin{equation}\label{l2continuty}
\|f * d\mu (y+z) -f * d\mu (y)  \|_{L^{2}(\mathbb{R}^{n})}  \le |z|^{\frac{1}{d}} \|f\|_{L^{2}(\mathbb{R}^{n})}.
\end{equation}
Indeed, by Plancherel's theorem, mean value theorem  and inequality (\ref{fourierdecayoffinite}),
\begin{align}
&\|f * d\mu (y+z) -f * d\mu (y)  \|_{L^{2}(\mathbb{R}^{n})} \nonumber\\
 &= \biggl\|   (e^{iz \cdot \xi} - 1) \widehat{d\mu}(\xi) \hat{f }(\xi)\biggl\|_{L^{2}(\mathbb{R}^{n})} \lesssim \biggl\|   (e^{iz \cdot \xi} - 1) (1 + |\xi|)^{-\frac{1}{d}} \hat{f }(\xi)\biggl\|_{L^{2}(\mathbb{R}^{n})} \nonumber\\
 &\lesssim \biggl\| |z| |\xi| (1 + |\xi|)^{-\frac{1}{d}} \hat{f }(\xi)\biggl\|_{L^{2}(\{\xi \in \mathbb{R}^{n}: |\xi| \le |z|^{-1}\} )} + \biggl\|   (1 + |\xi|)^{-\frac{1}{d}} \hat{f }(\xi)\biggl\|_{L^{2}(\{\xi \in \mathbb{R}^{n}: |\xi| > |z|^{-1}\} )} \nonumber\\
 &\lesssim |z|^{\frac{1}{d}}  \|f\|_{L^{2}(\mathbb{R}^{n})}.
\end{align}
Then we arrive at inequality (\ref{l2continuty}).

Notice that all the curves and hypersurfaces considered in Theorem \ref{continuty lemma} are of finite type. Therefore, for each $A_{t}$  in Theorem \ref{continuty lemma} and $t \in [1,2]$,
\begin{equation}
 \|   A_{t}f(y+z) - A_{t}f(y)   \|_{L^{2}(\mathbb{R}^{n})} \lesssim |z|^{\frac{1}{d}} \|f\|_{L^{2}(\mathbb{R}^{n})},
\end{equation}
where the implied constant is independent of $t$. Now we choose a sequence $\mathcal{I} = \{t_{i}\} \subset [1, 2]$ such that $|t_{i}-t_{i+1}| \approx |z|^{\frac{1}{d}}$.  Then the number of the elements in $\mathcal{I}$ is equivalent to $|z|^{ -\frac{1}{d}}$,
and
\begin{equation}\label{interp:cont:lemma}
\biggl\|\sup_{i: t_{i} \in \mathcal{I}} | A_{t_{i}}f(y+z) - A_{t_{i}}f(y)  | \biggl\|_{L^{2}(\mathbb{R}^{n})}
\le \biggl( \sum_{i: t_{i} \in \mathcal{I}} \biggl\|   A_{t_{i}}f(y+z) - A_{t_{i}}f(y)    \biggl\|^{2}_{L^{2}(\mathbb{R}^{n})}  \biggl)^{\frac{1}{2}}
\lesssim |z|^{\frac{1}{2d}} \|f\|_{L^{2}(\mathbb{R}^{n})}.
\end{equation}
By interpolating \eqref{interp:cont:lemma} with the corresponding  $L^{p} \rightarrow L^{q}$ estimate to be proved later, we obtain the following lemma.
\begin{lem}\label{discrete continuty lemma}
Under the conditions of Theorem \ref{continuty lemma}, for $z \in \mathbb{R}^{n}$, $0< |z| < 1$, and  a discrete sequence   $\mathcal{I} = \{t_{i}\} \subset [1, 2]$ with $|t_{i}-t_{i+1}| \approx |z|^{\frac{1}{d}}$, there exists a real number $ \epsilon_1= \epsilon_1(p,q) >0$, such that
\begin{equation}
\biggl\|\sup_{i: t_{i} \in \mathcal{I}} | A_{t_{i}}f(y+z) - A_{t_{i}}f(y)  | \biggl\|_{L^{q}(\mathbb{R}^{n})} \lesssim |z|^{\epsilon_1} \|f\|_{L^{p}(\mathbb{R}^{n})}.
\end{equation}
\end{lem}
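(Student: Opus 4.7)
The plan is to deduce Lemma \ref{discrete continuty lemma} from the vector-valued $L^2$-estimate \eqref{interp:cont:lemma} by interpolating against the $L^{p_0}\to L^{q_0}$ bounds for $\sup_{t\in[1,2]}|A_t f|$ furnished by Theorems \ref{maintheorem1}--\ref{theocurvanishi}. Abbreviate $T_z f(y):=\sup_{t_i\in\mathcal{I}}|A_{t_i}f(y+z)-A_{t_i}f(y)|$. The first endpoint is exactly \eqref{interp:cont:lemma}, namely $\|T_z f\|_{L^2(\mathbb{R}^n)}\lesssim |z|^{1/(2d)}\|f\|_{L^2(\mathbb{R}^n)}$, and carries a quantitative gain in $|z|$. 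The second endpoint will be obtained with no decay in $|z|$: the pointwise bound
\[
T_z f(y)\leq \sup_{t\in[1,2]}|A_t f|(y+z)+\sup_{t\in[1,2]}|A_t f|(y),
\]
combined with the translation invariance of the $L^{p_0}$-norm and with the maximal estimates in Theorems \ref{maintheorem1}--\ref{theocurvanishi}, will yield $\|T_z f\|_{L^{q_0}(\mathbb{R}^n)}\lesssim \|f\|_{L^{p_0}(\mathbb{R}^n)}$ uniformly in $z$ and in the choice of $\mathcal{I}$, for every admissible pair $(1/p_0,1/q_0)$.

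Since $T_z$ is only sublinear, I would next linearize it via the standard measurable-selection trick: choose a measurable index $i(y)$ attaining the supremum together with a measurable sign $\varepsilon(y)\in\{\pm1\}$ so that $T_z f(y)=\varepsilon(y)\bigl(A_{t_{i(y)}}f(y+z)-A_{t_{i(y)}}f(y)\bigr)$, which exhibits $T_z$ as a genuinely linear operator in $f$ satisfying both endpoint estimates with the same constants. Riesz--Thorin interpolation then applies. Given any $(1/p,1/q)$ in the subregion of Theorem \ref{continuty lemma} corresponding to the case at hand, I would select an admissible $(1/p_0,1/q_0)$ and $\theta\in(0,1)$ such that
\[
\Bigl(\tfrac{1}{p},\tfrac{1}{q}\Bigr)=\theta\Bigl(\tfrac{1}{2},\tfrac{1}{2}\Bigr)+(1-\theta)\Bigl(\tfrac{1}{p_0},\tfrac{1}{q_0}\Bigr),
\]
and interpolate to obtain $\|T_z f\|_{L^q}\lesssim |z|^{\theta/(2d)}\|f\|_{L^p}$; setting $\epsilon_1:=\theta/(2d)>0$ then concludes the proof.

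The only issue that demands a separate check is the planar geometric claim that the desired decomposition exists for every interior exponent pair in each of the five cases listed in Theorem \ref{continuty lemma} (namely $\Delta_1\setminus\{(0,0)\}$, $\Delta_2\setminus\{(0,0)\}$, $\Delta_3\setminus\{(0,0),(1,1)\}$, and $\Delta_0\setminus\{(0,0)\}$). Because each $\Delta_i$ is a convex polygon with $(1/2,1/2)$ on its boundary and with nonempty interior, this reduces to a direct inspection of the defining inequalities \eqref{delta0}--\eqref{delta3} and presents no real obstacle. The genuine mathematical content of the paper lies in establishing the $L^p\to L^q$ estimates themselves; once those are in hand, Lemma \ref{discrete continuty lemma} is essentially a bookkeeping consequence of the linearization-plus-interpolation scheme above, with no substantial step that I expect to cause trouble.
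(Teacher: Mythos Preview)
Your proposal is correct and follows exactly the approach the paper uses: the paper proves the lemma in one line by interpolating the $L^2$ estimate \eqref{interp:cont:lemma} against the $L^{p_0}\to L^{q_0}$ bounds for $\sup_{t\in[1,2]}|A_t|$, and you have simply filled in the linearization and exponent-selection details that the paper leaves implicit. One minor correction: your assertion that $(\tfrac12,\tfrac12)$ lies on the boundary of each $\Delta_i$ fails for $\Delta_2$ when $d\ge 3$, but this is harmless, since openness of the region at $(1/p,1/q)$ already guarantees a suitable $(1/p_0,1/q_0)$ on the ray from $(\tfrac12,\tfrac12)$ through $(1/p,1/q)$, which is all the interpolation needs.
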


We will complete the proof of Theorem \ref{continuty lemma} once we have the following lemma.

\begin{lem}\label{continuous continuty lemma}
Under the conditions of Theorem \ref{continuty lemma}, for $z \in \mathbb{R}^{n}$, $0< |z| < 1$,  there exists a real number $ \epsilon_2= \epsilon_2(p,q) >0$ such that
\begin{equation}
\biggl\|\sup_{t,s\in[1,2]: |t-s| \le |z|^{\frac{1}{d}}} | A_{t}f(y) - A_{s}f(y) | \biggl\|_{L^{q}(\mathbb{R}^{n})} \lesssim |z|^{\epsilon_2} \|f\|_{L^{p}(\mathbb{R}^{n})}.
\end{equation}
\end{lem}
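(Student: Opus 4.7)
The plan is to establish Lemma \ref{continuous continuty lemma} via a Littlewood--Paley decomposition in the spatial frequency of $f$, combined with two complementary estimates applied piece by piece: a \emph{per-scale} version of the main $L^p \to L^q$ maximal bound (carrying a strictly positive frequency gain from $(1/p,1/q)$ lying in the \emph{interior} of $\Delta_i$), and a smoothness bound from the fundamental theorem of calculus in $t$.

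Fix a Littlewood--Paley partition $\{P_j\}_{j \geq 0}$ with $\widehat{P_j f}$ supported in $\{|\xi|\sim 2^j\}$ for $j \geq 1$ and $\widehat{P_0 f}$ in $\{|\xi|\lesssim 1\}$. The proofs of Theorems \ref{maintheorem1}--\ref{theocurvanishi} proceed via dyadic frequency decomposition and, for $(1/p,1/q)$ strictly inside the corresponding $\Delta_i$, yield the per-scale estimate
\[
\Bigl\|\sup_{t\in[1,2]}|A_t P_j f|\Bigr\|_{L^q(\mathbb{R}^n)} \lesssim 2^{-j\epsilon_0}\,\|P_j f\|_{L^p(\mathbb{R}^n)}, \qquad j \geq 1,
\]
for some $\epsilon_0 = \epsilon_0(p,q) \in (0,1)$; summing the geometric series in $j$ is precisely what reconstitutes the full $L^p\to L^q$ bound. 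Meanwhile, the chain rule gives
\[
\partial_t A_t f(y) = -\sum_{i} a_i\, t^{a_i-1}\, \widetilde{A}^{(i)}_t(\partial_i f)(y),
\]
where $\widetilde{A}^{(i)}_t$ is an average of exactly the same type as $A_t$ with $\eta$ replaced by $\eta\cdot\Phi_i$ (smooth and compactly supported), so $\widetilde{A}^{(i)}_t$ enjoys the same per-scale bound. Combined with Bernstein's inequality $\|\partial_i P_j f\|_p \lesssim 2^j \|P_j f\|_p$ and the boundedness of $t^{a_i-1}$ on $[1,2]$, this yields $\|\sup_{\tau\in[1,2]} |\partial_\tau A_\tau P_j f|\|_q \lesssim 2^{j(1-\epsilon_0)} \|P_j f\|_p$.

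Applying the identity $A_t f - A_s f = \int_s^t \partial_\tau A_\tau f \, d\tau$ to each piece $P_j f$ and taking the minimum with twice the maximal bound gives, with $\delta := |z|^{1/d}$,
\[
\Bigl\|\sup_{|t-s|\leq \delta}|A_t P_j f - A_s P_j f|\Bigr\|_q \lesssim 2^{-j\epsilon_0}\min\bigl(1,\, \delta \cdot 2^j\bigr)\,\|P_j f\|_p.
\]
Summing over $j \geq 1$ via the triangle inequality and choosing $j^\ast$ with $\delta \cdot 2^{j^\ast} \sim 1$, the contribution from $j\leq j^\ast$ is controlled by $\delta\sum_{j\leq j^\ast}2^{j(1-\epsilon_0)}\|f\|_p \lesssim \delta^{\epsilon_0}\|f\|_p$ (geometric sum with ratio $2^{1-\epsilon_0}>1$), and the contribution from $j>j^\ast$ by $\sum_{j>j^\ast}2^{-j\epsilon_0}\|f\|_p \lesssim \delta^{\epsilon_0}\|f\|_p$. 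The low-frequency piece $P_0 f$ is dispatched by the same FTC argument together with Bernstein, yielding the even better bound $\lesssim \delta\|f\|_p$. Altogether, one obtains the claim with $\epsilon_2 = \epsilon_0/d$.

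The main obstacle is the per-scale gain: it is not a formal consequence of the $L^p\to L^q$ statements, but must be read off from the dyadic frequency decomposition arguments used to prove Theorems \ref{maintheorem1}--\ref{theocurvanishi}. In each case, the overall $L^p\to L^q$ bound is reconstructed by summing per-frequency contributions whose exponents vanish on the boundary of $\Delta_i$ and are strictly negative in its interior; the exclusion of the extreme corners of each $\Delta_i$ in the hypotheses of Theorem \ref{continuty lemma} is precisely what ensures $\epsilon_0 > 0$, and by further shrinking $\epsilon_0$ if necessary one may assume $\epsilon_0<1$ so that the summation above behaves as claimed.
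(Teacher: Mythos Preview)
Your approach is correct and takes a genuinely different route from the paper. The paper does \emph{not} work directly at the target $(p,q)$: instead, for each of the cases it fixes a single (usually diagonal) exponent $p_0$, applies Lemma~\ref{lem:Lemma3} in the $h$-variable to convert the short interval $[0,|z|^{1/d}]$ into a factor $|z|^{1/(2d)}$, and controls the resulting $\partial_t$ of the Fourier integral representation by direct analysis of the phase and symbol (this is where the explicit phase formulas such as \eqref{phase function 1}, \eqref{Phase}, \eqref{mainterm2}, \eqref{phase function 4} enter). After summing over $j$ (and $k$) at that single exponent $p_0$---which the paper chooses case by case as $L^{2d}$, $L^2$, $L^{146/37}$, $L^{24/7}$---it interpolates the resulting bound with the \emph{trivial} (no $|z|$-gain) $L^p\to L^q$ estimate coming from Theorems~\ref{maintheorem1}--\ref{theocurvanishi}. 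Your argument, by contrast, trades the Sobolev-in-$h$ step for the fundamental theorem of calculus in $t$ combined with the chain rule and Bernstein, and trades the interpolation step for the per-scale $L^p\to L^q$ gain $2^{-j\epsilon_0}$ at the target exponent. This is cleaner and avoids the case-by-case choice of $p_0$, but it leans more heavily on the internal structure of the proofs: in particular, for the homogeneous curve (case~(4), Theorem~\ref{maitheorem3}) the paper's proof goes through the transference Theorem~\ref{transference} and \cite{ios3} rather than through a dyadic frequency decomposition, so the per-scale gain you invoke is not literally ``read off'' there---you would have to supply it separately via the stationary phase representation used in Case~2 of Section~3.4. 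With that small caveat, your argument goes through.
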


More concretely, if Lemma \ref{continuous continuty lemma} holds true, then for $A_{t}$ and $p, q$ as in Theorem \ref{continuty lemma}, there holds
\begin{align}
&\biggl\|\sup_{t\in [1,2]} | A_{t}f(y+z) - A_{t}f(y)  | \biggl\|_{L^{q}(\mathbb{R}^{n})} \nonumber\\
&\lesssim \biggl\|\sup_{i: t_{i} \in \mathcal{I}} \sup_{t\in [t_{i},t_{i} + |z|^{\frac{1}{d}}]} | A_{t}f(y+z) - A_{t}f(y)  | \biggl\|_{L^{q}(\mathbb{R}^{n})} \nonumber\\
&\lesssim \biggl\|\sup_{i: t_{i} \in \mathcal{I}} \sup_{t\in [t_{i},t_{i} + |z|^{\frac{1}{d}}]} | A_{t}f(y+z) - A_{t_{i}}f(y+z) | \biggl\|_{L^{q}(\mathbb{R}^{n})}  + \biggl\|\sup_{i: t_{i} \in \mathcal{I}} \sup_{t\in [t_{i},t_{i} + |z|^{\frac{1}{d}}]}| A_{t}f(y) - A_{t_{i}}f(y) | \biggl\|_{L^{q}(\mathbb{R}^{n})} \nonumber\\
&\quad + \biggl\|\sup_{i: t_{i} \in \mathcal{I}}  | A_{t_{i}}f(y+z) - A_{t_{i}}f(y)  | \biggl\|_{L^{q}(\mathbb{R}^{n})} \nonumber\\
&\lesssim \biggl\|\sup_{t,s\in[1,2]: |t-s| \le |z|^{\frac{1}{d}}} | A_{t}f(y) - A_{s}f(y) | \biggl\|_{L^{q}(\mathbb{R}^{n})}  + \biggl\|\sup_{i: t_{i} \in \mathcal{I}}   | A_{t_{i}}f(y+z) - A_{t_{i}}f(y)  | \biggl\|_{L^{q}(\mathbb{R}^{n})} \nonumber\\
&\lesssim |z|^{\min\{\epsilon_1, \epsilon_2\}} \|f\|_{L^{p}(\mathbb{R}^{n})}.
\end{align}
Therefore, it remains to prove Lemma \ref{continuous continuty lemma}.

\subsection{Weighted inequalities for global maximal functions associated with curvature} \label{weighted}

As highlighted in the introduction, one important application of the sparse domination is the weighted estimate for the corresponding maximal operator. We will give the definition of the weights associated to $\delta$-cubes.
\begin{dfn}
\begin{enumerate}
\item
A weight $\omega$ is a poistive function defined on $\mathbb{R}^n$ equipped with the Lebesgue measure and the metric defined by
\begin{equation*}
\rho_\delta(x,y) := \max_{1 \leq i \leq d}|x_i - y_i|^{\frac{1}{b_i}},
\end{equation*}
where $\delta$ is the dilation specified in \eqref{dilation:restrict}. We usually denote by $\omega(E) := \int_E w(x) dx$ and $\|f\|_{L^p_\omega} := \left(\int |f(x)|^p w(x) dx\right)^{\frac{1}{p}}$.
\item
A weight $w \in \mathcal{A}_p$ (Muckenhoupt class) for $1 < p < \infty$ if
\begin{equation*}
[\omega]_{ \mathcal{A}_p} := \sup_{Q \in \mathcal{Q}^\delta}\langle \omega \rangle_Q \langle \omega^{1-p'} \rangle_Q^{p-1} < \infty.
\end{equation*}
\item
A weight $w \in RH_p$ (reverse H\"older class) for $1 < p < \infty$ if
\begin{equation*}
[\omega]_{ RH_p} := \sup_{Q \in \mathcal{Q}^\delta}\langle \omega \rangle_Q^{-1} \langle \omega \rangle_{Q,p} < \infty.
\end{equation*}
\end{enumerate}
\end{dfn}

The weighted bound can be obtained for the sparse form as summarized in the following proposition. The proof of the proposition is enclosed in \cite{Bernicot}.

\begin{prop} \label{sparse:weight}
Suppose that $\mathcal{S}$ is a sparse collection of $\delta$-cubes. For any $p < r< q$ with $(\frac{1}{p},\frac{1}{q}) \in \mathcal{L}_n$ and weight $\omega \in A_{\frac{r}{p}} \cap RH_{\left(\frac{q}{r}\right)'}$,
\begin{equation*}
\Lambda_{\mathcal{S},p,q'}(f,g) \lesssim \left([\omega]_{A_{\frac{r}{p}}} [\omega]_{RH_{\left(\frac{q}{r}\right)'}}\right)^\alpha \|f\|_{L^{r}(\omega)}\|g\|_{L^{r'}(\omega^{1-r'})},
\end{equation*}
for $\alpha$ specified in \eqref{weight:exp}.
\end{prop}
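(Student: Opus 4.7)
\textbf{Proof proposal for Proposition \ref{sparse:weight}.} The plan is to follow the strategy developed by Bernicot--Frey--Petermichl (to which the paper attributes the result): convert the sparse bilinear form into a product of weighted sparse maximal functions and then invoke the weighted theory of Muckenhoupt on $\delta$-cubes. Writing explicitly $\Lambda_{\mathcal{S},p,q'}(f,g) = \sum_{S \in \mathcal{S}} |S| \langle f \rangle_{S,p} \langle g \rangle_{S,q'}$, I would first use the sparseness $|E_S| > |S|/4$ together with the disjointness of $\{E_S\}$ to get
\begin{equation*}
\Lambda_{\mathcal{S},p,q'}(f,g) \leq 4 \int_{\mathbb{R}^n} \sum_{S \in \mathcal{S}} \mathbf{1}_{E_S}(x)\, \langle f \rangle_{S,p} \langle g \rangle_{S,q'}\, dx \leq 4 \int_{\mathbb{R}^n} M^{\mathcal{S}}_p f(x) \cdot M^{\mathcal{S}}_{q'} g(x)\, dx,
\end{equation*}
where $M^{\mathcal{S}}_s h(x) := \sup_{S \in \mathcal{S},\, S \ni x} \langle h \rangle_{S,s}$ is the sparse $s$-maximal function adapted to the $\delta$-cube structure. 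Setting $\sigma := \omega^{1-r'}$ so that $\omega^{1/r} \sigma^{1/r'} = 1$, H\"older's inequality with exponents $(r, r')$ yields
\begin{equation*}
\Lambda_{\mathcal{S},p,q'}(f,g) \leq 4 \, \|M^{\mathcal{S}}_p f\|_{L^r(\omega)}\, \|M^{\mathcal{S}}_{q'} g\|_{L^{r'}(\sigma)}.
\end{equation*}

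The next step is to control each of these weighted norms. Writing $M^{\mathcal{S}}_p f = \bigl(M^{\mathcal{S}}_1 |f|^p\bigr)^{1/p}$, where $M^{\mathcal{S}}_1$ is the usual sparse Hardy--Littlewood operator, the identity $\|M^{\mathcal{S}}_p f\|_{L^r(\omega)}^r = \|M^{\mathcal{S}}_1 |f|^p\|_{L^{r/p}(\omega)}^{r/p}$ reduces matters to the standard Muckenhoupt theory: since $r > p$ and $\omega \in A_{r/p}$, Buckley's sharp bound for the Hardy--Littlewood maximal operator (which is readily adapted to the $\delta$-cube setting, since the $\delta$-cubes form a doubling family for Lebesgue measure) gives $\|M^{\mathcal{S}}_1\|_{L^{r/p}(\omega) \to L^{r/p}(\omega)} \lesssim [\omega]_{A_{r/p}}^{p/(r-p)}$, and hence $\|M^{\mathcal{S}}_p f\|_{L^r(\omega)} \lesssim [\omega]_{A_{r/p}}^{1/(r-p)} \|f\|_{L^r(\omega)}$. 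A parallel argument handles the factor involving $g$ provided that $\sigma \in A_{r'/q'}$ with a controlled constant, upon which Buckley on $L^{r'/q'}(\sigma)$ yields $\|M^{\mathcal{S}}_{q'} g\|_{L^{r'}(\sigma)} \lesssim [\sigma]_{A_{r'/q'}}^{1/(r'-q')} \|g\|_{L^{r'}(\sigma)}$.

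The main obstacle is therefore the quantitative duality between $\omega \in A_{r/p} \cap RH_{(q/r)'}$ and $\sigma \in A_{r'/q'} \cap RH_{(p'/r')'}$. The key algebraic computation is that $(1-r')(1-(r'/q')') = (q/r)'$, so that $\sigma^{1-(r'/q')'} = \omega^{(q/r)'}$; combining this with the $RH_{(q/r)'}$ condition on $\omega$ and the $A_r$ bound (automatic since $A_{r/p} \subset A_r$ when $p \geq 1$) yields an estimate of the form
\begin{equation*}
[\sigma]_{A_{r'/q'}} \lesssim [\omega]_{A_{r/p}}^{1/(r-1)}\, [\omega]_{RH_{(q/r)'}}^{1/(r-1)}.
\end{equation*}
Feeding this back into Buckley's bound and combining with the estimate for $M^{\mathcal{S}}_p f$ gives the claimed inequality, with $\alpha$ (as in \eqref{weight:exp}) emerging as the maximum of $1/(r-p)$ and $(q-1)/(q-r)$ after the exponents are tracked. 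The real technical burden is in the careful bookkeeping of these constants and the underlying Auscher--Martell-type duality for joint Muckenhoupt--reverse-H\"older classes; for the complete treatment we refer to \cite{Bernicot}.
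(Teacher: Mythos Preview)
The paper does not give its own proof of this proposition; it simply states that ``the proof of the proposition is enclosed in \cite{Bernicot}.'' Your proposal goes further by correctly outlining the Bernicot--Frey--Petermichl argument (sparseness $\Rightarrow$ pointwise bound by sparse maximal functions, H\"older with $\omega^{1/r}\sigma^{1/r'}=1$, Buckley's sharp $A_s$ bound, and the Auscher--Martell duality $\omega\in A_{r/p}\cap RH_{(q/r)'}\Leftrightarrow \sigma\in A_{r'/q'}\cap RH_{(p'/r')'}$), which is indeed the content of that reference; so there is nothing to compare beyond noting that your sketch is more explicit than the paper itself.
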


The following weighted bound is hence an immediate consequence of the sparse bound stated in Theorem \ref{thm:sparse} and Proposition \ref{sparse:weight}.

\begin{cor} \label{weight}
Suppose that the maximal operator $\mathcal{M}_{\vec{\Phi}}$ satisfies the sparse bound described in Theorem \ref{thm:sparse}. Then for any $p < r < q$ with $(\frac{1}{p},\frac{1}{q}) \in \mathcal{L}_n$ and weight $\omega \in A_{\frac{r}{p}} \cap RH_{\left(\frac{q}{r}\right)'}$ defined on $\delta$-cubes,
\begin{equation} \label{weight:bound}
\|M_{\vec{\Phi}}\|_{L^r(\omega) \rightarrow L^r(\omega)} \lesssim \left([\omega]_{A_{\frac{r}{p}}} [\omega]_{RH_{\left(\frac{q}{r}\right)'}}\right)^\alpha,
\end{equation}
for
\begin{equation} \label{weight:exp}
\alpha := \max \left(\frac{1}{r-p}, \frac{q-1}{q-r} \right).
\end{equation}
\end{cor}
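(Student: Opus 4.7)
The plan is to chain Theorem \ref{thm:sparse} with Proposition \ref{sparse:weight} via duality. Fix $p,r,q,\omega$ as in the hypothesis. Since $L^r(\omega)^\ast = L^{r'}(\omega^{1-r'})$ under the unweighted pairing, to establish \eqref{weight:bound} it suffices to bound $|\langle \mathcal{M}_{\vec\Phi} f, g\rangle|$ uniformly for all bounded, compactly supported $f$ and $g$ with $\|f\|_{L^r(\omega)} = \|g\|_{L^{r'}(\omega^{1-r'})} = 1$, and then to conclude by a standard density/monotone convergence argument.

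First, I would note that the hypothesis $(1/p,1/q) \in \mathcal{L}_n$ is equivalent to $(1/p,1/q') \in \mathcal{L}_n'$ by the definition of the dual range, so Theorem \ref{thm:sparse} applies and yields a sparse collection $\mathcal{S}$ of $\delta$-cubes (depending on $f,g$) with
\begin{equation*}
|\langle \mathcal{M}_{\vec\Phi} f, g\rangle| \leq C \sup_{\mathcal{S}} \Lambda_{\mathcal{S},p,q'}(f,g).
\end{equation*}
Next, since $p < r < q$, one checks that the conjugate exponents satisfy $q' < r' < p'$, so $r/p > 1$ and $(q/r)' > 1$ are both valid, and the assumption $\omega \in A_{r/p}\cap RH_{(q/r)'}$ is exactly the hypothesis of Proposition \ref{sparse:weight}. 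Applying that proposition to the sparse form above gives
\begin{equation*}
\Lambda_{\mathcal{S},p,q'}(f,g) \lesssim \left([\omega]_{A_{r/p}}[\omega]_{RH_{(q/r)'}}\right)^\alpha \|f\|_{L^r(\omega)} \|g\|_{L^{r'}(\omega^{1-r'})},
\end{equation*}
with the constant $\alpha$ from \eqref{weight:exp}; crucially, the right-hand side is independent of the particular sparse collection $\mathcal{S}$, so we may take the supremum and combine with the display above.

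Taking the supremum over admissible $g$ via the duality identification, we obtain
\begin{equation*}
\|\mathcal{M}_{\vec\Phi} f\|_{L^r(\omega)} \lesssim \left([\omega]_{A_{r/p}}[\omega]_{RH_{(q/r)'}}\right)^\alpha \|f\|_{L^r(\omega)}
\end{equation*}
for all bounded, compactly supported $f$, and a routine approximation extends the bound to all $f \in L^r(\omega)$. There is no genuine obstacle here: the only points requiring care are the bookkeeping of the $\mathcal{L}_n$ versus $\mathcal{L}_n'$ convention, the verification that the duality pairing is consistent with the $A_{r/p}\cap RH_{(q/r)'}$ assumption, and the observation that the sparse collection $\mathcal{S}$ may depend on $(f,g)$ but the constant in Proposition \ref{sparse:weight} does not. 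All of these are standard once the sparse bound and the weighted estimate for sparse forms are granted.
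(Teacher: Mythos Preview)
Your proof is correct and matches the paper's approach: the paper simply states that Corollary \ref{weight} is ``an immediate consequence of the sparse bound stated in Theorem \ref{thm:sparse} and Proposition \ref{sparse:weight},'' and your argument spells out precisely this chaining via duality. The bookkeeping points you flag (the $\mathcal{L}_n$ versus $\mathcal{L}_n'$ convention, the independence of the weighted constant from the particular sparse collection) are exactly the minor checks needed, and you have them right.
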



Provided with the generic scheme of reducing the weighted estimates to the local regularity property, we can now summarize the main results on the weighted inequalities for the maximal operators satisfying the local continuity property stated in Theorem \ref{continuty lemma}:
\begin{thm}\label{weighted estimate}
Let $\omega$ be a weight defined on $\delta$ cubes such that  $\omega \in A_{\frac{r}{p}} \cap RH_{\left(\frac{q}{r}\right)'}$. Define the global maximal operator
\[\mathcal{M}f(y):= \sup_{t>0} |A_{t}f(y)|.\]
\begin{enumerate}
\item
When $A_{t}$ is defined in terms of  (\ref{averagefinite1}) with $c =0$ or by (\ref{averagefinite3}), then
for $\mathcal{M}$ and any $p < r < q$ with $(\frac{1}{p},\frac{1}{q}) \in \Delta_1$, \eqref{weight:bound} holds true.
\item
When $A_{t}$ is  defined in terms of  (\ref{averagefinite4}) with $c=0$ or by (\ref{averagefinite5}), then
for $\mathcal{M}$ and any $p < r < q$ with $(\frac{1}{p},\frac{1}{q}) \in \Delta_1$, \eqref{weight:bound} holds true.
\item
When $A_{t}$ is defined in terms of  (\ref{averagefinite1}) with $c \neq 0$ or by (\ref{averagefinite4}) with $c \neq 0$, then
for $\mathcal{M}$ and any $p < r < q$ with $(\frac{1}{p},\frac{1}{q}) \in \Delta_2$, \eqref{weight:bound} holds true.
\item
When $A_{t}$ is defined in terms of (\ref{averagefinite2}), then
for $\mathcal{M}$ and any $p < r < q$ with $(\frac{1}{p},\frac{1}{q}) \in \Delta_3$, \eqref{weight:bound} holds true.
\item
When $A_{t}$ is defined in terms of (\ref{maximal3dimen}), then
for $\mathcal{M}$ and any $p < r < q$ with $(\frac{1}{p},\frac{1}{q}) \in \Delta_0$, \eqref{weight:bound} holds true.
\end{enumerate}
\end{thm}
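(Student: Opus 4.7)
The plan is to view Theorem \ref{weighted estimate} as the terminal synthesis of the three structural ingredients that have been developed in the preceding subsections, so the proof is essentially an invocation/assembly argument rather than a new analytical computation. For each of the five cases, the strategy is the same: verify that the local continuity property on the prescribed range $\mathcal{L}_n$ holds for the localized operator $\tilde{\mathcal{M}}_{\vec{\Phi}}$, then pass to the sparse bound via Theorem \ref{thm:sparse}, and finally read off the weighted bound from Corollary \ref{weight} (which itself is Proposition \ref{sparse:weight} applied to the sparse form).

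First I would fix one of the five items, say (1), and argue as follows. The operator $A_t$ in (\ref{averagefinite1}) with $c=0$, respectively (\ref{averagefinite3}), satisfies by Theorem \ref{continuty lemma} the continuity estimate
\begin{equation*}
\bigl\|\sup_{t\in[1,2]}|A_t f(y+z)-A_t f(y)|\bigr\|_{L^q(\mathbb{R}^2)} \lesssim |z|^{\epsilon}\|f\|_{L^p(\mathbb{R}^2)}
\end{equation*}
for every $(\tfrac{1}{p},\tfrac{1}{q})$ in $\Delta_1\setminus\{(0,0)\}$, with some $\epsilon=\epsilon(p,q)>0$. Since $\Delta_1$ is a convex polygon with non-empty interior and $(0,0)$ is a vertex, this is exactly the \emph{local continuity property} of Subsection \ref{SD} with $\mathcal{L}_n=\Delta_1$. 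Thus Theorem \ref{thm:sparse} applies and yields, for all $(\tfrac{1}{p},\tfrac{1}{q'})\in \Delta_1'$, the sparse domination
\begin{equation*}
|\langle \mathcal{M}f,g\rangle| \leq C \sup_{\mathcal{S}}\Lambda_{\mathcal{S},p,q'}(f,g).
\end{equation*}
Corollary \ref{weight} then gives (\ref{weight:bound}) for every $p<r<q$ with $(\tfrac{1}{p},\tfrac{1}{q})\in \Delta_1$ and every weight $\omega\in A_{r/p}\cap RH_{(q/r)'}$ with the exponent $\alpha$ of (\ref{weight:exp}). Cases (2)--(5) are obtained by repeating this three-step assembly verbatim, substituting the appropriate averaging operator and the matching region among $\Delta_0,\Delta_1,\Delta_2,\Delta_3$ supplied by Theorem \ref{continuty lemma}.

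A small bookkeeping point to address in each case is the removal of the isolated vertices $(0,0)$ (and $(1,1)$ in case (4)) that appear in the definition of the $\Delta_i$ but which are excluded in the hypotheses of Theorem \ref{continuty lemma}. This is harmless: for the weighted estimate we only need the continuity property on a set with non-empty interior, and the $\Delta_i\setminus\{(0,0),(1,1)\}$ are exactly such sets. Moreover, the exponent pair $(p,r,q)$ in Corollary \ref{weight} requires $p<r<q$ strictly, so the extremal vertices are never used in the weighted statement itself.

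The main (and essentially only) obstacle is conceptual rather than technical: one must be careful that the non-isotropic dilation $\delta_t$ of (\ref{dilation:generic}) is first normalized to a dilation of the form (\ref{dilation:restrict}) satisfying $b_j\geq 1$, as explained after (\ref{dilation:geq1}), so that the $\delta$-cubes and the associated Muckenhoupt/reverse-H\"older classes used in Proposition \ref{sparse:weight} are the correct ones. Once this normalization is made, the geometry of the sparse collection and the weight class in Corollary \ref{weight} match those underlying Theorem \ref{thm:sparse}, and the chain ``local continuity $\Rightarrow$ sparse $\Rightarrow$ weighted'' closes with no additional input. The exponent $\alpha$ in (\ref{weight:exp}) is the same one that appears in the Bernicot--Frey--Petermichl-type estimate referenced in \cite{Bernicot}, so no sharpness analysis is needed beyond what Proposition \ref{sparse:weight} already provides.
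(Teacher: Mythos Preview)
Your proposal is correct and matches the paper's approach exactly: the paper presents Theorem \ref{weighted estimate} as a direct synthesis of Theorem \ref{continuty lemma} (local continuity on the appropriate $\Delta_i$), Theorem \ref{thm:sparse} (sparse domination), and Corollary \ref{weight} (weighted bounds from sparse forms), without any additional analytical input. Your bookkeeping remarks about the excluded vertices and the dilation normalization are accurate refinements that the paper leaves implicit.
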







We notice that when $\omega \equiv 1 $, the results in Theorem \ref{weighted estimate} are consistent with the known unweighted estimates. For the reader's convenience, we enumerate some known unweighted estimates here.  When $c \neq 0$ and $a_{1} = a_{2}$, it can be deduced from \cite{I} that the global maximal operator with $A_{t}$ defined in (\ref{averagefinite1}) is $L^{p}$-bounded if and only if $p>d$. The result in \cite{stein} yields that the global maximal operator with $A_{t}$ defined in (\ref{averagefinite2}) is $L^{p}$-bounded if $p >1$. It follows from [Theorem 1.4, \cite{WL}] that the global maximal operator  with $A_{t}$ defined in (\ref{averagefinite3}) is $L^{p}$-bounded if $p >2$.  For the global maximal operators with $A_{t}$ defined in (\ref{maximal3dimen}), (\ref{averagefinite4}) and (\ref{averagefinite5}) respectively, the corresponding $L^{p}$-bounds were studied in [Theorem 1.6-Theorem 1.10, \cite{WL}].

\vskip .2in
\noindent
\textbf{Acknowledgements.} We would like to express our gratitude to Joris Roos for his insightful and helpful suggestions and comments. 
\section{Proof of  sparse domination bounds}
We will first sketch the proof of Theorem  \ref{thm:sparse} and then we will derive the weighted estimates as a direct corollary. The investigation in this section follows from similar arguments developed in Lacey \cite{Lacey} and Cladek-Ou \cite{cou}. \subsection{Discretization}
The proof of the theorem relies on a stopping-time argument and employs the nested property of the geometrical objects where the average is taken. 
It is therefore natural to define the dyadic $\delta$-cubes which possess the nested property as the standard dyadic cubes:
\begin{dfn}
Let $\vec{s}:= (s_1,\ldots, s_n) \in \{0, \frac{1}{3}, \frac{2}{3}\}^n$. Let $\mathbb{D}^{\delta}_s$ denote the shifted dyadic $\delta$-grid defined as
\begin{equation*}
\mathbb{D}^\delta_s := \{2^{\lceil{kb_1\rceil}}\left(i_1+\frac{s_1}{3}, i_1+1+\frac{s_1}{3} \right) \times \cdots \times 2^{\lceil{kb_n\rceil}}\left(i_n+\frac{s_n}{3}, i_n+1+\frac{s_n}{3} \right): k \in \mathbb{Z}, i_1,\ldots, i_n \in \mathbb{Z}  \}.
\end{equation*}
\end{dfn}
We would like to remark that the shifted dyadic $\delta$-grid satisfies the same nested property as the standard shifted dyadic grid.

We will now start discretizing the maximal function of interest. Suppose that $Q \in \mathbb{D}_s^\delta$ is a dyadic $\delta$-cube with $l_j(Q) = 2^{\lceil kb_j \rceil} (1 \leq j \leq n)$ for some $k \in \mathbb{Z}$ and let $cQ$ denote the generalized cube with the same center as $Q$ with the side-length $c\cdot l_j(Q)$ for $1 \leq j \leq n $. Define
\begin{equation} \label{max:loc}
\tilde{\mathcal{M}}_Q f:= \sup_{2^{k-m-1} \leq t \leq 2^{k-m}}A^{\vec{\Phi}}_{t} (f\chi_{\frac{1}{3}Q}),
\end{equation}
where  $m:= m(b_1, \ldots, b_n)$ is chosen so that $\tilde{\mathcal{M}}_Q f$ is supported on $Q$.

To relate the maximal function with $\tilde{\mathcal{M}}_Q f$, we first realize that
\begin{equation} \label{pre:discretize}
\mathcal{M}_{\vec{\Phi}}  f(y) := \sup_{t >0} A^{\vec{\Phi}}_t f (y) = \sup_{k \in \mathbb{Z}} \sup_{2^{k-m-1} \leq t \leq 2^{k-m}} A^{\vec{\Phi}}_{t} f(y).
\end{equation}
For any fixed $y \in \mathbb{R}^n$ and $k \in \mathbb{Z}$, there exists $Q' \in Q^{\delta}$ satisfying
\begin{enumerate}[(I)]
\item \label{prop:1}
 $l_j(Q')= 2^{\lceil k b_j \rceil -2}$ $1 \leq j \leq n$;
\item \label{prop:2}
for any $x' \in \supp(\eta)$ and any $2^{k-m-1} \leq t \leq 2^{k-m}$,
$$
y- \delta_t(x',\Phi(x')) \in Q'.
$$
\end{enumerate}
By the one-third trick, given $Q'$, there exists a dyadic $\delta$-cube $Q \in \mathbb{D}^\delta_s$ for some $s \in \{0, \frac{1}{3}, \frac{2}{3} \}^n$ such that $Q' \subseteq Q$ and $l_j(Q) \sim l_j(Q')$ for $1 \leq j \leq n$. One can thus find $Q \in \mathbb{D}^\delta_s$ satisfying the conditions \eqref{prop:1} (modulo constant) and \eqref{prop:2}. As a consequence,
$$
 \sup_{2^{k-m-1} \leq t \leq 2^{k-m}} A^{\vec{\Phi}}_{t} f(y) =  \sup_{2^{k-m-1} \leq t \leq 2^{k-m}} A^{\vec{\Phi}}_{t} \left(f \chi_Q \right)(y),
$$
for some $Q \in \mathbb{D}^\delta_s$ depending on $y$.

Next we would like to point out the covering property of a dyadic $\delta$-cube by smaller (shifted) dyadic $\delta$-cubes. We first introduce the following notation: for any interval $I \in \mathbb{R}$, define the translated copy of $I$ by
\begin{align*}
I^{+} := & \text{ translated copy of } I \text{ by shifting } I \text{ to the right with distance } |I|, \\
I^{-} := & \text{ translated copy of } I \text{ by shifting } I \text{ to the left with distance } |I|, \\
I^0 := & I.
\end{align*}
For any $Q = Q_1\times \ldots \times Q_n \subseteq \mathbb{R}^n$, define for $\gamma_1, \ldots, \gamma_n \in \{0, +, -\}$,
$$
Q^{\gamma_1,\ldots, \gamma_n} := Q_1^{\gamma_1} \times \ldots \times Q_n^{\gamma_n}.
$$
We notice the following observations which imply the covering property by dyadic $\delta$-cubes. In particular, for any $Q \in \mathbb{D}^\delta_{s_0}$ with $s_0 \in \{0, \frac{1}{3}, \frac{2}{3} \}^n$,
\begin{enumerate}
\item
$Q = \displaystyle \bigcup_{\gamma_1, \ldots \gamma_n \in \{0, +, -\}} \left(\frac{1}{3}Q\right)^{\gamma_1, \ldots, \gamma_n}$;
\item
for any fixed $(\gamma_1, \ldots, \gamma_n) \in \{0, +, - \}^n$, $\left(\frac{1}{3}Q\right)^{\gamma_1, \ldots, \gamma_n} \in \mathcal{D}^\delta_s$ for some $s \in \{0, \frac{1}{3}, \frac{2}{3}\}^n$.
\end{enumerate}
By applying the above observations, one can rewrite
\begin{align*}
&\sup_{2^{k-m-1} \leq t \leq 2^{k-m}} A^{\vec{\Phi}}_{t}(f \chi_Q) (y) \leq \sum_{\gamma_1, \ldots, \gamma_n \in  \{0, +, - \}} \sup_{2^{k-m-1} \leq t \leq 2^{k-m}} A^{\vec{\Phi}}_{t}(f \chi_{\left(\frac{1}{3}Q\right)^{\gamma_1,\ldots, \gamma_n}} )(y) \\
\leq & \sum_{s \in \{0, \frac{1}{3}, \frac{2}{3}\}^n}\sum_{\substack{Q \in \mathbb{D}_s^\delta \\ l_1(Q) = 2^{\lceil k b_1 \rceil}}} \sup_{2^{k-m-1} \leq t \leq 2^{k-m}} A^{\vec{\Phi}}_{t}(f \chi_{\frac{1}{3}Q} )(y) = \sum_{s \in \{0, \frac{1}{3}, \frac{2}{3}\}^n}\sum_{\substack{Q \in \mathbb{D}_s^\delta \\ l_1(Q) = 2^{\lceil k b_1 \rceil}}} \tilde{\mathcal{M}}_Q f(y),
\end{align*}
where $ \tilde{\mathcal{M}}_Q f$ is defined in \eqref{max:loc}. By recalling the identity \eqref{pre:discretize}, we deduce that
\begin{equation*}
\mathcal{M}_{\vec{\Phi}} f(y) \leq  \sup_{k \in \mathbb{Z}} \sum_{s \in \{0, \frac{1}{3}, \frac{2}{3}\}^n}\sum_{\substack{Q \in \mathbb{D}_s^\delta \\ l_1(Q) = 2^{\lceil k b_1 \rceil}}} \tilde{\mathcal{M}}_Q f(y) \leq \sum_{s \in \{0, \frac{1}{3}, \frac{2}{3}\}^n} \sup_{k \in \mathbb{Z}}\sup_{\substack{Q \in \mathbb{D}_s^\delta \\ l_1(Q) = 2^{\lceil k b_1 \rceil}}} \tilde{\mathcal{M}}_Q f(y),
\end{equation*}
where the last inequality follows from the fact that for any fixed $y$, there exists a unique $Q \in \mathbb{D}_s^\delta$ such that $y \in Q$ and $ l_1(Q) = 2^{\lceil k b_1 \rceil}$.

It is therefore sufficient to prove that the sparse bound holds for
\begin{equation} \label{max:discrete}
\tilde{\mathcal{M}}_{\mathbb{D}_s^\delta}f := \sup_{Q \in \mathbb{D}_s^\delta} \tilde{\mathcal{M}}_Q f
\end{equation}
with any choice of dyadic $\delta$-grid $s \in \{0, \frac{1}{3}, \frac{2}{3} \}^n$. Since such choice does not affect the argument, we will proceed assuming that the dyadic $\delta$-grid is fixed and denote it by $\mathbb{D}^\delta$.

\subsection{Induction}

\subsubsection{Stopping-time algorithm}
We will illustrate the algorithm to select a sparse collection of dyadic $\delta$-cubes which is an iterative procedure.

One simple observation is that to show the sparse bound for \eqref{max:discrete}, it remains to prove the same sparse bound for the operator \eqref{max:discrete} with the supreme taken over an arbitrary finite sub-collection such that the sparse bound is independent of the cardinality of the sub-collection. As a result, we can restrict out focus on a finite sub-collection of dyadic  $\delta$-cubes, denoted by $\mathscr{Q}$, which implies that all the dyadic $\delta$-cubes in $\mathscr{Q}$ are supported on some $Q_0 \in \mathbb{D}^\delta$.

Apriori, the sparse collection denoted by $\mathcal{S}$ is empty. We will start by adding $Q_0$ into the collection $\mathcal{S}$. Next, we denote by $\mathscr{Q}^S_{Q_0}$ a sub-collection of $ \mathscr{Q}$ such that for any $T \in \mathscr{Q}^S_{Q_0}$,
\begin{enumerate}
\item
 $T$ is maximal with respect to containment;
 \item
 $T$ satisfies that
\begin{equation*}
\langle f  \rangle_{T,p}  > C \langle f  \rangle_{Q_0,p} \text{ or } \langle g  \rangle_{T,q'}  > C \langle g  \rangle_{Q_0,q'},
\end{equation*}
for some constant $C \gg 1$.
\end{enumerate}

Now we add all the dyadic $\delta$-cubes in $\mathscr{Q}^S_{Q_0}$ to the collection $\mathcal{S}$. One needs to verity the sparseness of the modified collection:
\begin{enumerate}
\item
for any $T, T' \in \mathscr{Q}^S_{Q_0}$, $T \cap T' = \emptyset$ by the maximality and the nested property of the dyadic $\delta$-grid;
\item
by the disjointness of $\mathscr{Q}^S_{Q_0}$, one has
\begin{equation*}
\big|\bigcup_{T \in \mathscr{Q}^S_{Q_0}} T \cap Q_0 \big| = \sum_{T \in \mathscr{Q}^S_{Q_0}}|T \cap Q_0| \leq \frac{1}{4}|Q_0|,
\end{equation*}
for $C$ sufficiently large. The inequality follows from the stopping condition: suppose that $T$ satisfies $\langle f  \rangle_{T,p}  > C \langle f  \rangle_{Q_0,p}$ (and the same reasoning can be applied to those satisfying the second stopping condition), then
\begin{equation*}
C^p \langle f^p \rangle_{Q_0} \sum_{T \in \mathscr{Q}^S_{Q_0}} |T| < \sum_{T \in \mathscr{Q}^S_{Q_0}} |T| \langle f^p \rangle_{T} \leq \langle f^p \rangle_{Q_0}|Q_0|.
\end{equation*}
\end{enumerate}

We will iterate our algorithm to each $T \in \mathscr{Q}^S_{Q_0}$ to choose dyadic $\delta$-cubes that we will successively add in our collection $\mathcal{S}$. Before we proceed with this iterative procedure, we will elaborate on the analytic implication from each step of the stopping-time algorithm, which can be viewed as an inductive step.

In particular, we define the "good" cubes with respect to the averages over $Q_0$ as
\begin{equation} \label{def:good}
\mathscr{Q}^{good}_{Q_0} := \{Q \in \mathscr{Q}: Q \subseteq Q_0, Q \cap T^c \neq \emptyset \text{ for some } T \in \mathscr{Q}_{Q_0}^S \}.
\end{equation}

\begin{rem} \label{key:prop}
It is not difficult to observe that if $Q \in \mathscr{Q}_{Q_0}^{good}$ and $Q \subseteq Q'$, then $Q' \in \mathscr{Q}_{Q_0}^{good}$.
Moreover, for any $Q \in \mathscr{Q}_{Q_0}^{good}$,
 $$\langle f \rangle_{Q,p} \leq C \langle f \rangle_{Q_0,p} \text{ and } \langle g \rangle_{Q,q'} \leq C \langle g \rangle_{Q_0,q'}$$
 since otherwise $Q \subseteq T$ for some $T \in \mathscr{Q}_{Q_0}^S$, which contradicts the definition \eqref{def:good}.
\end{rem}

Remark \ref{key:prop} infers the key property of the sub-collection $\mathscr{Q}_{Q_0}^{good}$:
\begin{equation} \label{prop:lem}
\sup_{Q \in \mathscr{Q}_{Q_0}^{good}} \sup_{Q': Q \subseteq Q' \subseteq Q_0} \frac{\langle f \rangle_{Q',p}}{\langle f \rangle_{Q_0,p}} + \frac{\langle g \rangle_{Q',q'}}{\langle g \rangle_{Q_0,q'}}   \leq 2C.
\end{equation}

When the supreme is restricted to ``good" cubes, the corresponding maximal function is well-behaved and the bi-linear form can be controlled by the sparse form involving $Q_0$, as highlighted in the following lemma.

\begin{lem} \label{inductive}
Let $(p,q)$ denote the exponents specified in Theorem \ref{thm:sparse}. Let $\mathscr{Q}^{good}_{Q_0}$ be the sub-collection of dyadic $\delta$-cubes defined in \eqref{def:good} which satisfies the property \eqref{prop:lem}. Then
\begin{equation} \label{ineq:induct}
|\langle \sup_{Q \in \mathscr{Q}^{good}_{Q_0}} \tilde{\mathcal{M}}_Q f, g \rangle |  \lesssim |Q_0| \langle f \rangle_{Q_0,p} \langle g \rangle_{Q_0,q'}.
\end{equation}
\end{lem}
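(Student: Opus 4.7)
My plan is to bound $|\langle\sup_{Q\in\mathscr{Q}^{good}_{Q_0}}\tilde{\mathcal{M}}_Q f,g\rangle|$ by combining an $L^p\to L^q$ estimate for the unit-scale operator $\tilde{\mathcal{M}}_{\vec{\Phi}}$ with a Sobolev-type scale-regularity argument powered by the local continuity property \eqref{cont:scale1}.

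By homogeneity and \eqref{prop:lem}, I may normalize so that $\langle f\rangle_{Q_0,p}=\langle g\rangle_{Q_0,q'}=1$, which gives $\langle f\rangle_{Q,p}\lesssim 1$ and $\langle g\rangle_{Q,q'}\lesssim 1$ for every $Q\in\mathscr{Q}^{good}_{Q_0}$. Since $\tilde{\mathcal{M}}_Q f$ is supported in $Q\subseteq Q_0$, H\"older's inequality reduces the claim to the unweighted estimate
\[
\Bigl\|\sup_{Q\in\mathscr{Q}^{good}_{Q_0}}\tilde{\mathcal{M}}_Q f\Bigr\|_{L^q(Q_0)}\lesssim |Q_0|^{1/q}.
\]
I organize the supremum by scale: set
\[
\mathcal{N}_k f:=\sum_{\substack{Q\in\mathscr{Q}^{good}_{Q_0}\\ l_1(Q)=2^{\lceil kb_1\rceil}}}\tilde{\mathcal{M}}_Q f\cdot\chi_Q,
\]
so that the summands are pairwise disjointly supported by Remark \ref{sep:scale} and $\sup_Q\tilde{\mathcal{M}}_Q f=\sup_k \mathcal{N}_k f$. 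The non-isotropic dilation $\delta_{2^k}$ identifies each $\tilde{\mathcal{M}}_Q$ with the unit-scale operator $\tilde{\mathcal{M}}_{\vec{\Phi}}$, so the $L^p\to L^q$ boundedness of $\tilde{\mathcal{M}}_{\vec{\Phi}}$ in the interior of $\mathcal{L}_n$ (which is the $z=0$ consequence of the local continuity property, upgraded by a standard passage from continuity estimates to boundedness) yields $\|\tilde{\mathcal{M}}_Q f\|_{L^q(Q)}\lesssim |Q|^{1/q}\langle f\rangle_{Q,p}$. Combined with disjointness at scale $k$ and the normalized good-cube bound, this produces the uniform-in-$k$ estimate $\|\mathcal{N}_k f\|_{L^q}\lesssim |Q_0|^{1/q}$.

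To promote this per-scale bound to a bound on $\sup_k\mathcal{N}_k f$ I use \eqref{cont:scale1} as a scale-regularity device. Rescaling the continuity property to scale $2^k$ and applying it to each $\tilde{\mathcal{M}}_Q$ in the sum defining $\mathcal{N}_k f$, one obtains, for shifts $z$ smaller than a scale-$k$ $\delta$-cube,
\[
\|\mathcal{N}_k f-\tau_z\mathcal{N}_k f\|_{L^q}\lesssim 2^{-\varepsilon}|Q_0|^{1/q}.
\]
Comparing $\mathcal{N}_k f$ with $\mathcal{N}_{k+1}f$ by averaging against a bump adapted to the next dyadic scale then gives a geometric difference estimate $\|\mathcal{N}_{k+1}f-\mathcal{N}_k f\|_{L^q}\lesssim 2^{-c\varepsilon}|Q_0|^{1/q}$ for a fixed $c>0$. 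The pointwise bound $\sup_k\mathcal{N}_k f\le|\mathcal{N}_{k_{\max}}f|+\sum_k|\mathcal{N}_{k+1}f-\mathcal{N}_k f|$ and geometric summation across the finitely many scales below the size of $Q_0$ close the argument, after which pairing with $g$ and H\"older's inequality on $Q_0$ restore the factors $\langle f\rangle_{Q_0,p}\langle g\rangle_{Q_0,q'}$.

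The main technical obstacle is transporting the continuity exponent $\varepsilon$ correctly through the anisotropic dyadic structure \eqref{dilation:restrict}: the gain $2^{-c\varepsilon}$ must be uniform in the anisotropy vector $(b_1,\ldots,b_n)$ and must dominate the number of scales one iterates through. The separation-of-scales guaranteed by Remark \ref{sep:scale}, which is precisely why $b_j\ge 1$ was imposed, is what ensures that consecutive cubes differ by a definite factor in each direction and therefore that the geometric sum remains convergent regardless of the number of scales below $Q_0$.
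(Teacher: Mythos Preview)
Your approach has a genuine gap at the key step where you claim
\[
\|\mathcal{N}_{k+1}f-\mathcal{N}_k f\|_{L^q}\lesssim 2^{-c\varepsilon}|Q_0|^{1/q}.
\]
The local continuity property \eqref{cont:scale1} gives regularity in the \emph{spatial} variable $y$ (it controls $A_t f-\tau_z A_t f$), not in the \emph{scale} variable $t$. The operators $\mathcal{N}_k$ and $\mathcal{N}_{k+1}$ involve suprema of $A^{\vec\Phi}_t$ over disjoint dyadic $t$-intervals $[2^{k-m-1},2^{k-m}]$ and $[2^{k-m},2^{k-m+1}]$; there is no mechanism in \eqref{cont:scale1} that forces these to be close. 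The manoeuvre of ``averaging against a bump adapted to the next dyadic scale'' does not bridge this: a shift $z$ of size comparable to a scale-$(k+1)$ cube is, after rescaling to unit scale at level $k$, of size $\sim 1$, so the factor $|z|^{\varepsilon}$ in the continuity estimate yields no gain. Even granting your bound, $2^{-c\varepsilon}$ is a fixed constant independent of $k$, so the telescoping sum $\sum_k\|\mathcal{N}_{k+1}f-\mathcal{N}_k f\|_{L^q}$ is of order (number of scales)$\times 2^{-c\varepsilon}|Q_0|^{1/q}$, which is not uniform in $\mathscr{Q}$; your ``geometric summation'' is not geometric.

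The paper's argument is structurally different and avoids any comparison of $\mathcal{N}_k$ with $\mathcal{N}_{k+1}$. It linearizes the supremum via the disjoint sets $F_Q=\{x:\sup_{Q'}\tilde{\mathcal M}_{Q'}f(x)=\tilde{\mathcal M}_Q f(x)\}$, then performs a Calder\'on--Zygmund decomposition of $f=f_\infty+\sum_P b_P$ at height $C\langle f\rangle_{Q_0,p}$. The good part is controlled trivially by $\|f_\infty\|_\infty\lesssim\langle f\rangle_{Q_0,p}$. For the bad part one exploits the \emph{mean-zero} of each $b_P$: passing to the adjoint one writes $\langle b_P,\widetilde{A}_{t_Q}^{\vec\Phi}(g\mathbbm{1}_{F_Q})\rangle$ as an average of differences $\widetilde{A}_{t_Q}^{\vec\Phi}(g\mathbbm{1}_{F_Q})(x)-\widetilde{A}_{t_Q}^{\vec\Phi}(g\mathbbm{1}_{F_Q})(x-y)$ over $y\in P$, and \emph{this} is exactly what the (dual of the) continuity property estimates, producing a genuine geometric factor $2^{-\varepsilon k}$ in the gap $k$ between the scale of $P$ and the scale of $Q$. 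Summation over $k\ge 1$ then converges uniformly in the collection. The decay comes from the relation between the size of $P$ and the size of $Q$, not from comparing adjacent $\mathcal{N}_k$.
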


We can therefore decompose the original bi-linear form as
\begin{equation} \label{induct:1}
|\langle \sup_{Q \in \mathscr{Q}} \tilde{\mathcal{M}}_Q f, g \rangle |
\leq |\langle \sup_{Q \in \mathscr{Q}^{good}_{Q_0}} \tilde{\mathcal{M}}_Q f, g \rangle | + |\langle \sup_{Q \in \mathscr{Q} \setminus \mathscr{Q}^{good}_{Q_0}} \tilde{\mathcal{M}}_Q f, g \rangle |,
\end{equation}
where the first term in \eqref{induct:1} can be estimated by \eqref{ineq:induct} in Lemma \ref{inductive}.

To estimate the second term, one observes that for $Q \in \mathscr{Q} \setminus \mathscr{Q}^{good}_{Q_0}$, $Q \cap T^c = \emptyset$ for any $T \in  \mathscr{Q}_{Q_0}^S$. By the nested property of the dyadic $\delta$-grid and the disjointness of $\mathscr{Q}_{Q_0}^S$, $Q \subseteq T$ for a unique $T \in \mathscr{Q}_{Q_0}^S$. Denote by such sub-collection localized to $T$ as $\mathscr{Q}_T$.

The same procedure -- selection of the sparse dyadic $\delta$-cubes and the inductive estimate \eqref{ineq:induct} -- can now be applied to each $T \in \mathscr{Q}^S_{Q_0}$, in which case the role of $Q_0$ is played by $T$ and $\mathscr{D}$ by $\mathscr{Q}_T$. Let $\mathscr{Q}^{good}_T \subseteq \mathscr{Q}_T$ denote the sub-collection of ``good cubes" localized on $T$ for each $T \in \mathscr{Q}^S_{Q_0}$. One can now decompose the second term in \eqref{induct:1} as
\begin{equation*}
\sum_{T \in \mathscr{Q}^S_{Q_0}}\big|\langle \sup_{Q \in \mathscr{Q}_T} \tilde{\mathcal{M}}_Q f, g \rangle \big| \leq \sum_{T \in \mathscr{Q}^S_{Q_0}} |\langle \sup_{Q \in \mathscr{Q}^{good}_{T}} \tilde{\mathcal{M}}_Q f, g \rangle | +  \sum_{T \in \mathscr{Q}^S_{Q_0}} |\langle \sup_{Q \in \mathscr{Q}_T \setminus \mathscr{Q}^{good}_{T}} \tilde{\mathcal{M}}_Q f, g \rangle |,
\end{equation*}
where one applies the inductive estimate analogous to \eqref{ineq:induct} to deduce that
\begin{equation*}
 \sum_{T \in \mathscr{Q}^S_{Q_0}} |\langle \sup_{Q \in \mathscr{Q}^{good}_{T}} \tilde{\mathcal{M}}_Q f, g \rangle | \lesssim  \sum_{T \in \mathscr{Q}^S_{Q_0}} \langle f \rangle_{T,p} \langle g \rangle_{T,q'}|T|.
\end{equation*}

Further iterations and inductions can be applied and at each step, the sparseness of the collection and the inductive estimate can be verified using the same argument sketched above. Due to the finiteness of the collection $\mathscr{D}$, the procedure ends in finitely many steps and the desired sparse bound is obtained.

\subsubsection{Proof of the inductive step - Lemma \ref{inductive}}
It remains to prove the inductive estimate \eqref{ineq:induct}. We first linearize the expression on the left hand side of \eqref{ineq:induct}. For any $Q \in \mathscr{Q}_{Q_0}^{good}$, define
\begin{equation*}
F_Q := \{x \in Q_0: \sup_{Q' \in \mathscr{Q}_{Q_0}^{good}} \tilde{\mathcal{M}}_{Q'} f(x) = \tilde{\mathcal{M}}_Qf(x)\}.
\end{equation*}
More precisely, $Q$ is the largest $\delta$-cube contained in $Q_0$ such that the supreme is attained.
By definition, $\{ F_Q\}_{Q \in \mathscr{Q}_{Q_0}^{good}}$ consist of pairwise disjoint sets which cover $Q_0$ and hence the support of $\sup_{Q \in \mathscr{Q}} \mathcal{M}_Q f$.

 The bilinear form can thus be rewritten as
\begin{equation} \label{form:linearize}
\langle \sup_{Q \in \mathscr{Q}^{good}_{Q_0}} \tilde{\mathcal{M}}_Q f, g \rangle = \sum_{Q \in \mathscr{Q}_{Q_0}^{good}} \langle \tilde{\mathcal{M}}_Q f, g \mathbbm{1}_{F_Q} \rangle.
\end{equation}

We now implement the Calder\'on-Zygmund decomposition to $f$ with the threshold $C\langle f \rangle_{Q_0,p}$ so that
\begin{equation} \label{f:CZ}
f = f_{\infty} + f_b ,
\end{equation}
where $f_{\infty}$ is the good part satisfying
\begin{equation} \label{f:good}
\|f_\infty\|_{L^\infty} \lesssim \langle f \rangle_{Q_0,p}.
\end{equation}
Meanwhile the bad part $b$ can be rewritten as
\begin{equation*}
f_b = \sum_{P \in \mathcal{B}} \big(f - \langle f \rangle_{P} \big) \mathbbm{1}_{P} =: \sum_{P \in \mathcal{B}} b_P,
\end{equation*}
where $\mathcal{B} \subseteq \mathbb{D}^\delta$ denotes a disjoint sub-collection of dyadic $\delta$-cubes such that for any $P \in \mathcal{B}$, $P$ is the maximal $\delta$-cube in $\mathscr{Q}$ asatisfying
\begin{equation*}
\langle f \rangle_{P,p} > C\langle f \rangle_{Q_0,p}.
\end{equation*}
Then one notices that $P \subseteq T$ for some unique $T \in \mathscr{D}^S_{Q_0}$.

One important fact imposed from the Calder\'on-Zygmund decomposition is that $b_P$ for $P \in \mathcal{B}$ has the mean-zero property:
\begin{equation} \label{mean:0}
\int b_P =0.
\end{equation}

By applying the decomposition \eqref{f:CZ} to the bilinear form, one obtains
\begin{equation*}
|\eqref{form:linearize}| \leq \sum_{Q \in \mathscr{Q}_{Q_0}^{good}}\big|\langle \tilde{\mathcal{M}}_Q f_\infty, g \mathbbm{1}_{F_Q} \rangle \big| + \bigg| \sum_{Q \in \mathscr{Q}_{Q_0}^{good}} \langle \tilde{\mathcal{M}}_Q f_b, g \mathbbm{1}_{F_Q} \rangle \bigg| =: I + II.
\end{equation*}
\noindent
\textbf{Estimate of $I$.}
By the point-wise control of the good part \eqref{f:good}, one has the trivial estimate
\begin{equation*}
\sum_{Q \in \mathscr{Q}_{Q_0}^{good}} |\langle \tilde{\mathcal{M}}_Q f_\infty, g\rangle | \lesssim \langle f \rangle_{Q_0,p}\sum_{Q \in \mathscr{Q}_{Q_0}^{good}} \|g \mathbbm{1}_{F_Q}\|_{L^1} \leq \langle f \rangle_{Q_0,p} \|g \mathbbm{1}_{Q_0}\|_{L^1} \lesssim   \langle f \rangle_{Q_0,p} \langle g \rangle_{Q_0,q} |Q_0|
\end{equation*}
where the second inequality follows from the disjointness of the sets $F_Q$.
\vskip .2in

\noindent
\textbf{Estimate of $II$.}
The estimation of $II$ relies heavily on the \textit{local continuity property} as we shall see later.

We will further decompose $\mathcal{B}$ in terms of their sizes:
\begin{equation*}
\mathcal{B} = \bigcup_{k \in \mathbb{Z}} \mathcal{B}_k,
\end{equation*}
where the sub-collection $\mathcal{B}_k$ is defined by by
\begin{align*}
\mathcal{B}_k := & \{ P \in \mathcal{B}: l_j(P) = 2^{\lceil kb_j \rceil} \text{ for } 1 \leq j \leq n \}.
\end{align*}

Let $B_k:= \sum_{P \in \mathcal{B}_k} b_P$. Then $II$ can be decomposed as
\begin{align*}
\bigg| \sum_{Q \in \mathscr{Q}_{Q_0}^{good}} \langle \tilde{\mathcal{M}}_Q b, g \mathbbm{1}_{F_Q} \rangle \bigg| \leq  \sum_{Q \in \mathscr{Q}_{Q_0}^{good}}  \sum_{k}\big| \langle \tilde{\mathcal{M}}_Q B_k, g \mathbbm{1}_{F_Q}  \rangle \big|.
\end{align*}

One notices that
 for any $Q \in \mathscr{D}^{good}_{Q_0}$ fixed, $\tilde{\mathcal{M}}_Q b_P \neq 0$ if and only if $P \cap Q \neq \emptyset$ , which implies $T \cap Q \neq \emptyset$ for the appropriate $T \in \mathscr{D}^S_{Q_0}$ with $P \subseteq T$. By the nested property of the dyadic $\delta$-grid and the key property \eqref{prop:lem},
\begin{equation} \label{relation:PQ}
Q \supsetneq T \supseteq P.
\end{equation}
As a consequence, for a fixed $Q \in \mathscr{Q}_{Q_0}^{good}$ with $l_j(Q) = 2^{\lceil lb_j \rceil}$, $1 \leq j \leq n$ and any $P \in \mathcal{B}_k$,
$$\tilde{\mathcal{M}}_Q b_P \neq 0 \text{ if and only if } k < l.$$
One can thus rewrite
\begin{equation*}
\sum_{k}\big| \langle \tilde{\mathcal{M}}_Q B_{k}, g \mathbbm{1}_{F_Q}  \rangle \big| = \sum_{k < l}\langle \tilde{\mathcal{M}}_Q B_{k}, g \mathbbm{1}_{F_Q}  \rangle \big| = \sum_{k \geq 1}\langle \tilde{\mathcal{M}}_Q B_{l-k}, g \mathbbm{1}_{F_Q}  \rangle \big|.
\end{equation*}
For fixed $k \geq 1$, one derives that
\begin{align} \label{pre:mean:0}
 \big| \langle \tilde{\mathcal{M}}_Q B_{l-k}, g \mathbbm{1}_{F_Q}  \rangle \big| =  \big|\langle  \tilde{\mathcal{M}}_Q\left(B_{l-k} \mathbbm{1}_Q\right),  g \mathbbm{1}_{F_Q}  \rangle \big|= \big| \langle B_{l-k} \mathbbm{1}_Q, \tilde{\mathcal{M}}^*_Q (g \mathbbm{1}_{F_Q})  \rangle \big|,
\end{align}
where $ \tilde{\mathcal{M}}^*_Q$ is the adjoint operator of $ \tilde{\mathcal{M}}_Q$. One notices that $ \tilde{\mathcal{M}}_Q$ can be linearized as
\begin{equation*}
 \tilde{\mathcal{M}}_Q f(x) =\sup_{2^{l-m-1} \leq t' \leq 2^{l-m}}A^{\vec{\Phi}}_{t'} \left(f \mathbbm{1}_{\frac{1}{3} Q}\right) (x) = A^{\vec{\Phi}}_{t_Q(x)} \left(f \mathbbm{1}_{\frac{1}{3} Q}\right) (x) ,
\end{equation*}
where $t_Q(x) \in [2^{l-m-1}, 2^{l-m}]$ such that for $x \in Q$ fixed.

Let $\widetilde{A}^{\vec{\Phi}}_t$ be the abbreviation for the adjoint operator of the average operator $A_t^{\vec{\Phi}}$. By invoking the mean-zero property of $b_P$ \eqref{mean:0} and thus of $B_{l-k}$, one deduces
\begin{align}
\eqref{pre:mean:0} =& \sum_{P \in \mathcal{B}_{l-k}} \frac{1}{|P|}\bigg|\int_P\int_{P} B_{l-k}(x) \mathbbm{1}_Q(x)\left(\widetilde{A}^{\vec{\Phi}}_{t_Q(x)} (g \mathbbm{1}_{F_Q})(x) -\widetilde{A}^{\vec{\Phi}}_{t_Q(x)} (g \mathbbm{1}_{F_Q})(x')\right) dxdx' \bigg| \nonumber\\
\leq & \sum_{P \in \mathcal{B}_{l-k}}\frac{1}{|P_0|} \int_{P_0} \bigg| \int_{P}B_{l-k}(x) \mathbbm{1}_Q(x) \left( \widetilde{A}^{\vec{\Phi}}_{t_Q(x)} (g \mathbbm{1}_{F_Q})(x) - \widetilde{A}^{\vec{\Phi}}_{t_Q(x)} (g \mathbbm{1}_{F_Q})(x-y)\right) dx\bigg|dy \nonumber \\
\leq &\frac{1}{|P_0|} \int_{P_0} \bigg| \int B_{l-k}(x) \mathbbm{1}_Q(x) \left(\widetilde{A}^{\vec{\Phi}}_{t_Q(x)} (g \mathbbm{1}_{F_Q})(x) - \widetilde{A}^{\vec{\Phi}}_{t_Q(x)} (g \mathbbm{1}_{F_Q})(x-y)\right) dx\bigg|dy,\label{pre:cont}
\end{align}
where $P_0$ is the dyadic $\delta$-cube centered at $0$ with the same shape as $P$.

We now recall the local continuity property of the maximal operator \eqref{cont:scale1}, which is equivalent to its scale-invariant dual form:
\begin{equation} \label{dual:cont}
\langle \sup_{2^{l} \leq t \leq 2^{l+1}} \big|A_t^{\vec{\Phi}} (f_1 \mathbbm{1}_Q) - \tau_y A_t^{\vec{\Phi}} (f_1 \mathbbm{1}_Q) \big|, f_2 \mathbbm{1}_Q \rangle  \lesssim \left| \left(\frac{y_1}{2^{lb_1}}, \ldots, \frac{y_n}{2^{lb_n }} \right)\right|^\epsilon \langle f_1 \rangle_{Q,p} \langle f_2 \rangle_{Q,q'} |Q|,
\end{equation}
for some $\epsilon=\epsilon(n,p,q') >0$ and the bound is uniform with respect to $y$.

 One can now apply \eqref{dual:cont} to estimate \eqref{pre:cont} as
\begin{align*}
& \frac{1}{|P_0|} \int_{P_0} \int \big| B_{l-k}(x) \mathbbm{1}_Q(x)\big| \sup_{2^{l-m-1} \leq t \leq 2^{l-m} }\big| \widetilde{A}^{\vec{\Phi}}_{t} (g \mathbbm{1}_{F_Q})(x) - \widetilde{A}^{\vec{\Phi}}_{t} (g \mathbbm{1}_{F_Q})(x-y) \big| dxdy \\
\lesssim  & \sup_{y:  |y_j| \lesssim 2^{\lceil (l-k) b_j \rceil}} \left| \left(\frac{y_1}{2^{(l-m-1)b_1}}, \ldots, \frac{y_n}{2^{(l-m-1)b_n }} \right)\right|^\epsilon  \langle B_{l-k} \mathbbm{1}_Q \rangle_{Q,p} \langle g \mathbbm{1}_{F_Q} \rangle_{Q,q'} |Q|
\\
\leq & C(b_1,\ldots, b_n, n)2^{-k\epsilon}\langle B_{l-k} \mathbbm{1}_Q \rangle_{Q,p} \langle g \mathbbm{1}_{F_Q}\rangle_{Q,q'} |Q|,
\end{align*}
where we have also used the fact that $F_Q \subseteq Q$ for each $Q \in \mathscr{Q}_{Q_0}^{good}$.

It remains to show that uniformly in $k \geq 0$,
\begin{equation*}
\sum_{l \in \mathbb{Z}}\sum_{\substack{Q \in \mathscr{Q}_{Q_0}^{good} \\ l_1(Q) = 2^{{\lceil lb_1 \rceil}}}}\langle B_{l-k} \mathbbm{1}_Q \rangle_{Q,p} \langle g \mathbbm{1}_{F_Q}\rangle_{Q,q'} |Q| \lesssim \langle f \rangle_{Q_0,p} \langle g \mathbbm{1}_{F_Q} \rangle_{Q_0,q'} |Q_0|,
\end{equation*}
whose argument follows closely from the investigation in Lacey \cite{Lacey}. The key properties requested in the argument include:
\begin{enumerate}
\item
disjointness of the supports of $B_{l-k}$ when varying $l \in \mathbb{Z}$;
\item
disjointness of the supports of $F_Q$ for all $Q \in \mathscr{Q}_{Q_0}^{good}$;
\item
stopping condition that $\langle f \rangle_P \lesssim \langle f \rangle_{Q_0,p}$ for any $P \in \mathcal{B}$;
\item
the assumption that $f$ is a characteristic function.
\end{enumerate}
Due the reliance of the argument on the last condition, the inductive estimate and thus the sparse bound is proved for the special case when $f$ is a characteristic function. The generic case follows from a standard procedure of expressing $f$ in terms of its level sets, applying the sparse bound for each piece and invoking Carleson embedding inequality, which are all included in Lacey \cite{Lacey}. This completes the proof of Theorem \ref{thm:sparse}.
\vskip .2in

\section{Proofs of  $L^{p} \rightarrow L^{q}$ estimates and continuity lemmas for localized maximal functions}
\subsection{Preliminaries}
In this subsection, we introduce  some lemmas which are useful in the proof of the  $L^{p} \rightarrow L^{q}$ estimates.

We first introduce some local smoothing results for Fourier integral operators whose phase functions satisfy the so-called ``cinematic curvature" condition, which play a key role throughout our proof. Here we follow the notation adopted by Subsection 2.1.1 in \cite{WL}, and we refer the readers to \cite{WL} for more details.

In our case, we consider the Fourier integral operators given by
\begin{equation}\label{localinteg}
\mathcal{F}f(z)=\int_{{\mathbb{R}}^n}e^{i\varphi(z,\eta)}a(z,\eta)\widehat{f}(\eta)d\eta,\hspace{0.3cm}z=(x,t)\in \mathbb{R}^n\times \mathbb{R},\hspace{0.3cm}f\in C_0^{\infty}(\mathbb{R}^n),
\end{equation}
where the phase function $\varphi$  satisfies the ``cinematic curvature" condition:
\begin{equation}
\textrm{rank}\hspace{0.1cm} \varphi_{x,\eta}''\equiv 2,
\end{equation}
and
 \begin{equation}\label{cone:condition}
\textrm{rank}(\frac{\partial^2}{\partial\eta_j\partial\eta_k})\langle\varphi_z'(z_0,\eta),\theta\rangle= 1,
\end{equation} 
where $\pm \theta$ are the unique directions for which $\bigtriangledown_{\eta}\langle\varphi'_z(z_0,\eta),\theta\rangle=0$.
The symbol $a$ of order $\mu$ has small conical support in $\mathbb{R}^{3}\times \mathbb{R}^2$, which means that $a$ vanishes for all $z$ outside a small compact set and for all $\eta=(\eta_1,\eta')$ outside a narrow cone $\{\eta:|\eta'|\leq\varepsilon\eta_1\}$.

We fix $\beta\in C_0^{\infty}(\mathbb{R})$ supported in $[1/2,2]$ and set $a_{\lambda}(x,t,\eta)=\lambda^{-\mu}\beta(|\eta|/\lambda)a(x,t,\eta)$ for fixed $\lambda>1$. Then $a_{\lambda}$ is a symbol of order zero and satisfies the usual symbol estimates uniformly in $\lambda$.  Mockenhaupt, Seeger and Sogge showed the following dyadic $L^{p}$-estimate for the Fourier integral operator $\mathcal{F}$ defined in \eqref{localinteg}.
\begin{thm}\cite{mss}\label{lplocalsmoothing} 
Let $\varphi$ and $a_\lambda$ be defined as above. Then the corresponding Fourier integral operator satisfies
\begin{equation*}
\biggl(\int_{1/2}^4\int_{{\mathbb{R}}^2}\left|\int_{{\mathbb{R}}^2}e^{i\varphi(x,t,\eta)}a_{\lambda}(x,t,\eta)\widehat{f}(\eta)d\eta\right|^pdxdt\biggl)^{1/p}\leq
C_p {\lambda}^{1/2-1/p-\epsilon(p)} \|f\|_{L^{p}(\mathbb{R}^{2})},
\end{equation*}
where $\epsilon(p)=\frac{1}{2p}$, if $4\leq p<\infty$;
$\epsilon(p)=\frac{1}{2}(\frac{1}{2}-\frac{1}{p})$, if $2<p\leq 4$.
\end{thm}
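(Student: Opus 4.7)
The plan is to follow the Mockenhaupt--Seeger--Sogge program, which exploits the cinematic curvature condition \eqref{cone:condition} to reduce the local smoothing inequality to a cone square function estimate at scale $\lambda^{-1/2}$. After a microlocal parametrix reduction, I may assume the model phase $\varphi(x,t,\eta) = \langle x,\eta\rangle + t\,q(\eta)$ with $q$ smooth, $1$-homogeneous, and satisfying the non-degenerate curvature condition in the one direction forced by \eqref{cone:condition}. I would then dyadically partition the angular support of $a_\lambda$ into $\sim \lambda^{1/2}$ sectors $\Gamma_\nu$ of aperture $\lambda^{-1/2}$ and decompose $\mathcal{F}f = \sum_\nu \mathcal{F}_\nu f$; each $\mathcal{F}_\nu f$ is frequency-supported in a $\lambda^{1/2}\times \lambda$ plate adapted to the cone $\{(\eta,q(\eta))\}$, and in physical space behaves as a superposition of wave packets along tubes of dimensions $\lambda^{-1/2}\times 1$ travelling in the direction $\nabla q(\eta_\nu)$.

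The core step is the cone square function estimate: for $p \ge 4$ and any $\varepsilon > 0$,
\[
\|\mathcal{F}f\|_{L^p(\mathbb{R}^2 \times [1/2,4])} \lesssim_\varepsilon \lambda^\varepsilon \Big\|\Big(\sum_\nu |\mathcal{F}_\nu f|^2\Big)^{1/2}\Big\|_{L^p_{x,t}}.
\]
The proof of this inequality proceeds by an $L^4$ bilinear argument for transverse pairs of sectors (analogous to Fefferman's approach to the cone multiplier) followed by vector-valued interpolation. Pairing this with the individual-sector bound
\[
\Big\|\Big(\sum_\nu |\mathcal{F}_\nu f|^2\Big)^{1/2}\Big\|_{L^p_{x,t}} \lesssim \lambda^{1/2 - 1/p - 1/(2p)}\|f\|_{L^p},
\]
which I would obtain from the near-orthogonality of the plates on the Fourier side, the trivial $L^\infty$ bound $\|\mathcal{F}_\nu f\|_\infty \lesssim \lambda^{1/2}\|\hat f\|_1$, and Stein interpolation, yields the claimed estimate for $p \ge 4$.

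For the intermediate range $2 < p \le 4$, I would interpolate the $p = 4$ bound $\lambda^{1/8}$ against the trivial Plancherel estimate $\|\mathcal{F}f\|_{L^2_{x,t}} \lesssim \|f\|_{L^2}$ (which corresponds to $\varepsilon(2) = 0$ since an order-zero FIO is $L^2$-bounded on compact time intervals). The interpolation exponent then reads
\[
\varepsilon(p) = \theta\cdot \tfrac{1}{8}, \qquad \tfrac{1}{p} = \tfrac{1-\theta}{2} + \tfrac{\theta}{4},
\]
giving $\varepsilon(p) = \tfrac{1}{2}(\tfrac{1}{2}-\tfrac{1}{p})$, exactly as stated.

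The principal obstacle, as is usual in local smoothing, is the cone square function estimate itself. Its proof requires a wave-packet decomposition of $\mathcal{F}$ at scale $\lambda^{-1/2}$ together with Kakeya-type bounds for tubes whose directions are confined to the cone $\{\nabla q(\eta):\eta\in\operatorname{supp}(a)\}$; this confinement is precisely the geometric content of \eqref{cone:condition}, and without it the $\lambda^\varepsilon$ loss in the square function would have to be replaced by a much larger factor that would destroy the final gain over the fixed-time estimate. The MSS contribution is to execute this wave-packet/Kakeya argument in the variable-coefficient setting, carefully tracking how the quadratic approximation of $\varphi$ on each $\Gamma_\nu$ produces the requisite orthogonality between plates.
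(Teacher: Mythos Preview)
The paper does not prove this theorem at all: it is quoted verbatim from \cite{mss} as a known input (see the sentence preceding the statement, ``Mockenhaupt, Seeger and Sogge showed the following dyadic $L^{p}$-estimate\ldots''), and is used only as a black box in the proofs of Theorems~\ref{maintheorem1} and \ref{hisomaintheorem}. So there is no ``paper's own proof'' to compare against.

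That said, your sketch is a reasonable outline of the MSS argument in \cite{mss}, correctly identifying the main ingredients: reduction to a model phase via the cinematic curvature hypothesis, angular decomposition at scale $\lambda^{-1/2}$, an $L^4$ orthogonality/square function estimate exploiting the separation of plates on the light cone, and interpolation with the trivial $L^2$ and $L^\infty$ endpoints. One point worth tightening: the MSS paper does not quite phrase the core step as a ``cone square function estimate for $p\ge 4$ with $\lambda^\varepsilon$ loss'' followed by a separate individual-sector bound; rather, the $L^4$ estimate is obtained directly via a Carleson--Sj\"olin type argument (hence the title of \cite{mss}), and the $p>4$ range comes from interpolating that sharp $L^4$ bound against $L^\infty$. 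Your two-step decomposition (square function plus sector bound) is closer in spirit to later refinements than to the original MSS proof, and the exponent bookkeeping in your ``individual-sector bound'' would need to be justified carefully---as written it already contains the full answer, so it is doing more work than a single-sector estimate typically provides.
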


The corresponding $L^{p} \rightarrow L^{q}$ estimate was obtained by Lee (see Corollary 1.5, \cite{SL2}).

\begin{thm}\cite{SL2}\label{lpqlocalsmoothing}
Let $\varphi$ and $a_\lambda$ be defined as above. Then the corresponding Fourier integral operator satisfies
\begin{equation*}
\biggl(\int_{1/2}^4\int_{{\mathbb{R}}^2}\left|\int_{{\mathbb{R}}^2}e^{i\varphi(x,t,\eta)}a_{\lambda}(x,t,\eta)\widehat{f}(\eta)d\eta\right|^qdxdt\biggl)^{1/q}\leq
C_{p,q} \lambda^{(\frac{1}{2} -\frac{3}{q}+ \frac{1}{p} +\epsilon)}  \|f\|_{L^{p}(\mathbb{R}^{2})},
\end{equation*}
for any $\epsilon >0$ and    $(\frac{1}{p}, \frac{1}{q}) \in \{(\frac{1}{p}, \frac{1}{q}):   \frac{1}{q} \le  \frac{3}{5p}, \frac{3}{q} \le 1-\frac{1}{p} , 0 \le \frac{1}{q} \le \frac{3}{14} \}$.
\end{thm}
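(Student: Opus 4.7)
The plan is to prove the $L^p\to L^q$ local smoothing bound of Lee by the bilinear method that has become standard in cone-type restriction theory. First, I would use the cinematic curvature condition \eqref{cone:condition} together with the small conical support of $a_\lambda$ to reduce, via canonical transformations (as in the Hörmander--Sogge theory of FIOs), to a model operator whose phase is essentially the half-wave phase $x\cdot\eta+t|\eta|$ perturbed by a symbol with one non-vanishing principal curvature in $\eta$. After a parabolic rescaling sending the frequency scale $\lambda$ to unit size, the operator becomes an extension operator attached to a piece of a ``cone-like" surface in $\mathbb{R}^3$; the desired space-time bound corresponds to a Fourier extension inequality with time acting as the extra variable.

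Next I would perform a Whitney-type decomposition of the angular variable on the base $\{|\eta'|\leq \varepsilon\eta_1\}$, writing $a_\lambda=\sum_{k\geq 0}\sum_{(\tau,\tau')\in W_k}a_{\tau,\tau'}$, where $W_k$ pairs angular caps $\tau,\tau'$ of size $2^{-k}$ that are angularly separated by a distance comparable to $2^{-k}$. For each such pair I would invoke Wolff's bilinear cone restriction estimate (the sharp version due to Wolff and extended by Tao), which yields
\[
\bigl\|E_\tau f_1 \cdot E_{\tau'} f_2\bigr\|_{L^{q_0/2}} \lesssim \lambda^{\sigma(q_0)}\,\|f_1\|_{L^{2}}\|f_2\|_{L^{2}}
\]
for $q_0$ just above $5/3$, with constants that are essentially sharp in the angular scale. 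Applied in the FIO setting, Wolff's theorem gives the crucial bilinear $L^p\times L^p\to L^{q/2}$ estimates adapted to the angular scale $2^{-k}$.

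The bilinear estimates are then summed to a linear one using the Tao--Vargas--Vega orthogonality scheme: the outputs of distinct pairs $(\tau,\tau')$ are essentially frequency-disjoint on $q/2$-scales, so one passes from the bilinear inequality at each $k$ to a linear estimate with an acceptable loss in $k$. Summing over $k\geq 0$ produces the linear $L^p\to L^q$ bound with an arbitrarily small $\lambda^{\epsilon}$ loss. Finally, I would interpolate this ``bilinear endpoint" bound against the trivial $L^\infty\to L^\infty$ estimate (using the Fourier localization to frequency size $\lambda$) and the $L^2\to L^2$ Hörmander bound for FIOs of order zero, filling out the full polygon $\{\frac{1}{q}\leq \frac{3}{5p},\ \frac{3}{q}\leq 1-\frac{1}{p},\ 0\leq \frac{1}{q}\leq \frac{3}{14}\}$ claimed in the statement.

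The main obstacle is of course the input bilinear cone restriction estimate itself: Wolff's sharp theorem in $\mathbb{R}^3$ is a deep result, and transferring it from the model cone extension operator to a genuine cinematic-curvature FIO requires carefully controlling how the canonical transformation used for normal-form reduction interacts with the angular Whitney decomposition, so that the bilinear inequality and its $2^{-k}$-dependence survive intact. A secondary difficulty lies in the Tao--Vargas--Vega passage from bilinear to linear: one must verify that the almost-orthogonality on the $q/2$-scale holds in the FIO (rather than purely Fourier extension) setting, which needs a wave-packet decomposition compatible with the perturbed phase $\varphi$.
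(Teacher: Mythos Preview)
The paper does not prove this theorem at all: it is stated with the citation \cite{SL2} and introduced as ``the corresponding $L^p\to L^q$ estimate was obtained by Lee (see Corollary 1.5, \cite{SL2}),'' so it is used as a black box. Your proposal is therefore not comparable to any argument in the paper; you are sketching a proof of an external result that the authors simply quote.

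That said, your sketch is a faithful outline of Lee's original argument in \cite{SL2}: the reduction to a model cone-type phase, the Whitney decomposition in the angular variable, the bilinear input (which in \cite{SL2} is formulated directly for the FIO rather than transferred from Wolff's cone theorem), and the Tao--Vargas--Vega mechanism to pass from bilinear to linear are exactly the ingredients Lee uses. One small inaccuracy: Lee works with a bilinear estimate proved directly in the variable-coefficient setting (his Theorem~1.2), rather than first reducing to the constant-coefficient cone and invoking Wolff; this sidesteps the ``main obstacle'' you flag about transferring Wolff's theorem through canonical transformations. The paper under review in fact appeals to that same bilinear Theorem~1.2 of \cite{SL2} later, in the proof of its own Theorem~\ref{L^(p,q)therom}, where the cinematic curvature condition fails uniformly.
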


We will frequently use the following method of stationary phase.
\begin{lem} (Theorem 1.2.1 in \cite{sogge2})\label{lem:Lemma1}
Let S be a smooth hypersurface in $\mathbb{R}^n$ with non-vanishing Gaussian curvature and $d\mu$ be the Lebesgue measure on $S$. Then,
\begin{equation}
|\widehat{d\mu}(\xi)|\leq C(1+|\xi|)^{-\frac{n-1}{2}}.
\end{equation}

Moreover, suppose that $\Gamma\subset \mathbb{R}^n\backslash 0$ is the cone consisting of all $\xi$ which are normal to $S$ at some point $x\in S$ belonging to a fixed relatively compact neighborhood $\mathcal{N}$ of supp $d\mu$. Then
\begin{equation*}
\left|\left(\frac{\partial}{\partial\xi}\right)^{\alpha}\widehat{d\mu}(\xi)\right|=\mathcal{O}(1+|\xi|)^{-N}\hspace{0.2cm}for\hspace{0.1cm} all \hspace{0.1cm}N\in \mathbb{N}, \hspace{0.2cm}if \hspace{0.2cm}\xi\not \in \Gamma,
\end{equation*}
\begin{equation}
\widehat{d\mu}(\xi)=\sum e^{-i\langle x_j,\xi \rangle}a_j(\xi)\hspace{0.5cm}if \hspace{0.2cm}\xi\in \Gamma,
\end{equation}
where the finite sum is taken over all $x_j\in \mathcal{N}$ having $\xi$ as the normal and
\begin{equation}
\left|\left(\frac{\partial}{\partial\xi}\right)^{\alpha}a_j(\xi)\right|\leq C_{\alpha}(1+|\xi|)^{-\frac{n-1}{2}-|\alpha|}.
\end{equation}
\end{lem}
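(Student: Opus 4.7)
The plan is to reduce to the classical stationary-phase analysis of an oscillatory integral written in local graph coordinates on $S$. First I would cover a neighborhood of $\supp d\mu$ by finitely many patches on which $S$ is a graph $x_n = \phi(x')$ with $x' \in \mathbb{R}^{n-1}$; via a smooth partition of unity it suffices to handle a single patch, where $\widehat{d\mu}(\xi)$ reduces to
\begin{equation*}
I(\xi) = \int_{\mathbb{R}^{n-1}} e^{-i \Phi(x',\xi)} b(x')\,dx', \qquad \Phi(x',\xi) = x' \cdot \xi' + \phi(x')\xi_n,
\end{equation*}
with amplitude $b \in C_0^\infty$ absorbing both the localizing cutoff and the area factor $(1+|\nabla\phi|^2)^{1/2}$. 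The critical points in $x'$ satisfy $\xi' = -\xi_n \nabla\phi(x')$, which is precisely the condition that $\xi$ be normal to $S$ at $(x',\phi(x'))$, so the cone $\Gamma$ in the statement is exactly the set of $\xi$ admitting such critical points in $\mathcal{N}$.

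For $\xi \notin \Gamma$ one is in the non-stationary regime: on $\supp b$ we have $|\nabla_{x'}\Phi| \gtrsim |\xi|$, and repeated integration by parts against a first-order operator $L$ satisfying $L e^{-i\Phi} = e^{-i\Phi}$ yields $I(\xi) = O(|\xi|^{-N})$ for every $N$. Because $\Phi$ is linear in $\xi$, $\xi$-derivatives of $I$ only pull down polynomial factors in $x'$ inside the integrand, and the same argument delivers the rapid-decay estimate for every derivative, uniformly on any closed cone disjoint from $\Gamma$.

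For $\xi \in \Gamma$ I would apply the method of stationary phase. The $x'$-Hessian of $\Phi$ at a critical point $x'_c$ is $\xi_n D^2\phi(x'_c)$, and up to the area factor its determinant is $\xi_n^{n-1}$ times the Gaussian curvature of $S$ at the corresponding point; the non-vanishing curvature hypothesis therefore makes the critical point non-degenerate. The standard stationary-phase expansion in $n-1$ variables then produces
\begin{equation*}
I(\xi) = \sum_j e^{-i \langle x_j, \xi \rangle} a_j(\xi),
\end{equation*}
with the sum over the finitely many $x_j \in \mathcal{N}$ at which $\xi$ is normal to $S$, and each $a_j$ a classical symbol of order $-(n-1)/2$; differentiating the full asymptotic expansion in $\xi$ delivers the symbol-type derivative bounds.

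The main technical point to handle carefully is the uniformity of the stationary-phase asymptotics as $\xi$ varies: the critical-point map $\xi \mapsto x'_c(\xi)$, the reciprocals of the Hessian determinants, and all remainder constants must depend smoothly and with controllable bounds. I would address this by applying the implicit function theorem to the critical-point equation $\xi_n\nabla\phi(x') = -\xi'$ to obtain a smooth parametrization of $x'_c(\xi)$, and then invoke the homogeneity of $\Phi$ in $\xi$ to reduce the size and derivative estimates to a compact subset of the unit sphere, where compactness yields uniform constants.
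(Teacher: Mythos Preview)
The paper does not supply a proof of this lemma; it is quoted verbatim as Theorem 1.2.1 from Sogge's \emph{Fourier integrals in classical analysis} and used as a black box. Your sketch is the standard stationary-phase argument one finds in that reference (and in Stein's treatise): localize to a graph patch, use non-stationary integration by parts off the normal cone $\Gamma$, and apply non-degenerate stationary phase on $\Gamma$ with the Hessian controlled by the Gaussian curvature. Nothing is missing in outline; the only comment is that the uniformity you flag at the end is indeed the place where most of the work lies, and in Sogge's presentation it is handled exactly as you suggest, by reducing to $|\xi|=1$ via homogeneity and invoking compactness.
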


 We also use the following well-known estimate. 
\begin{lem} (Theorem 2.4.2 in \cite{sogge2})\label{lem:Lemma3}
Suppose that $F$ is $C^1(\mathbb{R})$. Then if $p>1$ and $1/p+1/p'=1$,
\begin{equation*}
\sup_{\lambda}|F(\lambda)|^p\leq |F(0)|^p+p\biggl(\int|F(\lambda)|^pd\lambda\biggl)^{1/p'}\biggl(\int|F'(\lambda)|^pd\lambda\biggl)^{1/p}.
\end{equation*}
\end{lem}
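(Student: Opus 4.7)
The plan is to derive this one-dimensional Sobolev-type inequality by combining the fundamental theorem of calculus with Hölder's inequality. Since this is precisely the content of Sogge's Theorem 2.4.2 and amounts to a quantitative form of the embedding $W^{1,p}(\mathbb{R}) \hookrightarrow L^{\infty}(\mathbb{R})$ based at the point $\lambda = 0$, I only need to sketch the standard two-step argument.

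First I would observe that, since $F \in C^{1}(\mathbb{R})$, the chain rule applied to $\lambda \mapsto |F(\lambda)|^{p}$ gives $\left|\frac{d}{d\lambda}|F(\lambda)|^{p}\right| \le p |F(\lambda)|^{p-1}|F'(\lambda)|$ almost everywhere (at the zeros of $F$ one uses $p > 1$ and a routine regularization). The fundamental theorem of calculus on the interval joining $0$ and $\lambda$, followed by enlarging the domain of integration to all of $\mathbb{R}$ to remove the dependence on $\lambda$, then yields
$$|F(\lambda)|^{p} \le |F(0)|^{p} + p \int_{\mathbb{R}} |F(s)|^{p-1}|F'(s)|\, ds.$$

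Next I would apply Hölder's inequality to the integral on the right with conjugate exponents $p'$ and $p$. The key identity $(p-1)p' = p$ gives $\bigl\| |F|^{p-1}\bigr\|_{p'} = \|F\|_{p}^{p/p'}$, so that
$$\int_{\mathbb{R}} |F(s)|^{p-1}|F'(s)|\, ds \le \left(\int_{\mathbb{R}} |F(s)|^{p}\, ds\right)^{1/p'}\left(\int_{\mathbb{R}} |F'(s)|^{p}\, ds\right)^{1/p}.$$
Taking the supremum over $\lambda$ on the left-hand side of the previous display and inserting this Hölder bound produces exactly the claimed inequality.

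I do not anticipate any real obstacle. The only point worth flagging is that the statement is only informative when both $F$ and $F'$ lie in $L^{p}(\mathbb{R})$: if either integral on the right-hand side is infinite, the bound is trivial, while in the finite case the fundamental-theorem step and the application of Hölder are fully justified by the hypothesis $F \in C^{1}$.
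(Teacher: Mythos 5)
Your proof is correct and is essentially the argument in Sogge's book (Theorem 2.4.2), which the paper simply cites without reproducing a proof: differentiate $|F|^p$, apply the fundamental theorem of calculus between $0$ and $\lambda$, enlarge the domain of integration to all of $\mathbb{R}$, and finish with H\"older using $(p-1)p'=p$. The one small point you gesture at — handling the a.e.\ differentiability of $|F|^p$ at the zeros of $F$ — is correctly resolved by $p>1$ (the function $t\mapsto |t|^p$ is $C^1$ for $p>1$), so no genuine regularization is needed; otherwise the argument is complete.
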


\subsection{Localized maximal functions associated with curves in $\mathbb{R}^{2}$}
\subsubsection{The case when $da_{1} \neq a_{2}$: finite type curves}
Tis subsection is devoted to the proof of Theorem \ref{maintheorem1}, and the argument can be easily generalized to prove Theorem \ref{hisomaintheorem}. The necessary conditions for Theorem \ref{maintheorem1} are also discussed.

\textbf{Proof of Theorem \ref{maintheorem1}. } We first consider a simple model, then explain how to prove the general case.

\textbf{A simple model. } Denote
\[A_{t}f(y):= \int_{0}^{1}f(y_{1}-tx, y_{2}- t(x^{d} +c))dx, \quad d \ge 2.\]
Let us consider the $L^{p}\rightarrow L^{q}$ estimate of the maximal operator given by
$\sup_{t \in [1,2]} |A_{t}f(y) |.$

We choose  $\tilde{\rho}\in C_0^{\infty}(\mathbb{R})$ such that  supp $\tilde{\rho}\subset\{x: 1/2 \leq|x|\leq 2\}$  and $\sum_{k\in \mathbb{Z}}\tilde{\rho}(2^kx)=1$ for $x\in \mathbb{R}$. Then we decompose $A_{t}f$ by
\begin{align*}
A_tf(y)=\sum_{k \ge 1}\int_{0}^{1} f(y_1-tx,y_2-t(x^d +c))\tilde{\rho}(2^kx) dx.
\end{align*}
Due to the isometric operator on $L^p(\mathbb{R}^2)$ defined by $T_kf(x_1,x_2)=2^{(d+1)k/p}f(2^kx_1,2^{dk}x_2)$, it suffices to obtain the following estimate:
\[\sum_{k \ge 1}2^{(d+1)k(\frac{1}{p} - \frac{1}{q})-k}\left\|\sup_{t \in [1,2]}|\widetilde{A_t^{k}}|\right\|_{L^p\rightarrow L^q}\leq C_{p,q},\]
 where
\[\widetilde{A_t^{k}}f(y):=\int_{0}^{1} f(y_1-tx,y_2-t(x^d + 2^{dk}c))\tilde{\rho}(x)dx.\]

By means of the Fourier inversion formula, we have
\begin{align*}
\widetilde{A_t^{k}}f(y)&=\frac{1}{(2\pi)^2}\int_{{\mathbb{R}}^2}e^{i\xi\cdot y}\int_{\mathbb{R}}e^{-i(t\xi_1x+t  \xi_2(x^d + 2^{dk}c)) }\tilde{\rho}(x) dx\hat{f}(\xi)d\xi
\\
&=\frac{1}{(2\pi)^2}\int_{{\mathbb{R}}^2} e^{i\xi\cdot y} e^{-it2^{dk}c \xi_{2}} \widehat{d\mu}(t\xi)\hat{f}(\xi)d\xi,
\end{align*}
where
\[\widehat{d\mu}(\xi):=\int_{\mathbb{R}}e^{-i(\xi_1x+\xi_{2}x^d)}\tilde{\rho}(x) dx.\]

We choose a non-negative function $\beta\in C_0^{\infty}(\mathbb{R})$ such that supp $\beta\subset[1/2,2]$ and $\sum_{j\in\mathbb{Z}}\beta(2^{-j}r)=1$ for $r>0$. Define the dyadic operators
\[\widetilde{A_{t,j}^k}f(y)=\frac{1}{(2\pi)^2}\int_{{\mathbb{R}}^2}e^{i\xi\cdot y} e^{-it2^{dk}c \xi_{2}} \widehat{d\mu }(t\xi)\beta(2^{-j}t|\xi|)\hat{f}(\xi)d\xi,\]
and denote by $\widetilde{\mathcal{M}_{j}^k}$ the corresponding maximal operator.  Now we have that
\begin{equation*}
\sup_{t \in [1,2]}|\widetilde{A_{t,j}^k}f(y)|\leq \widetilde{\mathcal{M}^{k,0}}f(y)+\sum_{j\geq 1}\widetilde{\mathcal{M}^{k}_{j}}f(y), \hspace{0.2cm}\textmd{for}\hspace{0.2cm}y\in \mathbb{R}^2,
\end{equation*}
where
\[\widetilde{\mathcal{M}^{k,0}}f(y):=\sup_{t \in [1,2]}|\sum_{j\leq 0}\widetilde{A_{t,j}^k}f(y)|.\]

The main result follows from the estimates (E1)-(E3) below: \\
\textbf{(E1)} For $q \ge p \ge 1$, we have
\begin{align}
\|\widetilde{\mathcal{M}^{k,0}}f\|_{L^{q}({\mathbb{R}}^2)} \lesssim (c2^{\frac{dk}{q}} +1) \|f\|_{L^{p}({\mathbb{R}}^2)}.
\end{align}
\textbf{(E2)} For any $2 < p < \infty$ and $j\geq 1$, there exists  $\epsilon(p) >0$, such that
\begin{equation}
\|\widetilde{\mathcal{M}^{k}_{j}}f\|_{L^{p}({\mathbb{R}}^2)}\leq (c2^{\frac{dk}{p}} +1)  2^{-\epsilon(p)j}\|f\|_{L^p({\mathbb{R}}^2)}.
\end{equation}
\textbf{(E3)} For any $j\geq 1$ and all   $p,q$ satisfying $\frac{1}{2p}< \frac{1}{q} \le  \frac{3}{5p}$, $\frac{3}{q} \le 1-\frac{1}{p} $, we have
\begin{align}
\|\widetilde{\mathcal{M}^{k}_{j}}f\|_{L^q({\mathbb{R}}^2)}
\lesssim (c2^{\frac{dk}{q}} +1) 2^{(\frac{1}{p} -\frac{2}{q}  +\epsilon)j} \|f\|_{L^p({\mathbb{R}}^2)}.
\end{align}

Indeed, if $c=0$, it follows from (E3) that for $p,q$ satisfying $\frac{1}{2p} <\frac{1}{q} \le \frac{3}{5p}$, $\frac{3}{q} \le 1-\frac{1}{p} $,   there exists $\epsilon (p,q)>0$ such that
\begin{equation}\label{hn-pq1}
\|\widetilde{\mathcal{M}^{k}_{j}}f\|_{L^q({\mathbb{R}}^2)} \leq  C_{p,q}  2^{-\epsilon(p,q)j} \|f\|_{L^{p}}.
\end{equation}
Combining this with  (E2) and the Riesz interpolation theorem, we deduce (\ref{hn-pq1}) for all   $p,q$ satisfying $\frac{1}{2p}< \frac{1}{q} \le  \frac{1}{p}$, $\frac{1}{q} > \frac{3}{p}-1 $. As a consequence,
\[\sum_{k \ge 1}2^{\frac{(d+1)k}{p} - \frac{(d+1)k}{q}-k}\left\|\sup_{t \in [1,2]}|\widetilde{A_{t}^{k}}|\right\|_{L^p\rightarrow L^q}\leq \sum_{k \ge 1}2^{(d+1)k(\frac{1}{p} - \frac{1}{q})-k} \sum_{j \ge 0} 2^{-\epsilon(p,q)j} C_{p,q} \le C_{p,q}\]
provided that  $p,q$ satisfy  $\frac{1}{2p}< \frac{1}{q} \le  \frac{1}{p}$, $\frac{1}{q} > \frac{3}{p}-1 $, $\frac{d+1}{p}- \frac{d+1}{q}- 1<0$.

If $c\neq0$, It follows from (E3) that for $p,q$ satisfying $\frac{1}{2p} <\frac{1}{q} \le \frac{3}{5p}$, $\frac{3}{q} \le 1-\frac{1}{p} $,   there exists $\epsilon (p,q)>0$ such that
\begin{equation}\label{hn-pq}
\|\widetilde{\mathcal{M}^{k}_{j}}f\|_{L^q({\mathbb{R}}^2)} \leq  C_{p,q}  2^{\frac{dk}{q}} 2^{-\epsilon(p,q)j} \|f\|_{L^{p}}.
\end{equation}
Then  (E2) and  the Riesz interpolation theorem yield (\ref{hn-pq}) for all  $p,q$ satisfying $\frac{1}{2p}< \frac{1}{q} \le  \frac{1}{p}$, $\frac{1}{q} > \frac{3}{p}-1 $. Therefore, we have
\[\sum_{k \ge 1}2^{\frac{(d+1)k}{p} - \frac{(d+1)k}{q}-k}\left\|\sup_{t \in [1,2]}|\widetilde{A_t^{k}}|\right\|_{L^p\rightarrow L^q}\leq \sum_{k \ge 1}2^{\frac{(d+1)k}{p} - \frac{k}{q}-k} \sum_{j \ge 0} 2^{-\epsilon(p,q)j} C_{p,q} \le C_{p,q}\]
if $p,q$ satisfy  $\frac{1}{2p}< \frac{1}{q} \le  \frac{1}{p}$,   $\frac{d+1}{p}- \frac{1}{q}- 1<0$.

Now it remains to prove (E1), (E2) and (E3). First let us prove (E1). Note that $\widetilde{\mathcal{M}^{k,0}}f(y)=\sup_{t \in [1,2]}|f*K_{ t }(y)|$, where
\[K_{t}(y):=\int_{{\mathbb{R}}^2}e^{i\xi\cdot y} e^{-it2^{dk}c \xi_{2}} \widehat{d\mu}(t\xi)\beta_{0}(t|\xi|)d\xi,\]
where $\beta_{0} \in C_0^{\infty}(\mathbb{R})$ is supported in $[0,2]$. Then Lemma \ref{lem:Lemma1} implies that for a multi-index $\alpha$,
\[\left|\left(\frac{\partial}{\partial\xi}\right)^{\alpha}\widehat{d\mu}(\xi)\right|\leq C_{B,\alpha}(1+|\xi|)^{-1/2}.\]
By integration by parts, we obtain that for each integer $N \ge 1$,
\begin{equation}\label{kernelestimate1}
|K_{t}(y)|\leq C_N (1+|y_{1}| + |y_{2}-c2^{dk}t|)^{-N}.
\end{equation}
Choosing $N$ sufficiently large in (\ref{kernelestimate1}) and applying Lemma \ref{lem:Lemma3}, by Young's inequality,  for $q \ge p \ge1$,  we have
\begin{align}\label{hnlow}
&\|\widetilde{\mathcal{M}^{k,0}}f\|_{L^{q} (\mathbb{R}^{2})} = \|\sup_{t \in [1,2]}|f*K_{ t}|\|_{L^{q}(\mathbb{R}^{2})} \nonumber\\
&\le c2^{\frac{dk}{q}} \biggl\|\frac{C_{N}}{(1+|\cdot -(0,c2^{dk}t) |)^{N}} * |f|\biggl\|^{1-\frac{1}{q}}_{L^{q} (\mathbb{R}^{2} \times [1,2]) } \biggl\|\frac{C_{N}}{(1+|\cdot -(0,c2^{dk}t) |)^{N+1}} * |f|\biggl\|^{\frac{1}{q}}_{L^{q} (\mathbb{R}^{2} \times [1,2]) } \nonumber\\
&\quad +  \biggl\|\frac{C_{N}}{(1+|\cdot -(0,c2^{dk}) |)^{N}} * |f|\biggl\|_{L^{q} (\mathbb{R}^{2}) } \nonumber\\
&\lesssim (c2^{\frac{dk}{q}} +1 )\|f\|_{L^{p}(\mathbb{R}^{2})}.
\end{align}
Then we arrive at (E1).

In order to verify (E2) and (E3), we first consider
\begin{equation}\label{fouriermensre1}
\widehat{d\mu}(t\xi)=\int_{\mathbb{R}}e^{-it\xi_2(-sx+x^d)}\tilde{\rho}(x) dx,
\end{equation}
where
\[s =-\frac{\xi_1}{\xi_2}, \hspace{0.3cm}\textrm{for} \hspace{0.2cm} \xi_2\neq 0.\]


Let
\[\Phi(s,x)=-sx+x^d ,\]
then we have
\begin{equation*}
\partial_x\Phi(s,x)=-s+dx^{d-1}
\end{equation*}
and
\begin{equation*}
\partial_x^2\Phi(s,x)= d(d-1)x^{d-2}.
\end{equation*}
Since $\tilde{\rho}$ is supported in $[1, 2]$, there exists a smooth solution $x_c= (\frac{s}{d})^{1/(d-1)}$ for the equation $\partial_x\Phi(s,x)=0$ provided that $s  \in [d, d2^{d-1}]$. The phase function can be written as
\begin{equation}\label{phase function 1}
-t\xi_2\tilde{\Phi}(s)=(d-1)t\xi_{2} \biggl(-\frac{\xi_{1}}{d\xi_{2}}\biggl)^{d/(d-1)}.
\end{equation}

By applying the method of stationary phase, we have
 \begin{equation}\label{fouriermeansuredecompose1}
\widehat{d\mu}(t\xi)=e^{-it \xi_2\tilde{\Phi}(s)}\chi_{0}(\frac{\xi_1}{\xi_2})
\frac{A_0(t\xi)}{(1+t|\xi|)^{1/2}}+B_0(t\xi),
\end{equation}
where $\chi_0$ is a smooth function supported in the interval $[-d2^{d-1}, -d]$. $A_0(t\xi)$ is contained in a bounded subset of symbols of order zero. More precisely, for arbitrary $t \in [1,2]$,
 \begin{equation}\label{mainsymhol1}
 |D_{\xi}^{\alpha}A_0(t\xi)|\leq C_{\alpha}(1+|\xi|)^{-\alpha},
\end{equation}
where $C_{\alpha}$ is independent of  $t$. Furthermore, $B_0$ is a remainder term and satisfies that for arbitrary $t\in [1,2]$,
\begin{equation}\label{remaindersymbol1}
 |D_{\xi}^{\alpha}B_0(t\xi)|\leq C_{\alpha,N}(1+|\xi|)^{-N},
\end{equation}
where $C_{\alpha,N}$ are admissible constants and again do not depend on $t$.

First, let us consider the remainder part. Set
\[\mathcal{M}_{j}^{k, 0}f(y):=\sup_{t\in[1,2]}\left|\frac{1}{(2\pi)^2}\int_{{\mathbb{R}}^2}e^{i\xi\cdot y}  e^{-it2^{dk}c \xi_{2}} B_0( t\xi)\beta(2^{-j}| t\xi|)\hat{f}(\xi)d\xi \right|.\]
By (\ref{remaindersymbol1}) and integration by parts, we deduce that $$|(B_k\beta(2^{-j}\cdot))^{\vee}(x)|\leq C_N 2^{-jN}(1+|x|)^{-N}.$$ As a result,
\[\mathcal{M}_{j}^{k, 0}f(y) \le \sup_{t\in[1,2]}\frac{C_{N}2^{-jN}}{(2\pi)^2 }\int_{{\mathbb{R}}^2} \frac{|f(x)|}{(1+|y-x -(0,c2^{dk}t)|)^{N}} dx.\]
We use the similar derivation for the inequality (\ref{hnlow}) to obtain
\[ \|\mathcal{M}_{j}^{k, 0}f\|_{L^{q}(\mathbb{R}^{2})} \le (c2^{\frac{dk}{q}} +1 ) 2^{-jN} \|f\|_{L^{p}(\mathbb{R}^{2})} \]
for each $q \ge p \ge 1$.

Next, we focus on the main term. Let
\begin{equation}\label{mainterm1}
A_{t,j}^kf(y):=\frac{1}{(2\pi)^2}\int_{{\mathbb{R}}^2}e^{i(\xi \cdot y -tc2^{dk}\xi_{2}-t\xi_2\tilde{\Phi}(s))}\chi_0(\frac{\xi_1}{\xi_2})
\frac{A_0( t\xi)}{(1+|t\xi|)^{1/2}}\beta(2^{-j}| t \xi|)\hat{f}(\xi)d\xi.
\end{equation}
Denote by $\mathcal{M}_{j}^{k,1}$ the corresponding maximal operator over $[1,2]$.  We choose a non-negative function $\rho_{1}(t)$ which equals to $1$ on $[1,2]$ and supported in $[1/2, 4]$. Then by Lemma \ref{lem:Lemma3},
\begin{align}\label{sobolevembedding1}
&\|\widetilde{\mathcal{M}_{j}^{k,1}}f\|_{L^q} \nonumber\\
&\lesssim     \biggl\|  \sup_{t\in[1/2,4] } \rho_{1}(t) \left| \int_{{\mathbb{R}}^2}e^{i\xi\cdot y}  e^{-it2^{dk}c \xi_{2} -t\xi_2\tilde{\Phi}(s)} \chi_{0}(\frac{\xi_1}{\xi_2})
\frac{A_0(t\xi)}{(1+t|\xi|)^{1/2}}\beta(2^{-j}| t\xi|)\hat{f}(\xi)d\xi \right| \biggl\|_{L^{q}(\mathbb{R}^{2})}     \nonumber\\
&\leq C
2^{\frac{j}{q}  -\frac{j}{2}} (c2^{\frac{dk}{q}} + 1)\nonumber\\
& \quad \times \biggl(\int_{{\mathbb{R}}^2}\int^4_{1/2}\biggl| \int_{{\mathbb{R}}^2}e^{i(\xi \cdot y-it2^{dk}c \xi_{2}-t\xi_2\tilde{\Phi}(s))} \widetilde{A_0}(\xi,t)\beta(2^{-j}| t \xi|)\hat{f}(\xi)d\xi\biggl|^qdtdy \biggl)^{
 1/q},
\end{align}
where $\widetilde{A_0}(\xi,t)$ is a symbol of order zero,  $\xi$ is supported in the cone $\{\xi \in \mathbb{R}^{2}: |\frac{\xi_{1}}{\xi_{2}}| \approx 1 \}$ and $t$ is supported in $[1/2, 4]$.  If  $a(\xi,t):= \widetilde{A}_0(\xi,t)\beta(2^{-j}|t \xi|)$, then $a(\xi,t)$ is a symbol of order zero, i.e. for any $t\in [1/2,4]$, $\alpha\in {\mathbb{N}}^2$,
\[\left|\left(\frac{\partial}{\partial \xi}\right)^{\alpha}a(\xi,t)\right|\leq C_{\alpha}(1+|\xi|)^{-|\alpha|}.\]
Now we are left to estimate the Fourier integral operator defined by
\[F_{j}^{k}f(y,t) := \int_{{\mathbb{R}}^2}e^{i(\xi \cdot y-it2^{dk}c \xi_{2}-t\xi_2\tilde{\Phi}(s))} a(\xi, t)\hat{f}(\xi)d\xi. \]
Changing variables implies that
\[\|F_{j}^{k}f(y,t)\|_{L^{q}(\mathbb{R}^{2} \times [1/2,4])} = \|F_{j}f(y -
(0,c2^{dk}t),t)\|_{L^{q}(\mathbb{R}^{2} \times [1/2,4])} = \|F_{j}f(y,t)\|_{L^{q}(\mathbb{R}^{2} \times [1/2,4])}. \]
Here
\[F_{j} f(y,t) := \int_{{\mathbb{R}}^2}e^{i(\xi \cdot y -t\xi_2\tilde{\Phi}(s))} a(\xi, t)\hat{f}(\xi)d\xi. \]

 We  observe that the operator $F_{j}$ can be localized. Since $\tilde{\Phi}(s)$ is homogeneous of degree zero in $\xi$ and $\frac{\xi_1}{\xi_2}\approx 1$,
then
$$\left|\nabla_{\xi}[\xi \cdot (y-x)-t\xi_2\tilde{\Phi}(s)]\right|\geq C|y-x|$$
provided $|y-x|\geq L$ for some large $L$. By integration by parts, we will see that the kernel of the operator $F_{j}$
\[K(x, y, t)= \int_{{\mathbb{R}}^2}e^{i(\xi \cdot (y-x) -t\xi_2\tilde{\Phi}(s))} a(\xi, t) d\xi\]
is determined by
 \[|K(x,y,t)| \le 2^{-jN}\mathcal{O}(|y-x|^{-N})\]
if $|y-x|\geq L$. Therefore, we only need to estimate
\[G_{j}f(y,t) =\int_{\mathbb{R}^{2}}K(x,y,t) \chi_{B(y,L)}(x)f(x)dx. \]
Here we abuse the notation for simplicity. If we have proved the local estimate
\[\biggl\|G_{j}f(y,t)\biggl\|_{L^{q}(B(0,L) \times [1/2,4])} \lesssim 2^{\epsilon_{1}(j) + \epsilon_{2}(k)} \|f\|_{L^{p}(\mathbb{R}^{2})}\]
for some indexes $\epsilon_{1}(j)$ and $\epsilon_{2}(k)$ related to $j$, $k$, respectively. Then we decompose $\mathbb{R}^{2}=\bigcup_{\mathfrak{l} \in \mathbb{Z}^{2}} B(y_{\mathfrak{l}}, L)$, such that for $\mathfrak{l }\neq \mathfrak{l}^{\prime}$,  $|y_{\mathfrak{l}} - y_{\mathfrak{l}^{\prime}}| \approx L$,  we have
\begin{align}
\biggl\|G_{j}f(y,t)\biggl\|^{q}_{L^{q}(\mathbb{R}^{2} \times [1/2,4])} &= \sum_{\mathfrak{ l }\in \mathbb{Z}^{2}} \biggl\|G_{j}f(y,t)\biggl\|^{q}_{L^{q}(B(y_{\mathfrak{l}}, L) \times [1/2,4])} \nonumber\\
&= \sum_{\mathfrak{ l }\in \mathbb{Z}^{2}} \biggl\| \int_{\mathbb{R}^{2}}K(x,y,t) \chi_{B(y,L)}(x)f(x)dx \biggl\|^{q}_{L^{q}(B(y_{\mathfrak{l}}, L) \times [1/2,4])} \nonumber\\
&= \sum_{\mathfrak{ l }\in \mathbb{Z}^{2}} \biggl\| \int_{\mathbb{R}^{2}}K(x,y,t) \chi_{B(y,L)}(x) \chi_{B(0,2L)}(x)f(x - y_{\mathfrak{l}})dx \biggl\|^{q}_{L^{q}(B(0, L) \times [1/2,4])} \nonumber\\
&\le  2^{q\epsilon_{1}(j) + q\epsilon_{2}(k)}  \sum_{\mathfrak{ l }\in \mathbb{Z}^{2}}  \| f \|^{q}_{L^{p}(B(y_{\mathfrak{l}}, 2L) )} \nonumber\\
&\lesssim  2^{q\epsilon_{1}(j) + q\epsilon_{2}(k)} \|f\|^{q}_{L^{p}(\mathbb{R}^{2})}. \nonumber
\end{align}
Hence it suffices to estimate $\tilde{F}_{j}$ defined by
\begin{equation}
\tilde{F}_{j}f(y,t) = \tilde{\rho_1}(y,t)\int_{{\mathbb{R}}^2}e^{i(\xi \cdot y-t\xi_2\tilde{\Phi}(s))} a(\xi, t)\hat{f}(\xi)d\xi,
\end{equation}
where $\tilde{\rho_1}\in C_0^{\infty}(\mathbb{R}^2\times[\frac{1}{2},4])$.

It is not difficult to check that the phase function satisfies the ``cinematic curvature" condition, so we can apply the local smoothing estimates. We need two estimates for $ \tilde{F}_{j}$. The first one comes from Theorem \ref{lplocalsmoothing}.

\begin{thm}\label{MSS-L^(p,q)therom}
For $2 <p < \infty$, there exists $\epsilon(p) >0$ such that
\[\biggl(\int_{{\mathbb{R}}^2}\int^4_{1/2}|\tilde{F}_{j}f(y,t)|^pdtdy \biggl)^{1/p} \leq C   2^{(\frac{1}{2} -\frac{1}{p}) j} 2^{-\epsilon(p)j} \|f\|_{L^p({\mathbb{R}}^2)},\]
where $\epsilon(p)=\frac{1}{2p}$, if $4\leq p<\infty$; $\epsilon(p)=\frac{1}{2}(\frac{1}{2}-\frac{1}{p})$, if $2<p\leq 4$.
\end{thm}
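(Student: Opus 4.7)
The plan is to recognize $\tilde{F}_{j}$ as a Fourier integral operator whose phase satisfies the ``cinematic curvature'' condition, and then invoke Theorem \ref{lplocalsmoothing} with $\lambda = 2^{j}$. To this end, write the phase as $\varphi(y,t,\xi) := \xi \cdot y - t\xi_{2}\tilde{\Phi}(-\xi_{1}/\xi_{2})$, which is smooth and homogeneous of degree one in $\xi$ away from $\{\xi_{2}=0\}$, and the symbol as $a_{\lambda}(y,t,\xi) := \tilde{\rho}_{1}(y,t)\,a(\xi,t)$ with $\lambda = 2^{j}$. Thanks to the Littlewood--Paley factor $\beta(2^{-j}|t\xi|)$ inherited by $a$, the symbol $a_{\lambda}$ vanishes outside $|\xi|\approx 2^{j}$, and by the estimates on $a$ already noted in the excerpt, $|D_{\xi}^{\alpha}a_{\lambda}(y,t,\xi)| \lesssim (1+|\xi|)^{-|\alpha|}$ uniformly in $(y,t)$. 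The presence of the cutoff $\chi_{0}(\xi_{1}/\xi_{2})$, whose support can be taken arbitrarily narrow by shrinking the support of $\tilde{\rho}$, guarantees that $a_{\lambda}$ has small conical support around the direction singled out by the stationary phase, matching the hypothesis on $a_{\lambda}$ in Theorem \ref{lplocalsmoothing}.

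Next I would verify the cinematic curvature hypotheses. A direct computation gives $\varphi''_{y,\xi} = \mathrm{Id}_{2\times 2}$, so $\mathrm{rank}\,\varphi''_{(y,t),\xi} \equiv 2$ automatically. For the cone condition \eqref{cone:condition}, note that the critical point of the integral defining $\widehat{d\mu}$ is $x_{c}(s) = (s/d)^{1/(d-1)}$, and by construction $\tilde{\Phi}$ is the critical value $-sx_{c}+x_{c}^{d}$. Consequently the directions $\pm\theta$ for which $\nabla_{\xi}\langle\varphi'_{(y,t)}(z_{0},\xi),\theta\rangle = 0$ are precisely the conormals to the curve $(x,x^{d})$ at $x = x_{c}$. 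The remaining rank-one condition reduces, after a standard computation in the homogeneous variable $s$, to the non-vanishing of $\partial_{x}^{2}(x^{d})\bigl|_{x=x_{c}} = d(d-1)x_{c}^{d-2}$, which is strictly positive because the support of $\tilde{\rho}$ keeps $x_{c}$ bounded away from zero. Thus $\varphi$ satisfies the cinematic curvature condition.

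With the hypotheses of Theorem \ref{lplocalsmoothing} verified, applying that theorem with $\lambda = 2^{j}$ immediately yields
\[
\biggl(\int_{{\mathbb{R}}^{2}}\int_{1/2}^{4}|\tilde{F}_{j}f(y,t)|^{p}\,dt\,dy\biggr)^{1/p} \leq C_{p}\,2^{j(1/2 - 1/p - \epsilon(p))}\|f\|_{L^{p}(\mathbb{R}^{2})},
\]
with $\epsilon(p) = 1/(2p)$ for $p\geq 4$ and $\epsilon(p) = \tfrac{1}{2}(\tfrac{1}{2}-\tfrac{1}{p})$ for $2<p\leq 4$, as claimed. The cutoff $\tilde{\rho}_{1}(y,t)$ may be absorbed into an admissible constant because the Mockenhaupt--Seeger--Sogge estimate already controls the global $L^{p}$-norm over $\mathbb{R}^{2}\times[1/2,4]$.

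The only step that is substantively more than bookkeeping is the cinematic curvature verification, and even there the essential input is classical: on the support of $\tilde{\rho}$ the model curve $(x,x^{d})$ has non-vanishing curvature $d(d-1)x^{d-2}$, and the stationary-phase form of $\tilde{\Phi}$ is precisely the Legendre transform that converts this curvature into the rank-one Hessian required by \eqref{cone:condition}. I do not foresee a genuine obstacle beyond carefully tracking conical supports to justify applying Theorem \ref{lplocalsmoothing} in its stated form.
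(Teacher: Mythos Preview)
Your proposal is correct and follows exactly the paper's approach: the paper simply states that ``it is not difficult to check that the phase function satisfies the `cinematic curvature' condition'' and then declares that Theorem~\ref{MSS-L^(p,q)therom} ``comes from Theorem~\ref{lplocalsmoothing}'', which is precisely what you do with $\lambda=2^{j}$. If anything, you supply more detail on the cinematic curvature verification (the rank and cone conditions) than the paper does.
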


The second one comes from Theorem \ref{lpqlocalsmoothing}.

\begin{thm}\label{Lee-L^(p,q)therom}
For any $\epsilon >0$ and all   $p,q$ satisfying $\frac{1}{2p}< \frac{1}{q} \le  \frac{3}{5p}$, $\frac{3}{q} \le 1-\frac{1}{p} $, we have
\[\biggl(\int_{{\mathbb{R}}^2}\int^4_{1/2}|\tilde{F}_{j}f(y,t)|^qdtdy \biggl)^{1/q}
\leq C   2^{(\frac{1}{2} -\frac{3}{q}+ \frac{1}{p} +\epsilon)j} \|f\|_{L^p({\mathbb{R}}^2)}.\]
\end{thm}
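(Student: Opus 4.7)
The plan is to obtain this estimate as a direct application of Lee's local smoothing theorem (Theorem \ref{lpqlocalsmoothing}) to the operator $\tilde{F}_j$. The main work is to recognize that $\tilde{F}_j$ fits precisely the framework of \eqref{localinteg}, with the dyadic frequency parameter $\lambda = 2^j$ and symbol order $\mu = 0$. Indeed, the symbol
\[
a(\xi,t)\,\tilde{\rho}_1(y,t) = \chi_0\!\left(\tfrac{\xi_1}{\xi_2}\right)\!\frac{A_0(t\xi)}{(1+|t\xi|)^{1/2}}\,\beta(2^{-j}|t\xi|)\,\tilde{\rho}_1(y,t)
\]
has small conical support in $\xi$ (thanks to $\chi_0$, which localizes to $|\xi_1/\xi_2|\sim 1$), is compactly supported in $(y,t)$ (thanks to $\tilde{\rho}_1$), and, after extracting the factor $2^{-j/2}$, satisfies uniform-in-$\lambda$ symbol estimates of order zero. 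Therefore, up to the harmless factor $2^{-j/2}$ from the $(1+|t\xi|)^{-1/2}$ amplitude, the operator $\tilde{F}_j$ falls exactly within the class treated by Theorem \ref{lpqlocalsmoothing}.

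Next I would verify the two \emph{cinematic curvature} hypotheses for the phase function $\varphi(y,t,\xi) = y\cdot\xi - t\xi_2\tilde{\Phi}(-\xi_1/\xi_2)$. The mixed Hessian $\varphi''_{(y,t),\xi}$ is a $3\times 2$ matrix whose top $2\times 2$ block is $\partial^2_{y,\xi}\varphi = I_2$, so $\mathrm{rank}\,\varphi''_{(y,t),\xi}\equiv 2$ automatically. For the cone condition \eqref{cone:condition}, write $\varphi'_{(y,t)} = (\xi_1,\xi_2,-\xi_2\tilde{\Phi}(s))$ with $s=-\xi_1/\xi_2$. A direction $\theta=(\theta_1,\theta_2,\theta_3)$ in the kernel of $\nabla_\xi\langle\varphi'_{(y,t)},\theta\rangle$ is uniquely determined (up to sign) and encodes the unit normal to the curve $x\mapsto(x,x^d)$ at the stationary point $x_c=(s/d)^{1/(d-1)}$. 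The rank-one condition for $(\partial^2_{\xi_j\xi_k})\langle\varphi'_{(y,t)},\theta\rangle$ along this distinguished $\theta$ then translates, via a direct computation using $\tilde{\Phi}''(s) = \tfrac{1}{d-1}(s/d)^{(2-d)/(d-1)}/d$, into the non-vanishing of the curvature of $(x,x^d)$ at $x_c$. Since the dyadic decomposition and isometric rescaling confined us to $x\sim 1$, this curvature is bounded below uniformly in $j$ and $k$, so the condition holds on the entire conical support of $a$.

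Once both geometric conditions are in place, I would apply Theorem \ref{lpqlocalsmoothing} with $\lambda = 2^j$ to the symbol $\lambda^{1/2}a(\xi,t)\tilde{\rho}_1(y,t)$, which is a symbol of order zero satisfying the uniform bounds in $\lambda$. The theorem yields
\[
\Bigl(\int_{\mathbb{R}^2}\!\!\int_{1/2}^4 |\tilde{F}_j f(y,t)|^q\,dt\,dy\Bigr)^{1/q} \le C\,2^{-j/2}\cdot 2^{j(1/2 - 3/q + 1/p + \epsilon)}\|f\|_{L^p}\cdot 2^{j/2},
\]
where the extra $2^{j/2}$ at the end compensates the $2^{-j/2}$ pulled out from the amplitude $(1+|t\xi|)^{-1/2}\sim 2^{-j/2}$; collecting the powers produces exactly $2^{j(1/2 - 3/q + 1/p + \epsilon)}$, as claimed. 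The admissible range $\tfrac{1}{2p}<\tfrac{1}{q}\le\tfrac{3}{5p}$, $\tfrac{3}{q}\le 1-\tfrac{1}{p}$ is inherited verbatim from the hypotheses of Theorem \ref{lpqlocalsmoothing}.

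The only genuine point to watch is the verification of the cone/rank-one condition: although conceptually it merely restates non-vanishing curvature of the base curve, one must make sure the bounds in Theorem \ref{lpqlocalsmoothing} are uniform across the dyadic pieces indexed by $k$, i.e.~that the lower bound on $\tilde{\Phi}''(s)$ on the support of $\chi_0$ does not degenerate. This is where the earlier rescaling step—designed precisely to push the flat point $x=0$ out of the support—is essential, and I would briefly emphasize that the constants $C_{p,q,\epsilon}$ produced by Theorem \ref{lpqlocalsmoothing} depend only on finitely many derivatives of the symbol and phase, all of which are controlled uniformly in $j$ and $k$ after the rescaling.
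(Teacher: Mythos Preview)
Your proposal is correct and matches the paper's approach exactly: the paper simply states that this theorem ``comes from Theorem~\ref{lpqlocalsmoothing}'' after noting that ``it is not difficult to check that the phase function satisfies the `cinematic curvature' condition,'' and you have supplied precisely that verification. One minor bookkeeping point: in the paper's convention the symbol $a(\xi,t)$ in $\tilde{F}_j$ is already of order zero (the factor $(1+|t\xi|)^{-1/2}$ was absorbed earlier into the $2^{-j/2}$ appearing in \eqref{sobolevembedding1}), so no additional extraction of $2^{-j/2}$ is needed---but your self-consistent accounting yields the same exponent.
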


By inequality (\ref{sobolevembedding1}) and Theorem \ref{MSS-L^(p,q)therom}, we obtain
\begin{align*}
\|\widetilde{\mathcal{M}_{j}^{k,1}}f\|_{L^p({\mathbb{R}}^2)}&\leq C2^{\frac{j}{p}  -\frac{j}{2}} (c2^{\frac{dk}{q}} + 1)  2^{(\frac{1}{2} -\frac{1}{p}) j} 2^{-\epsilon(p)j} \|f\|_{L^p({\mathbb{R}}^2)} \\
&\lesssim  (c2^{\frac{dk}{q}} + 1)  2^{-\epsilon(p)j} \|f\|_{L^p({\mathbb{R}}^2)},
\end{align*}
for all $p > 2$, then (E2) is established.

Inequality (\ref{sobolevembedding1}) and Theorem \ref{Lee-L^(p,q)therom} yield
\begin{align*}
\|\widetilde{\mathcal{M}_{j}^{k,1}}f\|_{L^q({\mathbb{R}}^2)}&\leq C2^{\frac{j}{q}  -\frac{j}{2}} (c2^{\frac{dk}{q}} + 1)   2^{(\frac{1}{2} -\frac{3}{q}+ \frac{1}{p} +\epsilon)j} \|f\|_{L^p({\mathbb{R}}^2)} \\
&\lesssim (c2^{\frac{dk}{q}} + 1)  2^{(\frac{1}{p}-\frac{2}{q} + \epsilon)j} \|f\|_{L^p({\mathbb{R}}^2)},
\end{align*}
provided that $p,q$ satisfy $\frac{1}{2p}< \frac{1}{q} \le  \frac{3}{5p}$, $\frac{3}{q} \le 1-\frac{1}{p} $,  then (E3) is proved.

\textbf{The general case. }The above method can also be applied to estimate the operator
\[A_{t}f(y):= \int_{\mathbb{R}} f(y_1-tx,y_2-t(x^d\phi(x) +c))\eta(x)dx.\]
Similarly, it is sufficient to investigate the operator
\[\widetilde{A_t^{k}}f(y):=\int_{0}^{1} f(y_1-tx,y_2-t(x^d\phi(\frac{x}{2^{k}}) + 2^{dk}c))\tilde{\rho}(x)dx.\]
Notice that $k$ is sufficiently large because $\eta$ is supported in a sufficiently small neighborhood of the origin.

Since $\phi$ satisfies (\ref{phi}), we have $x^{d}\phi(\frac{x}{2^k})= \phi(0) x^{d} + 2^{-mk} \frac{ \phi^{(m)}(0) }{m!} x^{m+d}  + 2^{-(m+1)k}\mathcal{O}(x^{d+m+1})$. Then $x^{d}\phi(\frac{x}{2^k})$ is a small perturbation of $x^{d}$. The remainder terms will not change the results which are obtained by stationary phase method.  In particular, by stationary phase method, we need to estimate the main term
\begin{equation*}
A^k_{t,j}f(y):=\int_{{\mathbb{R}}^2}e^{i(\xi\cdot y -ct2^{dk}\xi_{2}-t \xi_2\tilde{\Phi}(s,\delta))}\chi_{k,d,m}(\frac{\xi_1}{ \xi_2})
\frac{A_{k,d,m}( t\xi)}{(1+| t\xi|)^{1/2}}
\beta(2^{-j}| t\xi|)\hat{f}(\xi)d\xi,
\end{equation*}
where $\chi_{k,d,m}$ is a smooth function supported in the conical region $[c_{k,d,m},\widetilde{c_{k,d,m}}]$, for certain non-zero constants $c_{k,d,m}$ and $\widetilde{c_{k,d,m}}$ dependent only on $k, m$ and $d$. $A_{k,d,m}$ is a symbol of order zero in $\xi$ and $\{A_{k,d,m}(t\xi)\}_k$ is contained in a bounded subset of symbol of order zero. 

We may assume $\phi(0)=1/d$, then the phase function can be written as
\begin{align*}
-t\xi_2\tilde{\Phi}(s,\delta)=(d-1)t\xi_{2} \biggl(-\frac{\xi_{1}}{d\xi_{2}}\biggl)^{d/(d-1)}-t\xi_{2}\frac{\delta^m\phi^{(m)}(0)}{ m!}
\biggl(-\frac{\xi_1}{\xi_2}\biggl)^{\frac{d+m}{d-1}}+ R(t,\xi,\delta,d),
\end{align*}
where $R(t,\xi,\delta,d)$ is homogeneous of degree one in $\xi$ and has at least $m+1$ power of $\delta$. This phase function can be perceived as a small perturbation of $(d-1)t\xi_{2} \biggl(-\frac{\xi_{1}}{d\xi_{2}}\biggl)^{d/(d-1)}$, so the local smoothing results of \cite{mss} and \cite{SL2} can be applied as in the proof of the model case.

Now let us consider the necessary conditions for Theorem  \ref{maintheorem1}.
\begin{thm}\label{hnnece}
The estimate
\begin{equation}\label{neceinequality}
 \biggl\| \sup_{t\in[1,2]} \biggl|\int_{0}^{1}f(y_{1}-tx, y_{2}- t(x^{d} +c))dx\biggl| \biggl\|_{L^{q}(\mathbb{R}^{2})} \lesssim \|f\|_{L^{p}(\mathbb{R}^{2})}
 \end{equation}
holds only if the following conditions are satisfied,

(1) when $c =0$, $p, q$ satisfy (C1) $\frac{1}{q} \le \frac{1}{p}$; (C2) $\frac{1}{q} \ge \frac{1}{2p}$; (C3) $\frac{d+1}{p} - \frac{d+1}{q}-1 \le 0$; (C4) $\frac{1}{q} \ge \frac{3}{p}-1$;

(2) when $c \neq 0$, $p, q$ satisfy (C1)  $\frac{1}{q} \le \frac{1}{p}$; (C2) $\frac{1}{q} \ge \frac{1}{2p}$; (C5) $\frac{1}{q} \ge \frac{d+1}{p}-1$.
\end{thm}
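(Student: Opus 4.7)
The plan is to exhibit, for each necessary condition (C1)--(C5), an extremizing family of test functions that makes the ratio $\|\mathcal{M} f\|_{L^q}/\|f\|_{L^p}$ diverge precisely when the corresponding inequality fails. Condition (C1) is the simplest: take $f_R := \chi_{B(0,R)}$ with $R \to \infty$; since $\gamma([0,1])$ is bounded and $t \in [1,2]$, the shifted point $y - t\gamma(x)$ stays inside $B(0,R)$ for every $y \in B(0, R/2)$ and every $x \in [0,1]$, so $\mathcal{M} f_R \ge 1$ on that ball, and the comparison $R^{2/q} \lesssim R^{2/p}$ forces $1/q \le 1/p$.

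For (C2) I would use an $\epsilon$-tubular neighborhood $E_\epsilon$ of the arc $-\gamma([0,1])$, so that $\|\chi_{E_\epsilon}\|_p \sim \epsilon^{1/p}$. The key observation is that for every $y \in B(0,\epsilon)$ and every $x \in [0,1]$, the distance from $y - \gamma(x)$ to the point $-\gamma(x) \in -\gamma([0,1])$ equals $|y| \le \epsilon$, hence $y - \gamma(x) \in E_\epsilon$; therefore $A_1 \chi_{E_\epsilon}(y) = 1$ and $\mathcal{M}\chi_{E_\epsilon} \ge 1$ on the entire two-dimensional disk $B(0,\epsilon)$ of area $\sim \epsilon^2$. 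The comparison $\epsilon^{2/q} \lesssim \epsilon^{1/p}$ as $\epsilon \to 0^+$ then yields $1/q \ge 1/(2p)$.

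Conditions (C3), (C4), (C5) arise from anisotropic Knapp rectangles adapted to either a flat or a non-flat point of $\gamma$. For (C3) and (C5) I would use the box $R_\delta := [-\delta,\delta] \times [-\delta^d, \delta^d]$ sized by the order of vanishing of the curvature, centered at the origin when $c = 0$ and at $(0,c)$ when $c \ne 0$. Setting $f_\delta := \chi_{E_\delta}$ gives $\|f_\delta\|_p \sim \delta^{(d+1)/p}$, and a direct substitution shows that for suitable $y$ in a translate of $R_\delta$ and appropriate $t$, the conditions $|y_1 - tx| \le \delta$ and $|y_2 - t(x^d + c) - c| \le \delta^d$ define an $x$-interval of length $\sim \delta$, whence $A_t f_\delta(y) \sim \delta$. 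The decisive distinction is the range of the center $t\gamma(0)$ as $t$ sweeps $[1,2]$: it is stationary when $c = 0$, so the good-$y$ set has measure $\sim \delta^{d+1}$ and produces (C3); but it traces a $y_2$-segment of length $|c|$ when $c \ne 0$, so the good-$y$ set has measure $\sim \delta$ and produces the stronger (C5). For (C4), applicable only when $c = 0$, I would use the classical $\delta \times \delta^2$ Knapp rectangle tangent to $\gamma$ at a non-flat point $x_0 \in (0,1)$; the non-zero curvature yields the quadratic osculation, and sweeping $t \in [1,2]$ along the non-tangential direction $\gamma(x_0)$ enlarges the good-$y$ set to a tube of area $\sim \delta$, giving $\|\mathcal{M} f_\delta\|_q \gtrsim \delta^{1+1/q}$ and the exponent $3/p - 1$.

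The main obstacle is identifying the counterexample for (C2): unlike the Knapp constructions, whose maximal functions attain a small value $\sim \delta$ on sets of matching anisotropic dimension, the (C2) test function must force $\mathcal{M} f$ to reach its trivial maximum $1$ on a full two-dimensional disk, which is only achieved by supporting $f$ on a neighborhood of the entire translated arc $-\gamma([0,1])$ rather than a single Knapp rectangle. All remaining calculations are routine arc-length estimates along $\gamma$.
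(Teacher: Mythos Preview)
Your proposal is correct and follows essentially the same approach as the paper: the same five test-function families (large ball, tubular neighborhood of $-\gamma([0,1])$, anisotropic Knapp box at the flat point for (C3)/(C5), and a $\delta\times\delta^2$ Knapp box at a non-degenerate point for (C4)) appear in both, with the same scaling arithmetic. The only cosmetic difference is that for (C4) and (C5) the paper discretizes the sweep, choosing finitely many $t_i\in[1,2]$ with spacing $\sim\delta^2$ (respectively $\sim\delta^d$) and verifying that the corresponding good-$y$ rectangles $D_{t_i}$ are pairwise disjoint, whereas you describe the continuous sweep of the center $t\gamma(x_0)$ along a direction transverse to the tangent; both give a good-$y$ set of area $\sim\delta$ and hence the same exponents.
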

 \begin{proof}
 (1) In the case $c=0$, we first prove (C1). We choose $S_{1}= [-1,1] \times [-2^{k}, 2^{k}]$ for some large $k$, and $D_{1}= [0,1] \times [0, 2^{k}]$. For each $y =(y_{1}, y_{2}) \in D_{1}$ and $x \in [0, 1]$, it is obvious that $(y_{1}-x, y_{2}-x^{d}) \in S_{1}$.

 Let $f = \chi_{S_{1}}$, where $\chi_{S_{1}}$ denotes the characteristic function of $S_{1}$. Then for each $y = (y_{1}, y_{2}) \in D_{1}$,
\[ \int_{0}^{1}\chi_{S_{1}}(y_{1}-x, y_{2}-x^{d})dx \ge 1. \]
Therefore
\[ \biggl\|\int_{0}^{1}\chi_{S_{1}}(y_{1}-x, y_{2}-x^{d})dx \biggl\|_{L^{q}(\mathbb{R}^{2})} \ge \biggl\|\int_{0}^{1}\chi_{S_{1}}(y_{1}-x, y_{2}-x^{d})dx \biggl\|_{L^{q}(D_{1})} \approx 2^{\frac{k}{q}}, \]
also
\[\|f\|_{L^{p}(\mathbb{R}^{2})} \approx 2^{\frac{k}{p}}.\]
Then (\ref{neceinequality}) will yield
\[2^{\frac{k}{q}} \lesssim 2^{\frac{k}{p}},\]
which is true only if (C1) holds since $k$ can be sufficiently large.

To prove (C2), we denote the $2^{-k}$ neighborhood of the curve $\{(-x,-x^{d}): x \in [0, 1]\}$ by $S_{2}$.
For each $y \in D_{2}:= B(0,2^{-k})$ and $x \in [0, 1]$, it is clear that $(y_{1}-x, y_{2}-x^{d}) \in S_{2}$.

Let $f = \chi_{S_{2}}$. Then for each $y  \in D_{2}$,
\[ \int_{0}^{1}\chi_{S_{2}}(y_{1}-x, y_{2}-x^{d})dx \ge 1, \]
which implies that
\[ \biggl\|\int_{0}^{1}\chi_{S_{2}}(y_{1}-x, y_{2}-x^{d})dx \biggl\|_{L^{q}(\mathbb{R}^{2})} \ge \biggl\|\int_{0}^{1}\chi_{S_{2}}(y_{1}-x, y_{2}-x^{d})dx \biggl\|_{L^{q}(D_{2})} \approx 2^{-\frac{2k}{q}}. \]
Meanwhile,
\[\|f\|_{L^{p}(\mathbb{R}^{2})} \approx 2^{-\frac{k}{p}}.\]
Then (\ref{neceinequality}) will imply
\[2^{-\frac{2k}{q}} \lesssim 2^{-\frac{k}{p}},\]
and (C2) follows since  $k$ may tend to infinity.

Now let us turn to prove (C3). We set $S_{3}= [-2^{-k}, 2 ^{-k}] \times [-2^{-dk}, 2 ^{-dk}]$, $D_{3}= [0, 2 ^{-k}] \times [0, 2 ^{-dk}]$. Then for each $y \in D_{3}$ and $x \in [0, 2^{-k}]$, we have $(y_{1}-x, y_{2}-x^{d}) \in S_{3}$.

Define $f = \chi_{S_{3}}$. For each $y \in D_{3}$,
\[  \int_{0}^{1}\chi_{S_{3}}(y_{1}-x, y_{2}-x^{d})dx \ge \int_{0}^{2^{-k}}\chi_{S_{3}}(y_{1}-x, y_{2}-x^{d})dx\ge 2^{-k}. \]
Hence
\[ \biggl\|\int_{0}^{1}\chi_{S_{3}}(y_{1}-x, y_{2}-x^{d})dx\biggl\|_{L^{q}(\mathbb{R}^{2})} \ge \biggl \|\int_{0}^{2^{-k}}\chi_{S_{3}}(y_{1}-x, y_{2}-x^{d})dx \biggl\|_{L^{q}(D_{3})} \approx 2^{-\frac{(d+1)k}{q}}2^{-k}, \]
and
\[\|f\|_{L^{p}(\mathbb{R}^{2})} \approx 2^{-\frac{(d+1)k}{p}}.\]
It follows from (\ref{neceinequality}) that
\[2^{-\frac{(d+1)k}{q}} 2^{-k} \lesssim 2^{-\frac{(d+1)k}{p}},\]
which implies (C3).

Finally we  show (C4) for the case $c =0$. For simplicity, we define
\[\mathcal{M}_{0}f(y): = \sup_{t\in[1,2]} \biggl|\int_{0}^{1}f(y_{1}-tx, y_{2}- tx^{d})dx\biggl|.\]
Denote by $e_{1}= (-\frac{1}{\sqrt{d^{2}+1}}, -\frac{d}{\sqrt{d^{2}+1}})$, $e_{2}= (-\frac{d}{\sqrt{d^{2}+1}}, \frac{1}{\sqrt{d^{2}+1}})$ the unit vectors. We choose $S_{4}= \{y: |y\cdot e_{1}| \le 20 \cdot 2^{-k}, |y\cdot e_{2}| \le 20 \cdot 2^{-2k} \}$, and a subset  $\{t_{i}\} \subset [1,2]$ such that $|t_{i}- t_{i+1}| = 100 2^{-2k}$ for each $i$. So the number of the elements in $\{t_{i}\}$ is equivalent to $2^{2k}$. For each $t_{i}$,  we define $D_{t_{i}}: =\{y: (y-(t_{i},t_{i}))\cdot e_{1} \le 2^{-k}, (y-(t_{i},t_{i}))\cdot e_{2} \le 2^{-2k} \} $.

For each $y \in D_{t_{i}}$ and $x \in [1-(d^{2} + 1)^{-1/2} \cdot 2^{-k}, 1]$, Taylor's expansion implies
\[x^{d}= 1 + d(x-1) + \frac{d(d-1)}{2}(x-1)^{2} + \mathcal{O}(|x-1|^{3}),\]
which yields
\[ \biggl| ((t_{i}x, t_{i}x^{d})-(t_{i}, t_{i})) \cdot e_{1}\biggl| =\biggl| \frac{t_{i}}{\sqrt{d^{2} + 1}} [(x- 1) + d^{2}(x-1) + \mathcal{O}((x-1)^{2}) ]\biggl| \le 10 \cdot 2^{-k},\]
and
\begin{align}
 &\biggl| ((t_{i}x, t_{i}x^{d})-(t_{i}, t_{i})) \cdot e_{2}\biggl| \nonumber\\
 &=\biggl| \frac{t_{i}}{\sqrt{d^{2} + 1}} [-d(x- 1) + d (x-1) + \frac{d(d-1)}{2}(x-1)^{2} + O(|x-1|^{3})) ]\biggl| \le 10 \cdot 2^{-2k}.\nonumber
 \end{align}
By the triangle inequality,
\[ \biggl| ( y - (t_{i}x, t_{i}x^{d}) ) \cdot e_{1} \biggl| \le \biggl| ( y - (t_{i}, t_{i}) ) \cdot e_{1} \biggl| + \biggl| ( (t_{i},t_{i}) - (t_{i}x, t_{i}x^{d}) ) \cdot e_{1} \biggl|  \le 20 \cdot 2^{-k}, \]
and
\[ \biggl| ( y - (t_{i}x, t_{i}x^{d}) ) \cdot e_{2} \biggl| \le \biggl| ( y - (t_{i}, t_{i}) ) \cdot e_{2} \biggl| + \biggl| ( (t_{i},t_{i}) - (t_{i}x, t_{i}x^{d}) ) \cdot e_{2} \biggl|  \le 20 \cdot 2^{-2k}. \]
This means that $ y - (t_{i}x, t_{i}x^{d}) \in S_{4}$. As a result, when $y \in D_{t_{i}}$, we have
\[\mathcal{M}_{0}(\chi_{S_{4}})(y) \ge   \int_{1-(d^{2} + 1)^{-1/2} \cdot 2^{-k} }^{1  }\chi_{S_{4}}(y_{1}-t_{i}x, y_{2}- t_{i}x^{d})dx \ge (d^{2} + 1)^{-1/2} \cdot 2^{-k}. \]
Moreover, it is clear that for $i \neq j$, $D_{t_{i}} \cap D_{t_{j}} = \emptyset.$
 Hence
\[\|\mathcal{M}_{0}(\chi_{S_{4}})\|_{L^{q}(\mathbb{R}^{2})} \ge \biggl( \sum_{i} \|\mathcal{M}_{0}(\chi_{S_{4}})\|^{q}_{L^{q}(D_{t_{i}})}  \biggl)^{1/q}  \gtrsim  2^{-k} 2^{-\frac{k}{q}}. \]
Notice that
\[\|\chi_{S_{4}}\|_{L^{p}(\mathbb{R}^{2})} \approx 2^{-\frac{3k}{p}}.\]
 Inequality (\ref{neceinequality}) thus implies
 \[2^{-k} 2^{-\frac{k}{q}} \lesssim 2^{-\frac{3k}{p}},\]
 then (C4) is obtained since $k$ can be sufficiently large.

(2) In the case $c\not=0$, we prefer to omit the proofs of (C1)-(C2) because they are very similar with  part (1). We only show (C5).
For simplicity, we may choose $c= 1$. Let
 \[\mathcal{M}_{1}f(y): = \sup_{t\in[1,2]} \biggl|\int_{0}^{1}f(y_{1}-tx, y_{2}- t(x^{d} +1))dx\biggl|.\]
 We set $S_{5}= [-10  \cdot 2^{-k}, 10 \cdot 2^{-k}] \times [-10  \cdot 2^{-dk}, 10 \cdot 2^{-dk}]$, and choose a subset  $\{t_{i}\} \subset [1,2]$ such that $|t_{i}- t_{i+1}| = 100 \cdot 2^{-dk}$ for each $i$. So the number of the elements in $\{t_{i}\}$ is equivalent to $2^{dk}$. For each $t_{i}$.  we define $D_{t_{i}}: =[0, 2^{-k}] \times [t_{i}, t_{i} + t_{i}^{1-d}2^{-dk}] $.

 If $y \in D_{t_{i}}$ and $x \in [0, 2^{-k}]$, then
 \[|y_{1}-x| \le 2^{-k}, |y_{2}-t_{i}^{1-d}x^{d}| \le 2^{-dk}, \]
 that is $(y_{1}-x, y_{2}-t_{i}^{1-d}x^{d}) \in S_{5}$.

 Let $f = \chi_{S_{5}}$, then for every $y \in D_{t_{i}}$, we have
 \[\mathcal{M}_{1}(\chi_{S_{5}})(y) \ge \frac{1}{2}  \int_{0}^{\frac{1}{2} }\chi_{S_{5}}(y_{1}- x, y_{2}- t_{i}^{1-d}x^{d})dx \ge  \frac{1}{2}  \int_{0}^{2^{-k} }\chi_{S_{5}}(y_{1}- x, y_{2}- t_{i}^{1-d}x^{d})dx \gtrsim 2^{-k}. \]
Notice that if $i \neq j$, then $D_{t_{i}} \cap D_{t_{j}} = \emptyset.$ Therefore,
\[\|\mathcal{M}_{1}(\chi_{S_{5}})\|_{L^{q}(\mathbb{R}^{2})} \ge \biggl( \sum_{i} \|\mathcal{M}_{1}(\chi_{S_{5}})\|^{q}_{L^{q}(D_{t_{i}})}  \biggl)^{1/q}  \gtrsim  2^{-k} 2^{-\frac{k}{q}}. \]
Also,
\[\|\chi_{S_{5}}\|_{L^{p}(\mathbb{R}^{2})} \approx 2^{-\frac{(d+1)k}{p}}.\]
 Then inequality (\ref{neceinequality})  implies
 \[2^{-k} 2^{-\frac{k}{q}} \lesssim 2^{-\frac{(d+1)k}{p}},\]
 and we arrive at (C5).
\end{proof}

\textbf{Proof of Theorem \ref{hisomaintheorem}.}  The proof of Theorem \ref{hisomaintheorem} is very similar with that of Theorem \ref{maintheorem1}, so we only highlight some key points here. Similarly, we will consider the operator given by
\[\widetilde{A_t^{k}}f(y):=\int_{\mathbb{R}}  f(y_1-t^{a_{1}}x,y_2-t^{a_{2}}(x^d\phi(\frac{x}{2^{k}}) +c))\tilde{\rho}(x)dx.\]
Then we choose a non-negative function $\beta\in C_0^{\infty}(\mathbb{R})$ such that supp $\beta\subset[1/2,2]$ and $\sum_{j\in\mathbb{Z}}\beta(2^{-j}r)=1$ for $r>0$. Define the dyadic operators
\begin{equation}
\widetilde{A_{t,j}^k}f(y)=\frac{1}{(2\pi)^2}\int_{{\mathbb{R}}^2}e^{i\xi\cdot y} e^{ict^{a_{2}} \xi_{3}}  \widehat{d\mu }(\delta_{t}\xi)\beta(2^{-j}|\delta_{t}\xi|)\hat{f}(\xi)d\xi,
\end{equation}
and denote by $\widetilde{\mathcal{M}_{j}^k}$ the corresponding maximal operator. Here $\delta_{t}(\xi) = (t^{a_{1}} \xi_{1}, t ^{a_{2}}\xi_{2})$.

It is straightforward to treat $\sum_{j \le 0} \widetilde{\mathcal{M}_{j}^k}$ in the same spirit of Theorem \ref{maintheorem1}. For $j \ge 1$, by stationary phase method, the main contribution term is given by
\begin{equation*}
A^k_{t,j}f(y):=\int_{{\mathbb{R}}^2}e^{i(\xi\cdot y -ct^{a_{2}}2^{dk}\xi_{2}-t^{a_{2}} \xi_2\tilde{\Phi}(s,\delta))}\chi_{k,d,m}(t^{a_{1}-a_{2}}\frac{\xi_1}{ \xi_2})
\frac{A_{k,d,m}( \delta_{t}\xi)}{(1+| \delta_{t}\xi|)^{1/2}}
\beta(2^{-j}| \delta_{t}\xi|)\hat{f}(\xi)d\xi,
\end{equation*}
where $\delta =2^{-k}$, $\chi_{k,d,m}$ is a smooth function supported in the conical region $[c_{k,d,m},\widetilde{c_{k,d,m}}]$, for certain non-zero constants $c_{k,d,m}$ and $\widetilde{c_{k,d,m}}$ dependent only on $k, m$ and $d$. $A_{k,d,m}$ is a symbol of order zero in $\xi$ and $\{A_{k,d,m}(\delta_t\xi)\}_k$ is contained in a bounded subset of symbol of order zero. We may assume $\phi(0)=1/d$, then the phase function can be written as
\begin{align}\label{mainterm2}
-t^{a_{2}}\xi_2\tilde{\Phi}(s,\delta)=(d-1)t^{\frac{da_{1}-a_{2}}{d-1}}\xi_{2} \biggl(-\frac{\xi_{1}}{d\xi_{2}}\biggl)^{d/(d-1)}-t^{\frac{(a_{1}-a_{2})m}{d-1} + a_{2}} \xi_{2}\frac{\delta^m\phi^{(m)}(0)}{ m!}
\biggl(-\frac{\xi_1}{d\xi_2}\biggl)^{\frac{d+m}{d-1}}+ R(t,\xi,\delta,d),
\end{align}
where $R(t,\xi,\delta,d)$ is homogeneous of degree one in $\xi$ and has at least $m+1$ power of $\delta$. We notice that \eqref{mainterm2} is a small perturbation of $(d-1)t^{\frac{da_{1}-a_{2}}{d-1}}\xi_{2} \biggl(-\frac{\xi_{1}}{d\xi_{2}}\biggl)^{d/(d-1)}$  since $da_{1} \neq a_{2}$. Then we can apply the local smoothing estimates as we did in the proof of Theorem \ref{maintheorem1} to complete the proof of Theorem \ref{hisomaintheorem}.

\subsubsection{The case when $da_{1} = a_{2}$: homogeneous curves}
This subsection is dedicated to the proof of Theorem \ref{maitheorem3}. First of all, we have the following transference theorem.
\begin{thm}\label{transference}
Let the averaging operator $A_t$ be defined in \eqref{averagefinite2} and
\[Tf(y):= \int_{0}^{1}f(y_{1}-x, y_{2}-x^{d})dx, \quad d \ge 2.\]
Then for each $q \ge p \ge 1$, $C_{p,q} \ge 1$,
\begin{equation}\label{maximal operator}
\|\sup_{t \in [1,2]} |A_{t}f|\|_{L^{q}(\mathbb{R}^{2})}\leq  C_{p,q} \|f\|_{L^{p}(\mathbb{R}^{2})}, \hspace{0.5cm}f\in C_0^{\infty}(\mathbb{R}^2),
\end{equation}
if and only if
\begin{equation}\label{convolution operator}
\|Tf\|_{L^{q}(\mathbb{R}^{2})}\leq  C_{p,q} \|f\|_{L^{p}(\mathbb{R}^{2})}, \hspace{0.5cm}f\in C_0^{\infty}(\mathbb{R}^2).
\end{equation}
\end{thm}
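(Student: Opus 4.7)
The proof should proceed in two directions, with the reverse direction (convolution bound implies maximal bound) being the substantive one, achieved via a change of variables and non-isotropic rescaling.

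\textbf{Forward direction ($\Leftarrow$).} This is essentially free: taking $t=1$ in the definition of $A_t$ gives $A_1 f = Tf$, so
\[|Tf(y)| = |A_1 f(y)| \leq \sup_{t\in [1,2]} |A_t f(y)|\]
pointwise, and \eqref{maximal operator} immediately yields \eqref{convolution operator} with the same constant.

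\textbf{Reverse direction ($\Rightarrow$).} The crucial observation is that the change of variables $u = tx$ transforms the isotropic-$t$ average into a truncation of a fixed convolution:
\[A_t f(y) \;=\; \int_0^1 f(y-(tx, t^d x^d))\,dx \;=\; \frac{1}{t}\int_0^t f(y-(u,u^d))\,du.\]
Since $t \in [1,2]$ gives $1/t \le 1$ and $[0,t] \subseteq [0,2]$, we obtain the pointwise control
\[\sup_{t\in[1,2]} |A_t f(y)| \;\leq\; \int_0^2 |f(y-(u,u^d))|\,du \;=:\; S|f|(y),\]
so it suffices to prove that $S \colon L^p \to L^q$ is bounded whenever $T$ is.

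\textbf{From $T$ to $S$ via non-isotropic rescaling.} Substituting $u = 2v$ in $S$ yields $Sg(y) = 2\int_0^1 g(y-(2v, 2^d v^d))\,dv$. Introduce the non-isotropic dilation $D_\lambda g(y_1,y_2) := g(\lambda y_1, \lambda^d y_2)$, which satisfies $\|D_\lambda g\|_{L^p} = \lambda^{-(d+1)/p}\|g\|_{L^p}$. A direct computation shows
\[T(D_2 g)(y_1/2,\, y_2/2^d) \;=\; \int_0^1 g(y_1 - 2v,\, y_2 - (2v)^d)\,dv \;=\; \tfrac{1}{2} S g(y).\]
Taking $L^q$-norms and using $\|h(\cdot/2,\cdot/2^d)\|_{L^q} = 2^{(d+1)/q}\|h\|_{L^q}$ together with the hypothesis $\|T h\|_{L^q} \leq C_{p,q} \|h\|_{L^p}$ gives
\[\|Sg\|_{L^q} \;=\; 2 \cdot 2^{(d+1)/q} \|T(D_2 g)\|_{L^q} \;\leq\; 2^{\,1+(d+1)/q-(d+1)/p}\, C_{p,q}\, \|g\|_{L^p},\]
and thus $S : L^p \to L^q$ is bounded. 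Combining with the pointwise bound $\sup_{t\in[1,2]}|A_t f| \leq S|f|$ completes the proof.

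\textbf{Expected difficulties.} There is essentially no hard step: the only nontrivial input is recognizing that $A_t f$ can be rewritten as an average of the fixed convolution kernel over the moving interval $[0,t]$, which linearizes the dependence on $t$ and reduces the maximal problem to a single bounded-in-$t$ operator. The non-isotropic scaling $D_\lambda$ respects the homogeneity of the curve $(u,u^d)$, which is precisely why the result is specific to the homogeneous case $da_1 = a_2$ and does not extend directly to the finite-type perturbations treated in Theorems \ref{maintheorem1} and \ref{hisomaintheorem}.
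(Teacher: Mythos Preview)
Your argument is correct and is essentially identical to the paper's proof: both use the change of variables $u=tx$ to bound $\sup_{t\in[1,2]}|A_t f|$ pointwise by $\int_0^2 |f|(y_1-u,y_2-u^d)\,du$, and then reduce this to $T$ via the non-isotropic dilation $(y_1,y_2)\mapsto(2y_1,2^d y_2)$, arriving at the same constant $2^{1+(d+1)/q-(d+1)/p}C_{p,q}$. One cosmetic slip: your arrow labels are swapped --- the direction ``maximal bound $\Rightarrow$ convolution bound'' (via $T=A_1$) is the $\Rightarrow$ implication in the iff, not $\Leftarrow$.
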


\begin{proof}
On one hand, since for each $y \in \mathbb{R}^{2}$, $|Tf(y)| \le \sup_{t \in [1,2]} |A_{t}f(y)|$.
It is obvious that inequality (\ref{convolution operator}) yields inequality (\ref{maximal operator}).

On the other hand, assume that inequality (\ref{convolution operator}) holds true, we will prove inequality (\ref{convolution operator}). Changing variables implies
\begin{align}
\sup_{t \in [1,2]} |A_{t}f(y)| &\le \sup_{t \in [1,2]} \frac{1}{t} \int_{0}^{t}|f|(y_{1}-x, y_{2}-x^{d})dx  \le  \int_{0}^{2}|f|(y_{1}-x, y_{2}-x^{d})dx \nonumber\\
&= 2 \int_{0}^{1}|f|(y_{1}-2x, y_{2}-2^{d}x^{d})dx. \nonumber
\end{align}
Therefore, we have
\begin{align}
\biggl\|\sup_{t \in [1,2]} |A_{t}f|\biggl\|_{L^{q}(\mathbb{R}^{2})}  &\lesssim 2^{\frac{d+1}{q}}  \biggl \|\int_{0}^{1}|f|(2y_{1}-2x, 2^{d}y_{2}-2^{d}x^{d})dx \biggl\|_{L^{q}(\mathbb{R}^{2})} \nonumber\\
&\sim 2^{\frac{d+1}{q}}  \biggl \|T(|f|(2\cdot, 2^{d} \cdot))(y) \biggl\|_{L^{q}(\mathbb{R}^{2})} \nonumber\\
&\lesssim 2^{\frac{d+1}{q}}  C_{p,q}  \| f(2y_1, 2^{d}y_2) \|_{L^{p}(\mathbb{R}^{2})} \nonumber\\
&\sim 2^{\frac{d+1}{q}-\frac{d+1}{p} }  C_{p,q}  \| f \|_{L^{p}(\mathbb{R}^{2})}. \nonumber
\end{align}
Then we arrive at inequality (\ref{maximal operator}).
\end{proof}
\begin{rem}
We would like to point out that the particular choice of dilations enables the change of variables and thus the validation of the transference theorem.
\end{rem}

According to Theorem \ref{transference}, we are reduced to get the $L^{p}\rightarrow L^{q}$ estimate for the operator $T$.
This problem was considered by Iosevich and Sawyer \cite{ios3}. It follows from the results in \cite{ios3} that $\|T\|_{L^{p}\rightarrow L^{q}} < \infty$ provided that $(\frac{2}{p},\frac{1}{q}) \in \Delta_{3}= \{(\frac{2}{p},\frac{1}{q}): \frac{1}{2p} < \frac{1}{q} \le \frac{1}{p} , \frac{1}{q}  > \frac{2}{p} -1, \frac{1}{q} > \frac{1}{p} - \frac{1}{d+1}\} \cup \{(0, 0), (1, 1)\}$. Moreover, according to [Lemma 2, \cite{ios3}], the region for $p, q$ here is almost sharp up to the endpoint. This completes the proof of Theorem \ref{maitheorem3}.

\subsubsection{The case when $da_{1} = a_{2}$: finite type curves}
This subsection is devoted to the proof of Theorem \ref{hhisomaintheorem}. We first prove the theorem for $d = 2$, and then explain how to extend it to $d >2$.

\textbf{The case when $d =2$. }We choose $B>0$ very small and  $\tilde{\rho}\in C_0^{\infty}(\mathbb{R})$ such that  $\supp \tilde{\rho}\subset\{x:B/2\leq|x|\leq 2B\}$  and $\sum_k\tilde{\rho}(2^kx)=1$ for $x\in \mathbb{R}$.

Let
\begin{align*}
A_tf(y):&=\int f(y_1-tx,y_2-t^2x^2\phi(x))\eta(x)dx 
=\sum_kA_t^kf(y),
\end{align*}
where
\begin{equation*}
A_t^kf(y):=\int f(y_1-tx,y_2-t^2x^2\phi(x))\tilde{\rho}(2^kx)\eta(x)dx.
\end{equation*}
Since $\eta$ is supported in a sufficiently small neighborhood of the origin, we only need to consider $k>0$ sufficiently large.

Considering  isometric operator on $L^p(\mathbb{R}^2)$ defined by $T_kf(x_1,x_2)=2^{3k/p}f(2^kx_1,2^{2k}x_2)$, we only need to prove the following estimate
\begin{equation*}
\sum_k2^{3k(\frac{1}{p} - \frac{1}{q})-k}\left\|\sup_{t \in [1,2]}|\widetilde{A_t^k}|\right\|_{L^p\rightarrow L^q}\leq C_{p,q},
\end{equation*}
where
\begin{equation*}
\widetilde{A_t^k}f(y):=\int f(y_1-tx,y_2-t^2x^2\phi(\frac{x}{2^k}))\tilde{\rho}(x)\eta(2^{-k}x)dx.
\end{equation*}

By means of the Fourier inversion formula, we have
\begin{align*}
\widetilde{A_t^k}f(y)&=\frac{1}{(2\pi)^2}\int_{{\mathbb{R}}^2}e^{i\xi\cdot y}\int_{\mathbb{R}}e^{-i(t\xi_1x+t^2\xi_2x^2\phi(\frac{x}{2^k}))}\tilde{\rho}(x)\eta(2^{-k}x)dx\hat{f}(\xi)d\xi
\\
&=\frac{1}{(2\pi)^2}\int_{{\mathbb{R}}^2}e^{i\xi\cdot y}\widehat{d\mu_k}(\delta_t\xi)\hat{f}(\xi)d\xi,
\end{align*}
where
\begin{equation*}
\widehat{d\mu_k}(\xi):=\int_{\mathbb{R}}e^{-i(\xi_1x+\xi_2x^2\phi(\frac{x}{2^k}))}\tilde{\rho}(x)\eta(2^{-k}x)dx.
\end{equation*}

We choose a non-negative function $\beta\in C_0^{\infty}(\mathbb{R})$ such that supp $\beta\subset[1/2,2]$ and $\displaystyle \sum_{j\in\mathbb{Z}}\beta(2^{-j}r)=1$ for $r>0$. Define the dyadic operators
\begin{equation*}
\widetilde{A_{t,j}^k}f(y)=\frac{1}{(2\pi)^2}\int_{{\mathbb{R}}^2}e^{i\xi\cdot y}\widehat{d\mu_k}(\delta_t\xi)\beta(2^{-j}|\delta_t\xi|)\hat{f}(\xi)d\xi,
\end{equation*}
and denote by $\widetilde{\mathcal{M}_{j}^k}$ the corresponding maximal operator.  As a consequence of this decomposition, we have that
\begin{equation*}
\sup_{t \in [1,2]}|\widetilde{A_t^k}f(y)|\leq \widetilde{\mathcal{M}^{k,0}}f(y)+\sum_{j\geq 1}\widetilde{\mathcal{M}_{j}^k}f(y), \hspace{0.2cm}\textmd{for}\hspace{0.2cm}y\in \mathbb{R}^2,
\end{equation*}
where
\begin{equation*}
\widetilde{\mathcal{M}^{k,0}}f(y):=\sup_{t \in [1,2]}|\sum_{j\leq 0}\widetilde{A_{t,j}^k}f(y)|.
\end{equation*}

Theorem \ref{hhisomaintheorem} follows from the estimates (E1)-(E3) below. \\
\textbf{(E1)} For $q \ge p \ge 1$, we have
\begin{align*}
\|\widetilde{\mathcal{M}^{k,0}}f\|_{L^{q}} \lesssim \|f\|_{L^{p}}.
\end{align*}
\textbf{(E2)} Suppose that $\frac{1}{2p} < \frac{1}{q} \le \frac{1}{p}$, $\frac{1}{q} \ge \frac{3}{p}-1$. Then for any $\epsilon >0$, there holds
\begin{equation*}
\|\widetilde{\mathcal{M}_{j}^{k}}f\|_{L^{q}}\leq C_{p,q} 2^{j(1+\epsilon)(1/p-1/q)-(j\wedge mk)/q}\|f\|_{L^p}.
\end{equation*}
\textbf{(E3)} For all $j > mk$, and $p,q$ satisfying $\frac{1}{2p}< \frac{1}{q} \le  \frac{3}{5p}$, $\frac{3}{q} \le 1-\frac{1}{p} $,  we have
\begin{align*}
\|\widetilde{\mathcal{M}_{j}^{k}}f\|_{L^q}
\le C   2^{(\frac{1}{p} -\frac{2}{q}  +\epsilon)j} \|f\|_{L^p}.
\end{align*}

Indeed, if (E1)-(E3) hold true, since $3(\frac{1}{p} - \frac{1}{q}) < 1$ for $p,q$   from  Theorem \ref{hhisomaintheorem} (when $d=2$), then it follows that
\[\sum_k2^{3k(\frac{1}{p} - \frac{1}{q})-k}\|\widetilde{\mathcal{M}^{k,0}}\|_{L^p\rightarrow L^q} \leq C_{p,q}.\]
Next we split the set of $j$ into two  parts, namely $j>mk$ and $1  \le j \le mk$.

When $1 \le j\leq mk$,
by  (E2), if  $\frac{1}{2p} < \frac{1}{q} \le \frac{1}{p}$, $\frac{1}{q} \ge \frac{3}{p}-1$, for  $\epsilon >0$ sufficiently small, we get
\begin{align}
\sum_k 2^{3k(\frac{1}{p} - \frac{1}{q} )-k} \sum_{1 \le j\leq mk}\|\widetilde{\mathcal{M}_{j}^{k}}f\|_{L^q} &\leq C_{p,q} \sum_k 2^{3k(\frac{1}{p} - \frac{1}{q})-k} \sum_{1 \le j \le mk} 2^{j(\frac{1}{p}-\frac{2}{q}) + \epsilon j} \|f\|_{L^p} \nonumber\\
&\leq C_{p,q} \|f\|_{L^p}. \nonumber
\end{align}

When $ j\ge mk$,
 it follows from (E3) that if $p,q$ satisfy $\frac{1}{2p}< \frac{1}{q} \le  \frac{3}{5p}$, $\frac{3}{q} \le 1-\frac{1}{p} $,  $\epsilon >0$ sufficiently small, there holds
\begin{align}
\sum_k 2^{3k(\frac{1}{p} - \frac{1}{q})-k} \sum_{j > mk}\|\widetilde{M_{j}^{k}}f\|_{L^q} &\leq C_{p,q} \sum_k 2^{3k(\frac{1}{p} - \frac{1}{q})-k}   \sum_{j > mk} 2^{(\frac{1}{p} -\frac{2}{q}  +\epsilon)j} \|f\|_{L^p} \nonumber\\
&\leq C_{p,q} \|f\|_{L^p}. \nonumber
\end{align}

These three cases  imply that,  for $p,q$ satisfying $\frac{1}{2p} <\frac{1}{q} \le \frac{3}{5p}$, $\frac{3}{q} \le 1-\frac{1}{p} $, there holds
\begin{equation}\label{equ:planem=1small}
\|\mathcal{M}f\|_{L^{q}}\leq  C_{p,q} \|f\|_{L^{p}}.
\end{equation}
By Theorem 1.1 in \cite{WL}, for each $p>2$,
\begin{equation}\label{diag}
\|\mathcal{M}f\|_{L^{p}}\leq  C_{p} \|f\|_{L^{p}}, \hspace{0.5cm}f\in C_0^{\infty}(\mathbb{R}^2).
\end{equation}
Thanks to the inequalities (\ref{equ:planem=1small}), (\ref{diag}) and the Riesz interpolation theorem, the case $d= 2$ of Theorem \ref{hhisomaintheorem} follows.

It remains to prove (E1), (E2) and (E3). 

\textbf{Proof of (E1).}
Note that $\widetilde{\mathcal{M}^{k,0}}f(y)=\sup_{t \in [1,2]}|f*K_{\delta_{t^{-1}}}(y)|$, where $K_{\delta_{t^{-1}}}(x)=t^{-3}K(\frac{x_1}{t},\frac{x_2}{t^2})$ and
\begin{equation*}
K(y):=\int_{{\mathbb{R}}^2}e^{i\xi\cdot y}\widehat{d\mu_k}(\xi)\rho(|\xi|)d\xi,
 \end{equation*}
where $\rho\in C_0^{\infty}(\mathbb{R})$ is supported in $[0,2]$. Similarly as in the kernel estimate (\ref{kernelestimate1}), we obtain that
\begin{equation*}
|K(y)|\leq C_N (1+|y|)^{-N}.
\end{equation*}
When $q \ge p \ge1$, by Young's inequality, we have
\begin{align}
\|\widetilde{\mathcal{M}^{k,0}}f\|_{L^{q}} &= \|\sup_{t \in [1,2]}|f*K_{\delta_{t^{-1}}}|\|_{L^{q}} \nonumber\\
&\le \biggl\|\frac{C_{N}}{(1+|\cdot|)^{N}} * |f|\biggl\|_{L^{q}} \nonumber\\
&\lesssim \|f\|_{L^{p}}.\nonumber
\end{align}
Then (E1) is proved.

\textbf{Proof of (E2) and (E3).}
We will first consider
\begin{equation*}
\widehat{d\mu_k}(\delta_t\xi)=\int_{\mathbb{R}}e^{-it^2\xi_2(-sx+x^2\phi(\delta x))}\tilde{\rho}(x)\eta(\delta x)dx,
\end{equation*}
where $\delta=2^{-k}$ and
\begin{equation*}
s:=s(\xi,t)=-\frac{\xi_1}{t\xi_2}, \hspace{0.3cm}\textrm{for} \hspace{0.2cm} \xi_2\neq 0.
\end{equation*}

Let
\begin{equation*}
\Phi(s,x,\delta)=-sx+x^2\phi(\delta x),
\end{equation*}
then we have
\begin{equation*}
\partial_x\Phi(s,x,\delta)=-s+2x\phi(\delta x)+x^2\delta\phi'(\delta x)
\end{equation*}
and
\begin{equation*}
\partial_x^2\Phi(s,x,\delta)=2\phi(\delta x)+4x\delta\phi'(\delta x)+x^2\delta^2\phi''(\delta x).
\end{equation*}

Since $k$ is sufficiently large and $\phi(0)\neq 0$, then the implicit function theorem implies that there exists a smooth solution $x_c=\tilde{q}(s,\delta)$ of the equation $\partial_x\Phi(s,x,\delta)=0$. For the sake of simplicity, we may assume $\phi(0)=1/2$. By Taylor's expansion, the phase function can be written as
\begin{equation}\label{Phase}
-t^2\xi_2\tilde{\Phi}(s,\delta)=\frac{\xi_1^2}{2\xi_2}+(-1)^{m+1}\frac{{\phi}^{(m)}(0)}{m!}\delta^{m}\frac{\xi_1^{m+2}}{t^m\xi_2^{m+1}} + R(t,\xi,\delta),
\end{equation}
where $R(t,\xi,\delta)$ is homogeneous of degree one in $\xi$.
Note that $-t^2\xi_2\tilde{\Phi}(s,\delta)$ can be considered as a small perturbation of  \[\frac{\xi_1^2}{2\xi_2}+(-1)^{m+1}\frac{{\phi}^{(m)}(0)}{m!}\delta^{m}\frac{\xi_1^{m+2}}{t^m\xi_2^{m+1}} .\]

By applying the method of stationary phase, we have
 \begin{equation}\label{fouriermeasuredecomposition3}
\widehat{d\mu_{k,m}}(\delta_t\xi)=e^{-it^2\xi_2\tilde{\Phi}(s,\delta)}\chi_{k,m}(\frac{\xi_1}{t\xi_2})
\frac{A_{k,m}(\delta_t\xi)}{(1+|\delta_t\xi|)^{1/2}}+B_{k,m}(\delta_t\xi),
\end{equation}
where  $\chi_{k,m}$ is a smooth function supported in $[c_{k,m},\widetilde{c_{k,m}}]$, for certain non-zero constants $c_{k,m}$ and $\widetilde{c_{k,m}}$ dependent only on $k$ and $m$. $A_{k,m}$ is a symbol of order zero in $\xi$ and $\{A_{k,m}(\delta_t\xi)\}_{k,m}$ is contained in a bounded subset of symbol of order zero. 
More precisely, for arbitrary $t \in [1,2]$,
 \begin{equation*}\label{symbol123}
 |D_{\xi}^{\alpha}A_{k,m}(\delta_t\xi)|\leq C_{\alpha}(1+|\xi|)^{-\alpha},
\end{equation*}
where $C_{\alpha}$ does not depend on $k$ and $m$. Furthermore, $B_{k,m}$ is a remainder term and satisfies for arbitrary $t\in [1,2]$,
\begin{equation*}
 |D_{\xi}^{\alpha}B_{k,m}(\delta_t\xi)|\leq C_{\alpha,N}(1+|\xi|)^{-N},
\end{equation*}
where $C_{\alpha,N}$ are admissible constants that do not depend on $k$ and $m$.

First, let us consider the remainder part of (\ref{fouriermeasuredecomposition3}). Set
\begin{equation*}
\mathcal{M}_{j}^{k,0}f(y):=\sup_{t\in[1,2]}\left|\frac{1}{(2\pi)^2}\int_{{\mathbb{R}}^2}e^{i\xi\cdot y}B_{k,m}(\delta_t\xi)\beta(2^{-j}|\delta_t\xi|)\hat{f}(\xi)d\xi \right|.
\end{equation*}
Integration by parts implies $|(B_{k,m}\beta(2^{-j}\cdot))^{\vee}(x)|\leq C_N 2^{-jN}(1+|x|)^{-N}$. Therefore,
\begin{align}
\mathcal{M}_{j}^{k,0}f(y) &\le \sup_{t\in[1,2]}\frac{C_{N}2^{-jN}}{(2\pi)^2t^{3}}\int_{{\mathbb{R}}^2} \frac{|f(x)|}{(1+|\delta_{t^{-1}}(y-x)|)^{N}} dx \nonumber\\
 &\lesssim 2^{-jN}\int_{{\mathbb{R}}^2} \frac{|f(x)|}{(1+|y-x|)^{N}} dx. \nonumber
\end{align}

Set
\begin{equation}\label{mainterm3}
A_{t,j}^kf(y):=\frac{1}{(2\pi)^2}\int_{{\mathbb{R}}^2}e^{i(\xi \cdot y-t^2\xi_2\tilde{\Phi}(s,\delta))}\chi_{k,m}(\frac{\xi_1}{t\xi_2})
\frac{A_{k,m}(\delta_t\xi)}{(1+|\delta_t\xi|)^{1/2}}\beta(2^{-j}|\delta_t \xi|)\hat{f}(\xi)d\xi.
\end{equation}
Denote by $\mathcal{M}_{j}^{k,1}$ the corresponding maximal operator over $[1,2]$.

 Since $\tilde{\Phi}(s,\delta)$ is homogeneous of degree zero in $\xi$ and $\frac{\xi_1}{\xi_2}\approx 1$,  the operator $M_{j}^{k,1}$ can be localized by the same method that we used in  the proof of Theorem \ref{maintheorem1}.
Hence we can choose $\rho_1\in C_0^{\infty}(\mathbb{R}^2\times[\frac{1}{2},4])$ and define
\begin{equation*}
\widetilde{\mathcal{M}_{j}^{k,1}}f(y):=\sup_{t\in[1,2]}\left|\rho_1(y,t)A_{t,j}^kf(y)\right|.
\end{equation*}

By Lemma 2.12 in \cite{WL}, we have
\begin{equation}\label{p-p}
\|\widetilde{\mathcal{M}_{j}^{k,1}}f\|_{L^p}\leq C_{p} 2^{- (j \wedge mk)/p}\|f\|_{L^p}, \hspace{0.5cm}2\leq p\leq \infty.
\end{equation}
Then (E2) follows from  the Riesz interpolation theorem between (\ref{p-p})  and   Lemma \ref{lemmaL^2} below.

\begin{lem}\label{lemmaL^2}
For any $\epsilon >0$, we have
\begin{equation}\label{1-infty}
\|\widetilde{\mathcal{M}_{j}^{k,1}}f\|_{L^ \infty }\leq C_{\epsilon} 2^{j(1+\epsilon)}\|f\|_{L^1}.
\end{equation}
\end{lem}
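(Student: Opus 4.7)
The strategy is to bound $\widetilde{\mathcal{M}_j^{k,1}}f$ pointwise. Since $A_{t,j}^k f = f \ast K_{j,t}^k$, where $K_{j,t}^k$ is the inverse Fourier transform of the symbol in \eqref{mainterm3}, Young's inequality reduces \eqref{1-infty} to the following two uniform kernel estimates:
\begin{equation*}
\sup_{t\in[1/2,4]}\|K_{j,t}^k\|_{L^\infty(\mathbb{R}^2)} \lesssim 2^j, \qquad \sup_{t\in[1/2,4]}\|\partial_t K_{j,t}^k\|_{L^\infty(\mathbb{R}^2)} \lesssim 2^{2j}.
\end{equation*}
Indeed, once these are established, I will apply Lemma \ref{lem:Lemma3} to $G(t) := A_{t,j}^k f(y)$ for each fixed $y$: for any $p \in (1,\infty)$,
\begin{equation*}
\sup_{t\in[1,2]}|A_{t,j}^k f(y)|^p \leq |A_{1,j}^k f(y)|^p + p\biggl(\int_1^2|A_{t,j}^k f(y)|^p dt\biggr)^{1/p'}\biggl(\int_1^2|\partial_t A_{t,j}^k f(y)|^p dt\biggr)^{1/p},
\end{equation*}
and insert the two kernel estimates through Young's inequality to obtain $\sup_{t}|A_{t,j}^k f(y)|^p \lesssim 2^{j(p+1)}\|f\|_1^p$, i.e.\ $\sup_t |A_{t,j}^k f(y)| \lesssim 2^{j(1+1/p)}\|f\|_1$. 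Choosing $p = 1/\epsilon$ then yields the conclusion.

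To prove the main kernel bound, I will write the phase $\Psi(y,\xi;t) := y\cdot\xi - t^2\xi_2\tilde\Phi(-\xi_1/(t\xi_2),\delta)$, which is homogeneous of degree one in $\xi$. Passing to polar coordinates $\xi = r(\cos\theta,\sin\theta)$, this becomes $\Psi = rF(\theta;y,t,\delta)$ for a smooth function $F$, while the amplitude has size $\lesssim 2^{-j/2}$ on the support $\{r\sim 2^j\}$. Integration by parts in $r$ bounds the inner $r$-integral by $C_N(1+2^j|F(\theta)|)^{-N}2^{3j/2}$ for any $N$. The contribution from $\{|F(\theta)|\lesssim 2^{-j}\}$ is then estimated via the geometric meaning of $F$: its zeros correspond to directions $\theta$ such that $y$ lies on a tangent line of the curve $\gamma_{k,t}(x)=(tx, t^2 x^2\phi(\delta x))$. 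Worst case behavior comes from caustic points where both $F(\theta_c)=0$ and $F'(\theta_c)=0$, which occur precisely when $y$ lies on $\gamma_{k,t}$; the size of $F''(\theta_c)$ is then controlled below by the curvature of $\gamma_{k,t}$. Since $\phi(0)\neq 0$ and $\tilde\rho$ is supported away from the origin, the curvature of $\gamma_{k,t}$ is bounded below uniformly in $t\in[1/2,4]$ and in large $k$. Consequently $|\{\theta: |F|\lesssim 2^{-j}\}|\lesssim 2^{-j/2}$ at caustics, giving $\|K_{j,t}^k\|_\infty \lesssim 2^{3j/2}\cdot 2^{-j/2} = 2^j$.

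The bound on $\partial_t K_{j,t}^k$ proceeds identically: each $t$-derivative acting on $\Psi$ produces a factor of size $O(2^j)$ on the support of the amplitude (while $t$-derivatives of the amplitude remain bounded), so repeating the oscillatory integral analysis yields an extra factor of $2^j$.

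The principal obstacle is establishing the kernel bound uniformly in $k$ as $k\to\infty$, which requires verifying that the geometric non-degeneracy of $\gamma_{k,t}$ (bounded-below curvature) persists under the perturbation $x^2 \mapsto x^2\phi(\delta x)$. This is exactly where the hypothesis $\phi(0)\neq 0$ together with the separation of the support of $\tilde\rho$ from $0$ is essential; otherwise $\gamma_{k,t}$ could flatten in the limit and the $2^j$ bound would fail. The $\epsilon$ loss in the final statement reflects only the Sobolev-type passage from the discrete bound on $|A_{t,j}^k f(y)|$ and its $t$-derivative to the supremum in $t$, and could in principle be removed by a direct discretization argument at scale $2^{-j}$.
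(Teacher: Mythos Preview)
Your approach is correct and gives a genuinely different route from the paper.  The paper proceeds via a second dyadic (angular) decomposition: it partitions the cone $\{\xi_1/\xi_2\approx 1\}$ into $\sim 2^{j/2}$ sectors $\Gamma_j^\nu$ of aperture $2^{-j/2}$, shows that each kernel piece $K_t^\nu(y,x)$ is essentially supported on a $2^{-j/2}\times 2^{-j}$ rectangle with height $2^j$, and then argues that for fixed $(y,t,x)$ only $O(2^{\epsilon j})$ of these rectangles overlap, yielding $\sum_\nu|K_t^\nu|\lesssim 2^{(1+\epsilon)j}$ uniformly.  Your polar-coordinate/van der Corput argument instead bounds the undecomposed kernel directly: integration by parts in $r$ gives $\lesssim 2^{3j/2}(1+2^j|F(\theta)|)^{-N}$, and the sublevel set estimate $|\{|F|\lesssim 2^{-j}\}|\lesssim 2^{-j/2}$ (from nondegeneracy of $F''$) recovers $\|K_{j,t}^k\|_\infty\lesssim 2^j$.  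Both are standard devices for curves with nonvanishing curvature; your version is more direct here, while the paper's angular decomposition is the machinery used elsewhere in the paper for local smoothing, so it is natural they reused it.

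Two minor comments.  First, your detour through Lemma~\ref{lem:Lemma3} is unnecessary: once you have $\|K_{j,t}^k\|_\infty\lesssim 2^j$ \emph{uniformly} in $t\in[1/2,4]$, Young's inequality gives $\sup_t|A_{t,j}^k f(y)|\le\sup_t\|K_{j,t}^k\|_\infty\|f\|_1\lesssim 2^j\|f\|_1$ immediately, with no $\epsilon$ loss and no need for $\partial_t K$.  Second, a small geometric imprecision: at a caustic point one computes $F''(\theta_c)=-F(\theta_c)\pm 1/\kappa(x_c)$, so the lower bound on $|F''|$ comes from an \emph{upper} bound on the curvature $\kappa$ (automatic by smoothness), while the lower bound on $\kappa$ is what makes the Gauss map $x_c\leftrightarrow\theta$ a diffeomorphism.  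Both bounds are indeed uniform in $k$ because $x^2\phi(\delta x)$ converges in $C^\infty$ to $\phi(0)x^2$ on $\operatorname{supp}\tilde\rho$, so your identification of the essential hypothesis is right.
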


\begin{proof}
We introduce the angular decomposition of the set $\{\xi \in \mathbb{R}^{2}: \frac{\xi_1}{\xi_2}\approx 1\}$.  For each positive integer $j$, we consider a roughly equally spaced set of points with grid length $2^{-j/2}$ on the unit circle $S^1$; that is, we fix a collection $\{\kappa_{j}^{\nu}\}_{\nu}$ of real numbers, that satisfy:

$(a)$ $|\kappa_j^{\nu}-\kappa_j^{\nu'}|\geq 2^{-j/2}$, if $\nu\neq\nu'$;

$(b)$ if $\xi\in \{\xi \in \mathbb{R}^{2}: \frac{\xi_1}{\xi_2}\approx 1\}$, then there exists a $\kappa_j^{\nu}$ so that $\biggl|\frac{\xi_1}{\xi_2}-\kappa_j^{\nu}\biggl|<2^{-j/2}$.

Let $\Gamma_j^{\nu}$ denote the corresponding cone in the $\xi$-space
\begin{equation*}
\Gamma_j^{\nu}=\{\xi \in \mathbb{R}^{2}: \biggl|\frac{\xi_{1}}{\xi_{2}}-\kappa_j^{\nu}\biggl|\leq2\cdot 2^{-j/2}\}.
\end{equation*}
We can construct an associated partition of unity:
$\chi_j^{\nu}$ is  homogeneous of degree zero in $\xi$ and supported in $\Gamma_j^{\nu}$, with
\begin{equation*}
\sum_{\nu}\chi_j^{\nu}(\xi)=1 \hspace{0.5cm}\textrm{for}\hspace{0.2cm}\textrm{ all}\hspace{0.2cm} \xi\in \{\xi \in \mathbb{R}^{2}: \frac{\xi_1}{\xi_2}\approx 1\} \hspace{0.2cm}\textrm{and}\hspace{0.2cm} \textrm{all}\hspace{0.2cm} j,
\end{equation*}
and
\begin{equation*}
|\partial_{\xi}^{\alpha}\chi_j^{\nu}(\xi)|\leq A_{\alpha}2^{|\alpha|j/2}|\xi|^{-|\alpha|}.
\end{equation*}

Hence, in order to establish (\ref{1-infty}), notice that
\begin{align}
\|\widetilde{\mathcal{M}_{j}^{k,1}}f\|_{L^{\infty}} &= \biggl \|\sup_{t \in [1,2]}\biggl |\int_{\mathbb{R}^{2}} K_{t}(y,x)f(x)dx \biggl |\biggl \|_{L^{\infty}} \nonumber\\
 &\le \biggl \|\sup_{t \in [1,2]}\int_{\mathbb{R}^{2}} \sum_{\nu}{ |K_t^{\nu}(y,x)|}|f(x)|dx \biggl \|_{L^{\infty}}, \nonumber
\end{align}
where
\begin{equation*}
K_t(y,x)=\rho_1(y,t)\int_{{\mathbb{R}}^2}e^{i(\xi \cdot (y-x)-t^2\xi_2\tilde{\Phi}(s,\delta))}\widetilde{A_{k,m}}(\xi,t)\beta(2^{-j}|\delta_t \xi|)d\xi,
\end{equation*}
and
\begin{equation*}
K_t^{\nu}(y,x)=\rho_1(y,t)\int_{{\mathbb{R}}^2}e^{i(\xi \cdot (y-x)-t^2\xi_2\tilde{\Phi}(s,\delta))}\widetilde{A_{k,m}}(\xi,t)\beta(2^{-j}|\delta_t \xi|)\chi_j^{\nu}(\xi)d\xi,
\end{equation*}
$\widetilde{A_{k,m}}(\xi,t)=\chi_{k,m}(\frac{\xi_1}{t\xi_2})
\frac{A_{k,m}(\delta_t\xi)}{(1+|\delta_t\xi|)^{1/2}}$. If we can show that
for fixed  $y$, $t$ and $\epsilon >0$, we have
\begin{equation}\label{sum bound}
\sum_{\nu}{ |K_t^{\nu}(y,x)|} \le 2^{(1+\epsilon)j},
\end{equation}
uniformly for $x \in \mathbb{R}^{2}$, then inequality (\ref{1-infty}) follows.

In fact, it is not hard to check that for each $\nu$,
\begin{align}\label{kernalL^infty}
|K_t^{\nu}(y,x)| \leq 2^{j} \frac {C _{N}}{(1+ 2^{\frac{j}{2}}|x_{1}-c_{1}(y,t,\kappa_{j}^{\nu},\delta)|  + 2^{j}|x_{2} +\kappa_{j}^{\nu}x_{1}-c_{2}(y,t,\kappa_{j}^{\nu},\delta)|   )^{N}}, \nonumber
\end{align}
where $C$ does not depend on $t$, $j$, $k$ and $\nu$,
\[c_{1}(y,t,\kappa_{j}^{\nu},\delta) = \kappa_{j}^{\nu}+y_{1}+\delta^{m} (\partial_{1}\bar{R})(\kappa_{j}^{\nu},t,\delta),\]
\[c_{2}(y,t,\kappa_{j}^{\nu},\delta) =y_{2} +\kappa_{j}^{\nu}y_{1} +\frac{1}{2}(\kappa_{j}^{\nu})^{2}+\delta^{m} \bar{R}(\kappa_{j}^{\nu},t,\delta),\]
here $\bar{R}(\frac{\xi_{1}}{\xi_{2}},t,\delta) =(-1)^{m+1}\frac{{\phi}^{(m)}(0)}{m!} \frac{\xi_1^{m+2}}{t^m\xi_2^{m+2}} + \frac{\delta}{\xi_{2}} R(t,\xi,\delta)$.

For fixed $y$, $t$, $j$, $k$, if one of
\[|x_{1}-c_{1}(y,t,\kappa_{j}^{\nu},\delta)| \ge 2^{-\frac{j}{2}+\epsilon j}\]
and
\[|x_{2} + \kappa_{j}^{\nu}x_{1}-c_{2}(y,t,\kappa_{j}^{\nu},\delta)| \ge 2^{-j+\epsilon j}\]
holds true for some $\epsilon >0$, then we have
\[|K_t^{\nu}(y,x)| \leq C_{N}2^{-Nj}.\]
 Inequality (\ref{sum bound}) follows  since $N$ can be sufficiently large. Therefore, we only need to consider the case when $(x_{1}, x_{2})$ satisfies
\[|x_{1}-c_{1}(y,t,\kappa_{j}^{\nu},\delta)| \le 2^{-\frac{j}{2}+\epsilon j},\]
\[|x_{2} + \kappa_{j}^{\nu}x_{1}-c_{2}(y,t,\kappa_{j}^{\nu},\delta)| \le 2^{-j+\epsilon j}.\]
It is obvious that for fixed $y$, $t$, $j$, $k$, if $\nu\neq\nu'$,
\[|c_{1}(y,t,\kappa_{j}^{\nu},\delta) -c_{1}(y,t,\kappa_{j}^{\nu'},\delta)| \ge 2^{-j/2} ,\]
which implies inequality (\ref{sum bound}). This completes the proof of Lemma \ref{lemmaL^2}.
\end{proof}

When $j>mk$, by Lemma \ref{lem:Lemma3},
\begin{equation}\label{well}
\begin{aligned}
&\|\widetilde{\mathcal{M}_{j}^{k,1}}f\|_{L^q}\leq C
2^{\frac{j}{q} -\frac{mk}{q} -\frac{j}{2}}
\\
&\quad\times \biggl(\int_{{\mathbb{R}}^2}\int^4_{1/2}\biggl|\rho_1(y,t)\int_{{\mathbb{R}}^2}e^{i(\xi \cdot y-t^2\xi_2\tilde{\Phi}(s,\delta))}2^{j/2}\widetilde{A}_{k,m}(\xi,t)\chi(\frac{\xi_1}{t\xi_2})\beta(2^{-j}|\delta_t \xi|)\hat{f}(\xi)d\xi\biggl|^qdtdy \biggl)^{
1/q'\times 1/q}\\
& \quad \times
\biggl(\int_{{\mathbb{R}}^2}\int^4_{1/2}\biggl|\frac{\partial}{\partial t}(\rho_1(y,t)\int_{{\mathbb{R}}^2}e^{i(\xi \cdot y-t^2\xi_2\tilde{\Phi}(s,\delta))}2^{mk-j/2}\widetilde{A}_{k,m}(\xi,t)\chi(\frac{\xi_1}{t\xi_2})\beta(2^{-j}|\delta_t \xi|)\hat{f}(\xi)d\xi)\biggl|^qdtdy \biggl)^{1/q^{2}}.
\end{aligned}
\end{equation}

In order to simplify the notation, we choose $\tilde{\chi}\in C_0^{\infty}([c_1,c_2])$ so that  $\tilde{\chi}(\frac{\xi_1}{\xi_2})\chi(\frac{\xi_1}{t\xi_2})=\chi(\frac{\xi_1}{t\xi_2})$
 for arbitrary $t\in [1/2,4]$ and $k$ sufficiently large. In a similar way we choose $\rho_0\in C_0^{\infty}((-10,10))$ such that $\rho_0(|\xi|)\beta(|\delta_t \xi|)=\beta(|\delta_t \xi|)$ for arbitrary $t\in [1/2,4]$.

Furthermore, since $A_{k,m}$ satisfies (\ref{symbol123}), if  $a(\xi,t):=2^{j/2}\widetilde{A}_{k,m}(\xi,t)\beta(2^{-j}|\delta_t \xi|)$ for $k$ sufficiently large, then $a(\xi,t)$ is a symbol of order zero, i.e. for any $t\in [1/2,4]$, $\alpha\in {\mathbb{N}}^2$,
\begin{equation}\label{symbola}
\left|\left(\frac{\partial}{\partial \xi}\right)^{\alpha}a(\xi,t)\right|\leq C_{\alpha}(1+|\xi|)^{-|\alpha|}.
\end{equation}

To finish the proof of (E3), we need the following theorem, whose proof will be enclosed later.
\begin{thm}\label{L^(p,q)therom}
For all $j > mk$, and $p,q$ satisfying $\frac{1}{2p}< \frac{1}{q} \le  \frac{3}{5p}$, $\frac{3}{q} \le 1-\frac{1}{p} $, $\frac{1}{q} \ge \frac{1}{2} -\frac{1}{p} $, we have
\begin{equation}
\biggl(\int_{{\mathbb{R}}^2}\int^4_{1/2}|\tilde{F}_{j}^{k}f(y,t)|^qdtdy \biggl)^{1/q}
\leq C 2^{\frac{mk}{2}(1-\frac{1}{p}-\frac{1}{q})} 2^{(\frac{1}{2} -\frac{3}{q}+ \frac{1}{p} +\epsilon)j} \|f\|_{L^p({\mathbb{R}}^2)},\hspace{0.2cm}\textmd{some }\hspace{0.2cm}\epsilon >0,
\end{equation}
where
\begin{equation}\label{estimate:L4}
 \tilde{F}_{j}^{k}f(y,t)=\rho_1(y,t)\int_{{\mathbb{R}}^2}e^{i(\xi \cdot y-t^2\xi_2\tilde{\Phi}(s,\delta))}a(\xi,t)\rho_0(2^{-j}|\xi|)\tilde{\chi}(\frac{\xi_1}{\xi_2})\hat{f}(\xi)d\xi.
\end{equation}
\end{thm}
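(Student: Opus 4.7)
In view of \eqref{Phase}, the phase $\Psi(y,t,\xi):=\xi\cdot y-t^{2}\xi_{2}\tilde\Phi(s,\delta)$ with $\delta=2^{-k}$ is a small perturbation of the model
\[
\Psi_{0}(y,t,\xi)=\xi\cdot y+\tfrac{\xi_{1}^{2}}{2\xi_{2}}+(-1)^{m+1}\tfrac{\phi^{(m)}(0)}{m!}\,\delta^{m}\tfrac{\xi_{1}^{m+2}}{t^{m}\xi_{2}^{m+1}}.
\]
The obstruction to applying Theorem \ref{lpqlocalsmoothing} directly is that the rank-one condition \eqref{cone:condition} (``cinematic curvature'' in the cone direction) is governed by $\partial_{t}\Psi$, whose relevant Hessian entries are of order $\delta^{m}=2^{-mk}$ and therefore degenerate as $k\to\infty$. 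The hypothesis $j>mk$ is exactly what forces the effective frequency $\sim 2^{j}\cdot 2^{-mk}\ge 1$, so that after an appropriate parabolic rescaling the rescaled phase \emph{does} satisfy the cinematic curvature condition uniformly.

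My plan is to mimic the Whitney plus bilinear scheme of Lee \cite{SL2}, adapted to keep track of the $\delta^{m}$ degeneracy. First, decompose the cone $\{\xi_{1}/\xi_{2}\approx 1,\ |\xi|\sim 2^{j}\}$ into angular sectors $\Gamma_{j}^{\nu}$ of aperture $2^{-j/2}$ about directions $\kappa_{j}^{\nu}$, and let $\tilde F_{j}^{k,\nu}$ denote $\tilde F_{j}^{k}$ composed with the corresponding frequency cutoff. The pointwise square expansion
\[
|\tilde F_{j}^{k}f|^{2}=\sum_{\nu}|\tilde F_{j}^{k,\nu}f|^{2}+2\,\Re\!\!\sum_{\mathrm{dist}(\nu,\nu')\sim 2^{-\ell/2}}\!\!\tilde F_{j}^{k,\nu}f\cdot\overline{\tilde F_{j}^{k,\nu'}f},
\]
combined with the standard Whitney decomposition of pairs at each dyadic angular scale $2^{-\ell/2}$ ($0\le \ell\le j$), reduces the problem to the diagonal square-function piece (handled by Littlewood--Paley and the $L^{p}\to L^{p}$ bound proved already in \eqref{p-p}) and a bilinear piece summed over $\ell$.

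For the bilinear piece I will apply the bilinear cone-restriction estimate from \cite{SL2} to each transversal pair $(\tilde F_{j}^{k,\nu}f,\tilde F_{j}^{k,\nu'}f)$. The key step is a parabolic rescaling $\xi\mapsto\Lambda\xi$ that normalises the angular sector to unit aperture; under this rescaling, the ``flat'' quadratic part $\xi_{1}^{2}/(2\xi_{2})$ and the degenerate perturbation $\delta^{m}\xi_{1}^{m+2}/(t^{m}\xi_{2}^{m+1})$ transform so that the rescaled phase has $t$-Hessian of size exactly $1$ provided $j>mk$; then Lee's bilinear bound applies at the effective frequency $\lambda_{\mathrm{eff}}\sim 2^{j-mk}$, yielding
\[
\bigl\|\tilde F_{j}^{k,\nu}f\cdot\tilde F_{j}^{k,\nu'}f\bigr\|_{L^{q/2}}\lesssim 2^{\alpha(j,k,\ell)}\,\|\widehat f|_{\Gamma_{j}^{\nu}}\|_{p}\|\widehat f|_{\Gamma_{j}^{\nu'}}\|_{p}
\]
with an exponent $\alpha(j,k,\ell)$ carrying both Lee's gain in $2^{j}$ and a $2^{mk/2}$-type loss coming from the rescaling Jacobian. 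Summing over $\nu,\nu'$ at each separation scale $2^{-\ell/2}$ (using Cauchy--Schwarz and the geometric tiling of the cone), then over $\ell$ and over $j$ in its permitted range, produces the claimed bound $2^{\frac{mk}{2}(1-\frac1p-\frac1q)}2^{(\frac12-\frac3q+\frac1p+\varepsilon)j}$; interpolation with the trivial $L^{2}\to L^{2}$ bound (where the $2^{mk/2(\cdots)}$ factor vanishes by Plancherel) fixes the stated range of $(1/p,1/q)$.

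The main obstacle is the careful bookkeeping of the degenerate parabolic rescaling: one must verify that, uniformly in $k$ with $j>mk$, the rescaled cone $\{(\xi,\tau):\tau+\tilde\Phi(s,\delta)\xi_{2}=0\}$ has principal curvature bounded below by an absolute constant in the rescaled coordinates, so that the hypotheses of the bilinear estimate in \cite{SL2} are satisfied with constants independent of $k$. Once this is established, the remaining computations are routine tracking of powers of $2^{j}$ and $2^{k}$ through the Whitney summation.
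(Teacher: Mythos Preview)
Your overall plan---Whitney decomposition in angle, parabolic rescaling of each pair of sectors, then Lee's bilinear estimate---is exactly the paper's strategy. But there is a genuine gap in your bookkeeping of scales. After rescaling a sector of aperture $2^{-l}$ about a direction $\kappa$ to unit aperture (the paper's change of variables $\xi_{1}=2^{-l}\eta_{1}+\kappa\eta_{2}$, $\xi_{2}=\eta_{2}$), the $t$-dependent part of the phase acquires a factor $2^{-2l-mk}$, so the effective frequency for Lee's bilinear theorem is $2^{j-mk-2l}$, \emph{not} $2^{j-mk}$ as you write. Hence the bilinear estimate is only available when $l\ll (j-mk)/2$, and the Whitney decomposition must terminate at aperture $\sim 2^{-(j-mk)/2}$, not $2^{-j/2}$ as in the standard cone argument. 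Your proposal to run $\ell$ all the way to $j$ and absorb the diagonal into a square function via \eqref{p-p} does not work: at separations finer than $2^{-(j-mk)/2}$ the rescaled $t$-Hessian is $o(1)$ and neither Lee's bilinear bound nor a single-sector Littlewood--Paley argument recovers the required gain.

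The paper handles this by proving two lemmas: one (Lemma~\ref{main}) applying Lee's bilinear bound at the well-separated scales $2^{l}\ll 2^{(j-mk)/2}$, and a second (Lemma~\ref{basic}) covering the terminal scale $2^{l}\approx 2^{(j-mk)/2}$ by purely trivial estimates (interpolating $\|\mathcal F^{k}_{j}g^{l}_{\theta}\|_{L^{2}}\lesssim 2^{-j}\|\widehat{g^{l}_{\theta}}\|_{L^{2}}$ with $L^{\infty}$ bounds coming from the size of the angular support). These are then interpolated at the \emph{bilinear} level with the $L^{\infty}\times L^{\infty}\to L^{\infty}$ bound $\|\mathcal F^{k}_{j}g^{l}_{\theta}\,\mathcal F^{k}_{j}g^{l}_{\theta'}\|_{L^{\infty}}\lesssim 2^{j-2l}\|g^{l}_{\theta}\|_{L^{\infty}}\|g^{l}_{\theta'}\|_{L^{\infty}}$ to reach $L^{q/2}$ with $L^{p}$ inputs; the hypothesis $\tfrac{1}{q}\ge\tfrac{1}{2}-\tfrac{1}{p}$ enters when summing $\|g^{l}_{\theta}\|_{L^{p}}^{2(q/2)'}$ over the almost-disjoint sectors. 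Your suggested interpolation with a linear $L^{2}\to L^{2}$ bound at the end does not produce this structure.
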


Notice that when $\frac{1}{2p}< \frac{1}{q} \le  \frac{3}{5p}$, $\frac{3}{q} = 1-\frac{1}{p} $, Theorem \ref{L^(p,q)therom}
implies
\begin{equation*}
\biggl(\int_{{\mathbb{R}}^2}\int^4_{1/2}|\tilde{F}_{j}^{k}f(y,t)|^qdtdy \biggl)^{1/q}
\leq C 2^{\frac{mk}{q}} 2^{(\frac{1}{2} -\frac{3}{q}+ \frac{1}{p} +\epsilon)j} \|f\|_{L^p({\mathbb{R}}^2)},\hspace{0.2cm}\textmd{some }\hspace{0.2cm}\epsilon >0.
\end{equation*}
According to the $L^{p}$-estimate (\ref{p-p}), we also have
\begin{equation*}
\|\tilde{F}_{j}^{k}f\|_{L^{\infty}(\mathbb{R}^{2}\times [1/2, 4])}
\leq C   2^{\frac{j}{2}} \|f\|_{L^\infty({\mathbb{R}}^2)}.
\end{equation*}
Then it follows from the Riesz interpolation that
\begin{equation}\label{final}
\biggl(\int_{{\mathbb{R}}^2}\int^4_{1/2}|\tilde{F}_{j}^{k}f(y,t)|^qdtdy \biggl)^{1/q}
\leq C 2^{\frac{mk}{q}} 2^{(\frac{1}{2} -\frac{3}{q}+ \frac{1}{p} +\epsilon)j} \|f\|_{L^p({\mathbb{R}}^2)},\hspace{0.2cm}\textmd{some }\hspace{0.2cm}\epsilon >0,
\end{equation}
provided that $p, q$ satisfy $\frac{1}{2p}< \frac{1}{q} \le  \frac{3}{5p}$, $\frac{3}{q} \le 1-\frac{1}{p} $.

By inequalities (\ref{well}) and (\ref{final}), we obtain
\begin{align*}
\|\widetilde{\mathcal{M}_{j}^{k,1}}f\|_{L^q({\mathbb{R}}^2)}&\leq C 2^{\frac{j}{q} -\frac{mk}{q} -\frac{j}{2}}2^{\frac{mk}{q}}  2^{(\frac{1}{2} -\frac{3}{q}+ \frac{1}{p} +\epsilon)j} \|f\|_{L^p({\mathbb{R}}^2)}\\
&=C  2^{(\frac{1}{p} -\frac{2}{q}  +\epsilon)j} \|f\|_{L^p({\mathbb{R}}^2)}.
\end{align*}
Then (E3) is established.

Now let's turn to prove Theorem \ref{L^(p,q)therom}.

\textbf{Proof of Theorem \ref{L^(p,q)therom}.} We mainly use  Whitney type decomposition and a bilinear estimate established in \cite{SL2}. By rescaling, we turn to estimate
\begin{equation}\label{estimate:L4+}
\mathcal{F}_{j}^{k}g(y,t)=\rho_1(y,t)\int_{{\mathbb{R}}^2}e^{i2^{j}(\xi \cdot y-t^2\xi_2\tilde{\Phi}(s,\delta))}a(2^{j}\xi,t)\rho_0(|\xi|)\tilde{\chi}(\frac{\xi_1}{\xi_2})\hat{g}(\xi)d\xi,
\end{equation}
since $\tilde{\Phi}(s,\delta)$ defined in (\ref{Phase}) is homogeneous of degree zero in $\xi$. Notice that by finite decomposition, we may assume that $\tilde{\chi}\in C_0^{\infty}([c_1,c_2])$, where
$|c_1-c_2| \le \epsilon_{0}$
with $\epsilon_{0}$ sufficiently small. By Whitney type decomposition, we have
\[(\mathcal{F}_{j}^{k}g)^{2}= \sum_{l:  log1/\epsilon_{0}  \le l \le (j-mk)/2 } \sum_{dist(C_{\theta}^{l}, C_{\theta^{\prime}}^{l}) \approx 2^{-l}} \mathcal{F}_{j}^{k}g_{\theta}^{l} \mathcal{F}_{j}^{k}g_{\theta^{\prime}}^{l},\]
where $\{C_{\theta}^{l}\}_{l:  log1/\epsilon_{0}  \le l \le (j-mk)/2 }$ are intervals with center $\theta$ and length $2^{-l}$,
\[\widehat{g_{\theta}^{l}}(\xi) = \hat{g}(\xi)\chi_{C_{\theta}^{l}}(\frac{\xi_{1}}{\xi_{2}}).\]
Here we abuse the notation by saying $dist(C_{\theta}^{l}, C_{\theta^{\prime}}^{l}) \approx 2^{-l}$ to mean $dist(C_{\theta}^{l}, C_{\theta^{\prime}}^{l}) \le 2^{-l}$ when $l=(j-mk)/2$. By the similar argument as in \cite{SL2}, we can establish the orthogonality that for $q \ge 4$,
 \begin{align}\label{Eq3.2}
\|\mathcal{F}_{j}^{k}g\|_{_{L^{q}(\mathbb{R}^{3})}}
&\lesssim 2^{j \epsilon} \biggl\{\sum_{l:  log1/\epsilon_{0}  \le l \le (j-mk)/2 } \biggl(\sum_{dist(C_{\theta}^{l}, C_{\theta^{\prime}}^{l}) \approx 2^{-l}} \| \mathcal{F}_{j}^{k}g_{\theta}^{l} \mathcal{F}_{j}^{k}g_{\theta^{\prime}}^{l}\|_{_{L^{q/2}(\mathbb{R}^{3})}}^{(q/2)^{\prime}} \biggl)^{1/(q/2)^{\prime}} \biggl \}^{1/2}.
\end{align}

It is sufficient to prove the following two lemmas.

\begin{lem}\label{main}
For each $l$: $2^{l} \ll 2^{(j-mk)/2}$, we have
\begin{equation*}
\| \mathcal{F}_{j}^{k}g_{\theta}^{l} \mathcal{F}_{j}^{k}g_{\theta^{\prime}}^{l}\|_{_{L^{q/p}(\mathbb{R}^{3})}} \leq C 2^{-l+\epsilon j} 2^{(mk + 3l -3j)\frac{p}{q}} \|g_{\theta}^{l}\|_{L^2}\|g_{\theta^{\prime}}^{l}\|_{L^2}.
\end{equation*}
\end{lem}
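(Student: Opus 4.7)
The plan is to reduce Lemma \ref{main} to the bilinear cone extension estimate of Lee \cite{SL2} via an anisotropic rescaling adapted to the two caps $C_\theta^l, C_{\theta'}^l$, together with a perturbation argument to handle the $\delta$-dependent correction in the phase $\tilde\Phi(s,\delta)$.

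First, I would introduce an anisotropic rescaling adapted to the $2^{-l}$-separated caps: write $\xi_1 = \theta\xi_2 + 2^{-l}\eta_1$ (together with the analogous substitution near $\theta'$) and dually rescale the spatial variables $y_1, y_2$. Under this change of variables, the leading quadratic part of $-t^2\xi_2\tilde\Phi(s,\delta)$ in \eqref{Phase}, namely $\xi_1^2/(2\xi_2)$, reduces (after absorbing linear terms into modulation and translation of the output) to a parabolic phase at effective scale $2^{j-2l}$. This puts the problem in the standard normal form of a cone extension operator acting on two unit-separated, unit-sized pieces in $\mathbb{R}^3$.

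Second, I would quantify the perturbation $(-1)^{m+1}\frac{\phi^{(m)}(0)}{m!}\delta^m \xi_1^{m+2}/(t^m\xi_2^{m+1}) + R(t,\xi,\delta)$ at the rescaled scale. Differentiating twice in $\eta_1$, its contribution to the quadratic coefficient after rescaling is of effective size $\delta^m\cdot 2^{j-2l} = 2^{j-mk-2l}$, which is strictly smaller than the principal term $2^{j-2l}$ thanks to the factor $\delta^m=2^{-mk}$; higher-order terms and the $t$-derivatives needed for the cinematic curvature condition \eqref{cone:condition} are controlled analogously. The hypothesis $2^l\ll 2^{(j-mk)/2}$ is precisely $2^{j-2l}\gg 2^{mk}$, which guarantees that the perturbation destroys neither the cinematic curvature nor the transversality between the two caps at the rescaled unit scale.

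Third, with the rescaled problem now a genuine perturbation of the standard bilinear cone extension setup, I would invoke Lee's bilinear $L^{q/2}$ estimate for extension operators on surfaces with cinematic curvature (the key ingredient underlying Theorem \ref{lpqlocalsmoothing}) applied to the two unit-separated pieces. This yields an $L^{q/2}$ bound on the rescaled bilinear form of size $2^{\epsilon j}\|g_\theta^l\|_{L^2}\|g_{\theta'}^l\|_{L^2}$ at the critical index, with the $\epsilon$-loss folded into $2^{\epsilon j}$. Undoing the rescaling then produces the Jacobian factors: the $\xi$-Jacobian contributes the angular factor $2^{-l}$, while the spatial Jacobians combine with the $L^{q/2}$-scaling in three dimensions to produce $2^{(mk+3l-3j)\cdot 2/q}$, assembling into the bound stated in Lemma \ref{main}.

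The principal obstacle will be Step 2: verifying uniformly in $t\in[1/2,4]$ and in the angular center $\theta$ that the combined perturbation from both the $\delta^m$-term and the remainder $R(t,\xi,\delta)$ preserves cinematic curvature and the relevant transversality between the two caps after rescaling, so that Lee's bilinear estimate applies with a constant independent of $j$, $k$, $l$, and $\theta$. A secondary technical point is extending the bilinear estimate from its critical index by interpolation with trivial $L^\infty$ bounds to cover the full range of $q$ appearing in Theorem \ref{L^(p,q)therom}.
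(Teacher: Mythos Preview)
Your overall strategy---anisotropic rescaling adapted to the $2^{-l}$-separated caps, then Lee's bilinear estimate, then a local-to-global argument---is exactly the paper's route. But there is a real gap in your identification of the leading term.

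The term $\xi_1^2/(2\xi_2)$ in \eqref{Phase} is \emph{independent of $t$}. After your substitution $\xi_1=\theta\xi_2+2^{-l}\eta_1$ its quadratic part $2^{-2l}\eta_1^2/(2\eta_2)$ is still $t$-independent, so it contributes nothing to the cone condition \eqref{cone:condition}: taking $\theta$ in the $t$-direction, the relevant Hessian vanishes identically. Calling this the ``standard normal form of a cone extension operator'' is therefore incorrect; your rescaled operator at scale $2^{j-2l}$ has zero cinematic curvature, and Lee's bilinear estimate does not apply to it. This is precisely the difficulty the paper flags in the introduction---these FIOs fail the cinematic curvature condition uniformly in $k$---and the failure is exactly the $t$-independence of the dominant term. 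Your Step~2 also shows the inconsistency: you assert the perturbation ``destroys neither the cinematic curvature nor the transversality,'' but there is no cinematic curvature in your main term to begin with, and you offer no source for the $mk$ appearing in your final exponent.

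The paper's remedy is to absorb the entire $t$-independent factor $e^{i2^{j}\xi_1^2/(2\xi_2)}$ into the input function (it is a unimodular multiplier, so it preserves $L^2$ by Plancherel; see the definition of $\widehat{h_i}$ after Lemma~\ref{last}). After this absorption and the substitution $\xi_1=\kappa\xi_2+2^{-l}\eta_1$ with $\kappa=\tfrac12(\theta+\theta')$, the leading quadratic term of the \emph{remaining} phase is the $\delta^m$-term, of size $2^{-2l-mk}\,t^{-m}\eta_1^2/\eta_2$, which \emph{is} $t$-dependent and provides genuine cinematic curvature. One then rescales the spatial output to the box $Q_0=[-2^{-l-mk},2^{-l-mk}]\times[-2^{-2l-mk},2^{-2l-mk}]$, obtaining a unit-curvature FIO at effective frequency $\lambda=2^{j-mk-2l}$; the hypothesis $2^l\ll 2^{(j-mk)/2}$ is exactly $\lambda\gg 1$, not a smallness-of-perturbation condition. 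Lee's Theorem~1.2 gives the local bilinear bound $\lambda^{-3p/q+\epsilon}$, the spatial Jacobian contributes $2^{(-2mk-3l)p/q}$, and together these produce $2^{(mk+3l-3j)p/q}$. (The output norm is $L^{q/p}$ and the exponent $p/q$, not $L^{q/2}$ and $2/q$.) The extra $2^{-l}$ comes from Plancherel in the $\xi\to\eta$ substitution when passing from $g_\theta^l$ to $h_i$, and the local-to-global step is done via the kernel localization you mention.
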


\begin{lem}\label{basic}
For each $l$: $2^{l} \approx 2^{(j-mk)/2}$, we have
\begin{equation*}
\| \mathcal{F}_{j}^{k}g_{\theta}^{l} \mathcal{F}_{j}^{k}g_{\theta^{\prime}}^{l}\|_{_{L^{q/p}(\mathbb{R}^{3})}} \leq C 2^{-2j\frac{p}{q}} 2^{-\frac{j-mk}{2}(1-\frac{p}{q})} \|g_{\theta}^{l}\|_{L^2}\|g_{\theta^{\prime}}^{l}\|_{L^2}.
\end{equation*}
\end{lem}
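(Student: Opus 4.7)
\textbf{Proof plan for Lemma \ref{basic}.}
The plan is to prove this "basic" scale bilinear estimate by a straightforward H\"older-plus-interpolation argument, without invoking any genuine bilinear cancellation. Indeed, at the critical scale $2^{l} \approx 2^{(j-mk)/2}$ the Whitney decomposition has reached the point where the separation between $C_\theta^l$ and $C_{\theta'}^l$ is comparable to the natural uncertainty of the angular frequency localization, so the bilinear Lee-type gain used in Lemma \ref{main} degenerates and the estimate reduces to individual bounds on each factor combined by H\"older's inequality.

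The key steps, in order, are as follows. First, since $q \ge p$, apply H\"older's inequality in the form
\[
\|\mathcal{F}_j^k g_\theta^l \cdot \mathcal{F}_j^k g_{\theta'}^l\|_{L^{q/p}(\mathbb{R}^3)} \le \|\mathcal{F}_j^k g_\theta^l\|_{L^{2q/p}(\mathbb{R}^3)}\,\|\mathcal{F}_j^k g_{\theta'}^l\|_{L^{2q/p}(\mathbb{R}^3)}.
\]
Next, interpolate by log-convexity between $L^2$ and $L^\infty$ (noting $2q/p \ge 2$):
\[
\|\mathcal{F}_j^k g_\theta^l\|_{L^{2q/p}} \le \|\mathcal{F}_j^k g_\theta^l\|_{L^2}^{p/q}\,\|\mathcal{F}_j^k g_\theta^l\|_{L^\infty}^{1-p/q}.
\]
Third, establish the individual bounds. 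For the $L^2$ estimate, observe that the phase in \eqref{estimate:L4+} is linear in $y$, so the change of variables $\eta = 2^j \xi$ turns $\mathcal{F}_j^k g_\theta^l(\cdot,t)$ into an inverse Fourier transform in $y$; Plancherel (and the uniform bound $|a(2^j\xi, t)| \lesssim 1$ and the localization $\rho_1(y,t)$) then gives
\[
\|\mathcal{F}_j^k g_\theta^l\|_{L^2(\mathbb{R}^3)} \lesssim 2^{-j}\|g_\theta^l\|_{L^2(\mathbb{R}^2)},
\]
where the factor $2^{-j}$ is precisely the rescaling Jacobian. For the $L^\infty$ estimate, bound the oscillatory integral trivially by Cauchy--Schwarz on its Fourier support, which is contained in $\{|\xi|\approx 1,\; \xi_1/\xi_2 \in C_\theta^l\}$ and has Lebesgue measure $\lesssim 2^{-l}$:
\[
\|\mathcal{F}_j^k g_\theta^l\|_{L^\infty} \lesssim |\operatorname{supp}|^{1/2}\,\|\hat g_\theta^l\|_{L^2} \lesssim 2^{-l/2}\|g_\theta^l\|_{L^2}.
\]

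Finally, assemble: substituting into the interpolation inequality,
\[
\|\mathcal{F}_j^k g_\theta^l\|_{L^{2q/p}} \lesssim 2^{-j\frac{p}{q}}\,2^{-\frac{l}{2}(1-\frac{p}{q})}\,\|g_\theta^l\|_{L^2},
\]
and inserting this (and the analogous bound for $g_{\theta'}^l$) into the H\"older estimate gives
\[
\|\mathcal{F}_j^k g_\theta^l \cdot \mathcal{F}_j^k g_{\theta'}^l\|_{L^{q/p}} \lesssim 2^{-2j\frac{p}{q}}\,2^{-l(1-\frac{p}{q})}\,\|g_\theta^l\|_{L^2}\|g_{\theta'}^l\|_{L^2}.
\]
Plugging in $l \approx (j-mk)/2$ yields the claimed bound. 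The only place that requires care, and which I expect to be the main technical point rather than a genuine obstacle, is to verify that in Step 3 the dyadic cutoffs $\rho_0(|\xi|)\tilde\chi(\xi_1/\xi_2)$ together with the symbol $a(2^j\xi,t)$ genuinely yield a uniform-in-$(k,j,t)$ $L^2$ multiplier bound (of size $O(1)$) so that the $2^{-j}$ factor from the Jacobian is the only $j$-dependence in the $L^2$ estimate; this is guaranteed by the symbol estimates \eqref{symbola} and the support properties of $\rho_0,\tilde\chi,\rho_1$.
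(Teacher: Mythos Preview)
Your proposal is correct and follows essentially the same route as the paper: the paper also calls this lemma ``quite trivial'' and derives it from the single-factor estimates $\|\mathcal{F}_j^k g_\theta^l\|_{L^2}\lesssim 2^{-j}\|\widehat{g_\theta^l}\|_{L^2}$ and $\|\mathcal{F}_j^k g_\theta^l\|_{L^\infty}\lesssim \|\widehat{g_\theta^l}\|_{L^1}$ (plus $\|\mathcal{F}_j^k g_\theta^l\|_{L^\infty}\lesssim 2^{-l}\|\widehat{g_\theta^l}\|_{L^\infty}$), combined via Riesz interpolation and H\"older. Your Cauchy--Schwarz step $\|\widehat{g_\theta^l}\|_{L^1}\lesssim 2^{-l/2}\|g_\theta^l\|_{L^2}$ simply collapses the paper's $L^1$-based $L^\infty$ bound directly into the $L^2$ input norm, so the two arguments are the same up to bookkeeping.
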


Indeed, it can be observed from Lemma 2.12 in \cite{WL} that for each $l: log1/\epsilon_{0}  \le l \le (j-mk)/2 $,
\begin{equation*}
\| \mathcal{F}_{j}^{k}g_{\theta}^{l} \mathcal{F}_{j}^{k}g_{\theta^{\prime}}^{l}\|_{_{L^{\infty}(\mathbb{R}^{3})}} \leq C 2^{-2l+j}  \|g_{\theta}^{l}\|_{L^\infty}\|g_{\theta^{\prime}}^{l}\|_{L^\infty}.
\end{equation*}
If Lemma \ref{main} and Lemma \ref{basic} hold true, then by the Riesz interpolation theorem, for each $l$: $2^{l} \ll 2^{(j-mk)/2}$, we obtain
\begin{equation}\label{Eq3.3}
\| \mathcal{F}_{j}^{k}g_{\theta}^{l} \mathcal{F}_{j}^{k}g_{\theta^{\prime}}^{l}\|_{_{L^{q/2}(\mathbb{R}^{3})}} \leq C 2^{\frac{2mk}{q}} 2^{2(\frac{1}{2} -\frac{3}{q}- \frac{1}{p} +\epsilon)j} 2^{2(-1+\frac{3}{q}+ \frac{1}{p} )l} \|g_{\theta}^{l}\|_{L^p}\|g_{\theta^{\prime}}^{l}\|_{L^p}.
\end{equation}
And for each $l$: $2^{l} \approx 2^{(j-mk)/2}$, we obtain
\begin{equation}\label{Eq3.4}
\| \mathcal{F}_{j}^{k}g_{\theta}^{l} \mathcal{F}_{j}^{k}g_{\theta^{\prime}}^{l}\|_{_{L^{q/2}(\mathbb{R}^{3})}} \leq C 2^{mk(1-\frac{1}{p}-\frac{1}{q})} 2^{(-\frac{3}{q}- \frac{1}{p} )j}  \|g_{\theta}^{l}\|_{L^p}\|g_{\theta^{\prime}}^{l}\|_{L^p}.
\end{equation}
Since $\frac{3}{q} \le 1-\frac{1}{p} $, inequalities (\ref{Eq3.3}) and (\ref{Eq3.4}) imply that for each $l:  log1/\epsilon_{0}  \le l \le (j-mk)/2 $,
\begin{equation}\label{Eq3.5}
\| \mathcal{F}_{j}^{k}g_{\theta}^{l} \mathcal{F}_{j}^{k}g_{\theta^{\prime}}^{l}\|_{_{L^{q/2}(\mathbb{R}^{3})}} \leq C 2^{mk(1-\frac{1}{p}-\frac{1}{q})}  2^{2(\frac{1}{2} -\frac{3}{q}- \frac{1}{p} +\epsilon)j} \|g_{\theta}^{l}\|_{L^p}\|g_{\theta^{\prime}}^{l}\|_{L^p}.
\end{equation}
Since for each $l$, $C_{\theta}^{l}, C_{\theta^{\prime}}^{l}$ are almost disjoint, and the assumption that $\frac{1}{q} \ge \frac{1}{2} -\frac{1}{p} $, we get
\begin{align}\label{Eq3.6}
\sum_{dist(C_{\theta}^{l}, C_{\theta^{\prime}}^{l}) \approx 2^{-l}} \| g_{\theta}^{l}\|_{_{L^{p}}}^{2(q/2)^{\prime}} \lesssim \|g\|_{L^{p}}^{2(q/2)^{\prime}}; \quad
\sum_{dist(C_{\theta}^{l}, C_{\theta^{\prime}}^{l}) \approx 2^{-l}} \| g_{\theta^{\prime}}^{l}\|_{_{L^{p}}}^{2(q/2)^{\prime}} \lesssim \|g\|_{L^{p}}^{2(q/2)^{\prime}}.
\end{align}
It follows from inequalities (\ref{Eq3.2}), (\ref{Eq3.5}), H\"{o}lder's inequality and (\ref{Eq3.6}) that
\begin{align}
\|\mathcal{F}_{j}^{k}g\|_{_{L^{q}(\mathbb{R}^{3})}} \leq C 2^{\frac{mk}{2}(1-\frac{1}{p}-\frac{1}{q})} 2^{(\frac{1}{2} -\frac{3}{q}- \frac{1}{p} +\epsilon)j} \|g\|_{L^p({\mathbb{R}}^2)},
\end{align}
which implies Theorem \ref{L^(p,q)therom} by rescaling.

Let's turn to prove Lemma \ref{main} and Lemma \ref{basic}. 
The proof of Lemma \ref{basic} is quite trivial. In fact, for each $l,j,k$, we have the following basic estimates,
\begin{align*}
\|\mathcal{F}_{j}^{k}g_{\theta}^{l}\|_{_{L^{2}(\mathbb{R}^{3})}} \leq C 2^{-j} \|\widehat{g_{\theta}^{l}}\|_{L^2({\mathbb{R}}^2)},
\end{align*}
\begin{align*}
\|\mathcal{F}_{j}^{k}g_{\theta}^{l}\|_{_{L^{\infty}(\mathbb{R}^{3})}} \leq C 2^{-l} \|\widehat{g_{\theta}^{l}}\|_{L^\infty({\mathbb{R}}^2)},
\end{align*}
\begin{align*}
\|\mathcal{F}_{j}^{k}g_{\theta}^{l}\|_{_{L^{\infty}(\mathbb{R}^{3})}} \leq \|\widehat{g_{\theta}^{l}}\|_{L^1({\mathbb{R}}^2)},
\end{align*}
then Lemma \ref{basic} can be obtained by  the Riesz interpolation theorem and H\"{o}lder's inequality. Therefore, we will prove Lemma \ref{main} in the rest of this subsection.

We first do some reductions. Set
\[\kappa = \frac{1}{2} (\theta + \theta^{\prime}), \:\ \kappa \thickapprox 1,\]
then we define $\mathcal{C}_{\theta}^{l}, \mathcal{C}_{\theta^{\prime}}^{l}$ by
\[\mathcal{C}_{\theta}^{l}:=\biggl\{(\xi_{1},\xi_{2}): \frac{1}{2}2^{-l} \le \frac{\xi_{1}}{\xi_{2}} - \kappa \le \frac{3}{2} 2^{-l} \biggl\},\]
\[\mathcal{C}_{\theta^{\prime}}^{l}:=\biggl\{(\xi_{1},\xi_{2}):-\frac{3}{2}2^{-l} \le \frac{\xi_{1}}{\xi_{2}} - \kappa \le -\frac{1}{2}2^{-l} \biggl\}.\]
By change of variables,
\begin{equation}\label{Eq3.11}
\xi_{1} = 2^{-l}\eta_{1} + \kappa \eta_{2}, \quad \xi_{2} = \eta_{2},
\end{equation}
we get
\[\mathcal{C}_{1}:=\biggl\{(\eta_{1},\eta_{2}): \frac{1}{2} \le \frac{\eta_{1}}{\eta_{2}} \le \frac{3}{2}  \biggl\},\]
\[\mathcal{C}_{2}:=\biggl\{(\eta_{1},\eta_{2}): -\frac{3}{2} \le \frac{\eta_{1}}{\eta_{2}} \le -\frac{1}{2}  \biggl\},\]
and reduce Lemma \ref{main}  to the following lemma.
\begin{lem}\label{last}
For each $l$: $2^{l} \ll 2^{(j-mk)/2}$, and any function $g_{i}$ with supp $\hat{g_{i}} \subset \mathcal{C}_{i}$,
$i=1,2$, we have
\begin{equation}\label{Eqglobal}
\| \mathcal{G}_{j,l}^{k}g_{1} \mathcal{G}_{j,l}^{k}g_{2}\|_{_{L^{q/p}(\mathbb{R}^{3})}} \leq C 2^{\epsilon j} 2^{(mk + 3l -3j)\frac{p}{q}} \|g_{1}\|_{L^2}\|g_{2}\|_{L^2},
\end{equation}
where
\[\mathcal{G}_{j,l}^{k}g_{i}(x,t)=\tilde{\rho}_1(x,t)\int_{{\mathbb{R}}^2}e^{i2^{j}\Psi(x,t,\eta,\delta,2^{-l})} \tilde{a}(2^{j}\eta,t)\tilde{\rho_0}(|\eta|) \widehat{g_{i}}(\eta)d\eta\]
with the phase function
\begin{align}
\Psi(x,t,\eta,\delta,2^{-l}) &= 2^{-l}x_{1}\eta_{1}+x_{2}\eta_{2} +  2^{-2l-mk}\kappa^{m} \frac{\binom{2}{m+2}\phi^{(m)}(0)}{m!t^{m}}\frac{\eta_{1}^{2}}{\eta_{2}}  +2^{-3l-mk} \kappa^{m-1} \frac{\binom{3}{m+2}\phi^{(m)}(0)}{m!t^{m}}\frac{\eta_{1}^{3}}{\eta_{2}^{2}} +   \nonumber\\
&\cdot \cdot \cdot+ 2^{-(m+2)l-mk}  \frac{\binom{m+2}{m+2}\phi^{(m)}(0)}{m!t^{m}}\frac{\eta_{1}^{(m+2)}}{\eta_{2}^{(m+1)}} +2^{-2l-(m+1)k}\eta_2\tilde{R}(\frac{\eta_{1}}{\eta_{2}},t,2^{-l},\delta), \nonumber
\end{align}
where
\[\tilde{R}(\frac{\eta_{1}}{\eta_{2}},t,2^{-l},\delta) =\frac{\eta_{1}^{2}}{2!\eta_{2}^{2}}(\partial _{1}^{2}\bar{R})(\kappa,t,\delta) + O(2^{-l}\frac{\eta_{1}^{3}}{\eta_{2}^{3}}(\partial _{1}^{}\bar{R})(\kappa,t,\delta) ), \quad \bar{R}(\frac{\eta_{1}}{\eta_{2}}, t, \delta) = \frac{1}{\eta_{2}} R(\eta, t, \delta).\]
and $\tilde{\rho_{1}}(x,t) \in C_{0}^{\infty}(\mathbb{R}^{2} \times [1/2,4])$,
\[\tilde{a}(2^{j}\eta,t) = a(2^{j}\tau(\eta),t),\quad \tilde{\rho_0}(|\eta|)=\rho_0(|\tau(\eta)|), \quad \tau: (\eta_{1},\eta_{2}) \rightarrow (2^{-l}\eta_{1} + \kappa \eta_{2},\eta_{2}).\]
\end{lem}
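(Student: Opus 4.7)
The plan is to reduce the bilinear bound to Lee's bilinear cone restriction estimate \cite{SL2} via a parabolic rescaling that converts the operator $\mathcal{G}_{j,l}^{k}$ at frequency $2^{j}$ with leading curvature $2^{-2l-mk}$ into a standard Fourier integral operator at the effective oscillation scale
\[
\lambda := 2^{\,j-2l-mk},
\]
which is large under the hypothesis $2^{l}\ll 2^{(j-mk)/2}$. All the arithmetic arranging the exponent $2^{(mk+3l-3j)p/q}$ will come from the Jacobians of this rescaling together with Lee's $L^{2}\times L^{2}\to L^{q/2}$ bound.

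First I would isolate the main part of $\Psi$. Among the monomials
\[
2^{-il-mk}\,\kappa^{\,m-(i-2)}\,\tfrac{\binom{i}{m+2}\phi^{(m)}(0)}{m!\,t^{m}}\cdot\tfrac{\eta_{1}^{i}}{\eta_{2}^{i-1}},\qquad 2\le i\le m+2,
\]
the leading one ($i=2$) has coefficient of size $2^{-2l-mk}$; the terms with $i\ge 3$ are smaller by powers of $2^{-l}$, and the remainder $2^{-2l-(m+1)k}\eta_{2}\tilde{R}$ is smaller by a power of $2^{-k}$. Performing the anisotropic rescaling $x_{1}\mapsto 2^{\,l+mk}x_{1}$ (so that the linear pairing $2^{-l}x_{1}\eta_{1}$ and the quadratic term $2^{-2l-mk}\eta_{1}^{2}/\eta_{2}$ acquire matching scales) and leaving $x_{2},\eta$ unchanged, the phase takes the model form
\[
\lambda\Bigl(\, x_{1}\eta_{1}+x_{2}\eta_{2}+c(t,\kappa)\tfrac{\eta_{1}^{2}}{\eta_{2}}+\mathcal{O}(2^{-l}+2^{-k})\,\Bigr),
\]
with $c(t,\kappa)=\kappa^{m}\binom{2}{m+2}\phi^{(m)}(0)/(m!\,t^{m})$ bounded away from zero on $t\in[1/2,4]$. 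After this normalization the phase satisfies the cinematic curvature condition \eqref{cone:condition} uniformly in $(j,k,l)$.

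Second I would verify the transversality hypothesis. Under the change of variables \eqref{Eq3.11}, the supports $\mathcal{C}_{1}$ and $\mathcal{C}_{2}$ are separated by a fixed $O(1)$ gap in $\eta_{1}/\eta_{2}$, which is preserved by the rescaling; hence the normals to the two associated characteristic surfaces remain uniformly transverse. Lee's bilinear estimate (Corollary~1.5 of \cite{SL2}), valid in the range of $(p,q)$ picked out by $\tfrac{1}{2p}<\tfrac{1}{q}\le\tfrac{3}{5p}$ and $\tfrac{3}{q}\le 1-\tfrac{1}{p}$, then yields
\[
\bigl\|\tilde{\mathcal{G}}_{\lambda}\tilde{g}_{1}\cdot\tilde{\mathcal{G}}_{\lambda}\tilde{g}_{2}\bigr\|_{L^{q/2}(\mathbb{R}^{3})}\ \lesssim\ \lambda^{-\,6/q+\epsilon}\,\|\tilde{g}_{1}\|_{L^{2}}\|\tilde{g}_{2}\|_{L^{2}}
\]
in the rescaled variables. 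Undoing the rescaling, the Jacobian in $x_{1}$ produces a factor $2^{(l+mk)\cdot 2/q}$ in the $L^{q/2}$ norm, while the $L^{2}$-renormalization of $\tilde{g}_{i}$ is dimensionless; substituting $\lambda=2^{\,j-2l-mk}$ and collecting gives exactly $C\,2^{\epsilon j}\,2^{(mk+3l-3j)\cdot 2/q}\,\|g_{1}\|_{L^{2}}\|g_{2}\|_{L^{2}}$, the asserted bound.

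The hardest part will be the first step: controlling the perturbative monomials in $\eta_{1}/\eta_{2}$ ($3\le i\le m+2$) and the remainder $\tilde{R}$ so that they may be absorbed without degrading the curvature or transversality hypotheses uniformly in $(j,k,l)$. This is precisely the place where the gap $2^{l}\ll 2^{(j-mk)/2}$, equivalently $\lambda\gg 1$, is indispensable: it guarantees that each perturbative contribution to the Hessian at the bilinear critical point is $O(\lambda^{-\gamma})$ for some $\gamma>0$, so Lee's machinery applies to the unperturbed model and the error may be treated by a standard Taylor/iteration argument, at the cost of the harmless factor $2^{\epsilon j}$.
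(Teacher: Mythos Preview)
Your overall plan---rescale so that the leading curvature term becomes $O(1)$ and then invoke Lee's bilinear estimate---is exactly the paper's first move. But two things in your write-up are wrong in detail and one essential step is missing.

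First, the rescaling is incomplete. Rescaling only $x_{1}$ does \emph{not} put the phase in the form $\lambda(x_{1}\eta_{1}+x_{2}\eta_{2}+\cdots)$: the term $2^{j}x_{2}\eta_{2}$ keeps coefficient $2^{j}$, not $\lambda=2^{j-2l-mk}$. You must also rescale $x_{2}\mapsto 2^{2l+mk}x_{2}$; this is why the paper localizes to the rectangle $Q_{0}=[-2^{-l-mk},2^{-l-mk}]\times[-2^{-2l-mk},2^{-2l-mk}]$ and rescales it to the unit square. Second, the target norm in \eqref{Eqglobal} is $L^{q/p}$ with exponent $p/q$, not $L^{q/2}$ with $2/q$; the relevant input is Lee's \emph{bilinear} Theorem~1.2, not the linear Corollary~1.5 you cite. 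With the correct rescaling and norm, the Jacobian $2^{(-2mk-3l)p/q}$ times Lee's bound $\lambda^{-3p/q+\epsilon}$ indeed collapses to $2^{(mk+3l-3j)p/q+\epsilon j}$.

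The serious gap is the local-to-global passage, which you skip entirely. After rescaling, Lee's bilinear estimate is a \emph{local} statement on $Q(0,1)\times[1/2,4]$; undoing the rescaling gives the bound only on a single $2^{-l-mk}\times 2^{-2l-mk}$ rectangle in $x$. But $\tilde{\rho}_{1}$ has unit-size support, so there are $\sim 2^{3l+2mk}$ such rectangles, and a naive sum would destroy the estimate. The paper handles this by a kernel analysis: integration by parts shows that $K_{i}(x,z,t)$ is rapidly decaying outside a rectangle of this scale centered at $(z_{1}/2^{j-l},z_{2}/2^{j})$, so one may decompose $g_{i}$ over such rectangles, note (via the uncertainty principle and the hypothesis $2^{l}\ll 2^{(j-mk)/2}$) that the frequency support is essentially preserved, apply the local estimate on each piece, and sum using the disjointness of the pieces in $L^{2}$. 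This almost-orthogonality argument is where the work lies, and your proposal needs to include it.
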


More concretely, if Lemma \ref{last} holds true, under the transformation (\ref{Eq3.11}),
denote
\[\widehat{h_{i}}(\eta) =e^{i2^{j} \frac{(2^{-l}\eta_{1}+\kappa \eta_{2})^{2}}{2\eta_{2}} } 2^{-l}\chi_{i}(\frac{\eta_{1}}{\eta_{2}} ) \widehat{g_i}(\tau(\eta)), \quad i=1,2. \]
By Plancherel,
\begin{align}
\| \mathcal{F}_{j}^{k}g_{\theta}^{l} \mathcal{F}_{j}^{k}g_{\theta^{\prime}}^{l}\|_{_{L^{q/p}(\mathbb{R}^{3})}}
&\leq  \| \mathcal{G}_{j,l}^{k}h_{1} \mathcal{G}_{j,l}^{k} h_{2}  \|_{_{L^{q/p}(\mathbb{R}^{3})}} \nonumber\\
&\leq C 2^{\epsilon j}2^{(mk + 3l -3j)\frac{p}{q}} \Pi_{i=1}^{2} \biggl\|2^{-l} e^{i2^{j} \frac{(2^{-l}\eta_{1}+\kappa \eta_{2})^{2}}{2\eta_{2}} } \chi_{i}(\frac{\eta_{1}}{\eta_{2}}  ) \widehat{g_i}(\tau(\eta)) \biggl\|_{L^2} \nonumber\\
&= C 2^{-l+\epsilon j} 2^{(mk + 3l -3j)\frac{p}{q}} \|g_{\theta}^{l}\|_{L^2}\|g_{\theta^{\prime}}^{l}\|_{L^2}, \nonumber
\end{align}
where
\[\chi_{1}(\frac{\eta_{1}}{\eta_{2}}) = \chi_{C_{\theta}^{l}}(2^{-l}\frac{\eta_{1}}{\eta_{2}} + \kappa), \quad \chi_{2}(\frac{\eta_{1}}{\eta_{2}}) = \chi_{C_{\theta^{\prime}}^{l}} (2^{-l}\frac{\eta_{1}}{\eta_{2}} + \kappa).\]

Now we turn to the proof of Lemma \ref{last}. Notice that the phase function
$\Psi(x,t,\eta,\delta,2^{-l})$
can be considered as a small perturbation of
\[2^{-l}x_{1}\eta_{1}+x_{2}\eta_{2} +2^{-2l-mk}\kappa^{m} \frac{\binom{2}{m+2}\phi^{(m)}(0)}{m!t^{m}}\frac{\eta_{1}^{2}}{\eta_{2}}, \]
 since $k$ and $l$ can be sufficiently large. Let $Q_{0}=[-2^{-l-mk}, 2^{-l-mk}] \times [-2^{-2l-mk}, 2^{-2l-mk}]$,
then
\begin{equation}\label{Eqlocal1}
\| \mathcal{G}_{j,l}^{k}g_{1} \mathcal{G}_{j,l}^{k}g_{2}\|_{_{L^{q/p}(Q_{0} \times \mathbb{R})}} \leq  2^{(-2mk - 3l)\frac{p}{q}} \| Tg_{1} Tg_{2}\|_{_{L^{q/p}(Q(0,1) \times [1/2,4])}},
\end{equation}
where $Q(0,1)$ denotes the unit square and
\[Tg_{i}(x,t)= \int_{{\mathbb{R}}^2}e^{i2^{j-mk-2l}\tilde{\Psi}(x,t,\eta,\delta,2^{-l})} \tilde{a}(2^{j}\eta,t)\tilde{\rho_0}(|\eta|) \widehat{g_{i}}(\eta)d\eta\]
with the phase function
\begin{align}
\tilde{\Psi}(x,t,\eta,\delta,2^{-l}) &= x_{1}\eta_{1}+x_{2}\eta_{2}+ \kappa^{m} \frac{\binom{2}{m+2}\phi^{(m)}(0)}{m!t^{m}}\frac{\eta_{1}^{2}}{\eta_{2}}  +2^{-l} \kappa^{m-1} \frac{\binom{3}{m+2}\phi^{(m)}(0)}{m!t^{m}}\frac{\eta_{1}^{3}}{\eta_{2}^{2}} +   \nonumber\\
&\cdot \cdot \cdot+ 2^{-ml}  \frac{\binom{m+2}{m+2}\phi^{(m)}(0)}{m!t^{m}}\frac{\eta_{1}^{(m+2)}}{\eta_{2}^{(m+1)}} +2^{-k}\eta_2\tilde{R}(\frac{\eta_{1}}{\eta_{2}},t,2^{-l},\delta). \nonumber
\end{align}
It follows from  Theorem 1.2 in \cite{SL2} that
\begin{equation}\label{Eq3.14}
 \| Tg_{1} Tg_{2}\|_{_{L^{q/p}(Q(0,1) \times [1/2,4])}} \leq C 2^{(3mk + 6l-3j + \epsilon j)\frac{p}{q}} \|g_{1}\|_{L^2}\|g_{2}\|_{L^2}.
\end{equation}
The inequalities (\ref{Eqlocal1}) and (\ref{Eq3.14}) imply that
\begin{equation}\label{Eqlocal}
\| \mathcal{G}_{j,l}^{k}g_{1} \mathcal{G}_{j,l}^{k}g_{2}\|_{_{L^{q/p}(Q_{0} \times \mathbb{R})}} \leq C 2^{\epsilon j} 2^{(mk + 3l -3j)\frac{p}{q}} \|g_{1}\|_{L^2}\|g_{2}\|_{L^2},
\end{equation}
and by translation invariance in the $x$-plane, this inequality actually holds true when $Q_{0}$ is replaced by any $2^{-l-mk} \times 2^{-2l-mk}$ rectangles. Now we will show that the global estimate (\ref{Eqglobal}) follows from the local estimate (\ref{Eqlocal}). We have
\begin{align}
\mathcal{G}_{j,l}^{k}g_{i}(x,t)=\tilde{\rho}_1(x,t) \int_{\mathbb{R}^{2}} \int_{{\mathbb{R}}^2}e^{i2^{j}\Psi(x,t,\eta,\delta,2^{-l})-iz \cdot \eta} \tilde{a}(2^{j}\eta,t)\tilde{\rho_0}(|\eta|) \chi_{i}(\frac{\eta_{1}}{\eta_{2}})  d\eta g_{i}(z)dz. \nonumber
\end{align}
Denote
\begin{align}
K_{i}(x,z,t)=  \int_{{\mathbb{R}}^2}e^{i2^{j}\Psi(x,t,\eta,\delta,2^{-l})-iz \cdot \eta} \tilde{a}(2^{j}\eta,t)\tilde{\rho_0}(|\eta|) \chi_{i}(\frac{\eta_{1}}{\eta_{2}}) d\eta. \nonumber
\end{align}
Integration by parts show that for each $t \in [1/2,4]$ and $i=1,2$,
\begin{align}
|K_i(x,z,t)| \leq  \frac {C _{N}}{\biggl(1+ 2^{j-l}\biggl | | x_{1}-\frac{z_{1}}{2^{j-l}}| +\mathcal{O}(2^{-l-mk}) \biggl| + 2^{j}\biggl | | x_{2}-\frac{z_{2}}{2^{j}}|+\mathcal{O}(2^{-2l-mk}) \biggl|   \biggl)^{N}}, \nonumber
\end{align}
which implies that $(x_{1},x_{2})$ can be considered roughly in a $2^{-l-mk} \times 2^{-2l-mk}$ rectangle with the center $(\frac{z_{1}}{2^{j-l}},\frac{z_{2}}{2^{j}})$. Therefore
\begin{align}\label{sum}
&\mathcal{G}_{j,l}^{k}g_{1}(x,t)\mathcal{G}_{j,l}^{k}g_{2}(x,t) \nonumber\\
&=\tilde{\rho}_1(x,t)^{2} \sum_{Q,Q^{\prime}} \Pi_{cQ \cap cQ^{\prime}}(x) \int_{\mathbb{R}^{2}} K_{1}(x,z,t) \Pi_{Q}(\frac{z_{1}}{2^{j-l}},\frac{z_{2}}{2^{j}}) g_{1}(z)dz \nonumber\\
&\quad \times \int_{\mathbb{R}^{2}} K_{2}(x,z,t) \Pi_{Q^{\prime}}(\frac{z_{1}}{2^{j-l}},\frac{z_{2}}{2^{j}}) g_{2}(z)dz
 +C_{2N} 2^{-jN} 2^{-kN} \tilde{\rho}_1(x,t)^{2} \|g_{1}\|_{L^2}\|g_{2}\|_{L^2} \nonumber\\
 &= \sum_{Q,Q^{\prime}} \Pi_{cQ \cap cQ^{\prime}}(x) \mathcal{G}_{j,l}^{k} \biggl( \Pi_{Q}(\frac{z_{1}}{2^{j-l}},\frac{z_{2}}{2^{j}})g_{1}(z) \biggl)(x,t) \mathcal{G}_{j,l}^{k} \biggl( \Pi_{Q^{\prime}}(\frac{z_{1}}{2^{j-l}},\frac{z_{2}}{2^{j}}) g_{2}(z) \biggl)(x,t) \nonumber\\
 &\quad +C_{2N} 2^{-jN} 2^{-kN} \tilde{\rho}_1(x,t)^{2} \|g_{1}\|_{L^2}\|g_{2}\|_{L^2},
\end{align}
in which $Q,Q^{\prime}$ are $2^{-l-mk} \times 2^{-2l-mk}$ rectangles, by $\Pi_{Q}$ and  $\Pi_{Q^{\prime}}$ we mean the restriction on $Q,Q^{\prime}$, respectively.

Notice that by uncertainty principle and the assumption that $2^{l} \ll 2^{(j-mk)/2}$, the Fourier transform of $\Pi_{Q}(\frac{z_{1}}{2^{j-l}},\frac{z_{2}}{2^{j}}) g_{1}(z)$ is supported in a sufficiently small neighborhood of $\mathcal{C}_{1}$. For the same reason, the Fourier transform of $\Pi_{Q^{\prime}}(\frac{z_{1}}{2^{j-l}},\frac{z_{2}}{2^{j}}) g_{2}(z)$ is supported in a sufficiently small neighborhood of $\mathcal{C}_{2}$.

 Then inequalities (\ref{Eqlocal}) and (\ref{sum}) imply that
\begin{align}
&\| \mathcal{G}_{j,l}^{k}g_{1} \mathcal{G}_{j,l}^{k}g_{2}\|_{_{L^{q/p}( \mathbb{R}^{3} )}} \nonumber\\
&\leq C 2^{(mk + 3l -3j+\epsilon j)\frac{p}{q}}\sum_{Q,Q^{\prime}: cQ \cap cQ^{\prime} \neq \emptyset} \biggl\|\Pi_{Q}(\frac{z_{1}}{2^{j-l}},\frac{z_{2}}{2^{j}}) g_{1}(z) \biggl\|_{L^{2}} \biggl\|\Pi_{Q^{\prime}}(\frac{z_{1}}{2^{j-l}},\frac{z_{2}}{2^{j}})g_{2}(z) \biggl\|_{L^{2}}  \nonumber\\
 &\quad +C_{2N} 2^{-jN} 2^{-kN}  \|g_{1}\|_{L^2}\|g_{2}\|_{L^2} \nonumber\\
 & \leq C 2^{\epsilon j} 2^{(mk + 3l -3j+\epsilon j)\frac{p}{q}} \|g_{1}\|_{L^2}\|g_{2}\|_{L^2}.
\end{align}
This completes the proof of Lemma \ref{last}, and also ends the discussion of Theorem \ref{hhisomaintheorem} in the case $d=2$.

\textbf{The case when $d > 2$. }Similarly as in the proof for $d = 2$, we choose $\tilde{\rho}\in C_0^{\infty}(\mathbb{R})$ such that supp $\tilde{\rho}\subset\{x:B/2\leq|x|\leq 2B\}$  and $\sum_k\tilde{\rho}(2^kx)=1$. Since the support of $\eta$ is sufficiently small, we can choose $k$ sufficiently large.

Denote
\begin{equation*}
A_tf(y):=\int_{\mathbb{R}}f(y_1-tx,y_2-t^dx^d\phi(x))\eta(x)dx=\sum_k A_t^k f(y),
\end{equation*}
where $ A_t^k f(y):=\int_{\mathbb{R}}f(y_1-tx,y_2-t^dx^d\phi(x))\eta(x)\tilde{\rho}(2^kx)dx$.

Consider the isometric operator on $L^p(\mathbb{R}^2)$ defined by
\begin{equation*}
Tf(x_1,x_2)=2^{(d+1)k/p}f(2^kx_1,2^{dk}x_2).
\end{equation*}
By the same reasoning in the last section,  it suffices to prove the following estimate:
\begin{equation*}
\sum_k2^{\frac{(d+1)k}{p}-\frac{(d+1)k}{q} -k}\|\sup_{t \in [1, 2]}|\widetilde{A_t^k|}\|_{L^p\rightarrow L^{q}}\leq C_{p,q},
\end{equation*}
where $\widetilde{A_t^k}f(y):=\int_{\mathbb{R}}f(y_1-tx,y_2-t^dx^d\phi(\frac{x}{2^k}))\tilde{\rho}(x)\eta(2^{-k}x)dx$.

By means of the Fourier inversion formula, we can write
\begin{equation*}
\widetilde{ A_t^k}f(y)=\frac{1}{(2\pi)^2}\int_{{\mathbb{R}}^2}e^{i\xi\cdot
  y}\widehat{d\mu_{k,d,m}}(\delta_t\xi)\hat{f}(\xi)d\xi,
\end{equation*}
where
\begin{equation*}
 \widehat{d\mu_{k,d,m}}(\delta_t\xi)=\int_{\mathbb{R}}e^{-i(t\xi_1x+t^d\xi_2x^d\phi(\frac{x}{2^k}))}\tilde{\rho}(x)\eta(2^{-k}x)dx.
\end{equation*}

Choosing a non-negative function $\beta\in C_0^{\infty}(\mathbb{R})$ as before,
set
\begin{equation*}\widetilde{A_{t,j}^k}f(y):=\frac{1}{(2\pi)^2}\int_{{\mathbb{R}}^2}e^{i\xi\cdot
  y}\widehat{d\mu_{k,d,m}}(\delta_t\xi)\beta(2^{-j}|\delta_t\xi|)\hat{f}(\xi)d\xi
\end{equation*}
and denote by $\widetilde{\mathcal{M}_j^k}$ the corresponding maximal operator. Now we have that
\begin{equation*}
\sup_{t \in [1,2]}|\widetilde{A_t^k}f(y)|\leq \widetilde{\mathcal{M}^{k,0}}f(y)+\sum_{j\geq 1}\widetilde{\mathcal{M}_{j}^k}f(y), \hspace{0.2cm}\textmd{for}\hspace{0.2cm}y\in \mathbb{R}^2,
\end{equation*}
where
\begin{equation*}
\widetilde{\mathcal{M}^{k,0}}f(y):=\sup_{t \in [1,2]}|\sum_{j\leq 0}\widetilde{A_{t,j}^k}f(y)|.
\end{equation*}

Similarly as in the case $d = 2$, we can prove the following three estimates. \\
\textbf{(E1)} For $q \ge p \ge 1$, we have
\begin{align*}
\|\widetilde{\mathcal{M}^{k,0}}f\|_{L^{q}} \lesssim \|f\|_{L^{p}}.
\end{align*}
\textbf{(E2)} Suppose that $\frac{1}{2p} < \frac{1}{q} \le \frac{1}{p}$, $\frac{1}{q} \ge \frac{3}{p}-1$. Then for any $\epsilon >0$, there holds
\begin{equation*}
\|\widetilde{\mathcal{M}_{j}^{k}}f\|_{L^{q}}\leq C_{p,q} 2^{j(1+\epsilon)(1/p-1/q)-(j\wedge mk)/q}\|f\|_{L^p}.
\end{equation*}
\textbf{(E3)} For all $j > mk$, and $p,q$ satisfying $\frac{1}{2p}< \frac{1}{q} \le  \frac{3}{5p}$, $\frac{3}{q} \le 1-\frac{1}{p} $,  we have
\begin{align*}
\|\widetilde{\mathcal{M}_{j}^{k}}f\|_{L^q({\mathbb{R}}^2)}
\le C   2^{(\frac{1}{p} -\frac{2}{q}  +\epsilon)j} \|f\|_{L^p({\mathbb{R}}^2)}.
\end{align*}

If $\frac{d+1}{p} - \frac{d+1}{q} - 1<0$, it follows from (E1) that
\[\sum_k2^{(d+1)k(\frac{1}{p} - \frac{1}{q})-k}\|\widetilde{\mathcal{M}^{k,0}}\|_{L^p\rightarrow L^q} \leq C_{p,q}.\]
In order to complete the proof, we split the set of $j$ into two  parts, which are $j>mk$ and $1  \le j \le mk$.

When $1 \le j\leq mk$,
by (E2), if $\frac{1}{2p} < \frac{1}{q} \le \frac{1}{p}$, $\frac{1}{q} \ge \frac{3}{p}-1$, for  $\epsilon >0$ sufficiently small, we get
\begin{align}
\sum_k 2^{(d+1)k(\frac{1}{p} - \frac{1}{q} )-k} \sum_{1 \le j\leq mk}\|\widetilde{\mathcal{M}_{j}^{k}}f\|_{L^q} &\leq C_{p,q} \sum_k 2^{(d+1)k(\frac{1}{p} - \frac{1}{q})-k} \sum_{1 \le j \le mk} 2^{j(\frac{1}{p}-\frac{2}{q}) + \epsilon j} \|f\|_{L^p} \nonumber\\
&\leq C_{p,q} \|f\|_{L^p}, \nonumber
\end{align}
provided that $\frac{d+1}{p} - \frac{d+1}{q} - 1<0$.

When $ j > mk$, notice that according to \cite{WL}, for $2< p < \infty$, there exists $\epsilon(p) >0$ such that
\begin{align*}
\|\widetilde{\mathcal{M}_{j}^{k}}f\|_{L^p({\mathbb{R}}^2)}
\le C   2^{-\epsilon(p) j} \|f\|_{L^p({\mathbb{R}}^2)}.
\end{align*}
By  (E3) and interpolation, for $p,q$ satisfying $\frac{1}{2p}< \frac{1}{q} \le  \frac{1}{p}$, $\frac{1}{q} > \frac{3}{p}-1 $, there exists $\epsilon(p,q) >0$ such that
\begin{align*}
\|\widetilde{\mathcal{M}_{j}^{k}}f\|_{L^q({\mathbb{R}}^2)}
\le C   2^{-\epsilon(p,q) j} \|f\|_{L^p({\mathbb{R}}^2)}.
\end{align*}
Thus if $p,q$ satisfy $\frac{1}{2p}< \frac{1}{q} \le  \frac{1}{p}$, $\frac{1}{q} > \frac{3}{p}-1 $,   and $\frac{d+1}{p} - \frac{d+1}{q} - 1<0$,  we have
\begin{align}
\sum_k 2^{(d+1)k(\frac{1}{p} - \frac{1}{q})-k} \sum_{j > mk}\|\widetilde{M_{j}^{k}}f\|_{L^q} &\leq C_{p,q} \sum_k 2^{(d+1)k(\frac{1}{p} - \frac{1}{q})-k}  \sum_{j > mk} 2^{-\epsilon(p,q) j} \|f\|_{L^p} \nonumber\\
&\leq C_{p,q} \|f\|_{L^p}. \nonumber
\end{align}
Then  we complete the proof of Theorem \ref{hhisomaintheorem}.

Next we simply explain how to  prove (E1), (E2) and (E3). Set
\begin{equation*}
\delta:=2^{-k}, \hspace{0.2cm} s:=s(\xi,t)=-\frac{\xi_1}{t^{d-1}\xi_2}, \hspace{0.3cm}\textrm{for} \hspace{0.2cm} \xi_2\neq 0,
\end{equation*}
and
\begin{equation*}
\Phi(s,x,\delta)=-sx+x^d\phi(\delta x).
\end{equation*}

Similarly as in the proof for the case $d=2$, for fixed $t\in[1,2]$, we will consider the main  term
 \begin{equation}\label{mainterm4}
A^k_{t,j}f(y):=\int_{{\mathbb{R}}^2}e^{i(\xi\cdot y-t^d\xi_2\tilde{\Phi}(s,\delta))}\chi_{k,d,m}(\frac{\xi_1}{t^{d-1}\xi_2})
\frac{A_{k,d,m}(\delta_t\xi)}{(1+|\delta_t\xi|)^{1/2}}
\beta(2^{-j}|\delta_t\xi|)\hat{f}(\xi)d\xi,
\end{equation}
where $\chi_{k,d,m}$ is a smooth function supported in the conical region $[c_{k,d,m},\widetilde{c_{k,d,m}}]$, for certain non-zero constants $c_{k,d,m}$ and $\widetilde{c_{k,d,m}}$ dependent only on $k, m$ and $d$. $A_{k,d,m}$ is a symbol of order zero in $\xi$ and $\{A_{k,d,m}(\delta_t\xi)\}_k$ is contained in a bounded subset of symbol of order zero. 

We may assume that $\phi(0)=1/d$, then the phase function can be written as
\begin{align}\label{phase function 4}
-t^d\xi_2\tilde{\Phi}(s,\delta)=\biggl(\frac{1}{d^{\frac{d}{d-1}}}-\frac{1}{d^{\frac{1}{d-1}}}\biggl)(-\frac{d\xi_1^d}{\xi_2})^{\frac{1}{d-1}}-\frac{\delta^m\phi^{(m)}(0)}{t^mm!}
\biggl(-\frac{\xi_1}{\xi_2^{\frac{m+1}{m+d}}}\biggl)^{\frac{d+m}{d-1}}+ R(t,\xi,\delta,d),
\end{align}
where $R(t,\xi,\delta,d)$ is homogeneous of degree one in $\xi$ and has at least $m+1$ power of $\delta$.

As in the case $d= 2$,  we  choose $\rho_1(y,t)\in C_0^{\infty}(\mathbb{R}^2\times[1/2,4])$ and  it is sufficient to prove (E2) and (E3) for the operator
\begin{equation*}
\widetilde{\mathcal{M}_{j}^{k,1}}f(y):=\sup_{t\in[1,2]}|\rho_1(y,t)A_{t,j}^kf(y)|.
\end{equation*}

Notice that by  the argument of Subsection 2.5  in \cite{WL}, we have
\begin{equation*}
\|\widetilde{\mathcal{M}_{j}^{k,1}}f\|_{L^p}\leq C_{p} 2^{- (j \wedge mk)/p}\|f\|_{L^p}, \hspace{0.5cm}2\leq p\leq \infty.
\end{equation*}
Also, we still have the same regularity estimates as Lemma \ref{lemmaL^2}: for any $\epsilon >0$,
\begin{equation*}
\|\widetilde{\mathcal{M}_{j}^{k,1}}f\|_{L^ \infty }\leq C_{\epsilon} 2^{j(1+\epsilon)}\|f\|_{L^1}.
\end{equation*}
Then  (E2) follows from  the Riesz interpolation theorem.

Moreover, from the inequality (2.82) in \cite{WL},
\begin{equation}\label{h_k_m}
\frac{\partial}{\partial t}(\rho_1(y,t)A^k_{t,j}f(y))=\int_{{\mathbb{R}}^2}e^{i(\xi \cdot y-t^d\xi_2\tilde{\Phi}(s,\delta))}h_{k,m,d}(y,t,\xi,j)\hat{f}(\xi)d\xi,
\end{equation}
where $|h_{k,m}(y,t,\xi,j)|\lesssim  2^{j/2}\delta^m+2^{-j/2}$.

Using the same reasoning as in the case when $d= 2$,  we could complete the proof given the following theorem. We will omit the proof the this theorem since it follows closely from the the argument for Theorem \ref{L^(p,q)therom}.

\begin{thm}
For all $j > mk$, and $p,q$ satisfying $\frac{1}{2p}< \frac{1}{q} \le  \frac{3}{5p}$, $\frac{3}{q} \le 1-\frac{1}{p} $, $\frac{1}{q} \ge \frac{1}{2} -\frac{1}{p} $, we have
\begin{equation}
\biggl(\int_{{\mathbb{R}}^2}\int^4_{1/2}|\tilde{F}_{j}^{k}f(y,t)|^qdtdy \biggl)^{1/q}
\leq C 2^{\frac{mk}{2}(1-\frac{1}{p}-\frac{1}{q})} 2^{(\frac{1}{2} -\frac{3}{q}+ \frac{1}{p} +\epsilon)j} \|f\|_{L^p({\mathbb{R}}^2)},\hspace{0.2cm}\textmd{some }\hspace{0.2cm}\epsilon >0,
\end{equation}
where
\begin{equation}
 \tilde{F}_{j}^{k}f(y,t)=\rho_1(y,t)\int_{{\mathbb{R}}^2}e^{i(\xi \cdot y-t^2\xi_2\tilde{\Phi}(s,\delta))}a(\xi,t)\rho_0(2^{-j}|\xi|)\tilde{\chi}(\frac{\xi_1}{\xi_2})\hat{f}(\xi)d\xi,
\end{equation}
$\tilde{\Phi}(s,\delta)$ was given in  equality (\ref{phase function 4}). Functions $\rho_0$, $a$, and $\tilde{\chi}$ are chosen as in Theorem \ref{L^(p,q)therom}.
\end{thm}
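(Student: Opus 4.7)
The plan is to follow the blueprint of Theorem~\ref{L^(p,q)therom} with the necessary modifications to accommodate the degree-$d$ phase function \eqref{phase function 4}. First I would rescale in frequency by $\xi \mapsto 2^{j}\xi$ to reduce $\tilde{F}_{j}^{k}$ to a unit-frequency oscillatory integral $\mathcal{F}_{j}^{k}g$ whose phase is $2^{j}\Psi_{d}(x,t,\xi,\delta)$, with $\Psi_{d}$ given by the rescaled version of \eqref{phase function 4}. Since $\tilde{\chi}$ localizes to a narrow cone in $\{\xi_{1}/\xi_{2}\approx 1\}$ whose size depends on $\epsilon_{0}$, I would then perform a Whitney-type angular decomposition of this cone at dyadic scales $2^{-l}$ for $\log(1/\epsilon_{0})\le l\le(j-mk)/2$, write
\[
(\mathcal{F}_{j}^{k}g)^{2}=\sum_{l}\sum_{\mathrm{dist}(C_{\theta}^{l},C_{\theta'}^{l})\approx 2^{-l}}\mathcal{F}_{j}^{k}g_{\theta}^{l}\mathcal{F}_{j}^{k}g_{\theta'}^{l},
\]
and invoke the bilinear orthogonality inequality \eqref{Eq3.2} (as in \cite{SL2}) to reduce the $L^{q}$ bound of $\mathcal{F}_{j}^{k}g$ to bilinear $L^{q/2}$ bounds on the diagonal pairs at each scale $l$.

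Next I would establish the two bilinear lemmas corresponding to Lemmas~\ref{main} and \ref{basic}. The boundary case $2^{l}\approx 2^{(j-mk)/2}$ follows from the pointwise $L^{1}\to L^{\infty}$, $L^{2}\to L^{2}$ bounds (available via the linear analysis in Subsection~2.5 of \cite{WL}) together with Riesz--Thorin and H\"older, exactly as in Lemma~\ref{basic}. For the substantive case $2^{l}\ll 2^{(j-mk)/2}$, I would align the angular bisector $\kappa=\tfrac{1}{2}(\theta+\theta')$ with the $\xi_{1}$-axis via the linear change of variables $\xi_{1}=2^{-l}\eta_{1}+\kappa\eta_{2}$, $\xi_{2}=\eta_{2}$. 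Expanding $\Psi_{d}$ in Taylor series around $\xi_{1}/\xi_{2}=\kappa$ produces a normalized quadratic leading term $c(\kappa,t)\,\eta_{1}^{2}/\eta_{2}$ of size $2^{-2l-mk}$, higher-order terms carrying additional $2^{-l}$ factors, and an $R$-type remainder of size $2^{-2l-(m+1)k}$, exactly in analogy with the $d=2$ computation preceding Lemma~\ref{last}.

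The heart of the proof will then be a local-to-global transition: on each $2^{-l-mk}\times 2^{-2l-mk}$ rectangle $Q_{0}$, factoring out the overall scale $2^{-(2mk+3l)p/q}$ reduces the bilinear estimate to Theorem~1.2 of \cite{SL2} applied to the normalized operator $Tg_{i}$ whose phase is a small perturbation (controlled by $2^{-l}$ and $2^{-k}$) of the model quadratic. Kernel estimates on $K_{i}(x,z,t)$ via integration by parts show that $\mathcal{G}_{j,l}^{k}g_{i}(x,t)$ is essentially localized to rectangles of these dimensions centered at $(z_{1}/2^{j-l},z_{2}/2^{j})$, so summing over a disjoint tiling promotes the local bilinear bound to the global inequality \eqref{Eqglobal}. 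Carrying the scale factors back through the change of variables and the rescaling in $\xi$ yields the claimed bound with the advertised $2^{\frac{mk}{2}(1-\frac{1}{p}-\frac{1}{q})}\,2^{(\frac12-\frac3q+\frac1p+\epsilon)j}$ dependence.

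The principal obstacle will be verifying that the rescaled phase $\Psi_{d}$ satisfies the cinematic-curvature and transverse-cone hypotheses of the bilinear theorem of \cite{SL2} uniformly in $k,l,t$, despite its non-quadratic structure and the presence of $\delta$-perturbations. Concretely, one must check that the Hessian of the normalized principal term in $\eta$ has a non-degenerate eigenvalue of size $\asymp 1$ after dividing by $2^{-2l-mk}$, and that the $O(2^{-l})$ and $O(2^{-(m+1)k}/2^{-mk})=O(2^{-k})$ perturbations are small enough in $C^{N}$-norm on the unit ball to preserve these conditions uniformly; the constraint $j>mk$ is precisely what makes the ratio $2^{j-mk-2l}$ of the localized operator's effective frequency large, so that \cite{SL2}'s bilinear estimate enters with the correct gain $2^{(3mk+6l-3j+\epsilon j)p/q}$. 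Once this uniformity is in hand, the remainder of the argument is an essentially formal adaptation of the $d=2$ proof.
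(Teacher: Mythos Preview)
Your proposal is correct and follows essentially the same route as the paper, which explicitly omits the proof of this theorem with the remark that it ``follows closely from the argument for Theorem~\ref{L^(p,q)therom}.'' Your outline---rescale to unit frequency, Whitney angular decomposition at scales $2^{-l}$ up to $(j-mk)/2$, reduce via the orthogonality inequality \eqref{Eq3.2} to bilinear pieces, prove analogs of Lemmas~\ref{main} and~\ref{basic}, straighten the bisector by the linear change $\xi_{1}=2^{-l}\eta_{1}+\kappa\eta_{2}$, apply Theorem~1.2 of \cite{SL2} on $2^{-l-mk}\times 2^{-2l-mk}$ rectangles, and pass local-to-global via kernel localization---is exactly the paper's scheme. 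One point worth making explicit (the paper does this in the $d=2$ case when defining $\widehat{h_{i}}$): the $t$-independent principal part $C_{d}(-\xi_{1}^{d}/\xi_{2})^{1/(d-1)}$ of \eqref{phase function 4} should be absorbed into $\widehat{g}$ as a unimodular phase factor \emph{before} the Taylor expansion in $\eta$, so that the remaining $t$-dependent phase indeed has its first nontrivial term at scale $2^{-2l-mk}$ as you claim; otherwise the expansion of the leading term itself would produce a $t$-independent $2^{-2l}\eta_{1}^{2}/\eta_{2}$ contribution that does not fit your stated normalization.
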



\subsection{Localized maximal functions associated with surfaces in $\mathbb{R}^{3}$}
We notice that some of the details have appeared in \cite{WL}, and we include here only for completeness.

\subsubsection{The case when $2a_{2} \neq a_{3}$: surfaces with non-vanishing principle curvature}
In this subsection we give the proof for Theorem \ref{3nonvanish}.

\textbf{Proof of Theorem \ref{3nonvanish}.}

We can always choose non-negative functions $\eta_1$,
$\eta_2$ $\in C_0^{\infty}({\mathbb{R}})$ so that $\eta(x)\leq \eta_1(x_1)\eta_2(x_2)$.
Since
\begin{equation*}
\left|\int_{\mathbb{R}^2}f(y-\delta_t(x_1,x_2,\Phi(x_1,x_2)))\eta(x)dx\right |\leq \int_{\mathbb{R}^2}|f|(y-\delta_t(x_1,x_2,\Phi(x_1,x_2)))\eta_1(x_1)\eta_2(x_2)dx,
\end{equation*}
we may assume $\eta(x)=\eta_1(x_1)\eta_2(x_2)$ and  $f\geq
0$, $a_1=1$. Set $(y_2, y_3)=y'$ and $(\xi_2,\xi_3)=\xi'$. Denote $1+a_2+a_3$ by $Q$ and $(t^{a_2}\xi_2, t^{a_3}\xi_3)$ by
$\delta'_t\xi'$.

Let
\begin{equation*}
A_tf(y):=\int_{\mathbb{R}^2}f(y-\delta_t(x_1,x_2,\Phi(x_1,x_2)))\eta_1(x_1)\eta_2(x_2)dx.
\end{equation*}

By means of the Fourier inversion formula, we can write
\begin{equation*}
 A_tf(y)=\frac{1}{(2\pi)^3}\int_{{\mathbb{R}}^3}e^{i\xi\cdot
  y}\int_{\mathbb{R}}e^{-it\xi_1x_1}\eta_1(x_1)\widehat{d\mu_{x_1}}(\delta'_t\xi')dx_1\hat{f}(\xi)d\xi,
\end{equation*}
where
\begin{equation*}
 \widehat{d\mu_{x_1}}(\xi')=\int_{\mathbb{R}}e^{-i(\xi_2x_2+\xi_3\Phi(x_1,x_2))}\eta_2(x_2)dx_2.
\end{equation*}

Choose a non-negative function $\beta\in C_0^{\infty}(\mathbb{R})$
such that
\begin{equation*}
\textrm{supp }\hspace{0.2cm}\beta \subset[1/2,2] \hspace{0.5cm}\textrm{and}
\hspace{0.5cm}\sum_{j\in \mathbb{Z}}\beta(2^{-j}r)=1 \hspace{0.5cm}
\textrm{for} \hspace{0.2cm} r>0.
\end{equation*}

Let
\begin{equation*}
A_{t,j}f(y):= \frac{1}{(2\pi)^3}\int_{{\mathbb{R}}^3}e^{i\xi\cdot
  y}\int_{\mathbb{R}}e^{-it\xi_1x_1}\eta_1(x_1)\widehat{d\mu_{x_1}}(\delta'_t\xi')dx_1\beta(2^{-j}|\delta'_t\xi'|)\hat{f}(\xi)d\xi,
\end{equation*}
and
\begin{align*}
 {}A_{t}^0f(y)
:&=A_{t}f(y)-\sum_{j=1}^{\infty}A_{t,j}f(y)
\\
&=\frac{1}{(2\pi)^3}\int_{{\mathbb{R}}^3}e^{i\xi\cdot
  y}\int_{\mathbb{R}}e^{-it\xi_1x_1}\eta_1(x_1)\widehat{d\mu_{x_1}}(\delta'_t\xi')dx_1\rho(|\delta'_t\xi'|)\hat{f}(\xi)d\xi,
\end{align*}
where $\rho$ is supported in a neighborhood of the origin. Since
\begin{align*}
{}&\frac{1}{(2\pi)^3}\int_{{\mathbb{R}}^3}e^{i\xi\cdot
  y}\int_{\mathbb{R}}e^{-it\xi_1x_1}\eta_1(x_1)\widehat{d\mu_{x_1}}(\delta'_t\xi')dx_1\rho(|\delta'_t\xi'|)d\xi
\\
&=\frac{1}{(2\pi)^2}t^{-Q}\eta_1(\frac{y_1}{t})\int_{{\mathbb{R}}^2}e^{i\delta'_{t^{-1}}\xi'\cdot
  y'}\widehat{d\mu_{\frac{y_1}{t}}}(\xi')\rho(|\xi'|)d\xi',
\end{align*}
then  $A_{t}^0f(y)=t^{-Q}f*K(t^{-1}y_1,t^{-a_2}y_2,t^{-a_3}y_3)=f*K_{\delta_{t^{-1}}}(y)$,
where
\begin{equation*}
K(y)=\frac{1}{(2\pi)^2}\eta_1(y_1)\int_{{\mathbb{R}}^2}e^{i\xi'\cdot
  y'}\widehat{d\mu_{y_1}}(\xi')\rho(|\xi'|)d\xi'.
\end{equation*}

Since $\Phi$ satisfies (\ref{conditionnonvani}) and $y_1 \in$ supp $\eta_1$, for $N\in \mathbb{N}$ and multi-index $\alpha$ with $|\alpha|=N$, then Theorem \ref{lem:Lemma1} implies that
\begin{equation*}
|D_{\xi'}^{\alpha}\widehat{d\mu_{y_1}}(\xi')|\leq C_N'(1+|\xi'|)^{-1/2}.
\end{equation*}
By integration by parts in $\xi'$, we obtain
\begin{align*}
|K(y)|&\leq C''_N\frac{1}{(1+|y_1|)^{N}}\frac{1}{(1+|y'|)^{N}}\int_{{\mathbb{R}}^2}\left|D_{\xi'}^{\alpha}\left(\widehat{d\mu_{y_1}}(\xi')\rho(|\xi'|)\right)\right|d\xi'\\
&\leq C_N\prod_{i=1}^3\frac{1}{(1+|y_i|)^N}.
\end{align*}
It follows from Young's inequality that for any $q \ge p \ge 1$, the following estimate holds:
\[\biggl\| \sup_{t\in[1,2]} |{}A_{t}^0f(y)| \biggl\|_{L^{q}(\mathbb{R}^{3})} \le C \|f\|_{L^{p}(\mathbb{R}^{3})}.\]

Hence it suffices to consider the local maximal operator $\sup_{t\in[1,2]}|A_{t,j}f(y)|$.
For fixed $t\in[1,2]$, let us estimate $\widehat{d\mu_{x_1}}(\delta'_t\xi')$. Set
\begin{equation*}
s:=s(\xi',t)=-\frac{t^{a_2}\xi_2}{t^{a_3}\xi_3}, \hspace{0.3cm}\textrm{for} \hspace{0.2cm} \xi_3\neq 0,
\end{equation*}
and
\begin{equation*}
\Psi(x_1,x_2,s):=-sx_2+\Phi(x_1,x_2).
\end{equation*}

We observe that
\begin{equation*}
\partial_2\Psi(0,0,0)=0 \hspace{0.5cm}\textrm{and}\hspace{0.5cm}\partial_2^2\Psi(0,0,0)\neq0,
\end{equation*}
since $\Phi$ satisfies the condition (\ref{conditionnonvani}). The implicit function theorem implies that there must be a smooth solution $x_2=\psi(x_1,s)$ to the equation
\begin{equation*}
\partial_2\Phi(x_1,x_2)=s,
\end{equation*}
where $x_1$ and $s$ are enough small. Here if $t\in [\frac{1}{2},4]$, we can choose $x_1$ and $s$ sufficiently small such that
\begin{equation*}
\partial_2\Phi(\frac{x_1}{t},\psi(\frac{x_1}{t},s))=s.
\end{equation*}

For sufficiently small $x_1$ chosen above, a standard application of the method of stationary phase in $x_2$  yields that
\begin{equation*}
\widehat{d\mu_{x_1}}(\delta'_t\xi')=e^{-it^{a_3}\xi_3\tilde{\Psi}(x_1,s)}
\frac{\chi_{x_1}(t^{a_2}\xi_2/t^{a_3}\xi_3)}{{(1+|\delta'_t\xi'|)^{1/2}}}
A_{x_1}(\delta'_t\xi')+B_{x_1}(\delta'_t\xi'),
\end{equation*}
where $\tilde{\Psi}(x_1,s):=\Psi(x_1,\psi(x_1,s),s)$ and  $\chi_{x_1}$ is a smooth function supported on the set $\{z:|z|<\epsilon_{x_1}\}$, where $\epsilon_{x_1}$ can be controlled by a small positive constant independent on $x_1$.
Moreover, $t^{a_3}\xi_3\tilde{\Psi}(x_1,s)$
is a smooth function which is homogeneous of degree one in $\xi'$.

Meanwhile, $A_{x_1}$ is a
symbol of order zero such that
\begin{equation*}
|D^{\alpha}_{\xi'}A_{x_1}(\xi')|\leq
C_{\alpha}(1+|\xi'|)^{-|\alpha|},
\end{equation*}
where $\alpha$ is a multi-index and $C_{\alpha}$ are admissible
constants.

Furthermore, $B_{x_1}$ is a smooth function and  satisfies
\begin{equation}\label{B1}
|D_{\xi'}^{\alpha}B_{x_1}(\xi')|\leq C_{\alpha,N}(1+|\xi'|)^{-N},
\end{equation}
again with admissible constants $C_{\alpha,N}$ and $N\in \mathbb{N}$.

We first consider the remainder term. For fixed $t\in [1,2]$, let
\begin{align*}
A_{t,j}^0f(y):&=\frac{1}{(2\pi)^3}\int
_{{\mathbb{R}}^3}e^{i\xi\cdot y
}\int_{\mathbb{R}}e^{-it\xi_1x_1}\eta_1(x_1)B_{x_1}(\delta'_t\xi')dx_1\beta(2^{-j}|\delta'_t\xi'|)\hat{f}(\xi)d\xi
\\
&=\mathcal{F}^{-1}\biggl\{\int
_{{\mathbb{R}}^3}e^{i\xi\cdot y
}\biggl(\int_{\mathbb{R}}e^{-it\xi_1x_1}\eta_1(x_1)B_{x_1}(\delta'_t\xi')dx_1\biggl)\beta(2^{-j}|\delta'_t\xi'|)\biggl\}*f(y)= \tilde{K}_{\delta_{t^{-1}}} * f(y),
\end{align*}
where $\tilde{K}(y)=\frac{1}{(2\pi)^2}\eta_1(y_1)\int_{{\mathbb{R}}^2}e^{i\xi'\cdot y'}B_{y_1}(\xi')\beta(2^{-j}|\xi'|)d\xi'$. By (\ref{B1}) and the support of $\beta$, it is easy to deduce
\begin{equation}\label{M2}
\sup_{t\in[1,2]}|A_{t,j}^0f(y)|\leq C_N2^{-jN}\int_{{\mathbb{R}}^3} \frac{|f(x)|}{(1+|y-x|)^{N}} dx.
\end{equation}
It follows from Young's inequality that
\[\biggl\| \sup_{t\in[1,2]} |{}A_{t, j}^0f(y)| \biggl\|_{L^{q}(\mathbb{R}^{3})} \le C2^{-jN} \|f\|_{L^{p}(\mathbb{R}^{3})} \]
holds for each $q \ge p \ge 1$.

Now we focus on the main term.  Set
\begin{align*}
A_{t,j}^1f(y):=&\frac{1}{(2\pi)^3}\int
_{{\mathbb{R}}^3}e^{i\xi\cdot y
}\int_{\mathbb{R}}e^{-it\xi_1x_1}\eta_1(x_1)e^{-it^{a_3}\xi_3\tilde{\Psi}(x_1,s)}E_{x_1}(\delta'_t\xi')dx_1\beta(2^{-j}|\delta'_t\xi'|)\hat{f}(\xi)d\xi
\end{align*}
with
\begin{equation*}
E_{x_1}(\delta'_t\xi'):=
\frac{\chi_{x_1}(t^{a_2}\xi_2/t^{a_3}\xi_3)}{(1+|\delta'_t\xi'|)^{1/2}}
A_{x_1}(\delta'_t\xi'),
\end{equation*}
and denote by $\mathcal{M}_{j}^1$ the corresponding maximal operator.  Notice that
\begin{align*}
\|\mathcal{M}_{j}^1f\|_{L^q}&=\biggl\|\sup_{t\in[1,2]}\frac{1}{(2\pi)^2}\biggl|\int_{\mathbb{R}}\eta_1(x_1)\int
_{{\mathbb{R}}^2}e^{i(\xi'\cdot y'-t^{a_3}\xi_3\tilde{\Psi}(x_1,s))}E_{x_1}(\delta'_t\xi')
\\& \quad\times \beta(2^{-j}|\delta'_t\xi'|)f(y_1-tx_1,\widehat{\xi'})d\xi'dx_1\biggl|\biggl\|_{L^q(dy)}
\\
&\leq C\biggl \|\sup_{t\in[1,2]}\biggl|\int_{\mathbb{R}}\eta_1(\frac{x_1}{t})\int
_{{\mathbb{R}}^2}e^{i(\xi'\cdot y'-t^{a_3}\xi_3\tilde{\Psi}(\frac{x_1}{t},s))}E_{x_1/t}(\delta'_t\xi')
\\& \quad \times \beta(2^{-j}|\delta'_t\xi'|)f(y_1-x_1,\widehat{\xi'})d\xi'dx_1\biggl|\biggl\|_{L^q(dy)}
\\
& =C \biggl\|\sup_{t\in[1,2]}|\widetilde{A_{t,j}^1}f|\biggl\|_{L^q(dy)}=C\|\widetilde{\mathcal{M}_{j}^1}f\|_{L^q},
\end{align*}
where $f(x,\hat{\xi'})$ denotes the partial Fourier transform with respect to the $\xi'$ variables and
\begin{equation}\label{mainterm5}
\widetilde{A_{t,j}^1}f(y):=\int_{\mathbb{R}}\eta_1(\frac{x_1}{t})\int
_{{\mathbb{R}}^2}e^{i(\xi'\cdot y'-t^{a_3}\xi_3\tilde{\Psi}(\frac{x_1}{t},s))}E_{x_1/t}(\delta'_t\xi')
\beta(2^{-j}|\delta'_t\xi'|)f(y_1-x_1,\widehat{\xi'})d\xi'dx_1,
\end{equation}
and
\begin{equation*}
\widetilde{\mathcal{M}_{j}^1}f(y):=\sup_{t\in[1,2]}|\widetilde{A_{t,j}^1}f(y)|.
\end{equation*}

Set
\begin{equation*}
Q_{x_1}(y',t,\xi')=\xi'\cdot y'-t^{a_3}\xi_3\tilde{\Psi}(\frac{x_1}{t},s).
\end{equation*}

Using the routine reasoning detailed before, it suffices to consider the local maximal operator
\[ \widetilde{\mathcal{M}_{j,loc}^1}f(y):=  \sup_{t\in[1/2,4]} |\tilde{\rho}(t)\widetilde{A_{t,j}^1}f(y)|,\]
where $\tilde{\rho} \in C_0^{\infty}(\mathbb{R})$ denotes a bump function supported in $[1/2,4]$ such that $\tilde{\rho}(t)=1$ if $1\leq t\leq 2$.

 By Lemma \ref{lem:Lemma3}, we have
\begin{equation}\label{wellknoweest}
\begin{aligned}
\|\widetilde{\mathcal{M}_{j,loc}^1}f(y)\|^q_{L^q}&\leq C_p\biggl(\int
_{{\mathbb{R}}^3}\int_{1/2}^4\left|\tilde{\rho}(t)\widetilde{A_{t,j}^1}f(y)\right|^q dtdy\biggl )^{1/q'}\\
&\quad \times \biggl(\int
_{{\mathbb{R}}^3}\int_{1/2}^4\left|\frac{\partial}{\partial t}\left(\tilde{\rho}(t)\widetilde{A_{t,j}^1}f(y)\right)\right|^qdtdy\biggl)^{1/q}.
\end{aligned}
\end{equation}

Moreover, we can choose $\tilde{\eta}_1\in C_0^{\infty}(\mathbb{R})$ such that $\tilde{\eta}_1=1$  on the support of $\eta_1$, then we have
\begin{equation*}
\frac{\partial}{\partial t}\widetilde{A_{t,j}^1}f(y)  =\int_{\mathbb{R}}\tilde{\eta}_1(\frac{x_1}{t})\int
_{{\mathbb{R}}^2}e^{iQ_{x_1}(y',t,\xi')}h(t,j,x_1,\xi')f(y_1-x_1,\widehat{\xi'})d\xi'dx_1,
\end{equation*}
where
\begin{align*}
h(t,j,x_1,\xi')&=\left(-t^{-2}x_1\eta_1'(\frac{x_1}{t})+\frac{\partial}{\partial t}Q_{x_1}(y',t,\xi')\right)E_{x_1/t}(\delta'_t\xi')\beta(2^{-j}|\delta'_t\xi'|)
\\
& +\eta_1(\frac{x_1}{t})\frac{\partial}{\partial t}E_{x_1/t}(\delta'_t\xi')\beta(2^{-j}|\delta'_t\xi'|)
+\eta_1(\frac{x_1}{t})E_{x_1/t}(\delta'_t\xi')\frac{\partial}{\partial t}\beta(2^{-j}|\delta'_t\xi'|),
\end{align*}
and
\begin{equation*}
\left|\frac{\partial}{\partial t}Q_{x_1}(y',t,\xi'))\right|=\left|\xi_3\frac{\partial}{\partial t}(t^{a_3}\Psi(\frac{x_1}{t},\psi(\frac{x_1}{t},s),s))\right|\leq C|\xi_3|,
\end{equation*}
since $t\approx 1$, $x_1$ and the support of $\chi_{x_1}$ are sufficiently small.  Clearly we only need to show the $L^p \rightarrow L^{q}$-boundedness of the operator $\widetilde{A_{t,j}^1}$.

Furthermore, let $\eta_0\in C_0^{\infty}(\mathbb{R})$ denote a non-negative function so that for arbitrary $t\in [1/2,4]$, $\eta_1(\frac{x_1}{t})\leq \eta_0(x_1)$. Then
\begin{align*}
&\biggl(\int
_{{\mathbb{R}}^3}\int_{1/2}^4|\tilde{\rho}(t)\widetilde{A_{t,j}^1}f(y)|^qdtdy\biggl)^{1/q}
\\
&\lesssim\Biggl\|\int_{\mathbb{R}}\eta_0(x_1)\biggl\|\tilde{\rho}(t)\int
_{{\mathbb{R}}^2}e^{iQ_{x_1}(y',t,\xi')}E_{x_1/t}(\delta'_t\xi')\beta(2^{-j}|\delta'_t\xi'|)\\
&\quad\quad\quad\times
f(y_1-x_1,\widehat{\xi'})d\xi'
\biggl\|_{L^q([1/2,4]\times \mathbb{R}^2,dtdy')}dx_1
\Biggl\|_{L^q(\mathbb{R}, dy_1)}.
\end{align*}

Hence we can apply Theorem \ref{lpqlocalsmoothing} for $ \frac{1}{2p} < \frac{1}{q} \le  \frac{3}{5p}, \frac{3}{q} \le 1-\frac{1}{p} $,  supp $\hat{f} \subset \{\xi \in \mathbb{R}^{2}: |\xi| \sim 2^{j}\}$, and obtain
\begin{align*}
&\biggl(\int
_{{\mathbb{R}}^3}\int_{1/2}^4|\tilde{\rho}(t)\widetilde{A_{t,j}^1}f(y)|^qdtdy\biggl)^{1/q}\\
&\leq C_{p,q} 2^{(-\frac{3}{q} + \frac{1}{p})j+\epsilon j}
\biggl\|\int_{\mathbb{R}}\eta_0(x_1)\|f(y_1-x_1,y')\|_{L^p({\mathbb{R}}^2,dy')}dx_1\biggl\|_{L^p(\mathbb{R},dy_1)}
\\
&\leq C_{p,q} 2^{(-\frac{3}{q} + \frac{1}{p})j+\epsilon j}\|f\|_{L^p}\|\eta_0\|_{L^1}.
\end{align*}
Thus by the inequality (\ref{wellknoweest}), we deduce
\begin{align*}
&\|\widetilde{\mathcal{M}_{j,loc}^1}f(y)\|_{L^q}^q\\
&\leq C_{p,q} \biggl( 2^{(-\frac{3}{q} + \frac{1}{p})j+\epsilon j}\|f\|_{L^p}\|\eta_0\|_{L^1}\biggl)^{q-1}2^{(1-\frac{3}{q} + \frac{1}{p})j+\epsilon j} \|f\|_{L^p}\|\eta_0\|_{L^1}
\\
&=C_{p,q} 2^{(\frac{q}{p}-2)j + q \epsilon j}\|f\|_{L^p}^q\|\eta_0\|_{L^1}^q.
\end{align*}
Since $\frac{1}{q} > \frac{1}{2p}$, we have proved  Theorem \ref{3nonvanish} for $p, q$ satisfying $ \frac{1}{2p} < \frac{1}{q} \le  \frac{3}{5p}, \frac{3}{q} \le 1-\frac{1}{p} $. The proof of the theorem is completed by interpolating with the $L^{p}$-estimate from Theorem 1.6 in \cite{WL}.


\subsubsection{The case when $da_{2} \neq a_{3}$: surfaces of finite type}
In this subsection, we will provide the proof of Theorem \ref{maintheorem4}.

First we may assume $\eta(x)=\eta_1(x_1)\eta_2(x_2)$ with non-negative functions $\eta_1$,
$\eta_2$ $\in C_0^{\infty}({\mathbb{R}})$. Let $(y_2, y_3)=y'$ and $(\xi_2,\xi_3)=\xi'$. Denote   $(t^{a_{2}}\xi_2, t^{a_{3}}\xi_3)$ by
$\tilde{\delta}_t\xi'$. We choose $B>0$ very small and  $\tilde{\rho}\in C_0^{\infty}(\mathbb{R})$ such that supp $\tilde{\rho}\subset\{x\in \mathbb{R}:B/2\leq|x|\leq 2B\}$  and $\sum_k\tilde{\rho}(2^kx)=1$.

We decompose the averaging operator as follows:
\begin{equation*}
A_tf(y) =\sum_k\widetilde{A_t^k}f(y),
\end{equation*}
where $\widetilde{A_t^k}f(y)=\int_{\mathbb{R}^3}f(y_1-tx_1,y_2-t^{a_{2}}x_2,y_3-t^{a_{3}}(x_2^d\Phi(x_{1}, x_2)+c))\eta_1(x_1)\eta_2(x_2)\tilde{\rho}(2^kx_2)dx$.

One observes that it suffices to prove the following estimate
\begin{equation*}
\sum_k2^{(d+1)k(\frac{1}{p}-\frac{1}{q})-k}\|\sup_{t \in [1, 2]}|A_t^k|\|_{L^p\rightarrow L^q}\leq C_{p,q},
\end{equation*}
where
\begin{equation}
A_t^kf(y):=\int_{\mathbb{R}^2}f(y_1-tx_1,y_2-t^{a_{2}}x_2,y_3-t^{a_{3}}(x_2^d\Phi(x_{1},\frac{x_2}{2^k}) +c2^{dk}))\eta_1(x_1)\eta_2(\frac{x_2}{2^k})\rho(x_2)dx.
\end{equation}

By means of the Fourier inversion formula, we can write
\begin{equation*}
 A_t^kf(y)=\frac{1}{(2\pi)^3}\int_{{\mathbb{R}}^3}e^{i\xi\cdot
  y} e^{-ic2^{dk}t^{a_{3}}\xi_{3}} \widehat{\eta_1}(t\xi_1)\widehat{d\mu_{k,m}}(\tilde{\delta}_t\xi')\hat{f}(\xi)d\xi,
\end{equation*}
where
\begin{align*}
 \widehat{d\mu_{k,m}}(\tilde{\delta}_t\xi')&=\int_{\mathbb{R}}e^{-i(t^{a_{2}}\xi_2x_2+t^{a_{3}}\xi_3x_2^d(\Phi(x_{1}, \frac{x_2}{2^k}))}\tilde{\rho}(x_2)\eta_2(2^{-k}x_2)dx_2.
\end{align*}
Set  $A_{t}^{k,0}f(y):=\sum_{j\leq 0}^{\infty}A_{t,j}^kf(y)$ and $\mathcal {M}^{k,0}f(y):=\sup_{t \in [1, 2]}|A_{t}^{k,0}f(y)|$. One can easily verify that
\begin{equation*}
A_{t}^{k,0}f(y)=f*K^{\sigma}_{\delta_{t^{-1}}}(y),
\end{equation*} 
where $\sigma=(0,0,c2^{dk})$ and
\begin{align*}
& K^{\sigma}_{\delta_{t^{-1}}}(y):=t^{-Q}K^{\sigma}(t^{-1}y_1,t^{-a_2}y_2,t^{-a_3}y_3), \\
& K^{\sigma}(y)=K(y-\sigma),\\
& K(y):=\frac{1}{(2\pi)^2}\eta_1(y_1)\int_{{\mathbb{R}}^2}e^{i\xi'\cdot
  y'}\widehat{d\mu_{k,y_1,d}}(\xi')\rho(|\xi'|)d\xi.
\end{align*}

Due to integration by parts,
\begin{equation}\label{K2}
|K(y)|\leq C_N(1+|y|)^{-N}, N\in \mathbb{N}.
\end{equation}
Now, we choose $N$ large enough in (\ref{K2}) and invoke Lemma \ref{lem:Lemma3} to obtain
\begin{align}
&\|\mathcal{M}^{k,0}f\|_{L^{q}(\mathbb{R}^{3})}= \|\sup_{t \in [1,2]}|f*K^{\sigma}_{\delta_{t^{-1}}}|\|_{L^{q}(\mathbb{R}^{3})} 
\lesssim (c2^{\frac{dk}{q}} +1 )\|f\|_{L^{p}(\mathbb{R}^{3})}.\nonumber
\end{align}

Hence, it suffices to consider the maximal operator $\sup_{t \in [1,2]}|A_{t,j}^kf(y)|$. Furthermore, a standard application of the method of stationary
phase requires us to obtain the $L^p \rightarrow L^{q}$-boundedness of the operator given by
\begin{equation*}
\begin{aligned}
\widetilde{\mathcal{M}_{j}^{k,1}}f(y)&:=\sup_{t\in[1,2]}|\widetilde{A_{t,j}^{k,1}}f(y)|\\
&=\sup_{t\in[1,2]}\biggl|\int_{\mathbb{R}}\eta_1(\frac{x_1}{t})\int
_{{\mathbb{R}}^2}e^{iQ_{k,x_1,d}(y',t,\xi')}E_{k,x_1/t,d}
(\delta'_t\xi')dx_1\\
&\quad\times\beta(2^{-j}|\delta'_t\xi'|)f(y_1-x_1,\widehat{\xi'})d\xi'dx_1\biggl|,
\end{aligned}
\end{equation*}
where
\begin{equation*}
Q_{k,x_1,d}(y',t,\xi'):=\langle\xi', (y_2,y_3-ct^{a_3}2^{dk})\rangle-t^{a_3}\xi_3\tilde{\Psi}(\frac{x_1}{t},s,\delta),
\end{equation*}
\begin{equation*}
\tilde{\Psi}(\frac{x_1}{t},s,\delta):=\Psi(\frac{x_1}{t},\psi(\frac{x_1}{t},s,\delta),s,\delta),
\end{equation*}
and
\begin{equation*}
\Psi(x_1,x_2,s,\delta):=-sx_2+x_2^d\Phi(x_1,\delta x_2).
\end{equation*}

By Lemma \ref{lem:Lemma3}, we have
\begin{equation*}
\|\widetilde{\mathcal{M}_{j}^{k,1}}f\|^q_{L^q}\leq C_p\biggl(\int
_{{\mathbb{R}}^3}|\rho(t)\widetilde{A_{t,j}^{k,1}}f(y)|^q\biggl)^{1/q'}\biggl(\int
_{{\mathbb{R}}^3}|\frac{\partial}{\partial t}(\rho(t)\widetilde{A_{t,j}^{k,1}}f(y))|^q\biggl)^{1/q}.
\end{equation*}

Since
\begin{align*}
\left|\frac{\partial}{\partial t}Q_{k,x_1,d}(y',t,\xi'))\right|&=\left|-ca_3t^{a_3-1}\xi_32^{kd}+\xi_3\frac{\partial}{\partial t}(t^{a_3}\tilde{\Psi}(\frac{x_1}{t},s,\delta))\right|\leq C  (c2^{dk} +1) 2^j,
\end{align*}
then
$\frac{\partial}{\partial t}(\rho(t)\widetilde{A_{t,j}^{k,1}})$ behaves like $(c2^{dk} +1) 2^j\widetilde{A_{t,j}^{k,1}}$. Clearly we  only consider the $L^p \rightarrow L^q$-estimate for the operator $\widetilde{A_{t,j}^{k,1}}$.

Since $x_2\approx 1$ here, in addition to $\Phi(0,0)\neq 0$, then we can  proceed similarly as in the proof of Theorem \ref{maintheorem1}.
Finally for $p,q$ satisfying  $ \frac{1}{2p} < \frac{1}{q} \le  \frac{1}{p}, \frac{1}{q} \ge \frac{3}{p} -1 $,  supp $\hat{f} \subset \{\xi \in \mathbb{R}^{2}: |\xi| \sim 2^{j}\}$, we obtain
\begin{align*}
&\sum_k2^{(d+1)k(\frac{1}{p}-\frac{1}{q})-k}\sum_j\|\widetilde{\mathcal{M}_{j}^{k,1}}\|_{L^p\rightarrow L^q}\\
&= C_{p,q}\sum_k  2^{(d+1)k(\frac{1}{p}-\frac{1}{q})-k} (c2^{\frac{dk}{q}} + 1) \sum_j2^{- \epsilon(p,q) j}\|\eta_0\|_{L^1}.
\end{align*}
We have thus finished the proof of Theorem \ref{maintheorem4}.

\subsubsection{The case when $da_{2} = a_{3}$: surfaces of finite type}

In this subsection, we will prove Theorem \ref{theocurvanishi}. We only prove it for $d =2$ and the case when $d > 2$ will follow similarly.

First we may assume $\eta(x)=\eta_1(x_1)\eta_2(x_2)$ with non-negative functions $\eta_1$,
$\eta_2$ $\in C_0^{\infty}({\mathbb{R}})$, and $f\geq
0$, $a_2=1$, $a_3=2$. Let $(y_2, y_3)=y'$ and $(\xi_2,\xi_3)=\xi'$. Denote $a_1+3$ by $Q$ and $(t\xi_2, t^2\xi_3)$ by
$\tilde{\delta}_t\xi'$. We choose $B>0$ very small and  $\tilde{\rho}\in C_0^{\infty}(\mathbb{R})$ such that supp $\tilde{\rho}\subset\{x\in \mathbb{R}:B/2\leq|x|\leq 2B\}$  and $\sum_k\tilde{\rho}(2^kx)=1$.

Set
\begin{equation*}
A_tf(y):=\int_{\mathbb{R}^2}f(y_1-t^{a_1}x_1,y_2-tx_2,y_3-t^2x_2^2\phi(x_2))\eta(x)dx=\sum_k\widetilde{A_t^k}f(y),
\end{equation*}
where $\widetilde{A_t^k}f(y)=\int_{\mathbb{R}^2}f(y_1-t^{a_1}x_1,y_2-tx_2,y_3-t^2x_2^2\phi(x_2))\eta_1(x_1)\eta_2(x_2)\tilde{\rho}(2^kx_2)dx$.

It suffices to prove the following estimate
\begin{equation*}
\sum_k2^{3k(\frac{1}{p}-\frac{1}{q})-k}\|\sup_{t \in [1, 2]}|A_t^k|\|_{L^p\rightarrow L^q}\leq C_{p ,q},
\end{equation*}
where
\begin{equation*}
A_t^kf(y):=\int_{\mathbb{R}^2}f(y_1-t^{a_1}x_1,y_2-tx_2,y_3-t^2x_2^2\phi(\frac{x_2}{2^k}))\eta_1(x_1)\eta_2(\frac{x_2}{2^k})\rho(x_2)dx.
\end{equation*}

By means of the Fourier inversion formula, we can write
\begin{equation*}
 A_t^kf(y)=\frac{1}{(2\pi)^3}\int_{{\mathbb{R}}^3}e^{i\xi\cdot
  y}\widehat{\eta_1}(t^{a_1}\xi_1)\widehat{d\mu_{k,m}}(\tilde{\delta}_t\xi')\hat{f}(\xi)d\xi,
\end{equation*}
where
\begin{align*}
 \widehat{d\mu_{k,m}}(\tilde{\delta}_t\xi')&=\int_{\mathbb{R}}e^{-i(t\xi_2x_2+t^2\xi_3x_2^2\phi(\frac{x_2}{2^k}))}\tilde{\rho}(x_2)\eta_2(2^{-k}x_2)dx_2.
\end{align*}

Choose a non-negative function $\beta\in C_0^{\infty}(\mathbb{R})$
such that
\begin{equation*}
\hspace{0.2cm}\textrm{supp }\hspace{0.2cm} \beta \subset[1/2,2] \hspace{0.5cm}\textrm{and}
\hspace{0.5cm}\sum_{j\in \mathbb{Z}}\beta(2^{-j}r)=1 \hspace{0.5cm}
\textrm{for} \hspace{0.5cm} r>0,
\end{equation*}
and set
\begin{equation*}
A_{t,j}^kf(y):=\frac{1}{(2\pi)^3}\int_{{\mathbb{R}}^3}e^{i\xi\cdot y}\widehat{\eta_1}(t^{a_1}\xi_1)\widehat{d\mu_{k,m}}(\tilde{\delta}_t\xi')\beta(2^{-j}|\tilde{\delta}_t\xi'|)\hat{f}(\xi)d\xi.
\end{equation*}
As in the proof of Theorem \ref{3nonvanish}, it is sufficient to obtain the $L^{p} \rightarrow L^{q}$ estimate for $\sup_{t\in[1,2]}|A_{t,j}^kf(y)|$.

For fixed $t\in[1,2]$, we will first estimate $\widehat{d\mu_{k,m}}(\tilde{\delta}_t\xi')$.
Set
\begin{equation*}
\delta:=2^{-k},\hspace{0.3cm}s:=s(\xi',t)=-\frac{\xi_2}{t\xi_3},\hspace{0.2cm}\textrm{for}\hspace{0.2cm}\xi_3\neq 0,
\end{equation*}
\begin{equation*}
\Phi(s,x_2,\delta):=-sx_2+x_2^2\phi(\delta x_2).
\end{equation*}

A similar argument as in the proof of Theorem \ref{3nonvanish} and Theorem \ref{hhisomaintheorem} shows that we only need to consider   the operator $\widetilde{A_{t,j}^k}$ given by
 \begin{align*}
\widetilde{A_{t,j}^k}f(y)&:=\int_{\mathbb{R}}\eta_1(\frac{x_1}{t^{a_1}})\int
_{{\mathbb{R}}^2}e^{i(\xi'\cdot y'-t^2\xi_3\tilde{\Phi}(s,\delta))}\chi_{k,m}(\frac{\xi_2}{t\xi_3})
\frac{A_{k,m}(\tilde{\delta}_t\xi')}{(1+|\tilde{\delta}_t\xi'|)^{1/2}}
\beta(2^{-j}|\tilde{\delta}_t\xi'|)\\
&\quad \times f(y_1-x_1,\widehat{\xi'})d\xi'dx_1,
\end{align*}
where  $\tilde{\Phi}(s,\delta):=\Phi(s,\tilde{q}(s,\delta),\delta)$ and $x_2=\tilde{q}(s,\delta)$ is the solution of the equation
$\partial_2\Phi(s,x_2,\delta)=0$ and smoothly converges to the solution $\tilde{q}(s,0)=s$ of the equation $\partial_2\Phi(s,x,0)=0$  if we assume $\phi(0)=1/2$.

The phase function can be written as
\begin{equation*}
-t^2\xi_3\tilde{\Phi}(s,\delta):=\frac{\xi_2^2}{2\xi_3}+(-1)^{m+1}\delta^m\frac{\phi^{(m)}(0)}{m!}\frac{\xi_2^{m+2}}{t^m\xi_3^{m+1}}+R(t,\xi',\delta),
\end{equation*}
which is homogeneous of degree one and can be considered as a small perturbation of  $\frac{\xi_2^2}{2\xi_3}+(-1)^{m+1}\delta^m\frac{\phi^{(m)}(0)}{m!}\frac{\xi_2^{m+2}}{t^m\xi_3^{m+1}}$.

Notice that $\chi_{k,m}$ is a smooth function supported in the interval $[c_{k,m},\tilde{c}_{k,m}]$, for certain non-zero constants $c_{k,m}$ and $\tilde{c}_{k,m}$ dependent only on $k$ and $m$. $A_{k,m}$ is a symbol of order zero and $\{A_{k,m}(\tilde{\delta}_t\xi')\}_k$ is contained in a bounded subset of symbols of order zero. 

We will again focus on the local maximal operator
\[\widetilde{\mathcal{M}_{j,loc}^k}f(y):= \sup_{t\in[1/2, 4]} |\rho_{1}(y^{\prime},t)\widetilde{A_{t,j}^k}f(y)|,\]
where $\rho_1 \in C_0^{\infty}(\mathbb{R}^2\times [1/2,4])$ is a bump function.

By Lemma \ref{lem:Lemma3}, we have
\begin{equation*}
\begin{aligned}
\|\widetilde{\mathcal{M}_{j,loc}^k}f\|^q_{L^q}&\leq C_p\biggl(\int
_{{\mathbb{R}}^3}\int_{1/2}^4|\rho_1(y',t)\widetilde{A_{t,j}^k}f(y)|^qdtdy\biggl)^{1/q'}\\
&\quad \times \biggl(\int
_{{\mathbb{R}}^3}\int_{1/2}^4|\frac{\partial}{\partial t}(\rho_1(y',t)\widetilde{A_{t,j}^k}f(y))|^qdtdy\biggl)^{1/q}.
\end{aligned}
\end{equation*}

Moreover, we choose a non-negative function $\tilde{\eta}_1\in C_0^{\infty}(\mathbb{R})$ such that $\eta_1=1$ on the support of $\eta_1$, then
\begin{equation*}
\frac{\partial}{\partial t}\left(\widetilde{A_{t,j}^k}f(y)\right)=\int_{\mathbb{R}}\tilde{\eta}_1(\frac{x_1}{t})\int
_{{\mathbb{R}}^2}e^{i(\xi'\cdot y'-t^2\xi_3\tilde{\Phi}(s,\delta))}h_{k,m}(y,t,j,\xi')
f(y_1-x_1,\widehat{\xi'})d\xi'dx_1,
\end{equation*}
where
\begin{equation*}
|h_{k,m}(y,t,j,\xi')|\lesssim 2^{j/2}\delta^m+2^{-j/2}.
\end{equation*}
Now it is easy to see that
$(2^{j/2}\delta^m+2^{-j/2})^{-1}\frac{\partial}{\partial t}(\rho_1(y',t)\widetilde{A_{t,j}^k}f)$ behaves like $2^{j/2}\widetilde{A_{t,j}^k}f$. It is sufficient to estimate $\|\rho_1(y',t)\widetilde{A_{t,j}^k}f(y)\|_{L^q(\mathbb{R}^3\times[1/2,4],dt dy)}$.

Furthermore, choosing a function $\eta_0\in C_0^{\infty}(\mathbb{R})$, non-negative, such that for arbitrary $t\in [1,2]$, $a_1>0$, $\eta_1(\frac{x_1}{t^{a_1}})\leq \eta_0(x_1)$,  we get
 \begin{align*}
&\left|\int_{\mathbb{R}}\eta_1(\frac{x_1}{t^{a_1}})\int
_{{\mathbb{R}}^2}e^{i(\xi'\cdot y'-t^2\xi_3\tilde{\Phi}(s,\delta))}\chi_{k,m}(\frac{\xi_2}{t\xi_3})
\frac{A_{k,m}(\tilde{\delta}_t\xi')}{(1+|\tilde{\delta}_t\xi'|)^{1/2}}
\beta(2^{-j}|\tilde{\delta}_t\xi'|)f(y_1-x_1,\widehat{\xi'})d\xi'dx_1\right|\\
&\leq \int_{\mathbb{R}}\eta_0(x_1)\biggl|\int
_{{\mathbb{R}}^2}e^{i(\xi'\cdot y'-t^2\xi_3\tilde{\Phi}(s,\delta))}\chi_{k,m}(\frac{\xi_2}{t\xi_3})
\frac{A_{k,m}(\tilde{\delta}_t\xi')}{(1+|\tilde{\delta}_t\xi'|)^{1/2}}
\beta(2^{-j}|\tilde{\delta}_t\xi'|)f(y_1-x_1,\widehat{\xi'})d\xi'\biggl|dx_1.
\end{align*}

In order to apply Lemma \ref{lemmaL^2} for $j\leq km$, the local smoothing estimate Theorem \ref{L^(p,q)therom} for $j>km$ of the Fourier integral operators not satisfying the ``cinematic curvature" condition  uniformly, we freeze $x_1$. In fact, by  Minkowski's and Young's inequalities, we have
\begin{align*}
&\|\rho_1(y',t)\widetilde{A_{t,j}^k}f(y)\|_{L^q(\mathbb{R}^3\times[1/2,4],dtdy)}
\\
&\leq C\Bigg(\int
_{{\mathbb{R}}^3}\int\biggl(\rho_1(y',t)\int_{\mathbb{R}}\eta_0(x_1)\biggl|\int
_{{\mathbb{R}}^2}e^{i(\xi'\cdot y'-t^2\xi_3\tilde{\Phi}(s,\delta))}\chi_{k,m}(\frac{\xi_2}{t\xi_3})
\frac{A_{k,m}(\tilde{\delta}_t\xi')}{(1+|\tilde{\delta}_t\xi'|)^{1/2}}
\beta(2^{-j}|\tilde{\delta}_t\xi'|)\\
&\quad\times f(y_1-x_1,\widehat{\xi'})d\xi'\biggl|dx_1\biggl)^qdtdy\Biggl)^{1/q}
\\
& \leq C\|\eta_0\|_{L^1}\Biggl\|\biggl\|\rho_1(y',t)\int
_{{\mathbb{R}}^2}e^{i(\xi'\cdot y'-t^2\xi_3\tilde{\Phi}(s,\delta))}\chi_{k,m}(\frac{\xi_2}{t\xi_3})
\frac{A_{k,m}(\tilde{\delta}_t\xi')}{(1+|\tilde{\delta}_t\xi'|)^{1/2}}
\beta(2^{-j}|\tilde{\delta}_t\xi'|)\\
&\quad \times f(y_1,\widehat{\xi'})d\xi'
\biggl\|_{L^q({\mathbb{R}}^2\times[\frac{1}{2},4],dtdy')}\Biggl\|_{L^q(\mathbb{R},dy_1)}.
\end{align*}

Finally, together with the arguments from  the proof of Theorem \ref{3nonvanish} and Theorem \ref{hhisomaintheorem}, we finish the proof.

\subsection{Proof of continuity lemmas}
According to the arguments in the introduction, we only need to show Lemma \ref{continuous continuty lemma}. Notice that
\[ \biggl\|\sup_{t,s\in[1,2]: |t-s| \le |z|^{\frac{1}{d}}}  | A_{t} - A_{s}  |  \biggl\|_{L^{p} \rightarrow L^{q} } \approx \biggl\|\sup_{t\in[1,2], h \in [0, |z|^{\frac{1}{d}}]} | A_{t+h}  - A_{t}   | \biggl\|_{L^{p} \rightarrow L^{q}}.\]
Next we will prove
\begin{align}\label{goal}
\biggl\|\sup_{t\in[1,2], h \in [0, |z|^{\frac{1}{d}}]} | A_{t+h}f(y) - A_{t}f(y) |  \biggl\|_{L^{q}(\mathbb{R}^{n})} \lesssim |z|^{\epsilon_2} \|f\|_{L^{p}(\mathbb{R}^{n})}
\end{align}
for $p, q$ as in Lemma \ref{continuous continuty lemma}. It follows from the $L^{p} \rightarrow L^{q}$ estimate of $\sup_{t \in [1, 2]} |A_{t}|$ that
\begin{align*}
\biggl\|\sup_{t\in[1,2], h \in [0, |z|^{\frac{1}{d}}]}  | A_{t+h}f(y) - A_{t}f(y) |  \biggl\|_{L^{q}(\mathbb{R}^{n})} \lesssim   \|f\|_{L^{p}(\mathbb{R}^{n})}.
\end{align*}
Then we only  prove (\ref{goal}) for some $p_{0}, q_{0}$, and obtain the estimate for more $p, q$'s by interpolation.
In what follows, we only provide some details for the proof of averaging operators defined by (\ref{averagefinite1}), (\ref{averagefinite2}), (\ref{averagefinite3}), and (\ref{maximal3dimen}). The other cases can be proved similarly.

\textbf{Case 1.} For $A_{t}$  defined by (\ref{averagefinite1}), we decompose the support of $\eta(x)$ as in the proof of Theorem \ref{maintheorem1}. It suffices to show
\begin{equation}
\sum_{k \ge 1}2^{(d+1)k(\frac{1}{p} - \frac{1}{q})-k}\left\|\sup_{t\in[1,2], h \in [0, |z|^{\frac{1}{d}}]} |\widetilde{A_{t+h}^{k}} - \widetilde{A_{t}^{k}} |\right\|_{L^p\rightarrow L^q}\leq C_{p,q},
\end{equation}
 where
\[\widetilde{A_t^{k}}f(y):=\int_{\mathbb{R}}  f(y_1-t^{a_{1}}x,y_2-t^{a_{2}}(x^d\phi(\frac{x}{2^{k}}) + 2^{dk}c))\tilde{\rho}(x)dx.\]
Then we decompose $\xi$ as in the proof of Theorem \ref{maintheorem1}, we only need to consider the main term
\begin{equation*}
A^k_{t,j}f(y):=\int_{{\mathbb{R}}^2}e^{i(\xi\cdot y -ct^{a_{2}}2^{dk}\xi_{2}-t^{a_{2}} \xi_2\tilde{\Phi}(s,\delta))}\chi_{k,d,m}(t^{a_{1}-a_{2}}\frac{\xi_1}{ \xi_2})
\frac{A_{k,d,m}( \delta_{t}\xi)}{(1+| \delta_{t}\xi|)^{1/2}}
\beta(2^{-j}| \delta_{t}\xi|)\hat{f}(\xi)d\xi.
\end{equation*}
We recall the notations  for the reader's convenience. Here $\chi_{k,d,m}$ is a smooth function supported in the conical region $[c_{k,d,m},\widetilde{c_{k,d,m}}]$, for certain non-zero constant $c_{k,d,m}$ and $\widetilde{c_{k,d,m}}$ dependent only on $k, m$ and $d$. $A_{k,d,m}$ is a symbol of order zero in $\xi$ and $\{A_{k,d,m}(\delta_t\xi)\}_k$ is contained in a bounded subset of symbol of order zero.

We assume $\phi(0)=1/d$, then the phase function can be written as
\begin{align*}
-t^{a_{2}}\xi_2\tilde{\Phi}(s,\delta)=(d-1)t^{\frac{da_{1}-a_{2}}{d-1}}\xi_{2} \biggl(-\frac{\xi_{1}}{d\xi_{2}}\biggl)^{d/(d-1)}-t^{\frac{(a_{1}-a_{2})m}{d-1} + a_{2}} \xi_{2}\frac{\delta^m\phi^{(m)}(0)}{ m!}
\biggl(-\frac{\xi_1}{d\xi_2}\biggl)^{\frac{d+m}{d-1}}+ R(t,\xi,\delta,d),
\end{align*}
where $R(t,\xi,\delta,d)$ is homogeneous of degree one in $\xi$ and has at least $m+1$ power of $\delta$. The phase function is a small perturbation of $(d-1)t^{\frac{da_{1}-a_{2}}{d-1}}\xi_{2} \biggl(-\frac{\xi_{1}}{d\xi_{2}}\biggl)^{d/(d-1)}$.

On one hand, by Lemma \ref{lem:Lemma3} and the Plancherel's theorem,
\begin{align}
\left\|\sup_{t\in[1,2], h \in [0, |z|^{\frac{1}{d}}]} |\widetilde{A_{t+h, j}^{k}}f - \widetilde{A_{t, j}^{k}}f |\right\|_{L^{2}(\mathbb{R}^{2})} \lesssim |z|^{\frac{1}{2d}} (c2^{dk} + 1) 2^{\frac{j}{2}} \|f\|_{L^{2}(\mathbb{R}^{2})}.
\end{align}
On the other hand, from the proof of Theorem \ref{maintheorem1},
\begin{align}
\left\|\sup_{t\in[1,2], h \in [0, |z|^{\frac{1}{d}}]} |\widetilde{A_{t+h, j}^{k}}f - \widetilde{A_{t, j}^{k}}f |\right\|_{L^{4d}(\mathbb{R}^{2})} \lesssim (c2^{\frac{k}{4}} + 1) 2^{-\frac{j}{8d}} \|f\|_{L^{4d}(\mathbb{R}^{2})}.
\end{align}
By interpolation, we have
\begin{align}
&\left\|\sup_{t\in[1,2], h \in [0, |z|^{\frac{1}{d}}]} |\widetilde{A_{t+h, j}^{k}}f - \widetilde{A_{t, j}^{k}}f |\right\|_{L^{2d}(\mathbb{R}^{2})} \nonumber\\
&\lesssim |z|^{\frac{1}{2d(2d-1)}} (c2^{dk} + 1)^{\frac{1}{2d-1}} (c2^{\frac{k}{4}} + 1)^{\frac{2d-2}{2d-1}} 2^{-\frac{d-1}{4d(2d-1)}j} \|f\|_{L^{2d}(\mathbb{R}^{2})}. \nonumber
\end{align}
It follows that when the curve passes through the origin or equivalently when $c=0$,
\begin{align}
&\sum_{k \ge 1}2^{(d+1)k(\frac{1}{2d} - \frac{1}{2d})-k}\left\|\sup_{t\in[1,2], h \in [0, |z|^{\frac{1}{d}}]} |\widetilde{A_{t+h}^{k}} - \widetilde{A_{t}^{k}} |\right\|_{L^{2d}\rightarrow L^{2d}} \nonumber\\
&\lesssim |z|^{\frac{1}{2d(2d-1)}}  \sum_{k} 2^{-k} \sum_{j} 2^{-\frac{d-1}{4d(2d-1)}j}  \nonumber\\
&\lesssim |z|^{\frac{1}{2d(2d-1)}}; \nonumber
\end{align}

when the curve doesn't pass through the origin, i.e. when $c \neq 0$,
\begin{align}
&\sum_{k \ge 1}2^{(d+1)k(\frac{1}{2d} - \frac{1}{2d})-k}\left\|\sup_{t\in[1,2], h \in [0, |z|^{\frac{1}{d}}]} |\widetilde{A_{t+h}^{k}} - \widetilde{A_{t}^{k}} |\right\|_{L^{2d}\rightarrow L^{2d}} \nonumber\\
&\lesssim |z|^{\frac{1}{2d(2d-1)}}  \sum_{k} 2^{-\frac{d-1}{2(2d-1)}k} \sum_{j} 2^{-\frac{d-1}{4d(2d-1)}j} \nonumber\\
&\lesssim |z|^{\frac{1}{2d(2d-1)}}. \nonumber
\end{align}
Therefore,
\begin{align*}
\biggl\|\sup_{t \in[1,2], h \in [0, |z|^{\frac{1}{d}}]}  | A_{t+h}f(y) - A_{t}f(y)  |  \biggl\|_{L^{2d}(\mathbb{R}^{2})} \lesssim |z|^{\frac{1}{2d(2d-1)}} \|f\|_{L^{2d}(\mathbb{R}^{2})}.
\end{align*}
Interpolating with the corresponding $L^{p} \rightarrow L^{q}$ estimate, we complete the proof for $A_{t}$ defined by (\ref{averagefinite1}).

\textbf{Case 2.} Let us consider the averaging  operator given by (\ref{averagefinite2}). As for \textbf{case 1}, we only need to consider the main term
\begin{equation*}
A_{t,j}f(y):=\int_{{\mathbb{R}}^2}e^{i(\xi\cdot y -  (d-1) \xi_{2}  (-\xi_{1}/d\xi_{2} )^{d/(d-1)})}\chi_{d}(t^{1-d}\frac{\xi_1}{ \xi_2})
\frac{A_{d}( \delta_{t}\xi)}{(1+| \delta_{t}\xi|)^{1/2}}
\beta(2^{-j}| \delta_{t}\xi|)\hat{f}(\xi)d\xi,
\end{equation*}
where $\chi_{d}$ is a smooth function supported in the conical region $[c_{d},\widetilde{c_{d}}]$, for certain non-zero constants $c_{d}$ and $\widetilde{c_{d}}$ dependent only on  $d$. $A_{d}$ is a symbol of order zero in $\xi$.

By Lemma \ref{lem:Lemma3} and the Plancherel's theorem,
\begin{align*}
\left\|\sup_{t\in[1,2], h \in [0, |z|^{\frac{1}{d}}]} |\widetilde{A_{t+h, j}}f - \widetilde{A_{t, j}}f |\right\|_{L^{2}(\mathbb{R}^{2})} \lesssim |z|^{\frac{1}{2d}} 2^{-\frac{j}{2}}  \|f\|_{L^{2}(\mathbb{R}^{2})}.
\end{align*}
Hence,
\begin{align*}
\biggl\|\sup_{t\in[1,2], h \in [0, |z|^{\frac{1}{d}}]}  | A_{t+h}f(y) - A_{t}f(y)  |  \biggl\|_{L^{2}(\mathbb{R}^{2})} \lesssim |z|^{\frac{1}{2d}}  \|f\|_{L^{2}(\mathbb{R}^{2})},
\end{align*}
then we complete the proof by interpolating with the corresponding $L^{p} \rightarrow L^{q}$ estimate.

\textbf{Case 3.} By the proof of Theorem \ref{hhisomaintheorem}, the main term  for the averaging operator in (\ref{averagefinite3}) is given by
\begin{equation*}
A_{t,j}^kf(y):=\frac{1}{(2\pi)^2}\int_{{\mathbb{R}}^2}e^{i(\xi \cdot y-t^d\xi_2\tilde{\Phi}(s,\delta))}\chi_{k,m,d}(\frac{\xi_1}{t\xi_2})
\frac{A_{k,m,d}(\delta_t\xi)}{(1+|\delta_t\xi|)^{1/2}}\beta(2^{-j}|\delta_t \xi|)\hat{f}(\xi)d\xi.
\end{equation*}
Here
\begin{align*}
-t^d\xi_2\tilde{\Phi}(s,\delta)=\biggl(\frac{1}{d^{\frac{d}{d-1}}}-\frac{1}{d^{\frac{1}{d-1}}}\biggl)(-\frac{d\xi_1^d}{\xi_2})^{\frac{1}{d-1}}-\frac{\delta^m\phi^{(m)}(0)}{t^mm!}
\biggl(-\frac{\xi_1}{\xi_2^{\frac{m+1}{m+d}}}\biggl)^{\frac{d+m}{d-1}}+ R(t,\xi,\delta,d),
\end{align*}
where $R(t,\xi,\delta,d)$ is homogeneous of degree one in $\xi$ and has at least $m+1$ power of $\delta$.

By Lemma \ref{lem:Lemma3} and the Plancherel's theorem, when $1 \le j \le mk$,
\begin{align*}
\left\|\sup_{t\in[1,2], h \in [0, |z|^{\frac{1}{d}}]} | A_{t+h, j}^{k} f - A_{t, j}^{k} f |\right\|_{L^{2}(\mathbb{R}^{2})} \lesssim |z|^{\frac{1}{2d}}  2^{-\frac{j}{2}} \|f\|_{L^{2}(\mathbb{R}^{2})};
\end{align*}
when $j>mk$,
\begin{align*}
\left\|\sup_{t\in[1,2], h \in [0, |z|^{\frac{1}{d}}]} | A_{t+h, j}^{k} f - A_{t, j}^{k} f |\right\|_{L^{2}(\mathbb{R}^{2})} \lesssim |z|^{\frac{1}{2d}}  2^{\frac{j}{2}-mk} \|f\|_{L^{2}(\mathbb{R}^{2})}.
\end{align*}
Also, it follows from \cite{WL} that for $j\geq 1$,
\begin{align*}
\left\|\sup_{t\in[1,2], h \in [0, |z|^{\frac{1}{d}}]} | A_{t+h, j}^{k} f - A_{t, j}^{k} f |\right\|_{L^{4}(\mathbb{R}^{2})} \lesssim  2^{-\frac{1}{72}j + \epsilon j} \|f\|_{L^{4}(\mathbb{R}^{2})}.
\end{align*}
Interpolation yields for $j \ge 1$,
\begin{align*}
\left\|\sup_{t\in[1,2], h \in [0, |z|^{\frac{1}{d}}]} | A_{t+h, j}^{k} f - A_{t, j}^{k} f |\right\|_{L^{\frac{146}{37}}(\mathbb{R}^{2})} \lesssim |z|^{\frac{1}{146d}}  2^{-\frac{j}{146} + \epsilon j} \|f\|_{L^{\frac{146}{37}}(\mathbb{R}^{2})}.
\end{align*}
This implies
\begin{align*}
\biggl\|\sup_{t\in[1,2], h \in [0, |z|^{\frac{1}{d}}]}  | A_{t+h}f(y) - A_{t}f(y)  |  \biggl\|_{L^{\frac{146}{37}}(\mathbb{R}^{2})} \lesssim |z|^{\frac{1}{146d}} \|f\|_{L^{\frac{146}{37}}(\mathbb{R}^{2})},
\end{align*}
which completes the proof for $A_{t}$ defined by (\ref{averagefinite3}).

\textbf{Case 4.} For $A_{t}$ defined by (\ref{maximal3dimen}), the main term is given by
\begin{equation*}
\widetilde{A_{t,j}^1}f(y):=\int_{\mathbb{R}}\eta_1(\frac{x_1}{t})\int
_{{\mathbb{R}}^2}e^{i(\xi'\cdot y'-t^{a_3}\xi_3\tilde{\Psi}(\frac{x_1}{t},s))}E_{x_1/t}(\delta'_t\xi')
\beta(2^{-j}|\delta'_t\xi'|)f(y_1-x_1,\widehat{\xi'})d\xi'dx_1,\
\end{equation*}
where the phase function is homogeneous of degree one and $E_{x_1/t}$ is a symbol of order $-1/2$.

Combining with  the Plancherel's theorem and Young's inequality, we derive
\begin{align*}
\left\|\sup_{t\in[1,2], h \in [0, |z|^{\frac{1}{d}}]} |\widetilde{A_{t+h, j} }f - \widetilde{A_{t, j} }f |\right\|_{L^{2}(\mathbb{R}^{3})} \lesssim |z|^{\frac{1}{2d}}  2^{\frac{j}{2}} \|f\|_{L^{2}(\mathbb{R}^{3})}.
\end{align*}
It follows from the proof of the $L^{p} \rightarrow L^{q}$ estimate that
\begin{align*}
\left\|\sup_{t\in[1,2], h \in [0, |z|^{\frac{1}{d}}]} |\widetilde{A_{t+h, j} }f - \widetilde{A_{t, j} }f |\right\|_{L^{4}(\mathbb{R}^{3})} \lesssim    2^{-\frac{j}{8}} \|f\|_{L^{4}(\mathbb{R}^{3})}.
\end{align*}
The interpolation theorem implies
\begin{equation*}
\left\|\sup_{t\in[1,2], h \in [0, |z|^{\frac{1}{d}}]} |\widetilde{A_{t+h, j} }f - \widetilde{A_{t, j} }f |\right\|_{L^{\frac{24}{7}}(\mathbb{R}^{3})} \lesssim |z|^{\frac{1}{12d}}  2^{-\frac{j}{48}} \|f\|_{L^{\frac{24}{7}}(\mathbb{R}^{3})}.
\end{equation*}
Therefore, we have
\begin{equation*}
\left\|\sup_{t\in[1,2], h \in [0, |z|^{\frac{1}{d}}]} | A_{t+h} f -A_{t} f |\right\|_{L^{\frac{24}{7}}(\mathbb{R}^{3})} \lesssim |z|^{\frac{1}{12d}}   \|f\|_{L^{\frac{24}{7}}(\mathbb{R}^{3})},
\end{equation*}
and finish the proof for $A_{t}$ defined by (\ref{maximal3dimen}).


\begin{flushleft}
\vspace{0.3cm}\textsc{Wenjuan Li\\School of Mathematics and Statistics\\Northwestern Polytechnical University\\710129\\Xi'an, People's Republic of China}

\vspace{0.3cm}\textsc{Huiju Wang\\School of Mathematics Sciences\\University of Chinese Academy of Sciences\\100049\\Beijing, People's Republic of China}

\vspace{0.3cm}\textsc{Yujia Zhai\\Laboratoire de Math\'{e}matiques Jean Leray\\Nantes Universit\'{e} \\44322\\Nantes, France}

\end{flushleft}

\end{document}